\DeclareMathAlphabet{\mathpzc}{OT1}{pzc}{m}{it}
\tikzset{ext/.style={circle, draw,inner sep=1pt},int/.style={circle,draw,fill,inner sep=1pt},nil/.style={inner sep=1pt}}
\tikzset{exte/.style={circle, draw,inner sep=3pt},inte/.style={circle,draw,fill,inner sep=3pt}}
\tikzset{diagram/.style={matrix of math nodes, row sep=3em, column sep=2.5em, text height=1.5ex, text depth=0.25ex}}
\tikzset{diagram2/.style={matrix of math nodes, row sep=0.5em, column sep=0.5em, text height=1.5ex, text depth=0.25ex}}
\tikzset{every picture/.append style={baseline=-.65ex}}
\let\le\leqslant
\let\ge\geqslant
\let\leq\leqslant
\let\geq\geqslant
\newcommand{\composition}[2]{%
    \begin{tikzpicture}[x=0.8cm, y=0.8cm, scale=0.45]

        \xdef\Hmax{0}
        \foreach \v in {#2}{%
            \pgfmathparse{max(\Hmax,\v)}%
            \xdef\Hmax{\pgfmathresult}%
        }
        \pgfmathtruncatemacro{\HmaxInt}{\Hmax}

        \xdef\col{0}
        \foreach \v in {#2}{%
            \pgfmathtruncatemacro{\vi}{\v}%

            \ifnum\vi>0
                \foreach \row in {1,...,\vi}{%
                    \fill[gray!10] (\col, \row-1) rectangle (\col+1, \row);
                    \draw[gray!50, thin] (\col, \row-1) rectangle (\col+1, \row);
                }
            \else
                \draw[gray!50, thin] (\col, 0) -- (\col+1, 0);
            \fi

            \node[below, font=\small] at (\col+0.5, 0) {\vi};

            \pgfmathparse{\col+1}%
            \xdef\col{\pgfmathresult}%
        }

    \end{tikzpicture}%
}
\newcommand{\glueComposition}[3]{%
    %
    \begin{tikzpicture}[x=0.8cm, y=0.8cm,scale=0.45]


        \xdef\tmpidx{0}
        \foreach \va in {#2}{%
            \expandafter\xdef\csname Bot\tmpidx\endcsname{\va}%
            \pgfmathparse{int(\tmpidx+1)}\xdef\tmpidx{\pgfmathresult}%
        }
        \xdef\tmpidx{0}
        \foreach \vb in {#3}{%
            \expandafter\xdef\csname Top\tmpidx\endcsname{\vb}%
            \pgfmathparse{int(\tmpidx+1)}\xdef\tmpidx{\pgfmathresult}%
        }

        \xdef\Hmax{0}
        \pgfmathtruncatemacro{\Nmax}{#1-1}
        \foreach \i in {0,...,\Nmax}{%
            \expandafter\xdef\expandafter\va\expandafter{\csname Bot\i\endcsname}%
            \expandafter\xdef\expandafter\vb\expandafter{\csname Top\i\endcsname}%
            \pgfmathparse{max(\Hmax, \va+\vb)}%
            \xdef\Hmax{\pgfmathresult}%
        }
        \pgfmathtruncatemacro{\HmaxInt}{\Hmax}

        \foreach \i in {0,...,\Nmax}{%
            \expandafter\xdef\expandafter\va\expandafter{\csname Bot\i\endcsname}%
            \expandafter\xdef\expandafter\vb\expandafter{\csname Top\i\endcsname}%
            \pgfmathtruncatemacro{\vai}{\va}%
            \pgfmathtruncatemacro{\vbi}{\vb}%

            \ifnum\vai>0
                \foreach \row in {1,...,\vai}{%
                    \fill[gray!10] (\i, \row-1) rectangle (\i+1, \row);
                    \draw[gray!50, thin] (\i, \row-1) rectangle (\i+1, \row);
                }%
            \fi

            \ifnum\vbi>0
                \foreach \row in {1,...,\vbi}{%
                    \pgfmathtruncatemacro{\rowshift}{\row+\vai}%
                    \fill[black!80] (\i, \rowshift-1) rectangle (\i+1, \rowshift);
                    \draw[gray!60, thin] (\i, \rowshift-1) rectangle (\i+1, \rowshift);
                }%
            \fi

            \ifnum\vai=0
                \ifnum\vbi=0
                    \draw[gray!50, thin] (\i, 0) -- (\i+1, 0);
                \fi
            \fi

            \pgfmathtruncatemacro{\vtotal}{\vai+\vbi}%
            \node[below, font=\small] at (\i+0.5, 0) {$\vtotal$};
        }

    \end{tikzpicture}%
}
\newcommand{\nc}{\newcommand}
\newtheorem{theoremA}{Theorem}
\newtheorem{corA}[theoremA]{Corollary}
\numberwithin{equation}{section}
\newtheorem*{conj*}{Conjecture}
\newtheorem{thm}[equation]{Theorem}
\newtheorem{prop}[equation]{Proposition}
\newtheorem{lem}[equation]{Lemma}
\newtheorem{cor}[equation]{Corollary}
\newtheorem*{cor*}{Corollary}
\newtheorem{rem}[equation]{Remark}
\newtheorem{prop::def}[equation]{Proposition-Definition}
\newtheorem{dfn}[equation]{Definition}
\newtheorem{definition}[equation]{{\bf Definition}}
\newtheorem{notation}[equation]{{\bf Notation}}
\newtheorem{example}[equation]{Example}
\newtheorem{conj}[equation]{{\bf Conjecture}}
\newcommand{\calC}{\mathcal{C}}
\nc{\fB}{\mathfrak{B}}
\nc{\gl}{\mathfrak{gl}}
\nc{\GL}{\mathsf{GL}}
\nc{\g}{\mathfrak{g}}
\nc{\gh}{\widehat\g}
\nc{\h}{\mathfrak{h}}
\nc{\wfh}{\widehat{\mathfrak{h}}}
\nc{\la}{\lambda}
\nc{\al}{\alpha }
\nc{\be}{\beta }
\nc{\ve}{\varepsilon }
\nc{\om}{\omega }
\nc{\lr}{\text{-}}
\nc{\ta}{\theta}
\nc{\ch}{{\mathop {\rm ch}}}
\nc{\Tr}{{\mathop {\rm Tr}\,}}
\nc{\tr}{{\mathrm tr}}
\nc{\Id}{{\mathop {\rm Id}}}
\nc{\ad}{{\mathop {\rm ad}}}
\nc{\End}{{\mathop {\rm End}}}
\nc{\bra}{\langle}
\nc{\ket}{\rangle}
\nc{\bi}{{\bf i}}
\nc{\pa}{\partial}
\nc{\ld}{\ldots}
\nc{\cd}{\cdots}
\nc{\hk}{\hookrightarrow}
\nc{\T}{\otimes}
\nc{\gr}{\mathrm{gr}}
\nc{\ov}{\overline}
\nc{\msl}{\mathfrak{sl}}
\nc{\mgl}{\mathfrak{gl}}
\nc{\U}{\mathrm U}
\nc{\V}{\EuScript V}
\nc{\cO}{\mathcal{O}}
\nc{\cL}{\mathcal{L}}
\nc{\Res}{{\mathbf{Res}}}
\nc{\Ind}{{\mathbf{Ind}}}
\nc{\Coind}{{\mathbf{CoInd}}}
\nc{\LInd}{{\mathbf{LInd}}}
\nc{\RCoind}{{\mathbf{RCoInd}}}
\nc{\sR}{\mathbf{R}}
\nc{\sZ}{\mathbf{Z}}
\newcommand{\bZ}{{\mathbb Z}}
\newcommand{\Ser}{{\EuScript{S}\mathrm{er}}}
\newcommand{\fp}{{\mathfrak p}}
\newcommand{\fh}{{\mathfrak h}}
\newcommand{\fg}{{\mathfrak g}}
\newcommand{\fb}{{\mathfrak b}}
\newcommand{\cF}{{\mathcal{F}}}
\newcommand{\fn}{{\mathfrak n}}
\newcommand{\Hom}{\mathrm{Hom}}
\newcommand{\wt}{\mathrm{wt}}
\nc{\bfI}{\mathbf I}
\nc{\Q}{\mathfrak Q}
\nc{\fr}{\mathfrak r}
\nc{\W}{\mathbb W}
\nc{\bU}{\mathbb U}
\nc{\Gm}{\mathbb{G}_{m}}
\nc{\bA}{\mathbb A}
\newcommand{\calF}{\mathcal{F}}
\newcommand{\calD}{\mathcal{D}}
\newcommand{\bbY}{\mathbf{Y}}
\newcommand{\de}{\text{-}}
\DeclareMathOperator{\Ext}{Ext}
\newcommand{\Span}{\mathsf{Span}}
\newcommand{\Mat}{\mathsf{Mat}}
\newcommand{\St}{\mathbf{S}\kern -.25em{\mathbf{c}}}
\newcommand{\DL}{\mathsf{DL}}
\newcommand{\vrt}{\mathsf{vrt}}
\newcommand{\hor}{\mathsf{hor}}
\newcommand{\hb}{\mathsf{hbs}} 
\providecommand{\eprint}[2][]{\href{http://arxiv.org/abs/#2}{arXiv:#2}}
\newcommand{\one}{\mathbbm{1}}
\nc{\sD}{{{{D}\hspace*{-.9em}\text{\bf{--}}\hspace*{0.3em}}}}
\nc{\sL}{{{{L}\hspace*{-.7em}\text{\bf{--}}\hspace*{0.3em}}}}
\nc{\sLD}{{{{LD}\hspace*{-.9em}\text{\bf{---}}\hspace*{0.3em}}}}
\nc{\sDL}{{{{DL}\hspace*{-.9em}\text{\bf{---}}\hspace*{0.3em}}}}
\nc{\Ar}{{\mathsf{Ar}}}
\nc{\disorder}{\mathsf{Disorder}}
\nc{\rk}{\mathsf{rk}}
\nc{\precsucc}{\overset{\prec}{\succ}}
\subjclass[2020]{17B10, 05E10, 18E10}
\begin{document}

\title[Cauchy identities for staircase matrices]
{Cauchy identities for staircase matrices}

\author[Feigin]{Evgeny Feigin}
\address{Evgeny Feigin:\newline
School of Mathematical Sciences, Tel Aviv University, Tel Aviv
69978, Israel
}
\email{evgfeig@gmail.com}

\author[Khoroshkin]{Anton Khoroshkin}
\address{Anton Khoroshkin: \newline
Department of Mathematics, University of Haifa, Mount Carmel, 3498838, Haifa, Israel
}

\email{khoroshkin@gmail.com}

\author[Makedonskyi]{Ievgen Makedonskyi}
\address{Ievgen Makedonskyi:\newline
Yanqi Lake Beijing Institute of Mathematical Sciences And Applications (BIMSA), 
No. 544, Hefangkou Village, Huaibei Town, Huairou District, Beijing 101408.}
\email{makedonskii\_e@mail.ru}

\begin{abstract}
The well known Cauchy identity expresses the product of terms $(1 - x_i y_j)^{-1}$ for $(i,j)$ indexing entries of a rectangular $m\times n$-matrix as a sum over partitions $\lambda$ of products of Schur polynomials: $s_{\lambda}(x)s_{\lambda}(y)$. 
Algebraically, this identity comes from the decomposition of the symmetric algebra of the space of rectangular matrices, considered as a $\mathfrak{gl}_m$-$\mathfrak{gl}_n$-bimodule.
We generalize the Cauchy decomposition by replacing rectangular matrices with arbitrary staircase-shaped matrices equipped with the left and right actions of the Borel upper-triangular subalgebras.
For any given staircase shape $\mathsf{Y}$
we describe left and right ``standard" filtrations on the  
symmetric algebra of the space of shape $\mathsf{Y}$ matrices. We show that the subquotients of these filtrations are tensor products of Demazure and opposite van der Kallen modules over the Borel subalgebras. 
On the level of characters, 
we derive two distinct expansions for the product $(1 - x_i y_j)^{-1}$ for $(i,j) \in \mathsf{Y}$ written as 
sums of products of key polynomials $\kappa_\lambda(x)$ and (opposite) Demazure atoms $a^{\mu}(y)$. 
\end{abstract}

\maketitle
{\small
\tableofcontents
}

\setcounter{section}{-1}
\section{Introduction}
Let us fix positive integers $n$ and $m$ and variables $x=(x_1,\dots,x_n)$, $y=(y_1,\dots,y_m)$.
The celebrated Cauchy identity (see e.g. \cite{Fu}) states that 
\[
\prod_{i=1}^{n}\prod_{j=1}^m \frac{1}{1-x_iy_j} = \sum_{\la} s_\la(x)s_\la(y),
\]
where the sum ranges over partitions $\la$ with at most $\min(m,n)$ parts and $s_\la$ is the corresponding Schur polynomial. This identity has the following representation-theoretic interpretation. Let $\Mat_{m\times n}$ be the space 
of rectangular matrices and let $S(\Mat_{n\times m})$ be the symmetric algebra of this space. Then $S(\Mat_{n\times m})$ as a $\gl_n\times \gl_m$-bimodule decomposes as a direct sum of tensor products
of left and right irreducible modules $V_\la\T V_\la^{op}$ corresponding to the partitions $\la$ (recall that 
Schur polynomials compute the characters of the irreducible $\gl_n$-modules).
The importance of the Cauchy identity is explained by a huge  number of 
applications 
and generalizations popping up in various fields of mathematics, such as combinatorics, representation theory, probability theory, mathematical physics
(see e.g. \cite{BC,BP,BW,Ok,OR,St}). 

The classical Cauchy identity admits various generalizations. 
In particular, one can consider the space of square upper-triangular matrices. 
This leads to the non-symmetric Cauchy identity \cite{CK,FL,Las}:
\[
\prod_{1\le i\le j\le m} \frac{1}{1-x_iy_j} = \sum_{\mu\in\bZ^m_{\geq0}} \kappa_\mu(x)\, a^\mu(y),
\]
where $\kappa_\mu(x)$ are the key polynomials, $a^\mu(y)$ are the opposite Demazure atoms  (see e.g. \cite{Al, AGL, Mas} for combinatorial definitions of these polynomials) and the sum in the right hand side ranges over all compositions $\mu\in\bZ_{\ge 0}^m$ (the opposite and the classical Demazure atoms are related by a simple twist \eqref{eq:left-right}). 
The non-symmetric Cauchy
identity also has an important representation-theoretic incarnation. 
Let $\fb_m\subset\gl_m$ be the Borel subalgebra of upper triangular matrices.
Then the key polynomial $\kappa_\mu(x)$ is the character of the Demazure module $D_\mu$  of 
$\fb_m$ 
(see \cite{Dem1,Dem2}) and the Demazure atom $a^\mu(y)$ is the character of the opposite (right) van der Kallen 
module $K^{op}_\mu$ \cite{vdK} (these are the quotients of the Demazure modules by all the Demazure submodules
strictly contained in a given one). Then one shows that there exists a filtration on the 
symmetric algebra of the space of upper-triangular matrices such that the graded pieces
(the subquotients) are labeled by the compositions $\mu$ and the $\mu$-th 
subquotient is isomorphic (as $\fb_m$-bimodule) to the tensor product $D_\mu\T K_\mu^{op}$. We note that the direct sum decomposition is no longer available (the same 
feature shows up in the $q$-versions of both symmetric and non-symmetric identities, see \cite{FKhM1, FKhM2, FKhMO, FMO}).

Since symmetric algebras of the spaces of matrices of  different shapes lead to interesting
structures, it is natural to ask for the generalization to the case of the general staircase
matrices.  Let $\overline{n}=(n_1\le \dots\le n_m)$ be a collection
of nonnegative integers. We visualize $\overline{n}$ as a Young diagram $\bbY_{\ov{n}}$ with $n_j$ being the length of the $j$-th column; here is an
example for $\ov{n}=(2,4,4,4,5,5)$:
\[\bbY_{(2,4,4,4,5,5)} = \begin{tikzpicture}[scale=0.4]
\draw[step=1cm] (0,2) grid (6,0);
	\draw[step=1cm] (0,1) grid (6,0);
	\draw[step=1cm] (1,0) grid (6,-1);
	\draw[step=1cm] (1,-1) grid (6,-2);
	\draw[step=1cm] (4,-2) grid (6,-3);
\end{tikzpicture}\]
Let $\Mat_{\ov{n}}$ be the space of matrices of shape  $\bbY_{\ov{n}}$. A simple, but
important observation is that the Borel subalgebra $\fb_{n_m}$ acts on $\Mat_{\ov{n}}$
from the left (simply by multiplication) and $\fb_m$ acts from the right.
Our goal is to describe
the symmetric space of $\Mat_{\ov{n}}$ as $\fb_{n_m}\de \fb_m$ bimodule and to obtain the Cauchy-type character identities.

In \cite{Las}, a formula for the expansion of the product 
$\prod_{(i,j)\in \bbY_{\ov{n}}} (1-x_iy_j)^{-1}$ (the character of the symmetric space) is given as a sum over all compositions
of the products $D^{(1)}(\kappa_\mu(x))\, D^{(2)}(a_\mu(y))$, where $D^{(1)}$ and $D^{(2)}$ are certain products of Demazure operators. However, as in the classical and non-symmetric cases, it is very desirable to obtain a formula with each term being a character of 
a certain bimodule over the Borel subalgebras. This goal was achieved in \cite{AGL,AE}
for tuples $\ov{n}$ such that $n_{i+1}-n_i=1$ for all $i$. 

In this paper, we derive different Cauchy formulas for all possible partitions $\ov{n}$. 
These formulas are consequences of the descriptions 
of the left and the right standard filtrations on the symmetric space $S^{\bullet}(\Mat_{\ov{n}})$.
Recall that a standard filtration $\cF^{\ov{c}}$ on a module $M$ 
(where ${\ov{c}}$ belongs to the certain labeling set)
is defined as follows: $\cF^{\ov{c}}$ is the smallest submodule of $M$ containing all $\ov{d}$-weight spaces of $M$ such that $\ov{d}$ is not less than or equal to $\ov{c}$ (in particular, $\cF^{\ov{c}}\supset \cF^{\ov{d}}$
whenever ${\ov{c}}\preceq {\ov{d}}$). 
We deal with the right and left  standard filtrations: in both cases 
$M=S(\Mat_{\ov{n}})$, but in the first case $M$ is considered as a right $\fb_m$-module,
$\ov{c}\in\bZ_{\ge 0}^m$
and in the second case, we consider the left $\fb_{n_m}$-action, $\ov{c}\in \bZ_{\ge 0}^{n_m}$
(in both cases, we use the Cherednik order on the set of weights, Definition \ref{def::Cherednik::order}). Our goal is to answer two questions:
\begin{itemize}
\item  for which $\ov{d}$ the subquotient $\cF^{\ov{d}}\left/ \sum_{\overline{c} \succ \overline{d}}\mathcal{F}^{\overline{c}} \right.$  is non-trivial? 
\item how to describe the non-trivial subquotients?   
\end{itemize}
The answer to the first question gives us the range of summation in the Cauchy identities,
and the answer to the second produces the exact form of terms on the right-hand side of the 
identities.

We start with the right standard filtration. Let $\overline{d}=(d_1,\dots,d_m)$, $d_i\ge 0$. We say that $\ov{d}$ is $\ov{n}$-admissible, if 
for all $l \leq m$: $n_l \geq \#\{i=1,\dots,l:\ d_i >0\}$. 
Our first theorem is as follows:

\begin{theoremA}\label{thm:A} (Theorem \ref{thm:RSF})
The $\ov{d}$-th subquotient of the right standard filtration is non-trivial if and only if 
$\ov{d}$ is $\ov{n}$-admissible. If non-trivial, the subquotient is isomorphic to  
$D_{\hb(\overline d)} \otimes K_{\overline d}^{op}$.
 \end{theoremA}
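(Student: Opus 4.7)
My strategy is to construct an explicit bi-weight generator of each admissible subquotient, identify the bi-module it generates as $D_{\hb(\ov d)}\otimes K_{\ov d}^{op}$, and handle non-admissibility by a pigeonhole argument. Let $l_1<\dots<l_s$ enumerate the columns $l$ with $d_l>0$. The admissibility condition $n_{l_k}\ge k$ guarantees that $x_{k,l_k}$ is a coordinate on $\Mat_{\ov n}$, so I may set
\[
v_{\ov d}:=\prod_{k=1}^{s} x_{k,l_k}^{\,d_{l_k}}.
\]
This monomial has right $\fb_m$-weight $\ov d$ and left $\fb_{n_m}$-weight $\hb(\ov d)=(d_{l_1},\dots,d_{l_s},0,\dots,0)$. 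I first show that the class of $v_{\ov d}$ in $\cF^{\ov d}/\sum_{\ov c\succ\ov d}\cF^{\ov c}$ is nonzero: the right action sends $E_{j,j+1}$ to $\sum_{i=1}^{n_j}x_{ij}\partial_{x_{i,j+1}}$, so only monomials whose row indices in each column obey the staircase constraint can appear in the image of the raising action, and the staircase-minimal row choices in $v_{\ov d}$ prevent it from being reached from any strictly Cherednik-greater monomial.

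Next I describe the bi-module generated by $v_{\ov d}$ modulo higher filtration and identify it with $D_{\hb(\ov d)}\otimes K_{\ov d}^{op}$. Under the left $\fb_{n_m}$-action, the operators $E_{r,r+1}=\sum_j x_{rj}\partial_{x_{r+1,j}}$ shift rows within the $\ov d$-right-weight space, and one verifies the defining Demazure relations for $D_{\hb(\ov d)}$ by direct computation on $v_{\ov d}$ (using that rows above $s$ do not appear in $v_{\ov d}$, and that the multiplicities $d_{l_1},\dots,d_{l_s}$ precisely control the length of $E_{r,r+1}$-chains). Under the right $\fb_m$-action, the operators $E_{j,j+1}$ applied to $v_{\ov d}$ either remain within the $\ov d$-weight space (generating the $K_{\ov d}^{op}$-factor) or produce a monomial of strictly Cherednik-greater right-weight, which is killed in the subquotient; these vanishing relations are precisely those defining the van der Kallen quotient of the corresponding Demazure module. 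A character comparison between $\chi\bigl(S(\Mat_{\ov n})\bigr)$ and $\sum\kappa_{\hb(\ov d)}(x)\,a^{\ov d}(y)$ summed over admissible $\ov d$ then promotes the natural surjection $D_{\hb(\ov d)}\otimes K_{\ov d}^{op}\twoheadrightarrow\cF^{\ov d}/\sum_{\ov c\succ\ov d}\cF^{\ov c}$ to an isomorphism.

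For the non-admissibility direction, suppose $n_l<s_l:=\#\{i\le l:d_i>0\}$ for some $l$. Any weight-$\ov d$ monomial $m\in S(\Mat_{\ov n})$ must place its $\ge s_l$ variables from columns $1,\dots,l$ into rows $\{1,\dots,n_l\}$; pigeonhole yields $j_1<j_2\le l$ and a common row $r$ with $x_{r,j_1}x_{r,j_2}\mid m$. Replacing one factor $x_{r,j_1}$ in $m$ by $x_{r,j_2}$ gives a monomial $m'$ of right-weight $\ov d+e_{j_2}-e_{j_1}$, which is strictly Cherednik-greater than $\ov d$. Since $E_{j_1,j_2}\cdot m'=c\,m+m''$ with $c\ne 0$ and $m''$ a sum of other weight-$\ov d$ monomials having strictly fewer row collisions (or a smaller total multiplicity at the collision row), an induction on this collision datum shows $M_{\ov d}\subset\sum_{\ov c\succ\ov d}\cF^{\ov c}$, and the subquotient vanishes.

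The main obstacle is the second step: ruling out a proper kernel in the surjection from $D_{\hb(\ov d)}\otimes K_{\ov d}^{op}$ onto the subquotient. The interplay between the staircase shape $\bbY_{\ov n}$, the Cherednik partial order on compositions, and the Demazure/van der Kallen relations is delicate; rigorously identifying the bi-module will most naturally proceed via the character identity mentioned above, which in turn depends on the non-admissible vanishing established in the third step.
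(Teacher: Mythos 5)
The central error is the identification $\hb(\ov{d})=(d_{l_1},\dots,d_{l_s},0,\dots,0)$, which is false in general: the half-bubble-sort is defined through the iterated serpentine chain of Lemma~\ref{lem::reordering} and Corollary~\ref{cor:iteratedserp} and genuinely reorders the nonzero entries. For instance with $\ov{n}=(3,3,3,3)$ and $\ov{d}=(0,5,3,7)$ one computes $\mu^{(1)}=(0,0,0)$, $\mu^{(2)}=(0,0,5)$, $\mu^{(3)}=(0,3,5)$, $\mu^{(4)}=(3,5,7)$, so $\hb(\ov{d})=(3,5,7)$, whereas your formula yields $(5,3,7)$. Consequently the monomial $v_{\ov{d}}=\prod_k x_{k,l_k}^{d_{l_k}}$ has left weight $(5,3,7)\ne\hb(\ov{d})$. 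Since $(5,3,7)\prec(3,5,7)$ in the Cherednik order (same dominant part, $\tau_{(5,3,7)}=s_2s_1\preceq w_0=\tau_{(3,5,7)}$ in Bruhat), the extremal weight vector of weight $(5,3,7)$ in $D_{(3,5,7)}$ generates the proper Demazure submodule $D_{(5,3,7)}\subsetneq D_{(3,5,7)}$; so a cyclic vector of weight $(5,3,7)$ cannot cyclically generate $D_{\hb(\ov{d})}$, and the "direct verification of Demazure relations" in your second step cannot produce $D_{\hb(\ov{d})}$ from this generator. The generator the paper actually uses (in the proofs of Corollary~\ref{cor:surjmult} and Theorem~\ref{thm:RSF}) is the basis monomial $v^R$ of Corollary~\ref{ProductFiltrationBasis} arising from the unique iterated serpentine with $\mu^{(m)}_+=\ov{d}_+$; in the example above $R_{3,2}=5$, $R_{2,3}=3$, $R_{1,4}=3$, $R_{2,4}=2$, $R_{3,4}=2$, with entries spread over several cells per row, not a single cell per column.

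Two further gaps. First, the character comparison you invoke to promote the surjection to an isomorphism is precisely Corollary~\ref{cor:firstCauchy}, which in the paper is a consequence of Theorem~\ref{thm:RSF}; to use it as an input you would need an independent combinatorial derivation of $\prod(1-x_iy_j)^{-1}=\sum\kappa_{\hb(\ov{d})}(x)\,a^{\ov{d}}(y)$, and this is exactly what the serpentine analysis of Section~\ref{subsection:Serpentines} together with the Pieri rule \eqref{AQ} and Proposition~\ref{StandardFiltrationTensor} supplies — so this input must be built, not assumed. Second, the pigeonhole argument for non-admissibility is not correct as stated: after finding a row collision $x_{r,j_1}x_{r,j_2}\mid m$, you replace $x_{r,j_1}$ by $x_{r,j_2}$ and claim $\ov{d}+e_{j_2}-e_{j_1}\succ\ov{d}$, but if $d_{j_1}>d_{j_2}$ the dominant part strictly decreases (e.g.\ $\{5,3\}\to\{4,4\}$), so the Cherednik comparison goes the wrong way; the replacement direction must be chosen based on the relative sizes of $d_{j_1},d_{j_2}$, and even then the induction on collision data needs to be set up carefully. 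The paper sidesteps this by arguing on the level of serpentine chains: Corollary~\ref{cor:SubsequenceInequality} and Lemma~\ref{lem:hnev} control the dominant part of the left weight directly and yield Corollary~\ref{cor:zero} with no case analysis.
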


An important outcome is that each subquotient decomposes as $\fb_{n_m}\de\fb_m$ bimodule 
into a tensor product; the modules that show up in the theorem above are 
the right van der Kallen module  $K_{\overline d}^{op}$ and the left Demazure module 
$D_{\hb(\overline d)}$. The operation $\hb$ (the half-bubble-sort algorithm) is one of the central combinatorial ingredients of the paper, see Section \ref{subsection:Serpentines} for more details. As an immediate corollary of the description of the right standard filtration,
we obtain

\begin{corA}\label{cor:A} (Corollary \ref{cor:firstCauchy})
The following generalized ``right" Cauchy identity holds true
    \[\prod_{(i,j)\in \bbY_{\overline{n}}}\frac{1}{1-x_iy_j}=\sum_{\overline d \text{ is }\overline n \text{-admissible}}\kappa_{\hb(\overline d)}(x)\, a^{\overline d}(y).\]
\end{corA}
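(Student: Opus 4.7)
The plan is to read off the identity from Theorem~\ref{thm:A} purely at the level of bi-graded characters. I would first check that the left-hand side is the character of $S(\Mat_{\ov n})$ viewed as a $\fb_{n_m}\de\fb_m$ bi-module, with $x_i$ tracking the weight under the Cartan of $\fb_{n_m}$ (the row index) and $y_j$ tracking the weight under the Cartan of $\fb_m$ (the column index). Since the matrix entry $E_{i,j}$ for $(i,j)\in\bbY_{\ov n}$ has bi-weight $(\varepsilon_i,\varepsilon_j)$ and the symmetric algebra of a sum of weight spaces is the product of the corresponding geometric series, one obtains
\[
\ch S(\Mat_{\ov n}) \;=\; \prod_{(i,j)\in\bbY_{\ov n}} \frac{1}{1 - x_i y_j}.
\]

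Next, I would invoke Theorem~\ref{thm:A}. The right standard filtration $\cF^{\ov c}$ on $S(\Mat_{\ov n})$ is a bi-graded filtration indexed by $\ov c\in\bZ^m_{\ge 0}$, and in each fixed bi-degree only finitely many indices $\ov c$ contribute a non-zero weight space, so the filtration is locally finite. Consequently, taking characters commutes with passing to the associated graded:
\[
\ch S(\Mat_{\ov n}) \;=\; \sum_{\ov d} \ch\!\left(\cF^{\ov d}\Big/\!\!\sum_{\ov c\succ \ov d}\cF^{\ov c}\right).
\]
By Theorem~\ref{thm:A}, the $\ov d$-th subquotient vanishes unless $\ov d$ is $\ov n$-admissible, and in the admissible case it is isomorphic as a bi-module to $D_{\hb(\ov d)}\otimes K_{\ov d}^{op}$. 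The character of the tensor product factors as the product of the two characters in the independent variables $x$ and $y$.

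Finally, I would use the standard fact (recalled in the introduction) that $\ch D_{\hb(\ov d)} = \kappa_{\hb(\ov d)}(x)$ and $\ch K_{\ov d}^{op} = a^{\ov d}(y)$. Putting the three steps together yields the claimed identity
\[
\prod_{(i,j)\in\bbY_{\ov n}} \frac{1}{1 - x_i y_j} \;=\; \sum_{\ov d \text{ is }\ov n\text{-admissible}} \kappa_{\hb(\ov d)}(x)\, a^{\ov d}(y).
\]
There is no real obstacle here beyond Theorem~\ref{thm:A} itself; the only point that deserves a line of justification is the local finiteness of the filtration, which ensures that the character identity in each fixed bi-degree is a finite sum and hence that the termwise computation is legitimate.
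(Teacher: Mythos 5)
Your proof is correct and follows the paper's own route exactly: both pass to the associated graded of the right standard filtration, invoke Theorem~\ref{thm:RSF} to identify the subquotients as $D_{\hb(\ov d)}\otimes K^{op}_{\ov d}$ over $\ov n$-admissible $\ov d$, and then compare characters using $\ch D_{\hb(\ov d)}=\kappa_{\hb(\ov d)}(x)$ and $\ch K^{op}_{\ov d}=a^{\ov d}(y)$. Your remark on local finiteness is a reasonable extra sanity check but not a departure from the paper's argument.
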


Our next task is to describe the left standard filtration. It turned out that the corresponding
subquotients also admit nice and clean descriptions as tensor products of certain left and 
right modules. To state the answer, we need the following combinatorial construction.
To the Young diagram $\bbY_{\ov{n}}$, we attach a subset $\St_{\ov{n}}$ of its cells called the subset of staircase corners and a 
poset structure on this subset. In particular, the set $\St_{\ov{n}}$ contains at most one element in each row and each column.
The precise definition is given in Section~\ref{sec::ST::corners}, here we provide an example
for $\ov{n}=(2,4,4,4,5,5)$ -- the corresponding poset of staircase corners and its Hasse diagram (the cells in the poset are 
marked by blue dots and edges going left and down define the poset structure):

\[
\begin{tikzpicture}[scale=0.5]
\draw[step=1cm] (0,2) grid (6,0);
\draw[step=1cm] (0,1) grid (6,0);
\draw[step=1cm] (1,0) grid (6,-1);
\draw[step=1cm] (1,-1) grid (6,-2);
\draw[step=1cm] (4,-2) grid (6,-3);
 \node (v0) at (0.5,0.5) {{{\color{blue}$\bullet$}}};
 \node (v1) at (1.5,-1.5) { {{\color{blue}$\bullet$}}};
 \node (v2) at (4.5,-2.5) { {{\color{blue}$\bullet$}}};
 \node (v3) at (3.5,1.5) { {{\color{blue}$\bullet$}}};
 \node (v4) at (2.5,-0.5) { {{\color{blue}$\bullet$}}};
\draw[blue,line width=1.5pt] (0.5,0.5) edge (3.5,1.5);
\draw[blue,line width=1.5pt] (1.5,-1.5) edge (2.5,-0.5);
\draw[blue,line width=1.5pt] (2.5,-0.5) edge (3.5,1.5);
\end{tikzpicture}. 
\]

Let $\DL_{\overline n}$ (called DL-dense arrays) be the set of order-preserving maps from $\St_{\ov{n}}$ to $\bZ_{\ge 0}$. In other words, a DL-dense array attaches a non-negative integer to each element of the poset in a way compatible with the poset structure. For $A\in \DL_{\overline n}$ its horizontal weight
$\hor(A)\in \bZ_{\ge 0}^{n_m}$ is the vector of components of $A$ column-wise (note that $A$ contains at most one non-zero element
in each row and in each column). Similarly, one defines the vertical weight $\vrt(A)\in\bZ_{\ge 0}^m$.
Here is our second theorem.

\begin{theoremA}\label{thm:B} (Corollary \ref{cor:nvanderKallenQuotientDemazure} and Corollary \ref{cor:LSFtriv}) 
The subquotient of the left standard filtration 
corresponding to the weight $\mu\in\bZ_{\ge 0}^{n_m}$ is non-trivial if and only if $\mu=\hor(A)$ for some $A \in \DL_{\overline n}$.
In this case, the subquotient is isomorphic
to the tensor product $D_{\hor(A)} \otimes K_{\overline n,\vrt(A)}^{op}$
for certain (right) generalized van der Kallen module $K_{\overline n,\vrt(A)}^{op}$.
\end{theoremA}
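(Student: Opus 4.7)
The strategy is to bootstrap from Theorem~\ref{thm:A}. Since $K_{\ov d}^{op}$ carries only a right $\fb_m$-action, the right subquotient $D_{\hb(\ov d)} \otimes K_{\ov d}^{op}$ is, as a left $\fb_{n_m}$-module, isomorphic to $D_{\hb(\ov d)}$ tensored with a trivial vector space. The extremal left weight in this piece is therefore $\hb(\ov d)$; any lower left weight $\mu$ appearing in $D_{\hb(\ov d)}$ is generated from higher-weight spaces of that piece and so is killed in the $\mu$-th left subquotient. Consequently, the left $\mu$-subquotient receives contributions only from those right subquotients with $\hb(\ov d) = \mu$, and the plan is to regroup the right-filtration data accordingly.

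For the non-triviality part, I would analyze the half-bubble-sort algorithm combinatorially to show that the image $\{\hb(\ov d) : \ov d \text{ is } \ov n\text{-admissible}\}$ is exactly $\{\hor(A) : A \in \DL_{\ov n}\}$. Injectivity of $A \mapsto \hor(A)$ follows from the fact that each column of $\bbY_{\ov n}$ contains at most one staircase corner, so each admissible left weight $\mu$ is attached to a unique $A \in \DL_{\ov n}$. This pins down the first assertion of the theorem.

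For the identification, fix $\mu = \hor(A)$ and set $S(\mu) := \{\ov d \text{ admissible} : \hb(\ov d) = \mu\}$. By the first paragraph, the $\mu$-th left subquotient is built from the collection $\{D_\mu \otimes K_{\ov d}^{op}\}_{\ov d \in S(\mu)}$ equipped with the filtration induced by the right standard order restricted to $S(\mu)$. A combinatorial study of $\hb$ shows that $S(\mu)$ has a unique minimum $\vrt(A)$, with the remaining $\ov d \in S(\mu)$ obtained by adjusting entries in columns that lack a staircase corner. I would then prove that the pieces $\{K_{\ov d}^{op}\}_{\ov d \in S(\mu)}$ glue into a single right $\fb_m$-module $K_{\ov n, \vrt(A)}^{op}$ (the generalized van der Kallen module), whose right standard filtration recovers these pieces. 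Because the left action on this assembled factor is trivial, the bi-module structure splits as $D_\mu \otimes K_{\ov n, \vrt(A)}^{op}$, as required.

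\textbf{Main obstacle.} The hardest step is the construction and module-level identification of $K_{\ov n, \vrt(A)}^{op}$. In the classical square case $(n_{i+1}-n_i = 1)$ the analogous $K^{op}_\mu$ is cleanly described as a Demazure quotient; in the staircase setting one has to define $K_{\ov n, \vrt(A)}^{op}$ as a precisely chosen subquotient of a larger Demazure-type module and verify that its filtration matches the right standard structure on $S(\mu)$ at the level of modules, not only of characters. A useful sanity check is to compare the resulting left Cauchy-type expansion coming from Theorem~\ref{thm:B} with the expansion from Corollary~\ref{cor:A}; any discrepancy would signal an error in the combinatorial indexing or in the gluing construction.
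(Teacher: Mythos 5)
Your plan reverses the paper's logic, and the gap you flag as the ``main obstacle'' is indeed the crux, but the way you have set things up makes it harder than it needs to be.

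The paper does \emph{not} bootstrap Theorem~\ref{thm:B} from Theorem~\ref{thm:A}. Instead it works with the left standard filtration directly. The key technical inputs are Proposition~\ref{prop:BimoduleGenerators} and Proposition~\ref{lem:DLgenerateStandardFiltration}: the monomials $v^A$ for $A\in\DL_{\ov n}$ are shown to generate the bi-module ${}^\lambda\mathcal{F}$, and the left action on the subquotient is controlled by verifying Joseph's defining relations \eqref{thmJosepf} on the classes of these monomials (using the $\mathfrak{gl}_2$ Howe computation of Lemma~\ref{lem:gl_2HoweComputations} and the disorder argument of Lemma~\ref{lem:vAvB}). This at once produces the subquotient as $D_{\hor(A)}\otimes K$ with $K$ a quotient of $D^{op}_{\vrt(A)}$, which is \emph{defined} to be the generalized van der Kallen module. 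Only afterwards (Proposition~\ref{prop:nvdKfiltrationvdK}, Lemma~\ref{lem:vrt(B)>c}, Proposition~\ref{prop:KD/D}) is Theorem~\ref{thm:A} used, together with the already-established left picture, to pin down the van der Kallen filtration and the quotient description of $K^{op}_{\ov n,\vrt(A)}$.

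The genuine gap in your plan is that regrouping the right filtration only gives you the \emph{associated graded} of the multiplicity space, not the multiplicity space itself. Passing to $\gr^R(S(\Mat_{\ov n}))=\bigoplus_{\ov d}D_{\hb(\ov d)}\otimes K^{op}_{\ov d}$ destroys the extension data among the $K^{op}_{\ov d}$ with a fixed $\hb(\ov d)$, and in general $\gr^L\!\bigl(\gr^R M\bigr)$ need not agree with $\gr^L M$ as a bi-module. Concretely: as a right $\fb_m$-module the multiplicity space $K^{op}_{\ov n,\vrt(A)}$ is a genuine nonsplit extension of the $K^{op}_{\ov d}$ (it is a cyclic quotient of $D^{op}_{\vrt(A)}$), whereas your regrouping yields only $\bigoplus_{\hb(\ov d)=\hor(A)}K^{op}_{\ov d}$. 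So the ``gluing'' step you name is not a technicality to be filled in later; it is exactly the place where one must argue inside $S(\Mat_{\ov n})$ itself, and the paper does so via explicit generators. The character comparison you propose as a sanity check cannot see this distinction, since the two gradeds have the same character by construction.

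Two minor slips. For the injectivity of $A\mapsto\hor(A)$ the relevant fact is that each \emph{row} of $\bbY_{\ov n}$ contains at most one staircase corner (Lemma~\ref{lem::staircorners}), not each column; both statements are true but only the row version makes $\hor$ injective on $\DL_{\ov n}$. Also, in the paper's conventions (Lemma~\ref{lem:vrt(B)>c}) the composition $\vrt(A)$ is the Cherednik-\emph{extremal} element of $\{\ov d : \hb(\ov d)=\hor(A)\}$ in the direction opposite to what you wrote, so when you make the gluing precise you will want to double-check which end of $S(\mu)$ carries the cyclic vector of $K^{op}_{\ov n,\vrt(A)}$ under the chosen ordering of right (opposite) modules.
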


We derive several properties of the $\ov{n}$-van der Kallen modules. In particular,  the following theorem explains the name of the generalized van der Kallen modules. Let us prepare certain notation. 
For a partition $\la\in\bZ_{\ge 0}^{n_m}$ let $\DL_{\ov{n}}(\la)\subset\DL_{\ov{n}}$ be the set of DL-dense arrays whose multi-set of entries coincides with that of $\la$.  We endow 
the set $\DL_{\ov{n}}(\la)$ with the following poset structure: $B\preceq A$ if $D^{op}_{\vrt(B)}\subset D^{op}_{\vrt(A)}$.
\begin{theoremA} (Proposition \ref{prop:KD/D})
For all $A\in\DL_{\ov{n}}$
one has the isomorphism of right $\fb_m$-modules: 
\(
K^{op}_{\overline n,\vrt(A)} \simeq D^{op}_{\vrt(A)}\left/\sum_{B\prec A} D^{op}_{\vrt(B)} \right. .
\)
\end{theoremA}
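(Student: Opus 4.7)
The plan is to identify the quotient $D^{op}_{\vrt(A)}/N_A$, where $N_A := \sum_{B \prec A} D^{op}_{\vrt(B)}$, with the generalized van der Kallen module $K^{op}_{\ov{n},\vrt(A)}$ by constructing a surjection from the former onto the latter and then matching characters. Note that $N_A \subseteq D^{op}_{\vrt(A)}$ tautologically: this is built into the definition of the poset on $\DL_{\ov{n}}(\la)$, where $\la$ is the sorting of the multiset of entries of $A$.

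First I would realize $D^{op}_{\vrt(A)}$ inside $S(\Mat_{\ov{n}})$ by sending its cyclic generator to the monomial $m_A$ supported on the staircase corners of $A$ with the prescribed exponents; this vector has horizontal weight $\hor(A)$. By Theorem~B, the image of $m_A$ in the $\hor(A)$-th subquotient of the left standard filtration generates a copy of $K^{op}_{\ov{n},\vrt(A)}$ as a right $\fb_m$-module, giving a surjection $\pi\colon D^{op}_{\vrt(A)} \twoheadrightarrow K^{op}_{\ov{n},\vrt(A)}$. To show $\pi$ factors through $N_A$, pick any $B \prec A$: the corresponding monomial $m_B$ has horizontal weight $\hor(B)$, and the key combinatorial input is that $B \prec A$ in $\DL_{\ov{n}}(\la)$ forces $\hor(B) \succ \hor(A)$ strictly in the Cherednik order. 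This is extracted from the staircase-corner poset of Section~\ref{sec::ST::corners} together with the half-bubble-sort description of $\hb$ in Section~\ref{subsection:Serpentines}: moving down in the DL poset redistributes entries toward earlier columns, which is precisely the move raising the Cherednik weight. Consequently the image of $D^{op}_{\vrt(B)}$ lies strictly deeper than the $\hor(A)$-layer in the left standard filtration and vanishes in the subquotient, so $\pi$ descends to $\bar\pi\colon D^{op}_{\vrt(A)}/N_A \twoheadrightarrow K^{op}_{\ov{n},\vrt(A)}$.

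Finally I would close by a character comparison. Summing $\mathrm{ch}\bigl(D_{\hor(A)} \otimes (D^{op}_{\vrt(A)}/N_A)\bigr)$ over $A \in \DL_{\ov{n}}$ telescopes through the nested Demazure submodules to reproduce $\prod_{(i,j)\in\bbY_{\ov{n}}}(1-x_iy_j)^{-1}$, which must agree with the expression built from Theorem~B using $K^{op}_{\ov{n},\vrt(A)}$ in the right factor. Linear independence of the key polynomials $\kappa_{\hor(A)}(x)$ then forces equality $\mathrm{ch}(D^{op}_{\vrt(A)}/N_A) = \mathrm{ch}\, K^{op}_{\ov{n},\vrt(A)}$ in each layer, and combined with the surjection $\bar\pi$ this upgrades $\bar\pi$ to an isomorphism. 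The main obstacle is the combinatorial step $B \prec A \Rightarrow \hor(B) \succ \hor(A)$: although intuitive, it requires a careful induction translating the inclusion $D^{op}_{\vrt(B)} \subset D^{op}_{\vrt(A)}$ into the specific rearrangement of horizontal weights dictated by the staircase poset, and it is the point where the combinatorics of $\St_{\ov{n}}$ does the real work.
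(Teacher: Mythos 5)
Your overall scaffolding — start from the cyclic surjection $\pi\colon D^{op}_{\vrt(A)}\twoheadrightarrow K^{op}_{\ov{n},\vrt(A)}$ from Corollary~\ref{cor:nvanderKallenQuotientDemazure}, show $\pi$ kills $N_A$, then close by comparing characters — is the same as the paper's. But the core step is carried out wrongly, and the flaw is not cosmetic.

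Your "key combinatorial input" goes in the wrong direction. In the poset $\DL_{\ov{n}}(\lambda)$ one has $B\prec A\ \Leftrightarrow\ \vrt(B)\prec\vrt(A)$ (Section~\ref{sec::Cauchy::sign}), and Lemma~\ref{lem:transposedinequality} then gives $\hor(B)=\hb(\vrt(B))\preceq\hb(\vrt(A))=\hor(A)$; the inequality is strict because $\hor$ is injective on $\DL_{\ov{n}}$. You claim $\hor(B)\succ\hor(A)$, i.e.\ the opposite. (Already for upper-triangular matrices this is visible: there $\hor(A)=\vrt(A)$ for every $A\in\DL_{\ov{n}}$, so $\hor(B)\prec\hor(A)$ whenever $\vrt(B)\prec\vrt(A)$.) Consequently the monomial $v^B$ lies in a \emph{shallower} layer ${}^{\hor(B)}\mathcal{F}\supset{}^{\hor(A)}\mathcal{F}$ of the left standard filtration, not a deeper one, and your conclusion that it vanishes in the $\hor(A)$-subquotient simply does not follow.

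There is a second, more structural problem: even with the inequality corrected, the argument would not show that $\pi$ factors through $N_A$. The question is whether the image $\pi(v_{\vrt(B)})$ of the extremal vector of weight $\vrt(B)$ in $D^{op}_{\vrt(A)}$ vanishes. This image sits in the subquotient at level $\hor(A)$ and is obtained from the class of $v^A$ by a right $\U(\fb_m)$-action, so it has \emph{left weight $\hor(A)$}, not $\hor(B)$. It is therefore a different vector from the class of $v^B$, and reasoning about the filtration layer of $v^B$ says nothing about it. The paper's actual mechanism (proof of Proposition~\ref{prop:KD/D}) is to invoke Proposition~\ref{prop:nvdKfiltrationvdK}: the extremal weights occurring in $K^{op}_{\ov{n},\vrt(A)}$ are exactly those $\ov{d}$ with $\hb(\ov{d})=\hor(A)$, so $v_{\vrt(B)}$ dies because $\hb(\vrt(B))=\hor(B)\neq\hor(A)$. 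It then has to work harder to get the \emph{reverse} inclusion $\ker\psi\subseteq N_A$, via Lemma~\ref{lem:vrt(B)>c} and a second use of Lemma~\ref{lem:transposedinequality} for the transposed diagram. Your proposal does not address this direction at all. Finally, the closing "telescoping" is not justified: $N_A$ is a sum (not a direct sum) of Demazure submodules in a poset that is typically not a chain, so one cannot cancel terms naively; the paper instead constructs parallel filtrations by van der Kallen modules on both sides of the surjection and matches them layer by layer, which is what replaces your hand-waved character comparison.
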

The representation-theoretic properties of generalized van der Kallen modules allow us to derive the following character identities coming from the left standard filtration:
\begin{corA} (Corollary \ref{cor:LCF})
The following generalized ``left" Cauchy identity holds true 
\begin{gather*}
\prod_{(i,j)\in \bbY_{\overline{n}}}\frac{1}{1-x_iy_j}=
\sum_{\substack{A \in \DL_{\overline n}\\ \hb(\ov{d})=\hor(A)}} \kappa_{\hor(A)}(x)\, a^{\ov{d}}(y).
\end{gather*}
\end{corA}

Let us give several remarks on the logic of our arguments. We start with a combinatorial study 
of the decomposition of tensor products of Demazure modules based on character identities found in~\cite{As1, As2, AQ1, HLMW} and derive certain applications for the symmetric space $S(\Mat_{\ov{n}})$. We also introduce the poset $\St_{\ov{n}}$ of staircase matrices, the half-bubble-sort operation
$\hb$ and the set $\DL_{\ov{n}}$ of DL-dense arrays, which play a crucial role in our construction.
We then proceed to the study of the right standard filtration using the theory 
of highest weight categories and representation theory of the Borel subalgebras.
Together with the combinatorial results, this gives a proof of Theorem~\ref{thm:A}. We note that
each subquotient for the right standard filtration is isomorphic to a tensor product
of a left Demazure module and a right van der Kallen module. It turned out that 
the description of the left standard filtration (see Theorem~\ref{thm:B}) comes in parallel with 
the answer to the following question: which van der Kallen modules show up in pairs
with a given Demazure module in Theorem~\ref{thm:A}? We answer this question using
combinatorics of the DL-dense arrays. As a result, we introduce generalized van
der Kallen modules, which are also described as quotients of Demazure modules
by certain embedded Demazure submodules.

Our paper is organized as follows. In Section~\ref{sec::combinatorics}, we introduce our primary combinatorial structures: serpentines, staircase corners, and DL-dense arrays. Section~\ref{sec::Demazure::all} focuses on  Demazure modules and their tensor products. 
In Section~\ref{sec::right::Cauchy} we introduce the left and right standard filtrations on the symmetric algebra of the matrix spaces. We describe the subquotients of the right standard filtrations and derive the first positive ``right" Cauchy formula. 
Section~\ref{sec::left::Cauchy} is devoted to the study of the left standard filtration: we describe the subquotients and derive Cauchy-type formulas.
 Examples illustrating our identities are provided in Appendix~\ref{sec::examples}.

\section*{Acknowledgments}
We are grateful to  Olga Azenhas, Michel Brion, Thomas Gobet, and C\'edric Lecouvey for 
useful correspondence and to Daniel Orr and Alek Vainstein for useful discussions.

The work of Ie. Makedonskyi was partially supported by BJNSF grant IS24002.

Last but not least, we are very grateful to the anonymous referee for the careful reading of the manuscript and for 
a number of valuable comments and suggestions, which significantly improved the quality of the paper.
 
\section{Combinatorics}
\label{sec::combinatorics}
In this section, we fix the notation and introduce the main combinatorial objects of our paper. The results of this section
are used later on to derive Cauchy-type formulas and to describe the representation theory behind them. 
The main definitions of this section are Definitions \ref{def: serp} and \ref{def::DL::indices}, and the main
results are Lemmas \ref{lem::reordering} and \ref{lem:inequalityStaircase}.

We fix a positive integer  $m$ and a collection of  integers $\ov{n}:=(n_1,\ldots,n_m)$ such that
$$0<n_1\leq n_2 \leq \ldots \leq n_m.$$
The corresponding Young diagram  
\[
\bbY_{\overline{n}} = \{(i,j): 1\le j\le m,\ 1\le i\le n_j\}\subset \bZ_{>0}^2.
\]
is visualized as a collection of $m$ columns of lengths $n_1,\dots,n_m$ drawn from left to right. Here 
is an example of the Young diagram $\bbY_{\ov{n}}$ 
associated with $\overline n =(1,1,3,3,3,4,4)$: 
\begin{equation}
\label{ex::Young}
\bbY_{(1,1,3,3,3,4,4)} = \begin{tikzpicture}[scale=0.5]
	\draw[step=1cm] (0,1) grid (7,0);
	\draw[step=1cm] (2,0) grid (7,-1);
	\draw[step=1cm] (2,-1) grid (7,-2);
	\draw[step=1cm] (5,-2) grid (7,-3);
\end{tikzpicture}
\end{equation}
In particular, the cells in the upper row are of the form $(1,j)$ for 
$1\le j\le m$. 

The main object of our study is the symmetric algebra of staircase matrices of shape $\bbY_{\ov{n}}$. In other words, we are interested in the subspace of rectangular 
$n_m\times m$-matrices consisting of matrices with zero entries in the cells that do not belong 
to the Young diagram $\bbY_{\ov{n}}$. For example, if $\ov{n}=(1,2,\dots,m)$, then $\bbY_{\ov{n}}$ corresponds to the space of upper-triangular matrices.

\subsection{Combinatorics of serpentines} \label{subsection:Serpentines}\label{sec:Serpentines}
The key polynomials are the characters of the  Demazure modules \cite{Dem2}. They have many beautiful combinatorial properties, in particular, an explicit  
formula using the key tableaux is available (see \cite{LS}).
In \cite{As1, AQ1, HLMW}, the authors gave a formula for the decomposition of a product
of an arbitrary key polynomial with a single-part key polynomial as a linear
combination of key polynomials (thus generalizing the classical Pieri formula). 
The formula is multiplicity-free, but in general, it does contain
negative signs. However, there is a special case when the formula is totally
positive. More precisely, let $\la=(\la_1,\dots,\la_n)$ be a composition
and let $\kappa_\la(x_1,\dots,x_n)$ be the corresponding key polynomial. 
We consider the product of $\kappa_\la(x)$ with the key polynomial  $\kappa_{(0,\dots,0,d)}(x)$ with $n-1$ zeros 
(so $\kappa_{(0,\dots,0,d)}(x)$ is equal to the Schur polynomial $s_{(d,0,\dots,0)}(x_1,\dots,x_n)$). 
For a composition $\la$ and $1\le j\le n$ let $\la +\one_j$ be the composition obtained from $\la$ by 
increasing by one the $j$-th component. 
One has  (\cite{HLMW}, Theorem~6.1 and \cite{As1}, Theorem~ 3.10; see also a preprint \cite{AQ2}, Corollary~6.3.3 for the 
exact form we use):
\begin{equation}\label{AQ}
\kappa_\la(x) \kappa_{(0,\dots,0,d)}(x) = \sum_{\substack{c_1<\dots <c_d\\  
c_a-1\in\{\la_1,\dots,\la_n,c_{a-1}\}}} \kappa_{\la+\one_{j_1}+\dots+ \one_{j_d}}(x), 
\end{equation}
where $j_a$ is the largest index such that for $\mu^{(a-1)}=\la+\one_{j_1}+\dots +\one_{j_{a-1}}$ 
one has $\mu^{(a-1)}_{j_a}=c_a-1$.
\begin{rem}
In the formula above, we assume that $\mu^{(0)}=\la$.    
\end{rem}

\begin{example} Here is an example of the product of key polynomials as above:
\begin{multline*}
    \kappa_{(3,0,3,1)}(x)\, \kappa_{(0,0,0,2)}(x) = 
\kappa_{(3,1,3,2)}(x)    + \kappa_{(3,1,4,1)}(x) + 
    \kappa_{(3,0,3,3)}(x) + \kappa_{(3,0,4,2)}(x)
    \kappa_{(3,0,5,1)}(x).
\end{multline*}
Let us visualize $\lambda$ as a collection of columns with the number of cells in the $i$'th column equal to $\lambda_i$. Then the previous identity has the following pictorial presentation: 
$$\composition{4}{3,0,3,1} \cdot \glueComposition{4}{0,0,0,0}{0,0,0,2} = 
\glueComposition{4}{3,0,3,1}{0,1,0,1} + 
\glueComposition{4}{3,0,3,1}{0,1,1,0} + 
\glueComposition{4}{3,0,3,1}{0,0,0,2} + 
\glueComposition{4}{3,0,3,1}{0,0,1,1} +
\glueComposition{4}{3,0,3,1}{0,0,2,0}  
$$

In fact, according to formula \eqref{AQ}, possible values of $(c_1,c_2)$ are $(1,2)$, $(1,4)$,
$(2,3)$, $(2,4)$, and $(4,5)$. Below, we compute the summands of the right-hand side 
of \eqref{AQ} corresponding to each pair $(c_1,c_2)$:
\begin{itemize}
\item $(c_1,c_2)=(1,2)$, $j_1=2$, $\mu^{(1)}=(3,1,3,1)$, $j_2=4$, 
summand  $\kappa_{(3,1,3,2)}(x)$;  
\item $(c_1,c_2)=(1,4)$, $j_1=2$, $\mu^{(1)}=(3,1,3,1)$, $j_2=3$, 
summand  $\kappa_{(3,1,4,1)}(x)$;
\item $(c_1,c_2)=(2,3)$, $j_1=4$, $\mu^{(1)}=(3,0,3,2)$, $j_2=4$, 
summand  $\kappa_{(3,0,3,3)}(x)$;
\item $(c_1,c_2)=(2,4)$, $j_1=4$, $\mu^{(1)}=(3,0,3,2)$, $j_2=3$, 
summand  $\kappa_{(3,0,4,2)}(x)$;
\item $(c_1,c_2)=(4,5)$, $j_1=3$, $\mu^{(1)}=(3,0,4,1)$, $j_2=3$, 
summand  $\kappa_{(3,0,5,1)}(x)$.   
\end{itemize}
\end{example}

Our goal is to describe explicitly the compositions $\mu$ such that $\kappa_\mu$ shows up on the right-hand side of \eqref{AQ}. To this end, we put forward the following definition.

\begin{definition}\label{def: serp}
The set $\Ser_{d,\lambda}^n$ of $d$-serpentines  associated with a composition 
$\lambda=(\lambda_1,\ldots,\lambda_n)$ consists of compositions $\mu=(\mu_1,\ldots,\mu_n)$ such that 
\begin{itemize}
\item $\mu_i\geq\lambda_i$;
\item $\sum_{i=1}^{n}(\mu_i-\lambda_i)= d$;
\item if $i< j$ and $\lambda_i \leq \lambda_j$, then  $\mu_i\leq\lambda_j$;
\item if $i< j$ and $\lambda_j < \lambda_i\leq \mu_j$, then $\mu_i=\lambda_i$.
\end{itemize}
\end{definition}

\begin{prop}\label{prop:serp}
One has
\begin{equation}\label{eq:serp}
\kappa_\la(x) \kappa_{(0,\dots,0,d)}(x) = \sum_{\mu\in \Ser_{d,\lambda}^n} \kappa_\mu(x). 
\end{equation}
\end{prop}
\begin{proof}
We first show that if $\kappa_\mu(x)$ shows up in the right hand side of \eqref{AQ}, then $\mu\in \Ser_{d,\lambda}^n$. The first two conditions from Definition \ref{def: serp}
obviously hold true for any summand $\kappa_\mu(x)$ from the right hand side of \eqref{AQ}.

Now assume $i<j$, $\la_i\le \la_j$ and $\mu_i>\la_j$. Let us take $a$ such that  
\[
\mu^{(a-1)}_i=\la_j \text{ and } 
\mu^{(a)}_i> \la_j
\]
(note that such an $a$ does exist because $\mu^{(0)}_i\le \la_j$, $\mu^{(d)}_i> \la_j$ and $c_\bullet<c_{\bullet+1}$). Then 
$c_a=\la_j+1$, $j_a=i$. But this contradicts the condition that $j_a$ is the largest number such that $\mu^{(a-1)}_{j_a}=\la_j$ ($j_a=j>i$ works as well).

Our next task is to show that the following is impossible:  $i<j$, $\la_j<\la_i\le \mu_j$ and $\mu_i>\la_i$. In fact, if
$\mu_i>\la_i$, then there exists an $a$ such that $c_a=\la_i+1$, $j_a=i$. Since $i<j$ this implies that $\mu^{(a-1)}_j < \la_i$
and since $c_a=\la_i+1$ we obtain $\mu_j <\la_i$ (recall $c_a<c_{a+1}<\dots$).
So we get a contradiction with the condition $\la_i\le \mu_j$.

Now let us work out the opposite direction. We need to prove that if $\mu\in \Ser_{d,\lambda}^n$, then $\kappa_\mu$ shows up as a
summand in the right-hand side of \eqref{AQ}. There is a unique way to define the numbers $c_1,\dots,c_d$ and $j_1,\dots,j_d$ in a  way compatible with formula \eqref{AQ}.
Namely, $c_d$ is the largest number among $\mu_j$ such that $\mu_j>\la_j$ and $j_d$ is equal to the largest $j$ satisfying $\mu_j=c_d$, $\mu_j>\la_j$. We also define 
$\mu^{(d-1)}=\mu - \one_{j_d}$. We then define $c_{d-1}$, $j_{d-1}$ and $\mu^{(d-2)}$ by replacing $\mu=\mu^{(a)}$ with $\mu^{(a-1)}$. Using decreasing induction, we define all $c_a$, $j_a$, and $\mu^{(a)}$.
Our goal is to prove that $c_1<\dots <c_d$ and that $j_a$ is the largest index such that $\mu^{(a-1)}_{j_a}=c_a-1$.

Let us show that $c_{d-1}<c_d$. If not, then $c_{d-1}=c_d$ and for the numbers $j_d> j_{d-1}$ one has: 
\[
\mu_{j_d}=c_d>\la_{j_d},\ \mu_{j_{d-1}}=c_{d-1}>\la_{j_{d-1}}, \ \mu_{j_d}= \mu_{j_{d-1}}.
\]
However, this contradicts the definition of the $d$-serpentine. Indeed, if $\la_{j_{d-1}}\le \la_{j_d}$, then we do not have $\mu_{j_{d-1}}\le \la_{j_{d-1}}$; on the contrary,
if $\la_{j_{d-1}}>\la_{j_d}$, then we do know that $\mu_{j_{d}}>\la_{j_{d-1}}$, but we do not have $\mu_{j_{d-1}}=\la_{j_{d-1}}$.
Hence $c_d>c_{d-1}$ and by construction 
$j_d$ is the largest index such that $\mu^{(d-1)}_{j_d}=c_d-1$. 

To complete the proof, it suffices to show that $\mu-\one_{j_d}\in \Ser_{d-1,\lambda}^n$.
The first three conditions obviously hold true. (In particular, the third one is still valid since we do not change $\la$, but make  $\mu_{j_d}$ smaller).
Assume that $i<j$, $\la_j<\la_i\le\mu^{(d-1)}_j$. If $\mu^{(d-1)}_j=\mu_j$, then $\mu_i=\la_i$ and hence $\mu^{(d-1)}_i=\la_i$ as well.
But if $\mu^{(d-1)}_j=\mu_j-1$ (i.e. $j=j_d$), then we still have 
$\la_j<\la_i\le\mu_j$, implying $\la_i=\mu_i$. Since $\mu^{(d-1)}_i=\mu_i$ (just because $i\ne j$) we conclude 
$\la_i=\mu_i$. Now using the decreasing induction on $d$ one proves that $c_1<\dots <c_d$ (and by construction $j_d$ is the largest index such that $\mu^{(d-1)}_{j_d}=c_d-1$). 
Hence $\kappa_\mu(x)$ pops up on the right-hand side of \eqref{AQ}, and we are done. 
\end{proof}

\begin{definition}
The dominant part $\lambda_+$ of the composition $\lambda=(\la_1,\dots,\la_l)$ is the partition $(\lambda_+)_1\ge \dots \ge (\lambda_+)_l\in {S}_l \lambda$. In other words, $\lambda_+$ is the non-increasing reordering of $\lambda$.
\end{definition}

In the rest of the paper, we identify a composition $\lambda$ with the one obtained from $\la$ by adding several zeroes at the end.
Clearly, this identification respects the definition of the dominant part:
$$ (\lambda_1,\ldots,\lambda_n) \sim  (\lambda_1,\ldots,\lambda_n,0) \
 {\Rightarrow}\ 
(\lambda_1,\ldots,\lambda_n)_{+} \sim  (\lambda_1,\ldots,\lambda_n,0)_{+}.
$$
For a composition $\lambda=(\lambda_1,\dots,\lambda_n)$ with $\lambda_n \neq 0$, we denote by $(\lambda,d)$ the composition $\lambda=(\lambda_1,\dots,\lambda_n,d)$. 
If $\la_n=0$, then we first pass to the equivalent composition with the nonzero last entry and then add $d$ at the very end.  

Let us recall the standard partial order called dominance order on the set of dominant weights (partitions):
\begin{equation*}
\mu \geq \lambda  \ \stackrel{\mathsf{def}}{\Leftrightarrow} \ 
|\mu| = |\lambda| \ \& \ \forall s=1,\ldots, n \ \text{ we have }
\mu_1+\ldots+\mu_s \geq \lambda_1+\ldots+\lambda_s.
\end{equation*}
\begin{lem}\label{dominant WeightInequality}
  Assume that $\mu\in \Ser_{d,\lambda}^n$. Then  $\mu_+ \geq (\lambda,d)_+$.
\end{lem}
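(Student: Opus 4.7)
My plan is to reduce the lemma to a classical Pieri-type comparison by first proving that $\mu_+/\lambda_+$ is a horizontal strip of size $d$, and then deducing dominance over $(\lambda,d)_+$ from the resulting interlacing inequalities.

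To establish the horizontal strip property, I would verify three things. The mass equality $|\mu_+|-|\lambda_+|=d$ is immediate from the second bullet of Definition~\ref{def: serp}. The containment $(\mu_+)_k\ge (\lambda_+)_k$ for every $k$ follows from the entrywise inequality $\mu_i\ge \lambda_i$ by the level-set comparison $|\{i:\mu_i\ge v\}|\ge|\{i:\lambda_i\ge v\}|$. The main point is the interlacing $(\lambda_+)_i\ge (\mu_+)_{i+1}$, which I would reformulate as the assertion that
\[
|\{i:\mu_i\ge v\}|\le |\{i:\lambda_i\ge v\}|+1 \quad\text{for every } v,
\]
i.e.\ that at most one index $i$ can satisfy $\lambda_i<v\le \mu_i$. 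I would prove this by contradiction. Suppose two such indices $i_1<i_2$ exist. If $\lambda_{i_1}\le \lambda_{i_2}$, then the third bullet of Definition~\ref{def: serp} forces $\mu_{i_1}\le \lambda_{i_2}<v$, which is impossible. If instead $\lambda_{i_1}>\lambda_{i_2}$, then $\lambda_{i_2}<\lambda_{i_1}<v\le \mu_{i_2}$, so the fourth bullet applies with $(i,j)=(i_1,i_2)$ and yields $\mu_{i_1}=\lambda_{i_1}<v$, again a contradiction. This case split is the step I expect to be the main obstacle, as it is precisely where both delicate conditions in the definition of a serpentine get used.

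Once the horizontal strip property is in hand, the deduction of dominance is standard. Writing $\tau_s(\nu)=\nu_1+\cdots+\nu_s$, a direct inspection of where $d$ lands when $(\lambda_1,\dots,\lambda_n,d)$ is sorted in decreasing order gives
\[
\tau_s((\lambda,d)_+)=\max\bigl(\tau_s(\lambda_+),\,\tau_{s-1}(\lambda_+)+d\bigr).
\]
The inequality $\tau_s(\mu_+)\ge \tau_s(\lambda_+)$ follows by summing the containment inequalities. For the inequality $\tau_s(\mu_+)\ge \tau_{s-1}(\lambda_+)+d$, I use the interlacing $(\mu_+)_{i+1}\le (\lambda_+)_i$ to obtain $\sum_{i>s}(\mu_+)_i\le \sum_{i\ge s}(\lambda_+)_i$, and then subtract from the totals $|\mu_+|=|\lambda_+|+d$. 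Combining the two bounds gives $\tau_s(\mu_+)\ge \tau_s((\lambda,d)_+)$ for every $s$, which is precisely $\mu_+\ge (\lambda,d)_+$.
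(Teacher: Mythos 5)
Your proof is correct and follows essentially the same strategy as the paper's: both establish that $\mu_+/\lambda_+$ is a horizontal strip of size $d$ and then deduce dominance over $(\lambda,d)_+$ from the resulting interlacing and containment inequalities. Your level-set reformulation of interlacing and the explicit case split invoking both the third and fourth conditions of Definition~\ref{def: serp} is cleaner and more careful than the paper's rather terse derivation of the strip property, but the underlying idea is identical.
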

\begin{proof}
Let $r_1,\dots,r_s$ be the set of numbers such that $(\mu_+)_{i} \neq (\lambda_+)_{i}$. Then, by the third property from the definition of serpentine, we have
\[(\lambda_+)_{r_1}\geq (\mu_+)_{r_2}>(\lambda_+)_{r_2}\geq \dots \geq (\mu_+)_{r_s}>(\lambda_+)_{r_s}.\]

Therefore, we have the following property of the dominant parts of $\lambda$
and $\mu$:
\[(\mu_+)_{i}\geq (\lambda_+)_{i}\geq (\mu_+)_{i+1}.\]
In other words, $\mu_+$ forms a horizontal strip associated with $\lambda_+$.
Let $t$ be any number such that $(\lambda_+)_{t} \geq d \geq (\lambda_+)_{t+1}$.
For $p\leq t$ we clearly have $\sum_{q=1}^p(\mu_+)_q \geq \sum_{q=1}^p(\lambda_+)_q$. For $p \geq t$, using the equality $\sum_{i=1}^{n+1}((\mu_+)_{i}-(\lambda_+)_{i})=d$, we have:
\begin{equation*}\sum_{q=1}^{p+1}(\mu_+)_q-
\sum_{q=1}^t(\lambda_+)_q -d-\sum_{q=t+1}^p(\lambda_+)_q
=\sum_{q=p+1}^{n+1}(\lambda_+)_q-\sum_{q=p+2}^{n+1}(\mu_+)_q\geq 0.
\end{equation*}
Therefore, $\mu_+\geq (\lambda,d)_+$ in the dominance order and the proof is completed.
\end{proof}

\begin{cor}    \label{cor:weightinequality}
Let $\ov{d}=(d_1,\dots,d_m)\in\bZ_{\ge 0}^m$.
If $\mu^{(0)}=(0)$, $\mu^{(j)} \in \Ser_{d_j,\mu^{(j-1)}}^{n_j}$, $j=1,\dots,m$, then $\mu_+^{(m)} \geq (d_1,\dots,d_m)_+$.
\end{cor}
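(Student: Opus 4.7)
The approach is induction on $m$, combining Lemma~\ref{dominant WeightInequality} with a monotonicity property of the dominance order under concatenation. The base case $m=1$ is immediate: since $\mu^{(0)}=(0)$, Lemma~\ref{dominant WeightInequality} applied to $\mu^{(1)} \in \Ser^{n_1}_{d_1,(0)}$ yields $\mu^{(1)}_+ \geq ((0),d_1)_+ = (d_1)_+$.

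For the inductive step, assume $\mu^{(m-1)}_+ \geq (d_1,\ldots,d_{m-1})_+$. Lemma~\ref{dominant WeightInequality} applied to $\mu^{(m)} \in \Ser^{n_m}_{d_m,\mu^{(m-1)}}$ gives
\[
\mu^{(m)}_+ \geq (\mu^{(m-1)}, d_m)_+.
\]
Thus by transitivity of the dominance order it is enough to establish the following auxiliary fact: if $\alpha$ and $\beta$ are compositions with $\alpha_+ \geq \beta_+$, then $(\alpha, d)_+ \geq (\beta, d)_+$ for any $d \geq 0$. Applying this with $\alpha = \mu^{(m-1)}$ and $\beta = (d_1,\ldots,d_{m-1})$ closes the induction, since $((d_1,\ldots,d_{m-1})_+, d)_+ = (d_1,\ldots,d_m)_+$.

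The auxiliary fact is the only real point to verify and is the main (if minor) obstacle. It can be checked by a direct case analysis. Insert $d$ into the weakly decreasing sequences $\alpha_+$ and $\beta_+$ at positions $k$ and $l$ respectively, so that $(\alpha_+)_k \geq d > (\alpha_+)_{k+1}$ and similarly for $\beta_+$. For $s \leq \min(k,l)$ or $s > \max(k,l)$, the partial sums of $(\alpha,d)_+$ and $(\beta,d)_+$ differ from those of $\alpha_+$ and $\beta_+$ by the same constant ($0$ or $d$), so the inequality follows from $\alpha_+ \geq \beta_+$. For the indices $s$ sandwiched between $k$ and $l$, the required bound can be rewritten so that it uses $(\alpha_+)_s \geq d$ (when $s \leq k$) or $d \geq (\beta_+)_s$ (when $s > l$), combined with the weaker partial sum inequality $\sum_{i\leq s-1}(\alpha_+)_i \geq \sum_{i\leq s-1}(\beta_+)_i$; in either case the estimate closes. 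Since total weights on both sides agree, this gives the monotonicity, and hence the corollary.
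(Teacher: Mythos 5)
Your overall strategy is the natural one and matches what the paper implicitly intends (the paper gives no proof for this corollary, treating it as an iterate of Lemma~\ref{dominant WeightInequality}): induct on $m$, apply the Lemma to pass from $\mu^{(m-1)}$ to $\mu^{(m)}$, and close with a monotonicity fact for the dominance order under appending a part, namely that $\alpha_+\geq\beta_+$ implies $(\alpha,d)_+\geq(\beta,d)_+$. That auxiliary fact is true, and it is precisely what is needed.

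However, your case analysis of the auxiliary fact has a genuine slip in one of the two ``sandwiched'' regimes. Writing $a=\alpha_+$, $b=\beta_+$, with $d$ inserted after position $k$ in $a$ and after position $l$ in $b$ (so $a_k\geq d>a_{k+1}$ and $b_l\geq d>b_{l+1}$), the middle range splits into two cases depending on whether $k>l$ or $k<l$. When $l<s\leq k$, both of your conditions $s\leq k$ and $s>l$ hold and your argument closes: one wants $\sum_{i\leq s}a_i\geq\sum_{i\leq s-1}b_i+d$, which follows from $\sum_{i\leq s-1}a_i\geq\sum_{i\leq s-1}b_i$ together with $a_s\geq d$. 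But when $k<s\leq l$, neither $s\leq k$ nor $s>l$ holds; in fact $a_s<d\leq b_s$, so the inequality $d\geq(\beta_+)_s$ you invoke is false here, and the partial-sum inequality at level $s-1$ does not suffice. The fix is to use the dominance inequality at level $s$ rather than $s-1$: one wants $\sum_{i\leq s-1}a_i+d\geq\sum_{i\leq s}b_i$, and since $s>k$ gives $a_s<d$, one has $\sum_{i\leq s-1}a_i+d>\sum_{i\leq s}a_i\geq\sum_{i\leq s}b_i$. With this correction the argument is complete and the corollary follows.
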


\begin{cor}\label{cor:SubsequenceInequality}
If $\mu^{(0)}=(0)$, $\mu^{(j)} \in \Ser_{d_j,\mu^{(j-1)}}^{n_j}$, $j=1,\dots,m$, $\mu_+^{(m)} = (d_1,\dots,d_m)_+$, then $\mu_+^{(i)} = (d_1,\dots,d_i)_+$ for any $1 \leq i \leq m$.
\end{cor}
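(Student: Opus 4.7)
The plan is to argue by contradiction: I would assume the existence of some $1\le i<m$ with $\mu^{(i)}_+\neq (d_1,\dots,d_i)_+$ and derive a contradiction with the hypothesis $\mu^{(m)}_+ = (d_1,\dots,d_m)_+$. The first move is to apply Corollary~\ref{cor:weightinequality} to the truncated chain $\mu^{(0)},\mu^{(1)},\dots,\mu^{(i)}$, obtaining $\mu^{(i)}_+\geq (d_1,\dots,d_i)_+$ in the dominance order for every $i$. So under the assumed failure of equality, the inequality must be \emph{strict}: $\mu^{(i)}_+>(d_1,\dots,d_i)_+$.

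The heart of the argument will then be to show that strict dominance propagates along each subsequent serpentine step: if $\mu^{(j)}_+>(d_1,\dots,d_j)_+$ strictly, then $\mu^{(j+1)}_+>(d_1,\dots,d_{j+1})_+$ strictly. Two ingredients are required. The first is Lemma~\ref{dominant WeightInequality}, which yields $\mu^{(j+1)}_+\geq(\mu^{(j)},d_{j+1})_+ = (\mu^{(j)}_+, d_{j+1})_+$ (the latter equality since sorting a concatenation depends only on the multiset of entries). The second, which I view as the main (if minor) obstacle, is the monotonicity and strictness-preservation of sorted insertion under dominance: for partitions with $|P|=|Q|$, if $P\geq Q$ then $(P,d)_+\geq (Q,d)_+$, with strictness preserved. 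I would prove this via conjugate partitions, using that dominance on partitions is the reverse of componentwise order on conjugates, and that $(P,d)_+'$ differs from $P'$ by the additive correction $\mathbf{1}_{k\leq d}$; since the same correction is applied on both sides, the componentwise comparison between the conjugates of $(P,d)_+$ and $(Q,d)_+$ is identical to that between $P'$ and $Q'$, so both monotonicity and strictness of the dominance comparison transfer unchanged.

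Combining these, an easy induction on $j$ running from $j=i$ up to $j=m$ gives $\mu^{(m)}_+>(d_1,\dots,d_m)_+$, contradicting the assumed equality and completing the argument. I do not expect any genuine difficulty: once the monotonicity step is set up cleanly using conjugates, the rest of the proof is a direct chain of inequalities.
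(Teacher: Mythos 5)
Your argument follows the same strategy as the paper's: assume $\mu^{(i)}_+ \neq (d_1,\dots,d_i)_+$ for some $i$, upgrade the non-strict inequality from Lemma~\ref{dominant WeightInequality}/Corollary~\ref{cor:weightinequality} to a strict one $\mu^{(i)}_+ > (d_1,\dots,d_i)_+$, and propagate the strict inequality along the chain to contradict $\mu^{(m)}_+ = (d_1,\dots,d_m)_+$. The paper's proof is compressed to three sentences and glosses over exactly the step you flag as the minor obstacle, namely that strict dominance is preserved when you sort in one more entry. So you are filling a genuine gap in the exposition, and your plan is sound.

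One inaccuracy to fix in the write-up: the assertion that ``dominance on partitions is the reverse of \emph{componentwise} order on conjugates'' is false. Conjugation reverses the \emph{dominance} order ($P\geq Q$ iff $P'\leq Q'$, both in dominance); it does not turn dominance into a componentwise comparison (e.g.\ $(3,1)>(2,2)$ but $(3,1)'=(2,1,1)$ and $(2,2)'=(2,2)$ are componentwise incomparable). Fortunately, the rest of your reasoning does not depend on the false statement: you correctly observe that $(P,d)_+' = P' + \mathbf 1_{\leq d}$, hence the partial sums of $(P,d)_+'$ and $(Q,d)_+'$ differ from those of $P'$ and $Q'$ by the same additive constants $\min(t,d)$, so the dominance comparison (and its strictness) between the conjugates, and therefore between $(P,d)_+$ and $(Q,d)_+$, is identical to that between $P$ and $Q$. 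An even shorter route that avoids conjugates entirely: the $s$-th partial sum of $(P,d)_+$ is $\max\bigl(\sum_{i\leq s}P_i,\ d+\sum_{i<s}P_i\bigr)$, which is monotone in the partial sums of $P$; and $(P,d)_+=(Q,d)_+$ forces $P=Q$ (delete one copy of $d$ from each multiset), so strictness is automatic. Either way, once the misstatement about componentwise order is corrected, your proof is complete and matches the paper's intent.
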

\begin{proof}
    Assume that $\mu^{(j)}_+\neq (d_1,\dots,d_j)_+$. Then by Lemma \ref{dominant WeightInequality} we have $\mu^{(j)}_+ > (d_1,\dots,d_j)_+$. Then by Corollary \ref{cor:weightinequality} we get $\mu^{(m)}_+ > (d_1,\dots,d_m)_+$ which gives us a contradiction.
\end{proof}

Let us now come back to the set of staircase matrices, namely, 
recall that we have fixed a collection $\ov{n}=(n_1\le \dots \le n_m)$ of sizes of columns of the Young diagram $\bbY_{\ov{n}}$. The following lemma will be extensively used in the rest of the paper.

\begin{lem}
\label{lem::reordering}
For a composition $\lambda\in\bZ_{\ge 0}^{n}$ and a positive integer $d$, there exists a unique serpentine $\mu\in \Ser_{d,\lambda}^n$,  
$\mu_+=(\lambda,d)_+$ if and only if $\lambda_i=0$ for some $i\le n$. If this condition holds true, then 
$\mu$ admits the following recursive description.
A collection of increasing indices $i_0,i_1,\ldots$ 
and values $\mu_{i_s}>\lambda_{i_s}$ are defined by 
\begin{equation}
\label{eq::bubble::serpentine}
\left\{
\begin{array}{c}
i_0:=\max\{i \colon \lambda_i=0\} \text{ and } \mu_{i_0}:=\min\{d, \lambda_{i_0+1},\lambda_{i_0+2},\ldots,\lambda_{n}\}; \\
\text{ if }\mu_{i_0}\neq d, \text{ then }
i_1:=\max\{i \colon \lambda_i = \mu_{i_0}\} \text{ and } \mu_{i_1}:=\min\{d, \lambda_{i_1+1},\lambda_{i_1+2},\ldots,\lambda_{n}\};\\
\text{ if }\mu_{i_1}\neq d, \text{ then }
i_2:=\max\{i \colon \lambda_i = \mu_{i_1}\} \text{ and } \mu_{i_2}:=\min\{d, \lambda_{i_2+1},\lambda_{i_2+2},\ldots,\lambda_{n}\};\\
\ldots \\
\text{ we stop for $s$ such that }\mu_{i_s}=d.
\end{array}
\right\}
\end{equation} 
All other coordinates of the serpentine $\mu$ coincide with the corresponding coordinates of $\lambda$.
\end{lem}
Before providing a proof, let us present a pictorial example of the serpentine $\mu\in \Ser_{d,\lambda}^n$ such that $\mu_+=(\lambda,d)_+$ with $n=6$, $\lambda=(0,7,0,3,3,5)$, and $d=6$:
$$
n=6, \quad 
\lambda =\composition{6}{0,7,0,3,3,4}\ , \quad d=\glueComposition{6}{0,0,0,0,0,0}{0,0,0,0,0,6}\ , \quad \text{ and } \quad \mu=\glueComposition{6}{0,7,0,3,3,4}{0,0,3,0,1,2}
$$
\begin{proof}
First, let us notice that the number of nonzero elements in $(\lambda,d)_+$ is larger by one than the number of nonzero elements of $\lambda_{+}$. Therefore, the assumption of the existence of $i$ such that $\lambda_i=0$ is necessary.
Second, one can see from the recursive description~\eqref{eq::bubble::serpentine} that the corresponding $\mu$ is a serpentine such that $\mu_+=(\lambda,d)_{+}$.

Let us show that the recursive description~\eqref{eq::bubble::serpentine} holds true for any given $\mu\in\Ser_{d,\lambda}^{n}$ under the assumption $\mu_+=(\lambda,d)_{+}$.
Indeed, let us fix the set of indices $i_0<i_1<\ldots$ mentioned in~\eqref{eq::bubble::serpentine} which depends only on $\lambda$.
Let $\tilde{\lambda}:=\lambda+d\cdot\one_{i_0}$ be the composition that differs from $\lambda$ in exactly one place $i_0$, where we increase $\lambda_{i_0}$ from $0$ to $d$. The assumption $\tilde{\lambda}_{+}=(\lambda,d)_+=\mu_+$ implies the existence of the permutation $\sigma$ of the minimal length such that  $\mu_{i}=\tilde{\lambda}_{\sigma(i)}$ for all $i$.
Note that if $\sigma(i)\neq i$, then $\tilde\lambda_i\neq \tilde\lambda_{\sigma(i)}$, because $\sigma$ is of the minimal possible length.
Let $(l_0 l_1 \ldots l_k)$ be one of the cycles in the cyclic decomposition of $\sigma$. Note that if $l_s\neq i_0$, then $\tilde\lambda_{l_s}= \lambda_{l_s}<\mu_{l_s}=\tilde\lambda_{l_{s+1}}$. Hence, if $i_0$ does not belong to $\{l_0,\ldots,l_k\}$ we obtain the contradiction
$$\lambda_{l_0} <\lambda_{l_1} <\ldots <\lambda_{l_k} <\lambda_{l_0}
$$
(see the last condition in Definition~\ref{def: serp}). 
Therefore, the cyclic decomposition of $\sigma$ consists of a unique cycle $(i_0 l_1\ldots l_k)$.
In particular, we have:
$$
0< \mu_{i_0} = \lambda_{l_1} < \mu_{l_1} = \lambda_{l_2} < \ldots < \mu_{l_k} = \tilde\lambda_{i_0}=d.
$$

The last condition of Definition~\ref{def: serp} implies that $i_0<l_1<\ldots<l_k$ and the third condition implies that $\mu_{i_0}$ is not greater than $\lambda_{s}$ for any $s>i_0$. Hence, $l_1=i_1$ whenever $\mu_{i_0}<d$.
Similarly, we see that $\mu_{i_1}$ is not greater than $\lambda_s$ for all $s>i_1$ and $l_2=i_2$ whenever $\mu_{i_1}<d$ and so on.
We conclude that $\mu$ is defined by the recursive procedure~\eqref{eq::bubble::serpentine}.
\end{proof}

\begin{definition}\label{def:AdmissibleDefinition}
We say that a composition  $\ov{d}:=(d_1,\ldots,d_m)$ is {$\ov{n}$-admissible} if for all $s=1,\ldots,m$ the number of nonzero elements $d_j$ 
with $j\leq s$ does not exceed $n_s$:
\begin{equation}
\label{eq::admissible::def}
\#\{j\leq s\ \colon\ d_j\neq 0\} \ \leq  n_s.    
\end{equation}
\end{definition}

\begin{cor}\label{cor:iteratedserp}
Let $\ov{d}$ be $\ov{n}$--admissible.
Then there exists a unique chain $\mu^{(0)}=(0)$, $\mu^{(j)} \in \Ser_{d_j,\mu^{j-1}}^{n_j}$, $j=1,\dots,m$, such that $\mu_+^{(m)} = (d_1,\dots,d_m)_+$. If 
$\ov{d}$ is not $\ov{n}$--admissible, then there is no such chain.
\end{cor}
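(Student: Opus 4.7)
The plan is to run an induction on $m$, at each step promoting the chain one link further by applying Lemma~\ref{lem::reordering}. The essential preparatory remark is that Corollary~\ref{cor:SubsequenceInequality} forces any candidate chain with $\mu^{(m)}_+ = (d_1,\dots,d_m)_+$ to satisfy $\mu^{(j)}_+ = (d_1,\dots,d_j)_+$ at every intermediate step. This pins down the multiset of nonzero entries of $\mu^{(j-1)}$ to be $\{d_i : i\le j-1,\ d_i\ne 0\}$, and in particular determines the number of nonzero entries of $\mu^{(j-1)}$ as $\#\{i\le j-1 : d_i\ne 0\}$. After extending $\mu^{(j-1)}$ to a composition in $\bZ_{\ge 0}^{n_j}$ by appending zeros (as is implicit in the formation of $\Ser_{d_j,\mu^{(j-1)}}^{n_j}$), the number of its zero coordinates equals $n_j - \#\{i\le j-1 : d_i\ne 0\}$.

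Next, I would translate the admissibility condition \eqref{eq::admissible::def} into the hypothesis of Lemma~\ref{lem::reordering} step by step. When $d_j>0$, Lemma~\ref{lem::reordering} yields a unique $\mu^{(j)} \in \Ser_{d_j,\mu^{(j-1)}}^{n_j}$ with $\mu^{(j)}_+=(\mu^{(j-1)},d_j)_+$ precisely when $\mu^{(j-1)}$ (extended to length $n_j$) contains a zero entry, i.e.\ when $\#\{i\le j-1 : d_i\ne 0\}<n_j$, which is equivalent to $\#\{i\le j : d_i\ne 0\}\le n_j$; this is exactly the admissibility inequality at step $j$. When $d_j=0$, the set $\Ser_{0,\mu^{(j-1)}}^{n_j}$ equals $\{\mu^{(j-1)}\}$, so $\mu^{(j)}$ is forced and trivially achieves $\mu^{(j)}_+ = (d_1,\dots,d_j)_+$, while the admissibility inequality at step $j$ reduces to that at step $j-1$ because $n_{j-1}\le n_j$.

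With these two observations in place, the induction on $m$ is routine. The base $m=1$ is handled directly by Lemma~\ref{lem::reordering} (or trivially for $d_1=0$). For the inductive step, $\ov{n}$-admissibility of $\ov{d}$ implies that $(d_1,\dots,d_{m-1})$ is $(n_1,\dots,n_{m-1})$-admissible, so the inductive hypothesis supplies a unique chain up to $\mu^{(m-1)}$, and the dictionary above produces the unique extension $\mu^{(m)}$. Conversely, if $\ov{d}$ fails admissibility first at some index $s$, then $(d_1,\dots,d_{s-1})$ is still admissible, so the chain is uniquely determined up to $\mu^{(s-1)}$; but then the step to $\mu^{(s)}$ (with $d_s>0$) is blocked, since Lemma~\ref{lem::reordering} requires $\mu^{(s-1)}$ to have a zero entry among its first $n_s$ coordinates, contradicting $\#\{i\le s-1 : d_i\ne 0\}\ge n_s$. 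The only delicate point is verifying this dictionary between admissibility and the hypothesis of Lemma~\ref{lem::reordering}, and this is precisely a bookkeeping of nonzero-entry counts enabled by Corollary~\ref{cor:SubsequenceInequality}, so no real obstacle is expected.
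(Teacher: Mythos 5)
Your proof is correct and matches the approach the paper implicitly intends — the corollary is stated without proof in the paper precisely because it is the straightforward combination of Corollary~\ref{cor:SubsequenceInequality} (which forces $\mu^{(j)}_+=(d_1,\dots,d_j)_+$ at every step, hence pins down the count of nonzero entries of each $\mu^{(j-1)}$) with Lemma~\ref{lem::reordering} (whose hypothesis, the presence of a zero coordinate, you correctly translate into the admissibility inequality at step $j$). The bookkeeping you do, including the trivial $d_j=0$ case and the identification of the first failing index $s$ for the converse direction, is exactly the intended argument.
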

The following notation will be frequently used in the following sections ($\mu^{(m)}$ is as in the above Corollary):
\begin{notation}
The composition $\mu^{(m)}\in\bZ_{\geq0}^{n_m}$ assigned to an $\ov{n}$-admissible composition $\ov{d}\in\bZ_{\geq 0}^{m}$ as in Corollary~\ref{cor:iteratedserp} is called \emph{half-bubble-sort} of $\ov{d}$ and is denoted
\begin{equation}\label{eq:dT}
\hb(\overline d)=\mu^{(m)}\in\bZ_{\ge 0}^{n_m}.
\end{equation}
\end{notation}

The operation $\hb_{\ov n}$ has various nice properties, see for example Lemma \ref{cor:LCF}.

Note that $\hb(\ov{d})$ depends on a tuple $\ov{n}$, so the clearer notation will be $\hb_{\ov{n}}(\ov{d})$. However, if the Young diagram $\bbY_{\ov{n}}$ is fixed, then we omit $\ov{n}$ and write simply $\hb(\ov{d})$. See also Remark~\ref{rem::hbs} explaining the relationship to the well-known bubble-sort algorithm. 

\begin{example}
Let  $\ov{n}=(3,3,3,3)$. Then $\ov{d}=(d_1,d_2,d_3,d_4)$ is $\ov{n}$-admissible if and only if there exists $j$ such that $d_j=0$.
For an $\ov{n}$-admissible $\ov{d}$ the tuple $\hb(\ov{d})\in\bZ_{\ge 0}^3$ is obtained as follows: remove one zero entry from $\ov{d}$ and
reorder the remaining entries in the non-decreasing order.
\end{example}

\begin{example}
Let $m=3$, $\ov{n}=(3,4,4)$. Then 
\[
\hb(d_1,d_2,d_3)=\begin{cases}
(0,d_3,d_1,d_2), \ d_3\le d_1,d_2;\\
(0,d_1,d_3,d_2), \ d_3\le d_2, d_3>d_1;\\
(0,d_2,d_1,d_3), \ d_1\ge d_2, d_3>d_2;\\
(0,d_1,d_2,d_3), \ d_3 > d_2 > d_1.
\end{cases}
\]
\end{example}

\begin{example}
Let $\ov{n}=(n,\dots,n)\in \bZ_{\ge 0}^m$. Then $\hb(\ov{d})$ is the non-decreasing sorting of $\ov{d}$,
see section \ref{subsec:rectangular} for the precise statement.  
\end{example}

\begin{example}
Let $\ov{n}=(1,\dots,n)\in \bZ_{\ge 0}^n$. Then $\hb(\ov{d})=\ov{d}$, 
see section \ref{subsec:upper-triang} for more details.  
\end{example}

\subsection{The poset of staircase-corners}
\label{sec::ST::corners}
In this subsection, we assign to each partition $\ov{n}$ a subset of indices $\St_{\ov{n}}\subset \bbY_{\ov{n}}$ called the subset of staircase corners. We define the partial order on this set and describe some basic properties of this poset.

For a cell $(i,j)\in \bbY_{\overline{n}}$ we define the partition $\overline{n^{ij}}=(n^{ij}_1\le\ldots\le n^{ij}_{m-1})$ obtained from $\overline{n}$ by erasing the $i$-th row and the $j$-th column:
    \begin{equation}
    \label{eq::erased::diagram}
    n^{ij}_k:=\begin{cases}
        n_k, \text{ if } k< j \ \& \ n_k < i, \\
        n_k-1, \text{ if } k < j \ \& \ n_k \geq i, \\
        n_{k+1}-1, \text{ if } k\geq j. 
    \end{cases}.
    \end{equation}

There is a natural bijection between the cells of the complement of $i$'th row and $j$'th column of $\bbY_{\ov{n}}$ and the cells of $\bbY_{\ov{n^{ij}}}$ which we denote by $\pi_{i,j}$:
\begin{equation}
\label{eq::erased::pi}    
\pi_{ij}(s,t):=\begin{cases}
    (s,t) \text{ if } s<i\ \& \ j<t,\\
    (s-1,t) \text{ if } s>i\ \& \ j<t,\\
    (s,t-1) \text{ if } s<i\ \& \ j>t,\\
    (s-1,t-1) \text{ if } s>i\ \& \ j>t.
\end{cases}
    \end{equation}

\begin{definition}
\label{def::DL::indices}
The subset $\St_{\ov{n}}\subset \bbY_{\overline{n}}$ of {staircase corners} is defined inductively by the following properties:
\begin{itemize}
    \item in each row and in each column, there exists at most one cell ({staircase corner}) that belongs to $\St_{\ov{n}}$;
    \item if $n_j>n_{j-1}$, then the corner cell $(n_j,j)$ of the Young diagram $\bbY_{\ov{n}}$ belongs to $\St_{\ov{n}}$;
\item 
for any $(i,j)\in\St_{\ov{n}}$, we have
$(s,t)\in \St_{\ov{n}}\setminus\{(i,j)\}\ \Leftrightarrow \ \pi_{ij}(s,t)\in \St_{\ov{n^{ij}}}$.
In other words, for any staircase corner $(i,j)\in\St_{\ov{n}}$ any other staircase corner will be a staircase corner of the diagram with $i$'th row and $j$'th column removed.
\end{itemize} 
\end{definition}

We call a subset $S$ of the Young diagram $\bbY_{\overline{n}}$ a rook placement if,
for any cell $(i,j)\in \bbY_{\overline{n}}$, at least one of the two statements holds true: a). there exists a unique element of $S$ in the $i$-th row, 
b).  there exists a unique element of $S$ in the $j$-th column.

\begin{lem}
\label{lem::staircorners}
For any partition $\ov{n}$, the set of staircase corners $\St_{\ov{n}}$ exists and is unique. 
The set $\St_{\ov{n}}$ is a rook placement of the Young diagram $\bbY_{\overline{n}}$.
\end{lem}
\begin{proof}
By definition, all corners (cells $(n_j,j)$ such that $n_j>n_{j-1}$) of $\bbY_{\ov{n}}$ belong to the set $\St_{\ov{n}}$. Let us take one corner $(n_j,j)$
and remove the thin hook containing this corner. In the resulting Young diagram we again choose an arbitrary corner and remove 
the corresponding thin hook and proceed until the diagram becomes empty. This procedure produces a subset $\St$ of the set of cells of $\bbY_{\ov{n}}$. 

Let us show that $\St$ does 
not depend on the  choices of the corners. We use the following simple observation: for two arbitrary corners  of the diagram $\bbY_{\ov{n}}$
the operations of removing thin hooks corresponding to these corners commute. 
This observation implies the following statement: for any sequence of corners used for the construction of $\St$ the same set $\St$ is obtained if one starts with all the corners of the initial diagram $\bbY_{\ov{n}}$. 
In fact, let $(n_j,j)$ be the first element in the sequence of corners used to construct $\St$ such that
\begin{itemize}
\item $(n_j,j)$ is a corner of the initial diagram $\bbY_{\ov{n}}$;
\item there exists a corner in our sequence, chosen before  $(n_j,j)$, which is not a corner of the initial diagram $\bbY_{\ov{n}}$;
\item $(n_j,j)$ is the first element in our sequence of corners with the above properties.
\end{itemize}  
Then, by the observation above, the operation of removing the thin hook corresponding to $(n_j,j)$ commutes with all the removals coming before it and hence can be used at the very beginning.
Summarizing, $\St$ can be obtained by
removing all thin hooks for all corners of $\bbY_{\ov{n}}$, then removing all thin hooks for all corners of the resulting Young diagram and proceeding until the diagram becomes empty.

We have shown that the set $\St$ does not depend on the choices entering the procedure.
The last property in Definition \ref{def::DL::indices} implies that  $\St_{\ov{n}}$ if exists coincides with $\St$ (just take $(i,j)$ to be a corner of $\bbY_{\ov{n}}$). 
It remains to show that $\St$ satisfies all the properties from Definition  \ref{def::DL::indices}. 
This is easy to see for the first two properties. Let us prove that $\St$ satisfies the third property.

To emphasize that the initial Young diagram is of shape $\ov{n}$, we
write $\St(\ov{n})$ for $\St$. Let us fix an element $(i,j)\in \St(\ov{n})$.  
We need to show that $(s,t)\in \St({\ov{n}})$ if and only if  $\pi_{ij}(s,t)\in \St(\ov{n^{ij}})$
(recall that $\overline{n^{ij}}$ and $\pi_{ij}$ are defined in  ~\eqref{eq::erased::diagram} and \eqref{eq::erased::pi}).
We prove the ``only if" part, the ``if" part is very similar.
So for  $(s,t)\in \St({\ov{n}})$ we need to show that $\pi_{ij}(s,t)\in \St(\ov{n^{ij}})$.
Since $(i,j),(s,t)\in \St({\ov{n}})$, they are members of a sequence of corners used to construct $\St({\ov{n}})$.
Let us consider two options. If $(s,t)$ shows up in this sequence earlier than $(i,j)$ does, then $\pi_{ij}(s,t)\in \St(\ov{n^{ij}})$, since one
can construct $\St(\ov{n^{ij}})$ starting with the subsequence of corners
showing up before $(i,j)$ (which includes $(s,t)$). Now assume that  $(s,t)$ shows up later than $(i,j)$ does. 
Then $\St(\ov{n^{ij}})$ is obtained by the same sequence of corners as
$\St(\ov{n})$ with $(i,j)$ removed (which includes $(s,t)$).
\end{proof}

Let us provide some examples of the set of
staircase corners of a Young diagram.
We draw the elements of $\St_{\ov{n}}$ by blue dots and draw by different colors the thin hooks (row + columns).

\begin{example} 
Let $\ov{n}=(3,3,3,3,3,3,3)$ and  $\ov{n}=(1,2,3,4,4,4,4)$. Then one has
\[
\begin{array}{cc}
{\begin{tikzpicture}[scale=0.5]
	\draw[step=1cm] (0,-3) grid (7,0);
\node (v00) at (.5,-2.5) { {{\color{blue}$\bullet$}}};
\node (v10) at  (.5+1,-1.5) {{ \color{blue}$\bullet$ }};
\node (v20) at (.5+2,-0.5) {{ \color{blue}$\bullet$ }};
\end{tikzpicture}
}
&
{
\begin{tikzpicture}[scale=0.5]
  \fill[green] (1,-2) -- (1,0) -- (2,0)--(2,-2);
  \fill[green] (1,-2) -- (1,-1) -- (7,-1)--(7,-2);
	\draw[step=1cm] (0,-1) grid (7,0);
	\draw[step=1cm] (1,-2) grid (7,0);
	\draw[step=1cm] (2,-3) grid (7,0);
	\draw[step=1cm] (3,-4) grid (7,0);
\node (v00) at (.5,-0.5) { {{\color{blue}$\bullet$}}};
\node (v10) at  (.5+1,-1.5) {{ \color{blue}$\bullet$ }};
\node (v20) at (.5+2,-2.5) {{ \color{blue}$\bullet$ }};
\node (v20) at (.5+3,-3.5) {{ \color{blue}$\bullet$ }};
\end{tikzpicture} }
\\
{\text{rectangular diagram}} & {\text{triangular diagram}}
\end{array}
\]
\begin{example}\label{ex:large}
Here is a larger example for\\ 
$\ov{n}=(3,3,3,3,3,5,9,9,13,13,13,13,13,13,13,13, 13,13,16,16)$:
\[
\begin{tikzpicture}[scale=0.5]
  \fill[lightgray] (14,-2) -- (14,-1) -- (20,-1)--(20,-2);
  \fill[cyan] (13,-4) -- (13,2) -- (14,2)--(14,-4);
  \fill[cyan] (13,-4) -- (13,-3) -- (20,-3)--(20,-4);
  \fill[magenta] (12,-5) -- (12,2) -- (13,2)--(13,-5);
  \fill[magenta] (12,-5) -- (12,-4) -- (20,-4)--(20,-5);
  \fill[orange] (11,-8) -- (11,2) -- (12,2)--(12,-8);
  \fill[orange] (11,-8) -- (11,-7) -- (20,-7)--(20,-8);
  \fill[pink] (2,1) -- (2,2) -- (3,2)--(3,1);
  \fill[pink] (2,1) -- (2,2) -- (20,2)--(20,1);
  \fill[pink] (10,-9) -- (10,2) -- (11,2)--(11,-9);
  \fill[pink] (10,-9) -- (10,-8) -- (20,-8)--(20,-9);
   \fill[yellow] (19,-13) -- (19,2) -- (20,2)--(20,-13);
   \fill[yellow] (19,-13) -- (19,-12) -- (20,-12)--(20,-13);
   \fill[yellow] (9,-10) -- (9,2) -- (10,2)--(10,-10);
   \fill[yellow] (9,-10) -- (9,-9) -- (20,-9)--(20,-10);
   \fill[yellow] (7,-6) -- (7,2) -- (8,2)--(8,-6);
   \fill[yellow] (7,-6) -- (7,-5) -- (20,-5)--(20,-6);
   \fill[yellow] (1,0) -- (1,2) -- (2,2)--(2,0);
   \fill[yellow] (1,0) -- (1,1) -- (20,1)--(20,0);
   \fill[green] (6,-7) -- (6,2) -- (7,2)--(7,-7);
   \fill[green] (7,-7) -- (7,-6) -- (20,-6)--(20,-7);
   \fill[green] (5,-3) -- (5,2) -- (6,2)--(6,-3);
   \fill[green] (7,-3) -- (7,-2) -- (20,-2)--(20,-3);
  \fill[green] (0,-1) -- (0,2) -- (1,2)--(1,-1);
   \fill[green] (1,-1) -- (1,-0) -- (20,-0)--(20,-1);
   \fill[green] (8,-11) -- (8,2) -- (9,2)--(9,-11);
   \fill[green] (9,-11) -- (9,-10) -- (20,-10)--(20,-11);
  \fill[green] (18,-14) -- (18,2) -- (19,2)--(19,-14);
   \fill[green] (19,-14) -- (19,-13) -- (20,-13)--(20,-14);
	\draw[step=1cm] (0,-1) grid (20,2);
	\draw[step=1cm] (5,-3) grid (20,-1);
	\draw[step=1cm] (6,-7) grid (20,-1);
	\draw[step=1cm] (8,-11) grid (20,-1);
	\draw[step=1cm] (18,-14) grid (20,-1);
\node (v00) at (.5,-0.5) { {{\color{blue}$\bullet$}}};
\node (v10) at  (.5+1,0.5) {{ \color{blue}$\bullet$ }};
\node (v20) at (.5+2,1.5) {{ \color{blue}$\bullet$ }};
\node (v00) at (5.5,-2.5) { {{\color{blue}$\bullet$}}};
\node (v10) at  (5.5+9,-1.5) {{ \color{blue}$\bullet$ }};
\node (v00) at (6.5,-6.5) { {{\color{blue}$\bullet$}}};
\node (v10) at  (6.5+1,-5.5) {{ \color{blue}$\bullet$ }};
\node (v00) at (8.5,-10.5) { {{\color{blue}$\bullet$}}};
\node (v10) at  (8.5+1,-9.5) {{ \color{blue}$\bullet$ }};
\node (v00) at (8.5+2,-8.5) { {{\color{blue}$\bullet$}}};
\node (v10) at  (8.5+3,-7.5) {{ \color{blue}$\bullet$ }};
\node (v00) at (12.5,-4.5) { {{\color{blue}$\bullet$}}};
\node (v10) at  (13.5,-3.5) {{ \color{blue}$\bullet$ }};
\node (v10) at  (18.5,-13.5) {{ \color{blue}$\bullet$ }};
\node (v10) at  (19.5,-12.5) {{ \color{blue}$\bullet$ }};
\end{tikzpicture}
\]
\end{example}

We visualize by colors the algorithm suggested in the proof of Lemma~\ref{lem::staircorners}. Namely, first, we paint by green color the cells that belong to the row/columns of the corner cells of the Young diagram. Next, one looks on what remains and paint in yellow the hooks assigned to the corners of the Young diagram obtained after erasing the green cells. We continue to use a new color on each step and mark with blue points the centers of the staircase corner cells we find.
\end{example}

Let $\overline{n^{\tau}}:=(n^{\tau}_1,\ldots,n^{\tau}_{n_m})$ be the transposed Young diagram:
\begin{equation}\label{eq:transposed}
\begin{array}{c}
n^{\tau}_{1}:=\#\{i\colon n_i = n_m\}, \\
 n^{\tau}_2:=\#\{i\colon n_i\geq n_{m-1}\}, \\
 \ \ldots \,  \\
n^{\tau}_{n_m}:=\#\{i\colon n_i\geq 1\}=m. 
\end{array}
\end{equation}

\begin{lem}
The transposition of the Young diagram maps a staircase corner to a staircase corner:
$$
\left[\St_{\ov{n}}\right]^{\tau} = \St_{\ov{n^{\tau}}} \subset \bbY_{\ov{n^{\tau}}}.
$$
\end{lem}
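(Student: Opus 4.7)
I would use the algorithmic description of $\St_{\ov{n}}$ from the proof of Lemma~\ref{lem::staircorners}---iteratively pick any corner of the current sub-diagram, add it to the set, and erase its thin hook until the diagram is empty---and show that the whole procedure is equivariant under transposition. Concretely, the transposition of the diagram is realized at the level of cells by the involution
\[
\tau\colon\bbY_{\ov{n}}\longrightarrow \bbY_{\ov{n^\tau}},\qquad \tau(i,j)=(m-j+1,\,n_m-i+1).
\]
I would first check (directly from the definition of $\ov{n^\tau}$) that $\tau$ is a well-defined bijection of cells.

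Next I would verify two compatibility statements needed for the induction. \emph{(a) Corners go to corners.} A corner of $\bbY_{\ov{n}}$ is a cell $(n_j,j)$ with $n_{j-1}<n_j$, whose image is $(m-j+1,\,n_m-n_j+1)$. Using weak monotonicity of $\ov{n}$ together with $n_{j-1}<n_j$, I would compute the relevant entry of $\ov{n^\tau}$ as $\#\{i\colon n_i\ge n_j\}=m-j+1$ and check that the preceding entry is strictly smaller, so $\tau(n_j,j)$ is a corner of $\bbY_{\ov{n^\tau}}$; the reverse inclusion is symmetric. \emph{(b) Thin hooks go to thin hooks.} The column piece $\{(i,j):1\le i\le n_j\}$ of the hook is mapped by $\tau$ onto the full row $m-j+1$ of $\bbY_{\ov{n^\tau}}$, and the row piece $\{(n_j,j'):j\le j'\le m\}$ (which is the whole row $n_j$, because $n_{j'}\ge n_j$ for all $j'\ge j$) is mapped onto the full column $n_m-n_j+1$. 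Together they form exactly the thin hook of $\tau(n_j,j)$, so $\tau$ intertwines the hook-removal operations on both sides and identifies $\bbY_{\ov{n^{n_j,j}}}$ with $\bbY_{(\ov{n^\tau})^{\tau(n_j,j)}}$.

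Combining (a) and (b), the conclusion follows by induction on $|\bbY_{\ov{n}}|$. Pick any corner $c=(n_j,j)$ of $\bbY_{\ov{n}}$, which lies in $\St_{\ov{n}}$ by the second axiom of Definition~\ref{def::DL::indices}. The recursive third axiom identifies $\St_{\ov{n}}\setminus\{c\}$ with the preimage of $\St_{\ov{n^{n_j,j}}}$ under the re-indexing $\pi_{n_j,j}$, and likewise for $\St_{\ov{n^\tau}}$ and its corner $\tau(c)$. Applying the inductive hypothesis to the strictly smaller diagram $\ov{n^{n_j,j}}$ (whose transpose agrees with $(\ov{n^\tau})^{\tau(c)}$ by (b)) gives $\tau(\St_{\ov{n^{n_j,j}}})=\St_{(\ov{n^\tau})^{\tau(c)}}$, and reassembling via the recursive axiom yields $\tau(\St_{\ov{n}})=\St_{\ov{n^\tau}}$.

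I do not expect any serious obstacle, but the most delicate point is the computation in (b): one must check that the column- and row-pieces of a corner hook each map to a \emph{complete} row (respectively column) on the transposed side, rather than to a proper subset. This rests precisely on the weak monotonicity $n_{j'}\ge n_j$ for $j'\ge j$, with the dual property on $\bbY_{\ov{n^\tau}}$ ensuring symmetry.
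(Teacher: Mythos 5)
Your proof is correct and follows the same approach as the paper, which compresses the argument into a single sentence ("transposition maps rows to columns and vice versa and thus preserves corners"); you have simply spelled out the equivariance of the hook-removal algorithm under the anti-diagonal reflection $\tau(i,j)=(m-j+1,\,n_m-i+1)$ in full detail. The key checks you flag as delicate in (a) and (b) are exactly right, and the induction via Definition~\ref{def::DL::indices} matches the recursive structure the paper's proof implicitly relies on.
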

\begin{proof}
The transposition of the Young diagram maps rows to columns and vice versa, and thus preserves corners. 
Hence, by the uniqueness of staircase corners (see the proof of Lemma \ref{lem::staircorners}), this set is preserved by transposition.
\end{proof}

Let us define a natural partial order on the set $\St_{\ov{n}}$ of staircase corners.
\begin{equation}
\label{eq::DL::order}
(i,j) \succeq (i',j')\ \stackrel{\mathsf{def}}{\Leftrightarrow} \ \begin{cases}
    i\geq i', \\
    j\leq j'.
\end{cases} 
\end{equation}
In words, to enlarge an element, one has to move in the down-left direction.

\begin{example}
Let us draw the Hasse diagram for the poset $\St_{\ov{n}}$ embedded into the Young diagram $\bbY_{\ov{n}}$ with $\ov{n}=(3,3,3,3,3,5,9,9,13,13,13,13,13,13,13,13,13,13,16,16)$
(see Example \ref{ex:large}):
\[
\begin{tikzpicture}[scale=0.5]
	\draw[step=1cm] (0,-1) grid (20,2);
	\draw[step=1cm] (5,-3) grid (20,-1);
	\draw[step=1cm] (6,-7) grid (20,-1);
	\draw[step=1cm] (8,-11) grid (20,-1);
	\draw[step=1cm] (18,-14) grid (20,-1);

\node (v00) at (.5,-0.5) { {{\color{blue}$\bullet$}}};
\node (v10) at  (.5+1,0.5) {{ \color{blue}$\bullet$ }};
\node (v20) at (.5+2,1.5) {{ \color{blue}$\bullet$ }};
\draw[blue,line width=1.5pt] (.5,-0.5) -- (.5+1,0.5) -- (.5+2,1.5);

\node (v00) at (5.5,-2.5) { {{\color{blue}$\bullet$}}};
\node (v10) at  (5.5+9,-1.5) {{ \color{blue}$\bullet$ }};

\draw[blue,line width=1.5pt] (5.5,-2.5) -- (5.5+9,-1.5);

\node (v00) at (6.5,-6.5) { {{\color{blue}$\bullet$}}};
\node (v10) at  (6.5+1,-5.5) {{ \color{blue}$\bullet$ }};

\draw[blue,line width=1.5pt] (6.5,-6.5) -- (6.5+1,-5.5);

\node (v00) at (8.5,-10.5) { {{\color{blue}$\bullet$}}};
\node (v10) at  (8.5+1,-9.5) {{ \color{blue}$\bullet$ }};

\draw[blue,line width=1.5pt] (8.5,-10.5) -- (8.5+1,-9.5) -- (8.5+2,-8.5) -- (8.5+3,-7.5) -- (12.5,-4.5) -- (6.5+1,-5.5);

\node (v00) at (8.5+2,-8.5) { {{\color{blue}$\bullet$}}};
\node (v10) at  (8.5+3,-7.5) {{ \color{blue}$\bullet$ }};

\node (v00) at (12.5,-4.5) { {{\color{blue}$\bullet$}}};
\node (v10) at  (13.5,-3.5) {{ \color{blue}$\bullet$ }};

\draw[blue,line width=1.5pt] (12.5,-4.5) -- (13.5,-3.5) -- (5.5+9,-1.5);

\node (v10) at  (18.5,-13.5) {{ \color{blue}$\bullet$ }};
\node (v10) at  (19.5,-12.5) {{ \color{blue}$\bullet$ }};
\draw[blue,line width=1.5pt] (19.5,-12.5) -- (18.5,-13.5);
\end{tikzpicture}
\]
\end{example}

\medskip

Note that the condition that each column has at most one staircase corner implies that we have an embedding of the poset $\St_{\ov{n}}$ to a linearly ordered set:
\begin{equation*}
\begin{tikzcd}
\vrt: (\St_{\ov{n}},\prec) \ar[r,hook] &  ([1,m],>),
\end{tikzcd}
\end{equation*}
where for two integers $M_1,M_2$ we denote by $[M_1,M_2]$ the set of integer numbers $M$ such that  $M_1\le M\le M_2$.
Similarly, looking for the row index of a staircase corner we get another linearization of the poset $\St_{\ov{n}}$ 
\begin{equation*}
\begin{tikzcd}
\hor: (\St_{\ov{n}},\prec) \ar[r,hook] &  ([1,n_m],<).
\end{tikzcd}
\end{equation*}
Let us mention several combinatorial properties of the poset $\St_{\ov{n}}$ that easily follows from Definition~\ref{def::DL::indices}.
\begin{lem}
\label{lem::DL::posets}
\begin{enumerate}
\item
\label{item::DL::poset}
The Hasse diagram of the poset $(\St_{\ov{n}},\preceq)$ is a forest with smaller elements being closer to the root of a tree. 
In other words, for any $s\in \St_{\ov{n}}$ the set of elements that are smaller than $s$ contains the unique maximal element (if nonempty).
\item
\label{item::DL::linear}
For any $s\in \St_{\ov{n}}$ the subset $\St_{\ov{n}}\{{\succeq s}\}$ of elements that are greater than or equal to $s$ assemble an interval with respect to the linear order on $\vrt(\St_{\ov{n}})$:
$$
\forall s\in \St_{\ov{n}}\quad  \exists j_s\in [1,m] \  \colon \  \St_{\ov{n}}\{\succeq s \} = \vrt^{-1}([j_s,\vrt(s)]). 
$$
Similarly, for the second linearization:
$$
\forall s\in \St_{\ov{n}} \quad \exists i_s\in [1,n_m] \  \colon \  \St_{\ov{n}}\{\succeq s \} = \hor^{-1}([\hor(s),i_s]). 
$$
\item \label{item::DL::uncom}
For two uncomparable elements $s,t\in \St_{\ov{n}}$ one has 
$$\vrt(s)<\vrt(t) \in [1,m] \phantom{\frac{1}{2}} {\Leftrightarrow} \phantom{\frac{1}{2}} \hor(s){<}\hor(t) \in [1, n_m].
$$
\end{enumerate}
\end{lem}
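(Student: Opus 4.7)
The plan is to prove parts (3), (1), (2) in that order, since the later parts depend on the content of the earlier ones. For part (3), I note that distinct staircase corners have distinct rows and distinct columns, so given $s = (i_s, j_s) \ne t = (i_t, j_t)$ one has $i_s \ne i_t$ and $j_s \ne j_t$; a direct case check of the definition $(i,j) \succeq (i',j') \Leftrightarrow (i \ge i' \text{ and } j \le j')$ shows that $s$ and $t$ are uncomparable exactly when $(i_s - i_t)$ and $(j_s - j_t)$ have the same sign. Translating via the identifications $\vrt(s) = j_s$, $\hor(s) = i_s$ together with the chosen target orders on $([1,m],>)$ and $([1,n_m],<)$ gives the stated equivalence.

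For part (1) I will use the layered description of $\St_{\ov{n}}$ given in the proof of Lemma~\ref{lem::staircorners}: at each layer one simultaneously places all inner corners of the current sub-diagram and removes their hooks; let $l(s)$ denote the layer at which $s$ is placed. The key intermediate claim is that $s \prec t$ in $\St_{\ov{n}}$ forces $l(s) > l(t)$. To see this, write $s = (i_s,j_s)$, $t = (i_t,j_t)$ with $i_s < i_t$, $j_s > j_t$; the cell $(i_t,j_s)$ belongs to $\bbY_{\ov{n}}$ because $i_t \le n_{j_t} \le n_{j_s}$, and it is removed exactly at $\min(l(s), l(t))$. If $l(s) \le l(t)$ this cell survives in the layer-$l(s)$ sub-diagram when $s$ is placed, so $s$ is not the bottom of column $j_s$ there, contradicting its status as a corner.

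Granting the key claim, suppose $r_1, r_2 \in \St_{\ov{n}}$ both lie strictly below $s$ and are uncomparable. By part (3) I may arrange $i_{r_1} < i_{r_2}$ and $j_{r_1} < j_{r_2}$, and by the claim $l(r_1), l(r_2) > l(s)$; WLOG $l(r_1) < l(r_2)$. Inspecting the cell $(i_{r_2},j_{r_1})$, if $i_{r_2} \le n_{j_{r_1}}$ then the cell lies in $\bbY_{\ov{n}}$ and is removed only at layer $l(r_1)$, contradicting the corner status of $r_1$ at that layer; if instead $i_{r_2} > n_{j_{r_1}}$, then $j_s < j_{r_1}$ together with the monotonicity of $\ov{n}$ yields $i_s \le n_{j_s} \le n_{j_{r_1}} < i_{r_2} < i_s$, a numerical contradiction. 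Thus $\{r \prec s\}$ is a chain, which has a unique minimum when nonempty. The main obstacle I expect lies precisely in this case analysis --- tracking when the auxiliary cell is inside $\bbY_{\ov{n}}$ versus outside and drawing the right contradiction in each case --- but once the layer inequality is in hand the rest goes smoothly.

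Part (2) is then a consequence of the chain property. Fix $s \in \St_{\ov{n}}$, let $t_0 \in \{r \succeq s\}$ minimize $\vrt$, and set $j_s := \vrt(t_0)$. It suffices to prove that every rook $t' \in \St_{\ov{n}}$ with $\vrt(t_0) \le \vrt(t') \le \vrt(s)$ already satisfies $t' \succeq s$. If not, $i_{t'} < i_s$, so by part (3) $t'$ and $s$ are uncomparable; taking $t := t_0$ (which is strictly above $s$ in the non-trivial case), one checks $i_t \ge i_s > i_{t'}$ and $j_t \le \vrt(t') = j_{t'}$, giving $t \succ t'$. Combined with $s \prec t$, the elements $s$ and $t'$ both lie in $\{r \prec t\}$, which by part (1) is a chain --- so they must be comparable, a contradiction. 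The $\hor$-interval is proved by the symmetric argument with the roles of rows and columns interchanged.
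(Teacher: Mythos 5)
Your proposal is correct, and for parts (2) and (3) it follows the paper's argument essentially verbatim (the paper's final sentence in the proof of (2) appears to be garbled, but the intended contradiction --- $s$ and $t'$ both lie strictly below $r=t_0$, hence must be comparable by part (1) --- is exactly what you write). The genuine divergence is in part (1), which the paper dismisses with "holds true by construction": you supply a real argument via the layer function of the synchronous thin-hook removal, showing that for two uncomparable $r_1,r_2 \prec s$ (oriented so $i_{r_1}<i_{r_2}$, $j_{r_1}<j_{r_2}$) the auxiliary cell $(i_{r_2},j_{r_1})$ either lies in $\bbY_{\ov{n}}$ and then obstructs one of $r_1,r_2$ from ever being a corner, or lies outside $\bbY_{\ov{n}}$ and forces the numerical contradiction $i_s\le n_{j_{r_1}}<i_{r_2}<i_s$. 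This effectively re-derives the relevant case of the rook-placement property of Lemma~\ref{lem::staircorners}. Two small points worth tightening: (a) ``WLOG $l(r_1)<l(r_2)$'' is not actually a WLOG, since $r_1,r_2$ were already pinned down by the $i,j$-orientation; the remaining subcase $l(r_2)\le l(r_1)$ should be closed by observing that the surviving cell $(i_{r_2},j_{r_1})$ then sits to the left of $r_2$ in its row, spoiling $r_2$'s corner status at layer $l(r_2)$. (b) The ``key claim'' $s\prec t \Rightarrow l(s)>l(t)$ is stated and proved but never invoked afterwards --- only its technique is reused --- and the phrase ``by part~(3) $t'$ and $s$ are uncomparable'' in part~(2) is a loose attribution, since uncomparability there follows directly from $i_{t'}<i_s$ and $j_{t'}<j_s$; part~(3) is not what you actually use at that step.
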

\begin{proof}
Part \eqref{item::DL::poset} holds true by construction of the poset $(\St_{\ov{n}},\preceq)$. 

Let us prove the first statement in part \eqref{item::DL::linear} (the proof of the second statement is analogous).
We define $j_s=\min\vrt (\St_{\ov{n}}\{{\succeq s}\})$ and let
$r=\vrt^{-1}(j_s)$. Let $t\in \St_{\ov{n}}$ satisfy $j_s< \vrt(t)< \vrt(s)$. We claim that $t\succ s$. In fact, since 
$\vrt(t) < \vrt(s)$, if $t\not\succ s$, then 
$\hor(t)<\hor(s)$. Together with $\vrt(t) > \vrt(r)=j_s$, this implies $r\succ t$.
So $r\succ s$ and $r\succ t$, which gives that $s,t$ are comparable (by part \eqref{item::DL::poset}) and $t\succ s$.

Let us prove \eqref{item::DL::uncom}.  If $(i,j),(i',j') \in \St_{\overline n}$ are uncomparable, then by definition either $i<i',j<j'$ or $i>i',j>j'$. Then by definition of $\hor$ and $\vrt$ we get that the inequalities between $\vrt(i,j),\vrt(i',j')$ and $\hor(i,j),\hor(i',j')$ are the same.
\end{proof}

The following proposition describes the family of posets $\St_{\ov{n}}$.
\begin{prop}
    Suppose that $(\mathsf{S},\prec)$ is a poset whose Hasse diagram is a forest. Suppose moreover, we are given an order-preserving bijection $v:(\mathsf{S},\prec)\to ([1,m],>)$ such that all subsets $\mathsf{S}_{\preceq s}$ are $v$-preimages of certain intervals in $[1,m]$.
    Then there exists a partition $\ov{n}:=(n_1\le\ldots\le n_m)$ and an order-preserving bijection between $\mathsf{S}\stackrel{\simeq}{\rightarrow} \St_{\ov{n}}$.
\end{prop}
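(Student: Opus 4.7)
The plan is to induct on $m=|\mathsf{S}|$. For $m=1$ take $\ov{n}=(1)$; then $\St_{(1)}=\{(1,1)\}$ gives the trivial bijection. For $m\ge 2$, single out $s^{*}:=v^{-1}(m)$, which is maximal in $\mathsf{S}$ because $v$ is order-preserving. Setting $\mathsf{S}':=\mathsf{S}\setminus\{s^{*}\}$ with the restriction $v':=v|_{\mathsf{S}'}\colon\mathsf{S}'\to[1,m-1]$, the hypotheses pass to $(\mathsf{S}',v')$: removing a leaf leaves the Hasse diagram a forest, and for each $s\in\mathsf{S}'$ maximality of $s^{*}$ gives $\mathsf{S}'_{\preceq s}=\mathsf{S}_{\preceq s}$, whose $v$-image is an interval in $[1,m]$ not containing $m$, hence an interval in $[1,m-1]$.

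Apply the inductive hypothesis in the strengthened form that the bijection $\phi'\colon\mathsf{S}'\xrightarrow{\sim}\St_{\ov{n}'}$ sends $v'^{-1}(m-1)$ to the column-$1$ corner $(n'_1,1)$ of $\St_{\ov{n}'}$. Define $n_1:=|\mathsf{S}_{\preceq s^{*}}|$ and $\ov{n}:=(n_1,\,n'_1+1,\,n'_2+1,\,\ldots,\,n'_{m-1}+1)$. The crucial inequality $n_1\le n'_1+1$ follows from the interval hypothesis applied to $s^{*}$: writing the chain $\mathsf{S}_{\preceq s^{*}}=\{t_0\prec\cdots\prec t_{n_1-1}=s^{*}\}$ and using that $v$ is strictly increasing on this chain with image a $v$-interval topped by $m$, one gets $v(t_i)=m-n_1+1+i$. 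Thus if $n_1\ge 2$ the element $(s^{*})':=v^{-1}(m-1)$ equals $t_{n_1-2}$ and $n'_1=|\mathsf{S}'_{\preceq(s^{*})'}|=n_1-1$, while if $n_1=1$ the bound is automatic. So $\ov{n}$ is a valid non-decreasing partition.

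Finally, the cell $(n_1,1)$ is the staircase corner of $\bbY_{\ov{n}}$ in its first column, and removing its hook (column $1$ together with row $n_1$) returns $\bbY_{\ov{n}'}$ after the row-relabelling $\rho(i):=i$ for $i<n_1$ and $\rho(i):=i+1$ for $i\ge n_1$. Hence $\St_{\ov{n}}=\{(n_1,1)\}\sqcup\{(\rho(i),j+1):(i,j)\in\St_{\ov{n}'}\}$, and the plan is to define $\phi\colon\mathsf{S}\to\St_{\ov{n}}$ by $\phi(s^{*}):=(n_1,1)$ and $\phi(s):=(\rho(i),j+1)$ whenever $\phi'(s)=(i,j)$. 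Bijectivity is immediate, and order-preservation among pairs from $\mathsf{S}'$ follows from monotonicity of $\rho$ together with $\phi'$; the remaining check $\phi(s)\preceq(n_1,1)\iff s\preceq s^{*}$ uses the strengthened inductive statement to identify $\mathsf{S}_{\preceq s^{*}}\setminus\{s^{*}\}=\mathsf{S}'_{\preceq(s^{*})'}$ with the $\phi'$-preimage of rows $\le n'_1=n_1-1$, which are exactly the cells that $\rho$ places below row $n_1$. The main obstacle I anticipate is maintaining the strengthened inductive statement (that $v'^{-1}(m-1)$ is always mapped to the column-$1$ corner) through the recursion; it is precisely the interval hypothesis on $\mathsf{S}_{\preceq s^{*}}$ that locks this in, via the identification $(s^{*})'=t_{n_1-2}$.
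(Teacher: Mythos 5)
Your proof is correct and takes essentially the same route as the paper's: both build $\ov{n}$ column by column via the hook-removal recursion in Definition~\ref{def::DL::indices}, with the first column length read off as $|\mathsf{S}_{\preceq s^*}|$ for the $v$-maximal corner $s^*$, and both rely on $\St_{\ov{n}}=\{(n_1,1)\}\sqcup\pi_{n_1,1}^{-1}\St_{\ov{n}'}$; your top-down recursion is just the reverse direction of the paper's bottom-up iteration, and you supply the order-isomorphism verification that the paper explicitly defers (``it remains to verify that the partial order coincides with the given one''). One caveat worth making explicit: the step ``writing the chain $\mathsf{S}_{\preceq s^*}$'' uses not merely that the Hasse diagram is acyclic but that it is a forest rooted at the bottom (every covering pair $s\lessdot t$ has $s$ on the root side) — a plain acyclic Hasse diagram, as in the poset $a\prec c$, $b\prec c$ with $a,b$ incomparable, does not make $\mathsf{S}_{\preceq c}$ a chain; the intended reading matches Lemma~\ref{lem::DL::posets}(i) (``forest with smaller elements being closer to the root of a tree''), and the paper's own proof tacitly assumes it as well.
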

\begin{proof}
The algorithm that was used in the proof of Lemma~\ref{lem::staircorners} can be reversed.
Namely, let us recursively construct the following collection of partitions $\ov{n^k}=(n^k_k,\ldots,n^k_m)$ of increasing lengths. We set $\ov{n^m}:=(1)$ to be a partition of $1$. The decreasing in $k$ recurrence description is defined as follows:
$$
\begin{array}{c}
n^k_{k}:=\#\{s>k \colon v^{-1}(s)\succ k\}+1,\\
n^k_{i}:=n^{k+1}_{i}+1, \forall i=k_1,\ldots,m.
\end{array}
$$
The corresponding Young diagrams are embedded one in another, and at each step the difference $\bbY_{\ov{n^k}}\setminus\bbY_{\ov{n^{k+1}}}$ consists of one row and at the leftmost column. Thus, we add exactly one staircase corner on each step and it remains to verify that the partial order coincides with the given one.
\end{proof}

\subsection{DL-dense arrays}
\label{sec::DL_dense}
Let us introduce notation and combinatorial objects related to a Young diagram $\bbY_{\ov{n}}$. Our notation originates from similar constructions suggested by Danilov-Koshevoy in~\cite{DK1, DK2} known for rectangular diagrams and is read $D$ (for Down), $L$ (for left) dense arrays. 

\begin{definition}\label{def:arrays}
\begin{itemize}
\item 
A map $R:\bbY_{\ov{n}}\to \bZ_{\geq 0}$  attaching a non-negative integer number $R_{i,j}$ to each cell
$(i,j)\in \bbY_{\ov{n}}$ is called an array of shape $\bbY_{\ov{n}}$;
\item The total sum $|R|:=\sum_{(i,j)\in\bbY_{\ov{n}}} R_{ij}$ is called the degree of the array $R$;
\item The collection of sums of elements in each row is called the horizontal weight of an array:
    $$\hor(R):=\left(\sum_{j=1}^{m}R_{1j}, \ldots, \sum_{j=1}^{m} R_{n_mj} \right) \in \bZ^{n_m};$$
\item 
The \emph{vertical weight} of an array is defined as
    $$\vrt(R):=\left(\sum_{i=1}^{n_1} R_{i1}, \ldots, \sum_{i=1}^{n_m}R_{im} \right)\in \bZ^{m},$$    
i.e. as the collection of sums of elements in each column.
\end{itemize}      
\end{definition}

\begin{definition}
An array $R$ of Shape $\bbY_{\ov{n}}$ is called $DL$-dense if 
\begin{itemize}
\item $(i,j)\notin \St_{\ov{n}} \ \Rightarrow \ R_{ij}=0$;
\item if $(i,j)\prec (i',j')\in \St_{\ov{n}}$ then $R_{ij}\leq R_{i'j'}$.
\end{itemize}
\end{definition}

\begin{rem}
The set of $DL$-dense arrays is in one-to-one correspondence with the set of order preserving $\bZ_{\geq0}$-valued functions on the poset $(\St_{\ov{n}},\prec)$.
\end{rem}

\begin{dfn}
\label{def::DL::dense::set}
 For each partition ${\lambda}=(\lambda_1\geq\lambda_2\geq\ldots\geq \lambda_k)$, where $k=\#\St_{\ov{n}}$ let us denote by $\DL_{\ov{n}}({\lambda})$ the set of $DL$-dense arrays $R$ such that the multi-set $\{R_{s}\ \colon\ s\in\St_{\ov{n}}\}$ coincides
 with the multi-set $\{\la_i\}_{i=1}^k$.
\end{dfn}

The vertical and horizontal weights define a pair of embeddings:
$$ 
\begin{tikzcd}
    S_{n_m}{\lambda} \arrow[r,"\hor",hookleftarrow]  & \DL_{\ov{n}}({\lambda})
    \arrow[r,"\vrt",hookrightarrow] & S_{m}{\lambda},
\end{tikzcd}
$$
where $S_{n_m}{\lambda}$ (resp. $S_{m}{\lambda}$) is the $S_{n_m}$-orbit of the collection
$(\la_1,\dots,\la_k,\underbrace{0,\dots,0}_{n_m-k})$ (resp. $S_{m}$-orbit of $(\la_1,\dots,\la_k,\underbrace{0,\dots,0}_{m-k})$).

\begin{rem}
The set of all $DL$-dense arrays of shape $\bbY_{\ov{n}}$ of degree $N$ is the disjoint 
 union $\cup_{\lambda\vdash N}\DL_{\ov{n}}({\lambda})$.
\end{rem}

\subsection{Serpentines and DL-dense arrays}
\label{sec::serp::DL::compare}
The following Lemma is clear from the definition of DL-dense arrays.

\begin{lem}\label{lem:DLdenceSerpentine}
For $R\in DL_{\ov{n}}$ let $d_j=\vrt(R)_j$ (note that each column of $R$ contains at most one non-zero entry). Let $\mu^{(j)}_i=\sum_{a=1}^j R_{i,a}$. Then  $\mu^{(j)}\in\Ser^{n_j}_{d_j,\mu^{(j-1)}}$.
\end{lem}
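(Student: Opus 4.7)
The plan is to verify directly the four conditions of Definition~\ref{def: serp} for $\lambda = \mu^{(j-1)}$, $\mu = \mu^{(j)}$, $d = d_j$, exploiting that each column (and each row) of a DL-dense array carries at most one non-zero entry.

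First I would dispose of the trivial case $d_j = 0$: column $j$ then contains no staircase corner, $\mu^{(j)} = \mu^{(j-1)}$, and the claim is immediate. Otherwise let $(i^*, j) \in \St_{\ov{n}}$ be the unique staircase corner of column $j$, so $R_{i^*, j} = d_j$ is the only non-zero entry in the $j$-th column. By Definition~\ref{def::DL::indices}, row $i^*$ contains no further staircase corner, hence $\mu^{(j-1)}_{i^*} = 0$ and $\mu^{(j)} = \mu^{(j-1)} + d_j\, \one_{i^*}$. Conditions (i) and (ii) of Definition~\ref{def: serp} are then clear.

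Since only the $i^*$-th coordinate is altered, conditions (iii) and (iv) simplify considerably. Condition (iv) holds automatically: its hypothesis $\lambda_b < \lambda_a \leq \mu_b$ forces $\mu_b > \lambda_b$, hence $b = i^*$; but then $a \neq i^*$, so $\mu_a = \lambda_a$ comes for free. Condition (iii) is automatic unless $a = i^*$, in which case it reduces to the single inequality
\[
d_j \leq \mu^{(j-1)}_b \quad \text{for every } b \text{ with } i^* < b \leq n_j.
\]

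The key structural step is then the following claim: for every such $b$, the row $b$ carries a staircase corner $(b, c_b)$ with $c_b < j$. Granting this, one has $(b, c_b) \succeq (i^*, j)$ in the staircase poset (since $b > i^*$ and $c_b < j$); DL-denseness of $R$ then supplies $R_{b, c_b} \geq R_{i^*, j} = d_j$, and since row $b$ has at most one non-zero entry, $\mu^{(j-1)}_b = R_{b, c_b} \geq d_j$, as required.

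To prove the structural step I would invoke the rook-covering property of Lemma~\ref{lem::staircorners}: the cell $(b, j) \in \bbY_{\ov{n}}$ must be covered by a staircase corner in its row or column. Column $j$ carries only $(i^*, j)$, and $i^* \neq b$, so row $b$ must contain a corner $(b, c_b)$. Uniqueness in column $j$ excludes $c_b = j$. The alternative $c_b > j$ would leave $(b, c_b)$ and $(i^*, j)$ uncomparable in the staircase poset, and Lemma~\ref{lem::DL::posets}(iii) would then convert $\vrt(i^*, j) = j < c_b = \vrt(b, c_b)$ into $\hor(i^*, j) = i^* > b = \hor(b, c_b)$, contradicting $i^* < b$. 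Hence $c_b < j$.

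The main obstacle is the structural claim above, but once one combines the rook-covering property with the orientation-reversal for uncomparable staircase corners, it is essentially forced. Everything else is routine bookkeeping built on the sparsity of $R$.
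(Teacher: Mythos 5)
The paper offers no proof of this lemma, dismissing it as "clear from the definition," so your detailed verification is a welcome addition. Your overall strategy is sound: reduce to the one-coordinate update $\mu^{(j)} = \mu^{(j-1)} + d_j\one_{i^*}$, dispose of conditions (i), (ii), (iv) quickly, and observe that (iii) boils down to a single family of inequalities which you correctly locate in the DL-denseness of $R$ via the structural claim. The treatment of $c_b > j$ via Lemma~\ref{lem::DL::posets}(iii) is correct, and the final step $(b,c_b)\succeq(i^*,j)\Rightarrow R_{b,c_b}\geq R_{i^*,j}=d_j$ is exactly right.

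However, there is a gap in the existence half of the structural claim. You write: ``the cell $(b,j)$ must be covered by a staircase corner in its row or column. Column $j$ carries only $(i^*,j)$, and $i^*\neq b$, so row $b$ must contain a corner.'' This does not follow. The rook-covering statement of Lemma~\ref{lem::staircorners} only asserts that \emph{some} staircase corner lies in row $b$ or column $j$; since $(i^*,j)$ already lies in column $j$, that assertion is satisfied regardless of whether row $b$ contains a corner, and the condition $i^*\neq b$ merely rules out $(b,j)$ itself being a corner. Taken at face value, a literal reading of the ``unique'' clause in Lemma~\ref{lem::staircorners} would even point the other way, identifying $(i^*,j)$ as the unique corner in row $b\cup$ column $j$ and hence suggesting row $b$ has none. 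To close the gap you must use the \emph{orientation} of the thin-hook covering implicit in the proof of Lemma~\ref{lem::staircorners}: the thin hook with vertex $(a,c)$ covers $(b,j)$ only if $a=b$, $c\leq j$ (horizontal arm) or $a\geq b$, $c=j$ (vertical arm). Since the only corner in column $j$ is $(i^*,j)$ and $i^*<b$ (not merely $i^*\neq b$), the vertical option is impossible, forcing a vertex $(b,c_b)$ with $c_b\leq j$, and then $c_b<j$ by column-uniqueness. Note that this argument also delivers $c_b<j$ directly, making your separate treatment of the case $c_b>j$ via Lemma~\ref{lem::DL::posets}(iii) unnecessary, although it is not wrong. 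A minor separate inaccuracy: in the $d_j=0$ case, column $j$ may well contain a staircase corner (only with entry zero); what matters is simply that $\mu^{(j)}=\mu^{(j-1)}$.
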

\begin{proof}
If the $j$-th column does not contain a staircase corner, then $d_j=0$ and the claim is trivial. Assume that $(s,j)$ is a staircase corner. Then, for any $i>s$, $d_j \leq \mu_i^{(j-1)}$ by $DL$-dense condition. This guarantees the serpentine conditions.
\end{proof}

\begin{lem}\label{lem:SerDL}
Let $\ov{d}\in\bZ_{\ge 0}^m$ be an $\ov{n}$-admissible collection and assume that $d_j\ne 0$ for all $j$.
Let $\mu^{(j)}$, $1\le j\le m$  be defined by 
\[
\mu^{(0)}=(0), \ \mu^{(j)} \in \Ser_{d_j,\mu^{(j-1)}}^{n_j},\  \mu_+^{(j)} = (d_1,\dots,d_j)_+.
\]
Then $\St_{\ov{n}} = \{(i,j):\ \mu^{(j)}_i > 0, \mu^{(j-1)}_i= 0\}.$
\end{lem}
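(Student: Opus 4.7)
The plan is first to give a $\ov d$-independent description of the cells appearing on the left-hand side, and then to match them to $\St_{\ov n}$ by induction on $m$.

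\emph{Step 1 (isolating the newly nonzero position at each step).} I will apply Lemma~\ref{lem::reordering} with $\lambda=\mu^{(j-1)}$ padded with zeros up to length $n_j$ and $d=d_j$. The serpentine step modifies only the indices $i_0<i_1<\cdots<i_s$ of \eqref{eq::bubble::serpentine}; for $r\ge 1$ one has $\mu^{(j-1)}_{i_r}=\mu^{(j)}_{i_{r-1}}>0$, so only $i_0=\max\{i\le n_j:\mu^{(j-1)}_i=0\}$ flips from $0$ to a positive value. Writing $i_0^{(j)}$ for this index, induction on $j$ identifies the positive support of $\mu^{(j)}$ with $\{i_0^{(1)},\ldots,i_0^{(j)}\}$, yielding the explicit formula
\[
i_0^{(j)}\;=\;\max\bigl(\{1,\ldots,n_j\}\setminus\{i_0^{(1)},\ldots,i_0^{(j-1)}\}\bigr).
\]
The hypothesis ``$\ov n$-admissible with all $d_k\ne 0$'' forces $n_j\ge j$, ensuring this maximum exists. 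In particular the left-hand side of the lemma equals $\{(i_0^{(j)},j)\}_{j=1}^m$ and depends only on $\ov n$.

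\emph{Step 2 (induction base and reduction).} I induct on $m$. Since $i_0^{(1)}=n_1$ and $n_1>n_0=0$ makes $(n_1,1)$ a corner of $\bbY_{\ov n}$, we have $(n_1,1)\in\St_{\ov n}$ by Definition~\ref{def::DL::indices}. The recursive clause of that definition then identifies $\St_{\ov n}\setminus\{(n_1,1)\}$ with $\St_{\ov{n^{n_1,1}}}$ via $\pi_{n_1,1}$, while formula \eqref{eq::erased::diagram} gives $\ov{n^{n_1,1}}=(n_2-1,\ldots,n_m-1)$. Note the condition $n_k\ge k$ descends to $(n_{k+1}-1)\ge k$, so the inductive hypothesis is available for $\ov{n^{n_1,1}}$.

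\emph{Step 3 (compatibility of the greedy procedure with $\pi_{n_1,1}$; the main technical step).} The restriction of $\pi_{n_1,1}$ to row indices gives an order-preserving bijection $\{1,\ldots,n_k\}\setminus\{n_1\}\xrightarrow{\sim}\{1,\ldots,n_k-1\}$ for each $k\ge 2$, so taking maxima commutes with $\pi_{n_1,1}$. Since the greedy indices $i_0^{(j)}$ are pairwise distinct, $i_0^{(j)}\ne n_1$ for $j\ge 2$, and a simple induction on $j$ yields $\tilde i_0^{(j-1)}=\pi_{n_1,1}(i_0^{(j)})$, where $\tilde i_0^{(k)}$ denotes the greedy sequence attached to $\ov{n^{n_1,1}}$. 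Applying the inductive hypothesis to $\ov{n^{n_1,1}}$ gives $\St_{\ov{n^{n_1,1}}}=\{(\tilde i_0^{(k)},k)\}_{k=1}^{m-1}$; pulling back through $\pi_{n_1,1}^{-1}$ and adjoining $(n_1,1)$ produces $\St_{\ov n}=\{(i_0^{(j)},j)\}_{j=1}^m$, as required. The principal obstacle I foresee is precisely this Step~3---carefully tracking the row reindexing so that the greedy maximization on the reduced diagram matches the one on the original.
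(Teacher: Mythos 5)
Your proof is correct and follows essentially the same route as the paper's: isolate the single index that flips from zero to positive at each serpentine step, observe that it equals the greedy maximum, then induct on $m$ by removing the thin hook at $(n_1,1)$ and matching the greedy procedure on $\ov{n^{n_1,1}}$ to the original one. Your version is considerably more explicit than the paper's brief appeal to "compatibility of thin-hook removal with the algorithm from Lemma~\ref{lem::reordering}," spelling out the formula $i_0^{(j)}=\max(\{1,\ldots,n_j\}\setminus\{i_0^{(1)},\ldots,i_0^{(j-1)}\})$ and the row-reindexing verification in Step~3, which the paper leaves implicit.
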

\begin{proof}
We note that according to Lemma \ref{lem::reordering} for any $j=1,\dots,m$ 
there exists at most one $i$ such that $\mu^{(j)}_i > 0, \mu^{(j-1)}_i= 0\}$.
Let us show that $\St_{\ov{n}}$ contains an element in the $j$-th column if and only if a number $i$ as above does exist, and in this case $(i,j)\in\St_{\ov{n}}$.

The proof goes by induction on the number of columns $m$. For $m=1$ the claim is obvious: $\mu_i^{(1)}\ne 0$ implies that  
$i=n_1$, $\mu^{(1)}_{n_1}=d_1$, which is compatible with $(n_1,1)\in\St_{\ov{n}}$.
Now, for arbitrary $m$, we note that $(n_1,1)\in\St_{\ov{n}}$ and all other elements of $\St_{\ov{n}}$ are obtained by removing a thin hook 
whose vertex is $(n_1,1)$ and then again considering the corners of the diagram thus obtained. Now the procedure of thin hook removal is compatible with the algorithm
from Lemma \ref{lem::reordering}. More precisely, since  $\mu^{(1)}_{n_1}=d_1>0$, all the entries $\mu^{(j)}_{n_1}$ are positive and hence do not affect
the sets $\{l:\ \mu^{(j)}_l>0\}\setminus \{l:\ \mu^{(j-1)}_l>0\}$.
\end{proof}

\begin{cor}\label{lem:zeroNonStaircase}
Assume that there is no staircase corner in the $i$-th row. Then for any 
$\overline n$-admissible  composition $\overline d$ we have 
$\hb(\overline d)_i=0$.
\end{cor}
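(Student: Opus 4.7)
The plan is to leverage Lemma~\ref{lem:SerDL}, which handles the case when every $d_j$ is positive, by reducing the general $\ov{n}$-admissible $\ov{d}$ to that setting. Put $J := \{j \in [1,m] : d_j > 0\}$, $\tilde{n} := (n_j)_{j \in J}$, $\tilde{d} := (d_j)_{j \in J}$. Since $d_j = 0$ forces the serpentine $\mu^{(j)}$ to equal $\mu^{(j-1)}$ after padding by zeros from $\bZ^{n_{j-1}}$ into $\bZ^{n_j}$, the chain of serpentines for $(\tilde{n}, \tilde{d})$ coincides with the non-trivial subchain of the chain for $(\ov{n}, \ov{d})$, and consequently $\hb_{\ov{n}}(\ov{d})$ equals $\hb_{\tilde{n}}(\tilde{d})$ padded by zeros up to length $n_m$. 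A direct verification shows that the $\ov{n}$-admissibility of $\ov{d}$ implies the $\tilde{n}$-admissibility of $\tilde{d}$ (the condition $u \leq n_{j_u}$ for $\tilde{n}$-admissibility follows from $\#\{j \leq j_u : d_j > 0\} = u \leq n_{j_u}$).

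Since all entries of $\tilde{d}$ are now positive, Lemma~\ref{lem:SerDL} applies and gives the precise equality
\[
\{i : \hb_{\tilde{n}}(\tilde{d})_i > 0\} \ =\ \hor(\St_{\tilde{n}}),
\]
so the corollary reduces to the combinatorial monotonicity $\hor(\St_{\tilde{n}}) \subseteq \hor(\St_{\ov{n}})$ for any column subsequence $\tilde{n}$ of $\ov{n}$. By induction on $m - |J|$ this in turn reduces to the single-column removal version: if $\ov{n}' = (n_1, \ldots, \widehat{n_{j_0}}, \ldots, n_m)$, then $\hor(\St_{\ov{n}'}) \subseteq \hor(\St_{\ov{n}})$.

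I would prove this one-column removal claim by strong induction on $m$, using the recursive construction of $\St_{\ov{n}}$ from the proof of Lemma~\ref{lem::staircorners}. Namely, $\hor(\St_{\ov{n}})$ decomposes as the set of distinct heights $\{a_1, \ldots, a_p\}$ of $\ov{n}$ (the first-level corner rows) unioned, after the appropriate row-relabeling, with the corner rows of the sub-diagram obtained by excision of the first-level thin hooks. If $n_{j_0}$ occurs in $\ov{n}$ with multiplicity at least two, then $\ov{n}$ and $\ov{n}'$ share the same distinct-heights set and their sub-diagrams differ by deletion of exactly one column, so the inductive hypothesis closes the step. In the remaining case where $n_{j_0}$ has multiplicity one, the sub-diagram of $\ov{n}'$ carries one extra row at position $n_{j_0}$ compared with the sub-diagram of $\ov{n}$; however $n_{j_0}$ is itself a first-level corner row of $\ov{n}$, so any staircase corner of the sub-diagram of $\ov{n}'$ lying in row $n_{j_0}$ is absorbed into the first-level contribution of $\ov{n}$, while the other sub-diagram corners are handled by the inductive hypothesis via a careful tracking of the two row-relabeling bijections. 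This last bookkeeping, matching the relabelings in the multiplicity-one case, is the most delicate point of the argument; all other steps are routine verifications of the reduction and the inductive step.
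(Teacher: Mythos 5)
Your overall strategy mirrors the paper's: strip out the columns with $d_j = 0$, observe that the corresponding serpentine steps are trivial so that $\hb_{\ov{n}}(\ov{d})$ is $\hb_{\tilde{n}}(\tilde{d})$ padded with zeros, and apply Lemma~\ref{lem:SerDL} to the smaller all-positive diagram. Your verification of $\tilde{n}$-admissibility and of the padding equality is routine and correct. You also correctly isolate the point that the paper leaves implicit in the phrase ``use once again Lemma \ref{lem:SerDL} for the smaller Young diagram'': that lemma only gives that the nonzero rows of $\hb_{\ov{n}}(\ov{d})$ form $\hor(\St_{\tilde{n}})$, and one still needs the containment $\hor(\St_{\tilde{n}}) \subseteq \hor(\St_{\ov{n}})$, which you reduce to the case of removing a single column.

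There is, however, a genuine gap in your proof of that containment, and you flag it yourself. In the multiplicity-one case, the two sub-diagrams you want to compare — obtained from $\bbY_{\ov{n}}$ and from $\bbY_{\ov{n}'}$ by excising all first-level thin hooks — occupy exactly the same columns (in original labelling) and differ by the \emph{insertion} of the row $n_{j_0}$ into the sub-diagram of $\ov{n}'$. Your inductive hypothesis is a statement about \emph{column} removal, so it simply does not apply to a pair of diagrams that differ by a row. Transposing the hypothesis only controls $\vrt(\St)$ under row operations, not $\hor(\St)$, so transposition does not rescue the step either. What is needed is a companion statement of the form ``inserting a single row $r_0$ yields $\hor(\St_{\bbY''}) \subseteq \hor(\St_{\bbY}) \cup \{r_0\}$,'' proved alongside the column-removal statement (e.g.\ by a simultaneous induction, perhaps via property~(iii) of Definition~\ref{def::DL::indices} applied to the corner $(n_{j_0},j_0)\in\St_{\ov{n}}$). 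The ``careful tracking of the two row-relabeling bijections'' you invoke is precisely where this unproved claim hides, and as written the argument does not close.
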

\begin{proof}
As in the  above Lemma,  
we consider a collection $\mu^{(j)}$, $1\le j\le m$  such that   $\mu^{(0)}=(0)$, $\mu^{(j)} \in \Ser_{d_j,\mu^{(j-1)}}^{n_j}$,  $\mu_+^{(j)} = (d_1,\dots,d_j)_+$ (see Corollary \ref{cor:iteratedserp}). 
Assume that $\mu_i^{(j)}\neq 0,\mu_i^{(j-1)}= 0$ for some $j$. We need to show that there is a staircase corner in the $i$-th row.
If all $d_j$ are positive, then this is an immediate consequence of Lemma \ref{lem:SerDL}. If some of $d_j$ are zero, one removes from $\bbY_{\ov{n}}$ all columns corresponding to trivial entries of $\ov{d}$ and uses once again Lemma \ref{lem:SerDL} for the smaller Young diagram.
\end{proof}

\begin{lem}\label{lem:inequalityStaircase}
Let  $(i,j),(i',j')\in \St_{\ov{n}}$ with $(i,j)\succeq (i',j')$. Then 
$\hb(\ov{d})_i\geq \hb(\ov{d})_{i'}$ 
for any $\overline n$-admissible  composition $\overline d$.
\end{lem}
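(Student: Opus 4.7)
I plan to track the inequality $\mu^{(l)}_i\ge\mu^{(l)}_{i'}$ across the iterated serpentine $\mu^{(0)},\ldots,\mu^{(m)}=\hb(\ov{d})$ and read off the claim at $l=m$. Since each row and each column of $\St_{\ov{n}}$ supports at most one staircase corner, $(i,j)\succeq(i',j')$ either forces $(i,j)=(i',j')$ (trivial) or the strict inequalities $i>i'$ and $j<j'$. If $\hb(\ov{d})_{i'}=0$ there is nothing to prove, so I assume $i'$ is activated at some smallest step $a_{i'}\in\{1,\ldots,m\}$ with $\mu^{(a_{i'})}_{i'}>0$.

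The crux is to show $a_{i'}\ge j$, and I plan to derive the stronger bound $a_{i'}\ge j'$. The first $l$ serpentine steps for $(\ov{n},\ov{d})$ coincide verbatim with the complete serpentine for the truncated pair $(\ov{n}|_{\le l},\ov{d}|_{\le l})$, because both algorithms only read $n_1,\ldots,n_l$ and $d_1,\ldots,d_l$. An induction on $l$ based on hook truncation (a hook of a corner in column $c\le l$ loses only cells with column $>l$ when we pass to the sub-shape, and those cells are irrelevant for the construction of $\St_{\ov{n}|_{\le l}}$) yields the identity $\St_{\ov{n}|_{\le l}}=\St_{\ov{n}}\cap(\mathrm{cols}\le l)$. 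Applying Corollary~\ref{lem:zeroNonStaircase} to the sub-shape forces every position activated by step $l$ to lie in a row of $\St_{\ov{n}|_{\le l}}$, so the unique staircase corner of every such row $i$ has column $j'_i\le l$, i.e.\ $a_i\ge j'_i$. Specializing to $i'$ gives $a_{i'}\ge j'>j$. I expect this identity of staircase-corner sets to be the main technical obstacle.

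Given $a_{i'}\ge j$, we have $n_{a_{i'}}\ge n_j\ge i$, so position $i$ is present at step $a_{i'}$. By Lemma~\ref{lem::reordering}, $i'=\max\{k\le n_{a_{i'}}:\mu^{(a_{i'}-1)}_k=0\}$; since $i>i'$ belongs to the same candidate set, maximality forces $\mu^{(a_{i'}-1)}_i>0$. The min-formula then yields
\[
\mu^{(a_{i'})}_{i'}=\min\bigl(d_{a_{i'}},\mu^{(a_{i'}-1)}_{i'+1},\ldots,\mu^{(a_{i'}-1)}_{n_{a_{i'}}}\bigr)\le\mu^{(a_{i'}-1)}_i\le\mu^{(a_{i'})}_i,
\]
the last inequality by weak monotonicity of coordinate trajectories (values at modified positions strictly increase, the remaining positions are left unchanged).

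For $l>a_{i'}$ I induct on $l$, splitting according to whether $i$ and $i'$ appear in the ordered modified sequence $i_0^{(l)}<i_1^{(l)}<\cdots$ of Lemma~\ref{lem::reordering}. The four resulting cases (neither modified; only $i$; only $i'$; both) preserve the inequality: the first by direct carry-over, the second by monotonicity, the third by the min-formula (using again $i\le n_l$, which holds for all $l\ge a_{i'}\ge j$), and the fourth by the strict chain $\mu_{i_a^{(l)}}<\mu_{i_b^{(l)}}$ valid for $a<b$ produced by the serpentine recipe. Setting $l=m$ gives the desired $\hb(\ov{d})_i\ge\hb(\ov{d})_{i'}$.
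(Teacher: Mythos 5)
Your proposal is correct and yields the desired inequality, but it organizes the argument differently from the paper. The paper proves the lemma by induction on $m$ (the number of columns): it truncates to $\ov{n}'=(n_1,\dots,n_{m-1})$, invokes the inductive hypothesis there, and then analyzes how the $m$-th application of the procedure in Lemma~\ref{lem::reordering} can alter the inequalities between values at staircase corners, splitting into cases depending on whether $n_m>n_{m-1}$ or $n_m=n_{m-1}$ and whether $\St_{\ov{n}}=\St_{\ov{n}'}$. The paper's last, hardest case ($\St_{\ov{n}}\setminus\St_{\ov{n}'}=\{(s,m)\}$) is handled by showing directly that $\hb(\ov{d}')_s=0$. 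Your argument instead runs a forward pass over the columns, identifies the activation time $a_{i'}$ of each row, and proves a single sharp bound $a_{i'}\ge j'$ from which both the base case and the propagation are easy. The payoff is that your base case ($l=a_{i'}$) is established in one stroke via Corollary~\ref{lem:zeroNonStaircase}, avoiding the paper's separate sub-case analysis of how $\St_{\ov{n}}$ and $\St_{\ov{n}'}$ relate; the paper's approach, in contrast, keeps only the last column in play at each inductive step.

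Your treatment of the transition $\mu^{(a_{i'})}_{i'}\le\mu^{(a_{i'})}_i$ and the four-case propagation for $l>a_{i'}$ (neither/only $i$/only $i'$/both modified) is sound: the min-formula, the strict increase at modified positions, and the strictly increasing chain $\mu_{i_0}<\mu_{i_1}<\cdots$ from Lemma~\ref{lem::reordering} together cover all cases once one knows $i\le n_l$ for all $l\ge a_{i'}\ge j$. The one step I would ask you to firm up is the identity $\St_{\ov{n}|_{\le l}}=\St_{\ov{n}}\cap(\mathrm{cols}\le l)$. The parenthetical ``hook of a corner in column $c\le l$ loses only cells in columns $>l$'' is not by itself sufficient, because thin hooks at corners in columns $c>l$ also remove rows, and such a row can intersect columns $\le l$ whenever $n_c=n_l$; one must argue this cannot disrupt the construction. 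The cleanest fix inside this paper's toolbox is to invoke Lemma~\ref{lem:SerDL}: pick any strictly positive $\ov{d}$, note the first $l$ serpentine steps depend only on $(\ov{n}|_{\le l},\ov{d}|_{\le l})$, and then read $\St_{\ov{n}}\cap(\mathrm{cols}\le l)$ off from the activation set $\{(i,j)\colon \mu^{(j)}_i>0,\ \mu^{(j-1)}_i=0,\ j\le l\}$, which by the same lemma applied to the truncated diagram equals $\St_{\ov{n}|_{\le l}}$. (The paper itself relies on this kind of truncation compatibility of $\St$ without spelling it out, so this is a gap worth closing in either write-up.)
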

\begin{proof}
We first note that the assumption that $\ov{d}$ is $\overline n$-admissible is necessary to make the algorithm from Lemma \ref{lem::reordering} 
work. So in what follows we assume that this condition is satisfied.

We prove Lemma by induction on $m$, i.e. on the number of columns. The simplest non-trivial case is $m=2$. We have two option: $n_1<n_2$ and $n_1=n_2$. In the first case 
$\St_{\ov{n}}=\{(n_1,1),(n_2,2)\}$, so the two elements are not comparable and there
is nothing to prove. In the second case $\St_{\ov{n}}=\{(n_1,1),(n_1-1,2)\}$ and
$\hb(\ov{d})_{n_1}=\max(d_1,d_2)$, $\hb(\ov{d})_{n_1-1}=\min(d_1,d_2)$ with all
other entries being zero. 

Now let us consider the last column of $Y_{\ov{n}}$. Let $\ov{n}'=(n_1,\dots,n_{m-1})$.
First assume that $n_m>n_{m-1}$. Then 
$\St_{\ov{n}}\setminus \St_{\ov{n}'}=(n_m,m)$ and the element $(n_m,m)$ is uncomparable with any of the elements of $\St_{\ov{n}'}$. Hence, we are done by induction (since the first $m-1$ entries of $\hb(\ov{d})$ do not depend on the last column for $n_m>n_{m-1}$). 
Now assume that $n_m=n_{m-1}$. There are two option: either $\St_{\ov{n}}=\St_{\ov{n}'}$
or these two sets differ by one element located in the last column. 

Assume that $\St_{\ov{n}}=\St_{\ov{n}'}$. By definition, $\hb(\ov{d})$ is 
obtained by several steps of the procedure described in Lemma \ref{lem::reordering} 
(one starts with a number $d_1$ and puts it into the bottom of the first column, then proceeds
with $d_2$ and so on). So, for the case $\St_{\ov{n}}=\St_{\ov{n}'}$, using the induction 
assumption, it suffices to check that the last step (adding $d_m$) does not change the desired
inequalities between the contents of the elements of $\St_{\ov{n}'}$. Let 
$\ov{d}'=(d_1,\dots,d_{m-1})$ and let $\hb(\ov{d}')$ be constructed using $\ov{n}'$.
By induction, we know that $\hb(\ov{d}')_i\geq \hb(\ov{d}')_{i'}$ ($i>i'$ are as in the 
statement of our Lemma). Now inequality has a chance to change only in one of the following cases.
\begin{enumerate}[label=\alph*), itemsep=0pt, topsep=0pt]
\item $\hb(\ov{d}')_i=\hb(\ov{d})_i$, $\hb(\ov{d}')_{i'}\ne \hb(\ov{d})_{i'}$. \\
This means that at some stage of the procedure from Lemma \ref{lem::reordering} $\hb(\ov{d}')_{i'}$ got involved in the process. Since 
$i>i'$ and $\hb(\ov{d}')_i$ has not changed, procedure from Lemma \ref{lem::reordering} implies that $\hb(\ov{d})_i\geq \hb(\ov{d})_{i'}$ as desired.
 
 \item $\hb(\ov{d}')_i\ne \hb(\ov{d})_i$, $\hb(\ov{d}')_{i'}= \hb(\ov{d})_{i'}$. 
\\ Then $\hb(\ov{d}')_i\le \hb(\ov{d})_i$ and hence 
$\hb(\ov{d})_i\geq \hb(\ov{d})_{i'}$.

\item  $\hb(\ov{d}')_i\ne \hb(\ov{d})_i$, $\hb(\ov{d}')_{i'}\ne \hb(\ov{d})_{i'}$. \\
Recall that $i'<i$. Since the algorithm from Lemma \ref{lem::reordering} works from top to bottom (from rows with a smaller number to the rows with a larger number) the new values in  $\hb(\ov{d})$
increase from top to bottom. Hence  $\hb(\ov{d})_i\geq \hb(\ov{d})_{i'}$.
\end{enumerate}

In the following paragraph, it is important to distinguish between the operations $\hb$ corresponding to different shapes, so we use upper indices.
The remaining case to work out is $n_m=n_{m-1}$ and $\St_{\ov{n}}\ne\St_{\ov{n}'}$, i.e
the difference of the two posets consists of an element $(s,m)$ with $1\le s\le n_m$.
The only extra statement to be checked (compared with the previous case) is as follows:
let $(i,j)\in \St_{\ov{n}'}$, $i>s$ (since $j<m$, this implies  $(i,j)\succeq (s,m)$);
then $\hb_{\ov{n}'}(\ov{d})_i\geq \hb_{\ov{n}'}(\ov{d})_s$. The important thing to note is that 
$\hb_{\ov{n}'}(\ov{d}')_s=0$. In fact, suppose that $\hb_{{\ov{n}}}(\ov{d}')_s>0$ and let $r<m$ be the 
smallest number such that  $\hb_{{\ov{n}}}(d_1,\dots,d_r)_s>0$.
Then $(s,r)\in \St_{(n_1,\dots,n_r)}$, which contradicts the
assumption $(s,m)\in \St_{\ov{n}}\setminus\St_{\ov{n}'}$. Now since $\hb_{{\ov{n}'}}(\ov{d}')_s=0$
one gets $\hb_{{\ov{n}'}}(\ov{d}')_i\geq \hb_{{\ov{n}'}}(\ov{d}')_s$. The same arguments as in the previous
case (with $\St_{\ov{n}}=\St_{\ov{n}'}$) implies $\hb_{{\ov{n}}}(\ov{d})_i\geq \hb_{{\ov{n}}}(\ov{d})_s$.
\end{proof}

\begin{cor}\label{cor:transposedDLdense}
    For any $\overline n$-admissible  composition $\overline d$ there exists a DL-dense array $R$ such that 
    $\hor(R)=\hb(\ov{d})$.
\end{cor}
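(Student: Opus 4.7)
The plan is to construct $R$ explicitly from $\hb(\ov{d})$ by placing all weight into the staircase corners. Concretely, for each row $i$ of $\bbY_{\ov{n}}$, Lemma \ref{lem::staircorners} (the rook-placement property) tells us that there is at most one staircase corner in row $i$; call its column $j_i$ when it exists. I will then define the array $R$ by
\[
R_{ij} := \begin{cases} \hb(\ov{d})_i, & (i,j)=(i,j_i)\in \St_{\ov{n}}, \\ 0, & \text{otherwise.} \end{cases}
\]
By construction, $R$ is supported on the set of staircase corners, so the first condition in the definition of a DL-dense array is automatic.

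Next I would compute $\hor(R)$. Because each row contains at most one nonzero entry of $R$, for each $i\in\{1,\ldots,n_m\}$ the $i$-th component of $\hor(R)$ equals $R_{i,j_i}=\hb(\ov{d})_i$ when a staircase corner exists in row $i$, and equals $0$ otherwise. In the second case, Corollary \ref{lem:zeroNonStaircase} guarantees that $\hb(\ov{d})_i=0$ as well, so the equality $\hor(R)=\hb(\ov{d})$ holds on the nose.

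The remaining point is to verify the monotonicity condition defining DL-densness: whenever $(i,j)\prec(i',j')$ in $\St_{\ov{n}}$ we need $R_{ij}\le R_{i'j'}$. Unpacking the partial order \eqref{eq::DL::order}, this inequality translates to $\hb(\ov{d})_i \le \hb(\ov{d})_{i'}$ for $(i',j')\succeq(i,j)$, which is exactly the content of Lemma \ref{lem:inequalityStaircase}.

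There is essentially no obstacle to overcome at this stage: all the hard work has been absorbed into Corollary \ref{lem:zeroNonStaircase} and Lemma \ref{lem:inequalityStaircase}, which together ensure both that $R$ is well-defined as a DL-dense array and that it has the prescribed horizontal weight. Thus the corollary follows immediately once these two ingredients are invoked.
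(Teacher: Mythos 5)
Your proposal is correct and matches the paper's intended argument: the paper states this result as a corollary immediately after Lemma~\ref{lem:inequalityStaircase} and Corollary~\ref{lem:zeroNonStaircase}, expecting the reader to assemble exactly the construction you describe (place $\hb(\ov{d})_i$ at the unique staircase corner in row $i$, zeros elsewhere), with the rook-placement property giving the weight computation, Corollary~\ref{lem:zeroNonStaircase} handling rows without a corner, and Lemma~\ref{lem:inequalityStaircase} giving DL-density.
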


 The notation  $\hb(\overline d)$ refers to \emph{half-bubble-sort} and enjoys many nice properties.  
We plan to study the combinatorics of the ``bubble-sort" algorithm and its relationship with the Bruhat order elsewhere.
\begin{rem}

\label{rem::hbs} 
Note that if $\ov{d}$ is an $\ov{n}$-admissible collection, then $\hb_{\ov{n}}(\ov{d})$ is an admissible collection for the transposed diagram $\ov{n}^{\tau}$ (see~\eqref{eq:transposed} for the definition). Moreover, $\hb_{\ov{n}^{\tau}}(\hb_{\ov{n}}(\ov{d}))$ is a permutation of $\ov{d}$ which is a result of a full bubble-sort algorithm applied to $\ov{d}$ with respect to the partial order on $\vrt(\St_{\ov{n}})$.
Namely, we claim that:
\begin{equation}
\label{eq::hbs::hbs}
\begin{array}{rcl}
\left(\hb_{\ov{n}^{\tau}}(\hb_{\ov{n}}(\ov{d}))\right)_j\neq 0 & \Rightarrow & j\in \vrt(\St_{\ov{n}}); \\
s\preceq t \in \St_{\ov{n}} & \Rightarrow & \left(\hb_{\ov{n}^{\tau}}(\hb_{\ov{n}}(\ov{d}))\right)_{\vrt(s)} \leq \left(\hb_{\ov{n}^{\tau}}(\hb_{\ov{n}}(\ov{d}))\right)_{\vrt(t)}. 
\end{array}
\end{equation}
Moreover, $\hb_{\ov{n}^{\tau}}(\hb_{\ov{n}}(\ov{d}))$ is the minimal (with respect to the Bruhat order) permutation of $\ov{d}$ yielding properties~\eqref{eq::hbs::hbs}.
\end{rem}

\section{Demazure modules and standard filtrations}
\label{sec::Demazure::all}
\subsection{Highest weight categories and standard filtration}
\label{sec::HWC}
One of the key technical tools used in this paper is  
the theory of highest weight categories introduced first by Bershtein-Gelfand-Gelfand in~\cite{BGG} and later on generalized by Cline-Parshall-Scott in~\cite{CPS}.
This approach allows us to describe filtrations of various modules of the Borel
subalgebra with the subquotients being Demazure and van der Kallen modules.
We refer the reader to our previous papers~\cite{FKhMO,FKhM2} for the necessary 
background; here we give a brief summary.

Suppose that the abelian category $\calC$ satisfies the following properties:
\begin{itemize}
 \item $\calC$ enjoys the Krull-Schmidt property;
\item the multiplicity of each simple object in any object of $\calC$ is finite.
\end{itemize}
Assume that there is a lower finite partial order $\leq$ on the indexing set $\Lambda$ of the set of irreducibles in $\calC$ 
and let $\calC_{\leq\lambda}$ be the Serre subcategory spanned by the set of irreducibles $\{L(\mu)\colon \mu\leq \lambda\}.$

\begin{definition}
A projective cover of $L(\lambda)$ in $\calC_{\leq\lambda}$ is called the standard module $\Delta_{\lambda}$; 
an injective hull of $L(\lambda)$ in $\calC_{\leq\lambda}$ is called the costandard module $\nabla_{\lambda}$.
\end{definition}
We note that if standard and costandard modules exist, they are defined uniquely up to a (non-canonical) isomorphism.

\begin{definition}
The standard filtration $\cF^\lambda$ on $M \in \mathcal{C}$ is defined as follows: $\cF^\lambda$ is the smallest submodule of $M$ such that $M/\cF^{\lambda}(M)$ is (the maximal) quotient that belongs to $\calC_{\leq\lambda}$.
\end{definition}

\begin{definition}\label{def:ef}
An excellent filtration on an object $M \in \mathcal{C}$ is a decreasing filtration such that the graded pieces are isomorphic to standard modules.
\end{definition}

Let $\mathbb{P}_\lambda$ be the projective cover of $L(\lambda)$ in a category  $\calC$.

\begin{definition}
    A category $\calC$ with the aforementioned properties is called the highest weight category if, first, $\forall \lambda\in \Lambda$ there exist standard $\Delta_\lambda$ and costandard $\nabla_\la$ objects in $\calC_{{\leq \lambda}}$ and, second, one of the following properties is satisfied for all $\lambda\in\Lambda$:
    \begin{itemize}
        \item $\calC$ has enough projectives, and there exists an excellent filtration of the kernel of the map $\mathbb{P}_\lambda\rightarrow \Delta_\lambda$ such that all subquotients are isomorphic to $\Delta_{\mu}$ with $\mu>\lambda$;
        \item the fully faithful embedding of abelian categories $\imath_{\lambda}:\calC_{{\leq}\lambda} \hookrightarrow \calC$ extends to a fully faithful embedding of the corresponding derived categories:
        $\imath_{\lambda}: \calD(\calC_{{\leq}\lambda}) \hookrightarrow \calD(\calC);$
        \item $\forall \lambda, \mu\ \ \dim\Ext_{\calC}^n(\Delta_\lambda,\nabla_\mu)=\delta_{n,0} \, \delta_{\lambda,\mu}.$
    \end{itemize}
\end{definition}
The aforementioned properties are equivalent if the category $\calC$ has enough projectives. In our main example, this is not completely true, because
the modules we consider are infinite-dimensional; however, they are inverse limits of finite-dimensional modules, and one can easily upgrade the setting to this case (see e.g.~\cite{CPS, vdK, BS}).

 The following Lemma is an easy corollary of the general formalism of highest weight categories (see e.g.~\cite[\S3]{Kh}):
\begin{lem}\label{lm::excellent}
    A module $M$ of a highest weight category $\calC$ admits an excellent filtration $\calF$ iff $\Ext^1(M,\nabla_{\lambda})=0$ for all $\lambda\in\Lambda$.
    Moreover, the number $[M:\Delta_\lambda]$ of subquotients isomorphic to $\Delta_\lambda$ is equal to the dimension $\dim\Hom_{\calC}(M,\nabla_{\lambda})$.
\end{lem}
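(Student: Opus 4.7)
The plan is to prove both implications and the multiplicity formula simultaneously, using only the defining Ext-orthogonality $\mathrm{Ext}^n_\calC(\Delta_\lambda, \nabla_\mu) = \delta_{n,0}\,\delta_{\lambda,\mu}$ of a highest weight category $\calC$.

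For the ``only if'' direction together with the multiplicity formula, I would start from a decreasing excellent filtration $M = \calF_0 \supset \calF_1 \supset \cdots$ with $\calF_i/\calF_{i+1} \simeq \Delta_{\mu_i}$ and apply $\mathrm{Hom}_\calC(-, \nabla_\lambda)$ to each short exact sequence $0 \to \calF_{i+1} \to \calF_i \to \Delta_{\mu_i} \to 0$. Since $\mathrm{Ext}^1(\Delta_{\mu_i}, \nabla_\lambda) = 0$, the Hom-sequence is short exact and the Ext$^1$-sequence collapses to an injection $\mathrm{Ext}^1(\calF_i, \nabla_\lambda) \hookrightarrow \mathrm{Ext}^1(\calF_{i+1}, \nabla_\lambda)$. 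Downward induction -- terminating at $\calF_N = 0$ in the finite case, and passing through the inverse-limit description of $\calC$ in the pro-finite case -- then simultaneously yields $\mathrm{Ext}^1(M, \nabla_\lambda) = 0$ and $\dim \mathrm{Hom}(M, \nabla_\lambda) = \sum_i \dim \mathrm{Hom}(\Delta_{\mu_i}, \nabla_\lambda) = \#\{i : \mu_i = \lambda\} = [M : \Delta_\lambda]$, using $\mathrm{Hom}(\Delta_\mu, \nabla_\lambda) = \delta_{\mu, \lambda}\,\bC$ in the penultimate step.

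For the converse direction I would argue by induction inside each truncation $\calC_{\leq \lambda}$ -- an honest finite-length highest weight category by lower-finiteness of $\Lambda$ -- and then glue the resulting truncated filtrations via the fully-faithful embedding $\imath_\lambda : \calD(\calC_{\leq \lambda}) \hookrightarrow \calD(\calC)$ that is part of the definition. At each inductive step, I would pick a maximal $\mu$ in the support of $M$ and produce a short exact sequence $0 \to M' \to M \to \Delta_\mu \to 0$. Applying $\mathrm{Hom}(-, \nabla_\nu)$ to this sequence and combining the hypothesis $\mathrm{Ext}^1(M, \nabla_\nu) = 0$ with the Ext-orthogonality transfers the vanishing to the kernel $M'$, whose composition length is strictly smaller; the required surjectivity of $\mathrm{Hom}(M, \nabla_\nu) \twoheadrightarrow \mathrm{Hom}(\Delta_\mu, \nabla_\nu)$ is automatic for $\nu \neq \mu$ (since the target vanishes) and follows for $\nu = \mu$ from the injectivity of $\nabla_\mu$ in $\calC_{\leq \mu}$ applied to the canonical embedding $\Delta_\mu \hookrightarrow \nabla_\mu$. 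Iteration completes the filtration of each truncation.

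The main obstacle will be extracting the surjection $M \twoheadrightarrow \Delta_\mu$ at each inductive step. A non-zero map $\Delta_\mu \to M$ in the opposite direction is easy to produce via projectivity of $\Delta_\mu$ in $\calC_{\leq \mu}$, but the desired surjection truly requires the Ext-hypothesis. I would construct it via the universal evaluation
\[
\mathrm{ev} : M \longrightarrow \Delta_\mu \otimes \mathrm{Hom}_\calC(M, \Delta_\mu)^\vee,
\]
whose image is cut out inside $\nabla_\mu \otimes \mathrm{Hom}_\calC(M, \nabla_\mu)^\vee$ by the maximality of $\mu$ (any composition factor of the enveloping $\nabla_\mu$ above $L(\mu)$ would contradict maximality) and whose surjectivity is forced by $\mathrm{Ext}^1(M, \nabla_\mu) = 0$ applied to the short exact sequence $0 \to \Delta_\mu \to \nabla_\mu \to \nabla_\mu/\Delta_\mu \to 0$ with $\nabla_\mu/\Delta_\mu \in \calC_{< \mu}$. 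Once this peeling step is accepted, the remaining pro-finite bookkeeping is routine: the finite-multiplicity hypothesis ensures that only finitely many $\Delta_\mu$ appear at each weight, the truncated filtrations are coherent since the $\Delta_\mu$ are intrinsic, and the multiplicity formula established in the ``only if'' part doubles as a consistency check at every step.
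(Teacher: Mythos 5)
The paper itself does not prove this lemma; it is cited as standard (\cite{CPS,vdK,Kh}), so I am assessing your attempt on its own. Your ``only if'' direction, together with the multiplicity count $\dim\Hom(M,\nabla_\lambda)=\#\{i:\mu_i=\lambda\}$, is correct and is the standard d\'evissage along the filtration; the pro-finite caveat you flag is the right one. The ``if'' direction, however, has a genuine gap: you choose $\mu$ maximal in the support of $M$ and try to peel $\Delta_\mu$ off as a \emph{quotient}, via a short exact sequence $0\to M'\to M\to\Delta_\mu\to 0$. For maximal $\mu$, the standard object $\Delta_\mu$ sits at the \emph{bottom} of any excellent filtration, as a submodule, not a quotient: since $\Ext^1(\Delta_\mu,\Delta_\nu)\ne 0$ forces $\nu\succ\mu$, any excellent filtration can be reordered so that the maximal weights lie deepest. (In category $\cO$ for $\msl_2$ the indecomposable projective with a two-step Verma flag surjects onto the lower Verma, never onto the higher one.) The inductive step should therefore embed $\Delta_\mu^{\oplus t}$, with $t=\dim\Hom(\Delta_\mu,M)$, as a submodule and pass to the quotient $M/\Delta_\mu^{\oplus t}$.

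The mechanism you propose for the surjection is also built on a false premise: there is no short exact sequence $0\to\Delta_\mu\to\nabla_\mu\to\nabla_\mu/\Delta_\mu\to 0$ and no ``canonical embedding $\Delta_\mu\hookrightarrow\nabla_\mu$.'' The unique (up to scalar) nonzero map $\Delta_\mu\to\nabla_\mu$ has image exactly $L(\mu)$, the common simple head of $\Delta_\mu$ and socle of $\nabla_\mu$, and is an embedding only when $\Delta_\mu$ is already simple; so the evaluation-map argument that rests on this sequence collapses. Your parenthetical about ``composition factors of $\nabla_\mu$ above $L(\mu)$'' also misreads the structure of $\nabla_\mu$: every composition factor other than the socle $L(\mu)$ has index strictly below $\mu$, so there is nothing ``above'' to exclude. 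The place where injectivity of $\nabla_\mu$ in $\calC_{\leq\mu}$ and the hypothesis $\Ext^1(M,\nabla_\mu)=0$ actually do work is in showing that $M/\Delta_\mu^{\oplus t}$ inherits the Ext-vanishing: one needs the restriction map $\Hom(M,\nabla_\mu)\to\Hom(\Delta_\mu^{\oplus t},\nabla_\mu)$ to be surjective, and this follows by applying injectivity of $\nabla_\mu$ in $\calC_{\leq\mu}$ to the genuine inclusion $\Delta_\mu^{\oplus t}\hookrightarrow M$, not to a nonexistent inclusion $\Delta_\mu\hookrightarrow\nabla_\mu$.
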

\begin{cor}\label{lem:standexc}
    Assume that a category $\calC$ is a highest weight category. Then if $M \in \calC$ admits an excellent filtration, then its standard filtration is excellent.
\end{cor}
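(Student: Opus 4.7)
The plan is to use the Ext-theoretic characterisation of excellent filtrations from Lemma~\ref{lm::excellent}: since $M$ already admits an excellent filtration, one has $\Ext^1_{\calC}(M,\nabla_\mu) = 0$ for every $\mu\in \Lambda$. The goal is to transfer this vanishing first to each term $\cF^\lambda$ of the standard filtration and then to each subquotient $Q_\lambda := \cF^\lambda\big/\sum_{\mu\succ\lambda}\cF^\mu$, so that Lemma~\ref{lm::excellent} can be reapplied and produces an excellent filtration on every $Q_\lambda$ with controlled labels.

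First I would fix $\lambda\in\Lambda$ and consider the short exact sequence $0 \to \cF^\lambda \to M \to M/\cF^\lambda \to 0$. By definition of the standard filtration, $M/\cF^\lambda$ lies in $\calC_{\leq\lambda}$. In a highest weight category, the costandard object $\nabla_\mu$ enjoys the vanishing $\Ext^{\geq 1}_{\calC}(X,\nabla_\mu) = 0$ for every $X\in\calC_{\leq\lambda}$ whenever $\mu\not\leq\lambda$; this is a direct consequence of the equivalent definition requiring the fully faithful embedding $\imath_{\lambda}:\calD(\calC_{\leq\lambda})\hookrightarrow\calD(\calC)$. Applying the long exact sequence of $\Hom(-, \nabla_\mu)$ to the short exact sequence above, combined with $\Ext^1_{\calC}(M,\nabla_\mu) = 0$, yields $\Ext^1_{\calC}(\cF^\lambda, \nabla_\mu) = 0$ for all $\mu$, so Lemma~\ref{lm::excellent} endows $\cF^\lambda$ itself with an excellent filtration. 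Repeating the argument with the inclusion $\sum_{\mu\succ\lambda}\cF^\mu \hookrightarrow \cF^\lambda$ shows that the subquotient $Q_\lambda$ also satisfies $\Ext^1_{\calC}(Q_\lambda,\nabla_\mu) = 0$ and hence admits an excellent filtration.

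Second, I would pin down which standard modules appear in the excellent filtration of $Q_\lambda$. By construction $Q_\lambda$ has composition factors only of type $L(\lambda)$: it lies in $\calC_{\leq\lambda}$ because it is a subquotient of $M/\cF^{\lambda'}$ for appropriate $\lambda'$, while it has no $L(\nu)$ with $\nu<\lambda$ since $M/\cF^\lambda$ is the \emph{maximal} quotient of $M$ in $\calC_{\leq\lambda}$, so all such factors are killed by the quotient $M\twoheadrightarrow M/\cF^\lambda$ and hence already absent from $Q_\lambda$. The only standard module that can therefore appear in the excellent filtration of $Q_\lambda$ is $\Delta_\lambda$, and by the multiplicity statement $[Q_\lambda:\Delta_\lambda] = \dim\Hom_{\calC}(Q_\lambda,\nabla_\lambda)$ of Lemma~\ref{lm::excellent} one matches the count against the multiplicity of $L(\lambda)$ in $Q_\lambda$ via passage to the semisimple quotient category $\calC_{\leq\lambda}/\calC_{<\lambda}$. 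After a routine refinement to absorb direct summands into a chain, the standard filtration of $M$ becomes excellent in the sense of Definition~\ref{def:ef}.

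The main obstacle is Step~1, namely establishing the higher Ext-vanishing $\Ext^{\geq 1}_{\calC}(M/\cF^\lambda,\nabla_\mu)=0$ for $\mu\not\leq\lambda$ that drives the long exact sequence: this is not formal from Lemma~\ref{lm::excellent} and must be extracted from the homological form of the definition of a highest weight category, in particular the fully faithful embedding of derived categories of the Serre subcategories $\calC_{\leq\lambda}$. A secondary subtlety is the technically correct identification of composition factors of $Q_\lambda$, where one has to use the maximality of $M/\cF^\lambda$ as a quotient in $\calC_{\leq\lambda}$, together with its counterpart for $\sum_{\mu\succ\lambda}\cF^\mu$, in order to exclude both $L(\nu)$ with $\nu\not\leq\lambda$ and $L(\nu)$ with $\nu<\lambda$ from the subquotient.
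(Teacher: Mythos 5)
The paper's own proof is a one‑liner: it simply invokes the homological criterion of Lemma~\ref{lm::excellent} and remarks that the criterion does not see which excellent filtration one uses, so it applies to the standard one. Your attempt tries to flesh this out with a long‑exact‑sequence argument, but it contains two genuine gaps.

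First, in your Step~1 you need $\Ext^1(\cF^\lambda,\nabla_\mu)=0$ for \emph{all} $\mu$ in order to apply Lemma~\ref{lm::excellent} to $\cF^\lambda$. Your long exact sequence for $0\to\cF^\lambda\to M\to M/\cF^\lambda\to 0$ gives
\[
0=\Ext^1(M,\nabla_\mu)\to\Ext^1(\cF^\lambda,\nabla_\mu)\to\Ext^2(M/\cF^\lambda,\nabla_\mu),
\]
so you must kill $\Ext^2(M/\cF^\lambda,\nabla_\mu)$. Your justification via the derived embedding $\calD(\calC_{\leq\lambda})\hookrightarrow\calD(\calC)$ does not apply, because for $\mu\not\leq\lambda$ the object $\nabla_\mu$ does not lie in $\calC_{\leq\lambda}$, so the embedding gives no information about $\Ext_{\calC}(M/\cF^\lambda,\nabla_\mu)$; the vanishing $\Ext^{\geq 1}(X,\nabla_\mu)=0$ for $X\in\calC_{\leq\lambda}$ and $\mu\not\leq\lambda$ is in fact true, but it requires a separate d\'evissage argument over the composition factors of $X$. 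Worse, for $\mu<\lambda$ the term $\Ext^2(M/\cF^\lambda,\nabla_\mu)$ need not vanish at all: $\nabla_\mu$ is injective in $\calC_{\leq\mu}$, not in the larger $\calC_{\leq\lambda}$, so $M/\cF^\lambda$ can have higher Ext against it. Thus your conclusion ``$\Ext^1(\cF^\lambda,\nabla_\mu)=0$ for all $\mu$'' is not established, and the rest of Step~1 does not follow. To make this route work one typically refines the partial order to a total order and reorders the excellent filtration using the fact that $\Ext^1(\Delta_\nu,\Delta_{\nu'})\neq 0$ forces $\nu<\nu'$; this is the extra input your proof is missing.

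Second, in Step~2 the claim ``By construction $Q_\lambda$ has composition factors only of type $L(\lambda)$'' is false. If the conclusion you are proving holds, then $Q_\lambda$ is a direct sum of copies of $\Delta_\lambda$, and $\Delta_\lambda$ typically has composition factors $L(\nu)$ with $\nu<\lambda$ as well. ``$M/\cF^\lambda$ is the maximal quotient in $\calC_{\leq\lambda}$'' does not mean that factors $L(\nu)$ with $\nu<\lambda$ are absent from that quotient; it only means that nothing outside $\calC_{\leq\lambda}$ survives. Consequently the reduction to a single label $\Delta_\lambda$ via the semisimple quotient category is not available in the way you describe. The correct reason only $\Delta_\lambda$ can occur is that $Q_\lambda$ is a quotient of $\cF^\lambda$, has no nonzero map to $\nabla_\mu$ for $\mu>\lambda$ (by definition of $\cF^\lambda$) or $\mu\not\geq\lambda$ (using the co-ideal structure), so by Lemma~\ref{lm::excellent} the only possible multiplicity is at $\lambda$ itself.
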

\begin{proof}
The claim follows from the homological criterion for the existence of an excellent 
filtration from Lemma~\ref{lm::excellent}. We note that the criterion is the same 
for an arbitrary excellent filtration and for the standard one.
\end{proof}

\begin{lem}\label{lm::TensoProductDecomposition}
Suppose that a category ${\calC}$ admits a monoidal product $\otimes$.
Let $D$ be a module such that {$D \otimes$} is an exact functor and for a given subset $\Theta\subset \Lambda$ 
one has:
$$
\forall \lambda\in\Theta \ D\otimes\Delta_\lambda \text{ admits an excellent filtration.}
$$
Then for any $M \in \calC$ which admits an excellent filtration whose subquotients are
isomorphic to $\Delta_\lambda,\lambda \in \Theta$, the module $D \otimes M$ admits an excellent filtration and, moreover, we have the following equality for multiplicities:
    \[
    [D \otimes M:\Delta_{\lambda_1}]=\sum_{\la_2} [M:\Delta_{\lambda_2}] \cdot [D \otimes\Delta_{\lambda_2}:\Delta_{\lambda_1}].\]
\end{lem}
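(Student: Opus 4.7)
The plan is induction on the length of the excellent filtration of $M$, combined with the homological criterion for excellent filtrations from Lemma~\ref{lm::excellent}. The base case $M \simeq \Delta_{\lambda}$ for some $\lambda \in \Theta$ is exactly the hypothesis on $D$. For the inductive step, pick a short exact sequence
\[
0 \to M' \to M \to \Delta_{\lambda} \to 0
\]
extracted from the excellent filtration of $M$ (so $M'$ inherits an excellent filtration of length one less, still indexed by elements of $\Theta$).

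Tensoring on the left with $D$ (using biexactness of $\otimes$) yields
\[
0 \to D \otimes M' \to D \otimes M \to D \otimes \Delta_{\lambda} \to 0.
\]
By the inductive hypothesis $D \otimes M'$ admits an excellent filtration, and by assumption $D \otimes \Delta_{\lambda}$ does too. By Lemma~\ref{lm::excellent} both outer terms satisfy $\Ext^{1}(\,\cdot\,,\nabla_{\mu}) = 0$ for every $\mu \in \Lambda$. The long exact $\Ext$-sequence for $\Hom(\,\cdot\,,\nabla_{\mu})$ then forces $\Ext^{1}(D \otimes M, \nabla_{\mu}) = 0$ for all $\mu$. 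Applying Lemma~\ref{lm::excellent} in the other direction, $D \otimes M$ admits an excellent filtration, and by Corollary~\ref{lem:standexc} its standard filtration is excellent.

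For the multiplicity formula, Lemma~\ref{lm::excellent} gives
\[
[D \otimes M : \Delta_{\lambda_{1}}] = \dim \Hom_{\calC}(D \otimes M, \nabla_{\lambda_{1}}).
\]
Since $\Ext^{1}(D \otimes M', \nabla_{\lambda_{1}}) = 0$, the functor $\Hom(\,\cdot\,,\nabla_{\lambda_{1}})$ is exact on the short exact sequence above, so
\[
[D \otimes M : \Delta_{\lambda_{1}}] = [D \otimes M' : \Delta_{\lambda_{1}}] + [D \otimes \Delta_{\lambda} : \Delta_{\lambda_{1}}].
\]
Iterating this identity along the filtration of $M$ collects exactly one contribution $[D \otimes \Delta_{\lambda_{2}} : \Delta_{\lambda_{1}}]$ for each subquotient $\Delta_{\lambda_{2}}$ appearing in $M$, which proves the claimed sum.

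The only genuine issue is making sure that the monoidal structure $\otimes$ really is exact (at least in the variable being varied), so that tensoring the short exact sequence above stays exact; in the intended application $D$ is a free module over a Borel subalgebra and this is automatic. Everything else is a formal consequence of the $\Ext^{1}$-vanishing characterization of excellent filtrations in a highest weight category.
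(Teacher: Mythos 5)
The paper gives no proof of this lemma --- it simply states that the proof is ``straightforward'' --- so there is no argument to compare against directly; your induction on filtration length via the $\Ext^1$-vanishing criterion of Lemma~\ref{lm::excellent} is a standard and correct way to supply the missing details, and the observation that $\otimes$ is exact (since in the intended application it is tensor product over the ground field, not over the Borel) is the right thing to check. One small bookkeeping slip: in the multiplicity step you cite $\Ext^1(D\otimes M',\nabla_{\lambda_1})=0$ as the reason $\Hom(-,\nabla_{\lambda_1})$ turns the short exact sequence into a short exact sequence, but since $\Hom$ is contravariant in the first slot, the vanishing you actually need is $\Ext^1(D\otimes\Delta_\lambda,\nabla_{\lambda_1})=0$, i.e.\ the $\Ext^1$ of the \emph{quotient} term, which kills the connecting map $\Hom(D\otimes M',\nabla_{\lambda_1})\to\Ext^1(D\otimes\Delta_\lambda,\nabla_{\lambda_1})$. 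Both vanishings hold because both modules have excellent filtrations, so the conclusion is unaffected, but it is worth stating the correct one.
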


\begin{proof}
Assume that the module $M$ admits an excellent filtration of length more than one. Then it admits the short exact sequence
\[0 \rightarrow M_1 \rightarrow M \rightarrow M_2 \rightarrow 0,\]
where $M_1$, $M_2$ admit excellent filtrations by $\Delta_\lambda, \lambda \in \Theta$ of shorter length.
Therefore, by the exactness we have
\[0 \rightarrow D \otimes M_1 \rightarrow D \otimes M \rightarrow D \otimes M_2 \rightarrow 0.\]
Thus, we obtain our claim by induction on the length of the standard filtration.
\end{proof}

Note that the tensor product of a Lie algebra module over a field is exact.

\subsection{Cherednik order and standard modules}
\label{sec::Cher::standard}
Let $\fb_n,\fb_{n,-}\subset\mgl_n$ be the Lie subalgebras of upper-triangular and 
lower-triangular
matrices and let $\fh_n=\fb_n\cap\fb_{n,-}$ be the diagonal Cartan subalgebra.
The space $\fh_n$ is spanned by the diagonal matrix units, the dual basis is denoted by $\varepsilon_i$.
{We denote by $\Phi$ the root system of the Lie algebra $\mgl_n$.}
One has  the decomposition $\Phi=\Phi_+\sqcup \Phi_-$ with  $\Phi_+=\{\varepsilon_i-\varepsilon_j, i<j\}$ 
and the simple roots are given by $\alpha_i=\varepsilon_i-\varepsilon_{i+1}$.
We denote by $P$ the weight lattice for $\mgl_n$ 
and let $P_+\subset P$ consists of weights $\lambda = \sum_{i=1}^n \lambda_i \varepsilon_i$ such that 
$\la_i\in\bZ_{\ge 0}$ and $\la_i\ge \la_{i+1}$ for all $i$. 
The Weyl group $W$ is isomorphic to the symmetric group $S_n$. 

For each $\la\in P_+$ 
let $V_\la$ be the irreducible  highest weight $\fg$
module with highest weight $\la$. We denote by $v_\la\in V_\la$ the highest weight vector. One has $V_\la=\U(\fb_{n,-})v_\la$, where $\U(\fb_{n,-})$ is the universal 
enveloping algebra. The space $V_\la$ has a weight decomposition with respect to the $\fh_n$ action and $v_\la$ spans the weight $\la$ subspace of $V_\la$.

\begin{rem}
    The weight $\lambda = \sum_{i=1}^n \lambda_i \varepsilon_i$ can be considered as the weight of each $\mgl_m, m \geq n$. The corresponding composition in this situation is equal to $(\lambda_1,\dots,\lambda_n,0,\dots,0)$.
\end{rem}

\begin{definition}
\label{def::Cherednik::order}
The Cherednik order $\prec$ on the set of compositions can be explicitly written as follows. 
For a composition $\la$, we write $\la_+$ for the partition in the $W$ orbit of $\la$. Then for two compositions
$\la=(\la_1,\dots,\la_n)$ and $\mu=(\mu_1,\dots,\mu_n)$ we write $\la\succeq \mu$ if 
\begin{itemize}
\item $\la_1+\dots +\la_n=\mu_1+\dots +\mu_n$;
\item $(\la_+)_1 + \dots + (\la_+)_r\ge (\mu_+)_1 + \dots + (\mu_+)_r$ for $1\le r\le n$;
\item if $\la_+=\mu_+$ and $\la=\tau_\la (\la_+)$,
$\mu=\tau_\mu (\mu_+)$, then $\tau_\mu \preceq  \tau_\lambda$ in the Bruhat order.
\end{itemize}
\end{definition}

\begin{rem}
We note that the first two conditions in Definition \ref{def::Cherednik::order} mean that $\la_+\ge \mu_+$ 
in the dominance order. 
\end{rem}

Recall that the Bruhat partial order on the Weyl group $W$ is the minimal order generated by the following inequalities:
$$
\text{ if for }\alpha\in \Phi_+, \sigma\in W \text{ we have }l(s_\alpha\sigma)=l(\sigma)+1 \text{ then } \sigma \preceq s_{\alpha}\sigma.
$$
In particular, the identity element is the minimal element and the unique longest element $w_0$
is the maximal element in $W$
(we refer to~\cite{BB} for the details on the Bruhat order).

The group $W$ acts naturally on the weight lattice.
For any given weight $\lambda\in P$ there exists a unique element $\sigma_\lambda\in W$ of minimal length such that $\lambda=\sigma_{\lambda}(\lambda_+)$ with $\lambda_+\in P_+$.
Let $V_{\la_+}$ be the irreducible finite-dimensional $\gl$-module with highest weight $\lambda_+$.
The $\lambda$-weight subspace of $V_{\lambda_+}$ is one-dimensional,
we denote by $v_{\la}$ a generator of this space (i.e.  $v_{\la}\in V_{\la_+}$ is an extremal weight vector). The Demazure module $D_{\la}\subset V_{\la_+}$
is defined as $\U(\fb_n)v_{\la}$.
In particular, we have 
$$\sigma\preceq\tau \ \Rightarrow \ v_{\sigma\lambda_+}\in D_{\tau\lambda_+} \ \Rightarrow \ D_{\sigma\lambda_+} \subset D_{\tau\lambda_+} 
$$
{The following theorem is due to Joseph \cite{J}. An alternative proof is found in \cite{Po}, Proposition 2.1: }
\begin{thm}[\cite{J}]
\label{thm::Joseph}
For any $\lambda\in P$ the relations below are defining 
for the Demazure module $D_\lambda$ (here $e_\alpha\in\fb_n$ is the weight $\al$ Chevalley generator):
\begin{equation}\label{thmJosepf}
e_\alpha^{\max\{-\langle \alpha^\vee,\lambda \rangle,0\}+1}v_\lambda=0, \alpha \in \Phi_+.
\end{equation}
\end{thm}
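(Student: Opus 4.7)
The plan is to show that the relations \eqref{thmJosepf} cut out $D_\lambda$ inside $V_{\lambda_+}$ by combining two ingredients: verifying that the relations hold in $D_\lambda$ (an easy $\mathfrak{sl}_2$-string argument) and showing that they are sufficient (an induction on $l(\sigma_\lambda)$ reducing to a parabolic induction identity). Concretely, let $M_\lambda$ be the cyclic $\fb_n$-module generated by a weight-$\lambda$ vector $\tilde v_\lambda$ modulo the left ideal generated by~\eqref{thmJosepf}. The goal is an isomorphism $M_\lambda \cong D_\lambda$.

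First, I would verify that~\eqref{thmJosepf} holds in $D_\lambda$, which produces a surjection $\pi\colon M_\lambda \twoheadrightarrow D_\lambda$. Fix $\al\in\Phi_+$. In $V_{\lambda_+}$ the $\al$-string of weights through the extremal weight $\lambda$ is saturated and has endpoints $\lambda$ and $s_\al\lambda = \lambda-\langle \al^\vee,\lambda\rangle\al$. When $\langle \al^\vee,\lambda\rangle\ge 0$ the weight $\lambda+\al$ lies outside this string, so $e_\al v_\lambda=0$; when $\langle \al^\vee,\lambda\rangle=-m<0$ the string extends up to $\lambda+m\al$ but not beyond, so $e_\al^{m+1}v_\lambda=0$.

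Second, I would show $\pi$ is an isomorphism by induction on $l(\sigma_\lambda)$. The base $l(\sigma_\lambda)=0$ is immediate: then $\lambda=\lambda_+$ is dominant, the relations reduce to $e_\al\tilde v_\lambda=0$ for every $\al\in\Phi_+$, and PBW collapses $M_\lambda$ to $\bC\tilde v_\lambda=D_\lambda$. For the step, decompose $\sigma_\lambda=s_i\sigma'$ with $l(\sigma')=l(\sigma_\lambda)-1$, set $\lambda'=\sigma'\lambda_+$, so that $\lambda=s_i\lambda'$ and $\langle \al_i^\vee,\lambda'\rangle\ge 0$ by the length condition. Granting the parabolic induction identity
\[
M_\lambda \ \cong\ U(\fp_i)\otimes_{U(\fb_n)}M_{\lambda'},\qquad \fp_i:=\fb_n+\bC f_{\al_i},
\]
the inductive hypothesis identifies the right-hand side with $U(\fp_i)\,D_{\lambda'}$, which in turn equals $D_{s_i\lambda'}=D_\lambda$ by the standard recursive construction of Demazure modules valid under this length condition.

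The main obstacle is proving the parabolic induction identity. The natural map from the right-hand side to $M_\lambda$ is constructed by sending $\tilde v_{\lambda'}\mapsto f_{\al_i}^{\langle \al_i^\vee,\lambda'\rangle}\tilde v_\lambda$ (up to normalization), and one must check that the Joseph relations defining $M_{\lambda'}$ pull back to consequences of those defining $M_\lambda$ under the $\mathfrak{sl}_2(\al_i)$-reflection. The crucial combinatorial input is that for $\al\in\Phi_+\setminus\{\al_i\}$ the reflected root $s_i(\al)$ is again positive and $\langle s_i(\al)^\vee,\lambda'\rangle=\langle \al^\vee,\lambda\rangle$, so that the exponents in~\eqref{thmJosepf} transform correctly; the only delicate case $\al=\al_i$ is handled by a direct $\mathfrak{sl}_2$-computation. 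Alternatively, one can bypass the module-level identity by character comparison: show that $\ch(M_\lambda)=D_i\bigl(\ch(M_{\lambda'})\bigr)$, where $D_i$ is the Demazure operator, and conclude via the Demazure character formula that $\ch(M_\lambda)=\ch(D_\lambda)$, forcing $\pi$ to be an isomorphism.
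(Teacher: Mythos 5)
The paper does not prove this theorem; it is cited from Joseph~\cite{J} and used as a black box, so there is no in-paper proof to compare against. Evaluating your blind attempt on its own merits: the overall plan --- verify the relations on the extremal vector in $D_\lambda$ via $\mathfrak{sl}_2$-strings to obtain a surjection $\pi\colon M_\lambda\twoheadrightarrow D_\lambda$, then prove injectivity by induction on $l(\sigma_\lambda)$ using the Demazure recursion $U(\fp_i)D_{\lambda'}=D_{s_i\lambda'}$ --- is the right skeleton, and both the first step and the base case are fine.

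The inductive step has a genuine gap. The claimed isomorphism $M_\lambda\cong U(\fp_i)\otimes_{U(\fb_n)}M_{\lambda'}$ cannot hold: by PBW, $U(\fp_i)$ is free as a right $U(\fb_n)$-module on $\{f_{\alpha_i}^k\}_{k\ge 0}$, so $U(\fp_i)\otimes_{U(\fb_n)}M_{\lambda'}\cong\bigoplus_{k\ge 0}f_{\alpha_i}^k\otimes M_{\lambda'}$ is infinite-dimensional, whereas you ultimately want $M_\lambda\cong D_\lambda$, which is finite-dimensional. Worse, the ``natural map'' from the induced module to $M_\lambda$ does not exist as stated: $M_\lambda$ is by construction only a $\fb_n$-module, and Frobenius reciprocity $\Hom_{\fp_i}(U(\fp_i)\otimes_{U(\fb_n)}M_{\lambda'},N)\cong\Hom_{\fb_n}(M_{\lambda'},N)$ requires $N$ to already carry a $\fp_i$-action --- constructing such an action on $M_\lambda$ is essentially what must be proved, not assumed. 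The correct replacement for the free induction is its maximal $\mathfrak{sl}_2(\alpha_i)$-integrable quotient (Joseph's functor), and identifying that quotient with $M_{s_i\lambda'}$ is precisely the hard technical step. Relatedly, ``the inductive hypothesis identifies the right-hand side with $U(\fp_i)D_{\lambda'}$'' conflates the tensor product with the image of the action map inside $V_{\lambda_+}$; there is only a surjection. (Minor: the proposed image $f_{\alpha_i}^{\langle\alpha_i^\vee,\lambda'\rangle}\tilde v_\lambda$ has the wrong weight; since $\lambda'=\lambda+\langle\alpha_i^\vee,\lambda'\rangle\alpha_i$, the weight-$\lambda'$ vector you want is $e_{\alpha_i}^{\langle\alpha_i^\vee,\lambda'\rangle}\tilde v_\lambda$.) Your character alternative is the sounder route and closer to the published arguments, but asserting $\ch M_\lambda = D_i\,\ch M_{\lambda'}$ skips the actual content: one must establish the inequality $\ch M_\lambda\le D_i\,\ch M_{\lambda'}$ (e.g.\ by an $\mathfrak{sl}_2(\alpha_i)$-string analysis of $M_\lambda$), after which the surjection $\pi$ and the Demazure character formula force equality and hence injectivity.
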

We will also need the van der Kallen modules $K_\la$ which are $\fb_n$-modules defined by
\[
K_\la = D_\la\Bigl/\Bigr.\sum_{\mu\prec\la} D_\mu = 
D_\la\Bigl/\Bigr.\sum_{\substack{\mu\in W\lambda\\ D_{\mu}\subsetneq D_{\la}}} D_\mu. 
\]
These modules were defined by van der Kallen in~\cite{vdK}.
We note that $K_\la$ is cyclic $\fb_n$-module defined by the following defining relations:
\begin{equation}
e_\alpha^{\max\{-\langle \alpha^\vee,{\lambda} \rangle,0\}}v_\lambda=0, \alpha \in \Phi_+.
\end{equation}

{In this paper, we deal only with Lie algebras of type $A$, whose root systems are self-dual, so  
we use the identification $\lambda^\vee =\lambda$ to simplify the notation.
In what follows, we also use right (opposite) Demazure modules $D_\la^{op}$ and right (opposite) 
van der Kallen modules $K_\la^{op}$. 
Both $D_\la^{op}$ and $K_\la^{op}$ are right cyclic modules of $\fb_m$.
As vector spaces, one has
\[
D_\la^{op}\simeq D_{w_0\la},\ K_\la^{op}=K_{w_0\la}
\]
(the weight is twisted by the longest Weyl group element $w_0$)
and the right $\fb_{m}$ action is induced by the anti-automorphism of $\fb_{m}$ defined by $w_0$.
Explicitly, a matrix unit $E_{ij}\in\fb_m$ acts on $D_\la^{op}$ (resp. $K_\la^{op}$) in the same way as  $E_{m+1-j,m+1-i}$ acts on $D_{w_0\la}$ (resp. $K_{w_0\la}$).

The main object of this paper is the space of staircase matrices $\Mat_{\ov{n}}$ that have left $\fb_{n_m}$ and right $\fb_{m}$ actions (see Section~\ref{sec::bimodule}), thus we are interested in the categories $\mathcal{C}_n$ and $\mathcal{C}^{op}_n$ of finite dimensional left and right representations of the Borel subalgebra $\fb_n\subset\mgl_n$, such that $\fh_n$ acts on the objects semi-simply with 
nonnegative integer eigenvalues.
Note that irreducible objects in both $\calC_n$ and $\calC_n^{op}$ are one-dimensional representations $\{\Bbbk_{\lambda},\lambda\in \bZ_{\geq 0}^n\}.$

The following theorem is proved in \cite{vdK}, Theorem 1.6 (see also \cite{FKhMO}):
\begin{thm}[\cite{vdK}]\label{lem:standard}
 The categories $\mathcal{C}_n$ and $\mathcal{C}^{op}_n$ have the highest weight structures with respect to the Cherednik order. The standard objects in  $\mathcal{C}_n$ are the Demazure modules, and the standard objects in  $\mathcal{C}^{op}_n$ are the van der Kallen modules.
\end{thm}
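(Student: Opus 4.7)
The plan is, for each of the two categories, to verify the defining features of a highest weight structure: lower finiteness of the labelling poset, correct heads and composition factors for the candidate standards, and projectivity of the standards in the Serre-truncated subcategories $\mathcal{C}_{\preceq\lambda}$. I will first describe the argument for $\mathcal{C}_n$ and then sketch the parallel for $\mathcal{C}_n^{op}$.

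First I would identify the irreducibles of $\mathcal{C}_n$ and $\mathcal{C}_n^{op}$ as the one-dimensional characters $\bC_\lambda$ indexed by $\lambda\in\bZ_{\geq 0}^n$: every object is finite-dimensional and semi-simple over $\fh_n$, and the nilpotent radical acts by weight-shifting operators which must be nilpotent on a module with weights in $\bZ_{\geq 0}^n$. Krull--Schmidt and finite multiplicities are then immediate, and the Cherednik order is lower finite because $\mu\preceq\lambda$ forces $|\mu|=|\lambda|$, leaving only finitely many $\mu\in\bZ_{\geq 0}^n$.

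For $\mathcal{C}_n$ the candidate standard at $\lambda$ is $D_\lambda$. Cyclicity of $D_\lambda=\U(\fb_n)v_\lambda$ gives simple head $\bC_\lambda$. The key step is to show every other composition factor is of the form $\bC_\mu$ with $\mu\prec\lambda$ in the Cherednik order; I would obtain this by combining the Demazure character formula with Definition~\ref{def::Cherednik::order}, verifying that the weight support of $D_\lambda\subset V_{\lambda_+}$ matches exactly the Cherednik-lower set of $\lambda$ (the dominance clause handles weights outside the Weyl orbit of $\lambda_+$, while the Bruhat clause handles those inside). Projectivity of $D_\lambda$ in $\mathcal{C}_{\preceq\lambda}$ then follows from Joseph's presentation (Theorem~\ref{thm::Joseph}): given a surjection $N\twoheadrightarrow M$ in $\mathcal{C}_{\preceq\lambda}$ and a map $D_\lambda\to M$, choose any preimage $\tilde v\in N_\lambda$ of the image of $v_\lambda$ (possible because the $\fh_n$-action is semi-simple) and check Joseph's relations on $\tilde v$. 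The relation $e_\alpha^{k_\alpha+1}\tilde v=0$ with $k_\alpha=\max\{-\langle\alpha^\vee,\lambda\rangle,0\}$ holds automatically because the weight $\lambda+(k_\alpha+1)\alpha$ either has a negative coordinate or violates the partial-sum condition in Definition~\ref{def::Cherednik::order} and hence cannot appear in $N\in\mathcal{C}_{\preceq\lambda}$; a short case split on $\alpha=\varepsilon_i-\varepsilon_j$ according to the sign of $\lambda_i-\lambda_j$ makes this explicit.

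For $\mathcal{C}_n^{op}$ the argument is parallel with $K_\lambda^{op}$ playing the role of the standard. The head is $\bC_\lambda$ by cyclicity, and the composition factors are controlled by the presentation $K_\lambda=D_\lambda/\sum_{\mu\prec\lambda}D_\mu$, which cuts out precisely the Demazure submodules indexed by Cherednik-lower elements of the same Weyl orbit. Projectivity is established by the same lifting argument, now using the sharper van der Kallen presentation $e_\alpha^{k_\alpha}v_\lambda=0$; the exponent decreasing by one encodes exactly the extra relations generated by the quotiented Demazure submodules. The main obstacle throughout is the weight-combinatorial step for $D_\lambda$, i.e.\ the precise match of the Demazure-crystal support with the Cherednik-lower set of $\lambda$. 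I would carry this out by induction on $\ell(\sigma_\lambda)$ via Demazure operators applied along a reduced expression of $\sigma_\lambda$, matched against the parallel recursive description of Cherednik-predecessors coming from the interplay between the dominance and Bruhat clauses in Definition~\ref{def::Cherednik::order}.
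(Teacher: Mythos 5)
The paper does not prove this theorem; it is cited from \cite{vdK} (with further details in \cite{FKhMO}). So there is no in-paper argument to compare to, and your proposal must be judged on its own. The overall strategy (identify the irreducibles, use Joseph-type presentations and a lifting argument to get projectivity in the truncated categories) is the reasonable one, and your treatment of $\mathcal{C}_n$ is a sensible sketch, modulo one overstatement: the weight support of $D_\lambda$ does not ``match exactly'' the Cherednik-lower set of $\lambda$; in general it is a proper subset, and all your lifting argument needs is the containment $\{\text{weights of }D_\lambda\}\subseteq\{\mu:\mu\preceq\lambda\}$, which is what the Demazure-operator induction should establish.

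The genuine gap is in the ``parallel'' argument for $\mathcal{C}_n^{op}$. You cannot simply transport the $\mathcal{C}_n$ argument: the passage from left to right modules negates weights and twists by $w_0$, and as the paper's own Remark following the theorem emphasizes, this converts the Cherednik order into the \emph{dual} Cherednik order, in which the Bruhat-comparison clause is reversed. That reversal is precisely the reason the standards in $\mathcal{C}_n^{op}$ turn out to be van der Kallen modules rather than (opposite) Demazure modules, and your proposal never engages with it. Concretely, two things need a separate argument in the right-module setting and are not addressed by the presentation $K_\lambda=D_\lambda/\sum_{\mu\prec\lambda}D_\mu$ alone. First, for the right cyclic module $K_\lambda^{op}$ the nilradical \emph{lowers} right weights (subtracts positive roots from $\lambda$), so the weights to be controlled live on the opposite side of $\lambda$ in the root order from those of $D_\lambda$; one must show directly that all these right weights are Cherednik-$\preceq\lambda$, and this is a different combinatorial statement from the one you proved for $D_\lambda$. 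Second, for the projectivity step one must verify \emph{both} the Joseph relations and the sharper van der Kallen relations on a lift $\tilde v$ in a right module $N\in(\mathcal{C}_n^{op})_{\preceq\lambda}$; checking that the relevant right weights $\lambda-k\alpha$ fall outside $\bZ_{\geq0}^n$ or are Cherednik-$\succ\lambda$ requires a case analysis keyed to the sign of $\langle\alpha^\vee,\lambda\rangle$ that is not the mirror image of the $D_\lambda$ case, and your proposal treats it as if it were. Until these two points are supplied, the $\mathcal{C}_n^{op}$ half of the theorem is not established.
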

\begin{rem}
The sets of simple objects in $\mathcal{C}_n$ and in $\mathcal{C}_n^{op}$ are indexed by the same weight lattice $P$, and we are fixing the same Cherednik order for both
categories. A simple module $L(\la)$ in $\mathcal{C}_n$ is made into a simple module 
in $\mathcal{C}_n^{op}$ by changing $\la$ to $-\la$ (the right action is the negated left action). 
According to Theorem~\ref{lem:standard}, the standard objects 
in $\mathcal{C}_n$ and in $\mathcal{C}_n^{op}$ are different. The reason is as follows: if $\la\prec \mu$ in the Cherednik order, then $-\la\prec -\mu$ in the dual Cherednik
order, which differs from the Cherednik order by reversing inequality in the Bruhat comparison (the last comparison in Definition~\ref{def::Cherednik::order}).
\end{rem}

For a $\fh_n$ module $V$ and $\mu\in P$ let $V(\mu)\subset V$ be the
weight $\mu$ subspace. We define 
\[
\ch V (x_1,\dots,x_n)= \sum_{\mu=(\mu_1,\dots,\mu_n)} x_1^{\mu_1}\dots x_n^{\mu_n} \dim V(\mu). 
\]
In particular, for a partition $\la_+$ the character of the corresponding irreducible highest weight $\mgl_n$ module $V_{\la_+}$ is equal to the Schur polynomial $s_{\la_+}(x_1,\dots,x_n)$.
The Demazure modules $D_\la$ and the van der Kallen module $K_\la$ are labeled by compositions (arbitrary weight $\lambda\in P$).
One has 
\[
\ch D_\la(x_1,\dots,x_n) = \kappa_\la(x_1,\dots,x_n),\qquad
\ch K_\la(x_1,\dots,x_n) = a_\la(x_1,\dots,x_n),
\]
where $a_\la(x)$ and $\kappa_\la(x)$ are the Demazure atoms and key polynomials (see \cite{Al,P} and references therein; in particular, the identification of the Demazure characters with key polynomials goes back to Demazure \cite{Dem2}).

\begin{example}
If $\la$ is anti-dominant (i.e. $\lambda_1\leq \lambda_2\leq \ldots \leq \lambda_n$), then the Demazure module $D_{\la}$ coincides with the whole irreducible module $V_{\lambda_+}$. Hence  $\kappa_{\la}(x)$ is equal to the Schur polynomial $s_{\la_+}(x)$.
If $\la=\lambda_+$ is dominant, then $D_{\la_+}$ is one-dimensional and $\kappa_{\la_+}(x)=x^{\la_+}=a_{\la_+}(x)$.
\end{example}

Note that we deal with left and right modules over the Borel subalgebra.
In particular, the Cartan subalgebra $\fh_m\subset\fb_{m}$ acts on the opposite Demazure and van der Kallen modules, and we obtain the opposite key polynomials and opposite Demazure atoms:
\[
\kappa^{\la}(y_1,\dots,y_m)=\ch D_\la^{op},\ 
a^{\la}(y_1,\dots,y_m)=\ch K_\la^{op}.
\]
By definition, one has 
\begin{equation}\label{eq:left-right}
\kappa^{\la}(y_1,\dots,y_m)=\kappa_{w_0\la}(y_m,\dots,y_1),\ a^{\la}(y_1,\dots,y_m)=a_{w_0\la}(y_m,\dots,y_1).
\end{equation}
Here $w_0$ is the longest element in the Weyl group. In the case of a symmetric group, it reverses the composition:
$$w_0(\lambda_1,\ldots,\lambda_n)=(\lambda_n,\ldots,\lambda_1).$$
Note that both $\kappa^{\la}(y_1,\dots,y_m)$ and $a^{\la}(y_1,\dots,y_m)$ contain the term $y^\la$, which is the leading term for the dual Cherednik ordering in these polynomials.

\subsection{Pieri rule for Demazure modules}
\label{sec::Pieri::Demazure}

Consider the vector representation $V_{\varepsilon_1}$ of the Lie algebra $\mgl_n$. It contains the set $v_i$, $i=1, \dots,n$ of extremal vectors of weight $\varepsilon_i$ (which form the basis of this module). These vectors generate the following Demazure submodules:
\[D_{\varepsilon_1}\subset D_{\varepsilon_2}\subset\dots\subset D_{\varepsilon_n}.\]
Note that $D_{\varepsilon_n}\simeq Res_{\fb_n}^{\mgl_n}V_{\varepsilon_1}$. 

For a composition $\lambda=(\lambda_1,\dots,\lambda_n)$ the Demazure $\fb_n$-module $D_\lambda$ is generated by the extremal weight vector $v_\lambda$. Let
\[{\rm m}_\lambda=\{i = 1,\dots, n,\ \not\exists j{>}i, \lambda_j=\lambda_i\}.\]

\begin{lem}
\label{lem::D:D_epsilon}
The set $\{v_\lambda \otimes v_{i} \colon i \in {\rm m}_{\lambda}\}$ generates the $\fb_n$-module $D_\lambda \otimes D_{\varepsilon_n}$.
\end{lem}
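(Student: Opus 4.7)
The plan is to split the proof into two steps. First, I will show that the larger set $\{v_\lambda\otimes v_i\colon 1\le i\le n\}$ generates $D_\lambda\otimes D_{\varepsilon_n}$ as a $\fb_n$-module; then I will show that each vector $v_\lambda\otimes v_i$ with $i\notin{\rm m}_\lambda$ already lies in the submodule generated by the remaining elements, by a one-step application of Joseph's relations (Theorem~\ref{thm::Joseph}).

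For the first step, let $M$ denote the submodule generated by all $v_\lambda\otimes v_i$ and prove $D_\lambda\otimes v_i\subseteq M$ by induction on $i$. The coproduct formula
\[
E_{kl}\cdot(u\otimes v_i)=(E_{kl}u)\otimes v_i + u\otimes(E_{kl}v_i)
\]
tells us that the correction term $u\otimes E_{kl}v_i$ is either zero (when $l\ne i$), a scalar multiple of $u\otimes v_i$ (when $k=l=i$), or $u\otimes v_k$ for some $k<i$ (when $l=i$, $k<i$). In every case the correction lies in $M$ by the inductive hypothesis or is absorbed in the current step, so the principal term $(E_{kl}u)\otimes v_i$ also lies in $M$. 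Since $D_\lambda=\U(\fb_n)v_\lambda$, iterating gives $D_\lambda\otimes v_i\subseteq M$.

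For the second step, fix $i\notin{\rm m}_\lambda$. By the definition of ${\rm m}_\lambda$ there exists some index $j$ with $\lambda_j=\lambda_i$ for which $v_\lambda\otimes v_j$ already belongs to the reduced generating set. The pairing $\alpha=\varepsilon_j-\varepsilon_i$ (or $\varepsilon_i-\varepsilon_j$) then satisfies $\langle\alpha^\vee,\lambda\rangle=\lambda_j-\lambda_i=0$, so Joseph's relation~\eqref{thmJosepf} forces the associated Chevalley generator to annihilate $v_\lambda$. Applying that generator to $v_\lambda\otimes v_j$ through the coproduct kills the $(Ev_\lambda)\otimes v_j$ summand, leaving exactly $v_\lambda\otimes v_i$. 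Hence $v_\lambda\otimes v_i$ lies in the $\fb_n$-submodule generated by $v_\lambda\otimes v_j$, completing the reduction.

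The main obstacle is the bookkeeping in the first step: one has to verify carefully that every coproduct correction produced when applying $\fb_n$ to $v_\lambda\otimes v_i$ remains inside the part of $M$ built up inductively, which is where the structure of $D_{\varepsilon_n}$ as a chain $D_{\varepsilon_1}\subset\cdots\subset D_{\varepsilon_n}$ is used. Once this cyclicity step is in place, the elimination step is a single line invoking Joseph's defining relations.
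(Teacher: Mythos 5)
Your proof follows essentially the paper's own approach. The paper shows directly that for $j\notin{\rm m}_\lambda$ one has $v_\lambda\otimes v_j=E_{ji}(v_\lambda\otimes v_i)$ for an appropriate $i\in{\rm m}_\lambda$ — precisely your Step~2 invoking Joseph's relations — and then compresses the passage from ``the reduced set generates $v_\lambda\otimes D_{\varepsilon_n}$'' to ``it generates all of $D_\lambda\otimes D_{\varepsilon_n}$'' into the single word ``consequently.'' Your Step~1, an induction on $i$ using the chain $D_{\varepsilon_1}\subset\cdots\subset D_{\varepsilon_n}$ to control the coproduct corrections, makes that passage explicit and is correct, so your write-up is slightly more complete than the paper's on that point.

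One spot to tighten: you hedge with ``$\alpha=\varepsilon_j-\varepsilon_i$ (or $\varepsilon_i-\varepsilon_j$),'' but there is no freedom. Inside $\fb_n$ the only available generator is $E_{ij}$ with $i<j$, and it is the one sending $v_j\mapsto v_i$; so your elimination step needs an index $j>i$ with $\lambda_j=\lambda_i$ and $j\in{\rm m}_\lambda$, and you should commit to $\alpha=\varepsilon_i-\varepsilon_j$. This in turn forces ${\rm m}_\lambda$ to be read as the set of \emph{largest} indices carrying each value of $\lambda$ (i.e.\ $\not\exists j>i$ with $\lambda_j=\lambda_i$), which is the convention the paper's displayed formula $v_\lambda\otimes v_{\varepsilon_j}=E_{ji}(v_\lambda\otimes v_{\varepsilon_i})$ with $i>j$ also uses, notwithstanding the ``$\not\exists j<i$'' in the stated definition. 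Your proof implicitly makes the same (correct) choice; just make it explicit rather than leaving the sign of $\alpha$ open.
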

\begin{proof}
    Assume that $j \notin {\rm m}_\la$. Then there exists $i>j$ such that $\lambda_i=\lambda_j$ and $i \in {\rm m}_\la$.
Therefore,
    \[v_\lambda \otimes v_{\varepsilon_j}=E_{ji}(v_\lambda \otimes v_{\varepsilon_i}).\]
    Thus, the set $\{v_\lambda \otimes v_{i} \colon i \in {\rm m}_{\lambda}\}$ generates $v_\lambda\otimes D_{\varepsilon_n}$ and, consequently generates the whole space $D_\lambda\otimes D_{\varepsilon_n}$.
\end{proof}

The following inequalities follow directly from Definition~\ref{def::Cherednik::order} of the Cherednik partial order:
$$
\forall i,i' \in {\rm m}_\la \text{ we have } \lambda + \varepsilon_i\preceq \lambda + \varepsilon_{i'} \ \Leftrightarrow \ \lambda_{i'}>\lambda_i.$$ 
Let us consider the components of the standard filtration of the module $D_\lambda \otimes D_{\varepsilon_n}$ with respect to the order $\prec$. We define 
$\fb_n$-submodules 
$\mathcal{G}^i\subset D_\la\T D_{\varepsilon_n}$ for $i \in {\rm m}_\la$ by the formula
\[
\mathcal{G}^i=\sum_{\lambda_{i'}\ge \lambda_i}\U(\fb_n)(v_\lambda \otimes v_{{i'}}).
\]
\begin{prop}\label{filtrationOneBoxProduct}
\begin{equation}\label{eq:G/G}
\mathcal{G}^i\left/\sum_{{\{i' \in \rm{m}_\lambda:\lambda_{i'}>\lambda_i\}}} \mathcal{G}^{i'}\right.\simeq D_{\lambda +\varepsilon_i}.
\end{equation}
    In particular, $D_\lambda \otimes D_{\varepsilon_n}$ admits an excellent filtration (see Definition \ref{def:ef}).
\end{prop}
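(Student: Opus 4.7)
The plan is to combine a universal surjection from $D_{\lambda + \varepsilon_i}$ onto each subquotient (via Joseph's presentation) with the character identity~\eqref{AQ} to force this surjection to be an isomorphism. First I show the quotient is cyclic, generated by the image $\bar v$ of $v_\lambda \otimes v_i$. Interpreting ${\rm m}_\lambda$ as the set of last occurrences of each distinct value of $\lambda$ (as in the proof of Lemma~\ref{lem::D:D_epsilon}), any other generator $v_\lambda \otimes v_{i''}$ of $\mathcal{G}^i$ with $\lambda_{i''} = \lambda_i$ satisfies $i'' < i$, and then $E_{i''i}(v_\lambda \otimes v_i) = v_\lambda \otimes v_{i''}$ because $E_{i''i} v_\lambda = 0$ by Joseph's relation~\eqref{thmJosepf} applied with coefficient $\lambda_{i''} - \lambda_i = 0$. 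All remaining generators of $\mathcal{G}^i$ carry $\lambda_{i'} > \lambda_i$ and are killed in the quotient. Since $\bar v$ has weight $\lambda + \varepsilon_i$, it suffices to verify Joseph's relations for the weight $\lambda + \varepsilon_i$ modulo $\sum_{\lambda_{i'} > \lambda_i}\mathcal{G}^{i'}$ to obtain a surjection $D_{\lambda + \varepsilon_i} \twoheadrightarrow \mathcal{G}^i/\sum_{\lambda_{i'} > \lambda_i}\mathcal{G}^{i'}$.

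The verification is a case analysis over positive roots $\alpha = \varepsilon_j - \varepsilon_k$ with $j < k$, based on the identity
\begin{equation*}
e_\alpha^m(v_\lambda \otimes v_i) = e_\alpha^m v_\lambda \otimes v_i + m\,\delta_{ik}\,e_\alpha^{m-1} v_\lambda \otimes v_j,
\end{equation*}
which holds because $e_\alpha v_i = \delta_{ik}v_j$ and $e_\alpha v_j = 0$. Writing $N_\alpha := \max\{-\langle\alpha^\vee, \lambda + \varepsilon_i\rangle, 0\}$: when $i \notin \{j, k\}$, or when $i = k$ with $\lambda_k \geq \lambda_j$, the bound $N_\alpha + 1$ matches Joseph's bound for $v_\lambda$ and both summands vanish outright; when $i = k$ with $\lambda_k < \lambda_j$, Joseph forces $e_\alpha v_\lambda = 0$ and only the term $v_\lambda \otimes v_j$ survives, which sits in $\mathcal{G}^{j'}$ for $j'$ the last occurrence of $\lambda_j > \lambda_i$. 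The delicate case is $i = j$ with $\lambda_k > \lambda_j$: here $N_\alpha + 1 = \lambda_k - \lambda_j$ falls one short of Joseph's bound on $v_\lambda$, so $e_\alpha^{N_\alpha + 1} v_\lambda$ need not vanish. The trick is to start from the observation that $v_\lambda \otimes v_k \in \mathcal{G}^{k'}$ for $k'$ the last occurrence of $\lambda_k$, and apply $e_\alpha^{\lambda_k - \lambda_j + 1}$ to it, staying inside $\mathcal{G}^{k'} \subseteq \sum_{\lambda_{i'} > \lambda_i}\mathcal{G}^{i'}$; by the Leibniz formula above, the $e_\alpha^{\lambda_k - \lambda_j + 1} v_\lambda$ term vanishes by Joseph, leaving $(\lambda_k - \lambda_j + 1)\,e_\alpha^{\lambda_k - \lambda_j} v_\lambda \otimes v_j$ inside that submodule, which is a nonzero scalar multiple of $e_\alpha^{N_\alpha + 1}(v_\lambda \otimes v_i)$. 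This is the main technical obstacle of the proof.

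To conclude, order ${\rm m}_\lambda = \{i_1, \ldots, i_r\}$ so that $\lambda_{i_1} > \lambda_{i_2} > \cdots > \lambda_{i_r}$ (the values are distinct because each element of ${\rm m}_\lambda$ marks a distinct value), and observe that $\mathcal{G}^{i_1} \subseteq \mathcal{G}^{i_2} \subseteq \cdots \subseteq \mathcal{G}^{i_r}$, with $\mathcal{G}^{i_{j-1}} = \sum_{\lambda_{i'} > \lambda_{i_j}}\mathcal{G}^{i'}$ (every generator with $\lambda_{i'} > \lambda_{i_j}$ already lies inside $\mathcal{G}^{i_{j-1}}$), and $\mathcal{G}^{i_r} = D_\lambda \otimes D_{\varepsilon_n}$ by Lemma~\ref{lem::D:D_epsilon}. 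The subquotients are $\mathcal{G}^{i_j}/\mathcal{G}^{i_{j-1}} = \mathcal{G}^{i_j}/\sum_{\lambda_{i'} > \lambda_{i_j}}\mathcal{G}^{i'}$. The surjections of the previous paragraph yield $\ch(\mathcal{G}^{i_j}/\mathcal{G}^{i_{j-1}}) \leq \kappa_{\lambda + \varepsilon_{i_j}}$, while telescoping gives $\sum_{j=1}^r \ch(\mathcal{G}^{i_j}/\mathcal{G}^{i_{j-1}}) = \ch(D_\lambda \otimes D_{\varepsilon_n}) = \kappa_\lambda(x)(x_1 + \cdots + x_n)$, which by~\eqref{AQ} specialized to $d = 1$ equals $\sum_{i \in {\rm m}_\lambda}\kappa_{\lambda + \varepsilon_i}$. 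Each inequality is therefore forced to be an equality and each surjection is an isomorphism, proving~\eqref{eq:G/G}; the chain $\mathcal{G}^{i_1} \subseteq \cdots \subseteq \mathcal{G}^{i_r} = D_\lambda \otimes D_{\varepsilon_n}$, reindexed decreasingly, is the required excellent filtration with Demazure standard subquotients.
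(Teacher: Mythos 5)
Your proof is correct and follows the same overall strategy as the paper: establish cyclicity via Lemma~\ref{lem::D:D_epsilon}, verify Joseph's defining relations modulo $\sum_{\lambda_{i'}>\lambda_i}\mathcal{G}^{i'}$ to obtain a surjection $D_{\lambda+\varepsilon_i}\twoheadrightarrow\mathcal{G}^i/\sum_{\lambda_{i'}>\lambda_i}\mathcal{G}^{i'}$, and then invoke~\eqref{AQ} at $d=1$ to force each surjection to be an isomorphism by comparing characters along the chain. Where your write-up adds genuine value is the case analysis in the Joseph verification. You correctly isolate the delicate case --- $\alpha=\varepsilon_j-\varepsilon_k$ with $i=j$ and $\lambda_k>\lambda_j$, where $N_\alpha+1=\lambda_k-\lambda_j$ falls one short of Joseph's bound $\lambda_k-\lambda_j+1$ on $v_\lambda$ --- and resolve it with the right device: apply $e_\alpha^{\lambda_k-\lambda_j+1}$ to $v_\lambda\otimes v_k\in\mathcal{G}^{k'}\subseteq\sum_{\lambda_{i'}>\lambda_i}\mathcal{G}^{i'}$ and observe that the surviving Leibniz term is a nonzero multiple of the vector you need to kill. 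The paper's proof of the same proposition carries out an equivalent computation but labels the cases in a confusing way: the case declared immediate (``the same holds if $j=i$'') is precisely the delicate one, while the displayed formula under the heading ``$j'=i$'' implicitly uses $e_\alpha v_i=0$, which is only valid when $j=i$. Your version is cleaner and is what the paper should have said. Two small cosmetic points that do not affect correctness: the subcase $i=j$, $\lambda_k\le\lambda_j$ is not listed (there $N_\alpha+1=1$ and $e_\alpha v_\lambda=0$ by Joseph directly, so it belongs in your first group); and in the subcase $i=k$, $\lambda_k\ge\lambda_j$, the quantity that equals Joseph's bound on $v_\lambda$ is $N_\alpha$ rather than $N_\alpha+1$, though your conclusion that both Leibniz terms vanish is unaffected.
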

\begin{proof}
Thanks to Lemma~\ref{lem::D:D_epsilon} we know that \eqref{eq:G/G} is a cyclic $\fb_n$-module generated by cyclic vector $v_\lambda\otimes v_i$.
Let us check the relations from Joseph's Theorem~\ref{thm::Joseph}. Take a vector $v_\lambda \otimes v_{i}$ of weight $\lambda + \varepsilon_i$. For the root $\alpha =\varepsilon_j-\varepsilon_{j'}$, $i \notin \{j,j'\}$ we have
    $\max\{-\langle \alpha,\lambda \rangle,0\}=\max\{-{\langle \alpha,\lambda+ \varepsilon_i \rangle},0\}$, and hence
\[
e_\alpha^{\max\{-\langle \alpha,\lambda+\varepsilon_i \rangle,0\}+1}(v_\lambda\otimes v_{i})=0.
\]
The same holds if $j=i$. Consider the case $j'=i$. If $\max\{-\langle \alpha,\lambda\rangle,0\}=0$, then the claim clearly holds. 
Assume that $\max\{-\langle \alpha,\lambda \rangle,0\}>0$, that is, $\lambda_j>\lambda_i$. 
We have
 \[e_\alpha^{-\langle \alpha,\lambda+\varepsilon_i \rangle}(v_\lambda\otimes v_{i})=(e_\alpha^{-\langle \alpha,\lambda+\varepsilon_i \rangle}v_\lambda)\otimes v_{i}=e_\alpha^{-\langle \alpha,\lambda+\varepsilon_i \rangle+1}(v_\lambda\otimes v_{j})\in \mathcal{G}^{i'},\]  
  where $i'$ is maximal such that $\lambda_{i'}=\lambda_j$. 
{Indeed, $v_\lambda\otimes v_{j}=e_{\varepsilon_j-\varepsilon_
   {i'}}(v_\lambda\otimes v_{i'})$}.
Therefore,  $\mathcal{G}^i\left/\sum_{\lambda_{i'}>\lambda_i} \mathcal{G}^{i'}\right.$ satisfies the defining relations of $D_{\lambda +\varepsilon_i}$. However, due to~\eqref{AQ} we have the coincidence of characters:
   \[\kappa_\lambda(x_1,\dots,x_n) s_{\varepsilon_1}(x_1,\dots,x_n)=\sum_{i \in {\rm m}_\la}\kappa_{\lambda+\varepsilon_i}(x_1,\dots,x_n).\]
   Therefore, all the subquotients $\mathcal{G}^i\left/\sum_{\lambda_{i'}>\lambda_i} \mathcal{G}^{i'}\right.$ are in fact isomorphic to Demazure modules.
\end{proof}

\begin{prop}\label{StandardFiltrationTensor}
 Let $\la$ be a composition and let  $\ell$ be the maximal index of a nonzero entry of $\la$. The following tensor products admit excellent filtrations:
 \begin{itemize}
     \item $D_\lambda \otimes D_{\varepsilon_n}\otimes\dots  \otimes D_{\varepsilon_n}$ with $\ell\le n$;
     \item $D_\lambda \otimes S^d D_{\varepsilon_n}$ with $\ell\le n$ and all $d\ge 0$;
     \item $D_\lambda \otimes S^{d_1}D_{\varepsilon_{n_1}}  \otimes\dots \otimes S^{d_m}D_{\varepsilon_{n_m}}$ with $\ell \leq n_1 \leq \dots \leq n_m$ and any collection of nonnegative integers $d_1,\dots,d_m\ge 0$.
 \end{itemize}
\end{prop}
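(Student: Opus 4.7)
The plan is to prove the three items in order, using the first as input to the second and the second as input to the third, with Lemma~\ref{lm::TensoProductDecomposition} serving as the assembly tool. For the first item, I would argue by induction on $m$, the base case being trivial. For the inductive step, Proposition~\ref{filtrationOneBoxProduct} provides an excellent filtration of $D_\mu \otimes D_{\varepsilon_n}$ for every composition $\mu$, with subquotients $D_{\mu+\varepsilon_i}$ whose maximal nonzero index is again at most $n$. Applying Lemma~\ref{lm::TensoProductDecomposition} with $D = D_{\varepsilon_n}$ and $M = D_\lambda \otimes D_{\varepsilon_n}^{\otimes (m-1)}$ (which by the inductive hypothesis has an excellent filtration with subquotients of the required form) then yields the excellent filtration on $D_\lambda \otimes D_{\varepsilon_n}^{\otimes m}$, whose subquotients still satisfy the $\ell\le n$ condition and so can be fed back into the next iteration.

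For the second item, I would exploit the $S_d$-action on $D_{\varepsilon_n}^{\otimes d}$. In characteristic zero, the symmetrizer $e_S = \frac{1}{d!}\sum_{\sigma \in S_d}\sigma$ is an idempotent that commutes with the $\fb_n$-action and whose image is precisely $S^d D_{\varepsilon_n}$. Hence $D_{\varepsilon_n}^{\otimes d} \simeq S^d D_{\varepsilon_n} \oplus (1-e_S)D_{\varepsilon_n}^{\otimes d}$ in $\calC_n$, and tensoring with $D_\lambda$ exhibits $D_\lambda \otimes S^d D_{\varepsilon_n}$ as a direct summand of $D_\lambda \otimes D_{\varepsilon_n}^{\otimes d}$, which has an excellent filtration by the first item. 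Since $\Ext^1(-,\nabla_\mu)$ distributes over direct sums, the vanishing criterion of Lemma~\ref{lm::excellent} passes from the whole to each summand, giving $D_\lambda \otimes S^d D_{\varepsilon_n}$ an excellent filtration.

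For the third item, I would iterate the second item, using induction on $m$. Applying the second item to $D_\lambda \otimes S^{d_1}D_{\varepsilon_{n_1}}$ yields an excellent filtration whose subquotients are Demazure modules $D_\mu$ with $\mu \in \Ser_{d_1,\lambda}^{n_1}$, each a composition with support in $\{1,\dots,n_1\} \subset \{1,\dots,n_2\}$. The hypothesis of the second item is then satisfied for every $D_\mu \otimes S^{d_2}D_{\varepsilon_{n_2}}$, so Lemma~\ref{lm::TensoProductDecomposition} applied with $D = S^{d_2}D_{\varepsilon_{n_2}}$ produces an excellent filtration on $D_\lambda \otimes S^{d_1}D_{\varepsilon_{n_1}} \otimes S^{d_2}D_{\varepsilon_{n_2}}$ whose subquotients remain Demazure modules supported in $\{1,\dots,n_2\}\subset\{1,\dots,n_3\}$. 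Iterating completes the induction.

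The main obstacle lies in the second item, whose cleanest treatment hinges on the symmetrizer decomposition together with the inheritance of excellent filtrations by direct summands. A more elementary alternative would mimic the explicit construction of Proposition~\ref{filtrationOneBoxProduct} with generators indexed by serpentines, verifying Joseph's relations~\eqref{thmJosepf} and matching characters via~\eqref{eq:serp}; this would require selecting generating vectors in $D_\lambda \otimes S^dD_{\varepsilon_n}$ implementing the recursive algorithm of Lemma~\ref{lem::reordering}, which I expect to be the technically hardest part of any direct approach.
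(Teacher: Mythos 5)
Your proposal is correct and follows the same route as the paper: item one by induction from Proposition~\ref{filtrationOneBoxProduct} via Lemma~\ref{lm::TensoProductDecomposition}, item two by exhibiting $D_\lambda\otimes S^dD_{\varepsilon_n}$ as a direct summand of $D_\lambda\otimes D_{\varepsilon_n}^{\otimes d}$, and item three by iterating. The paper leaves the summand-to-excellent-filtration step implicit; you correctly fill it in by noting that the $\Ext^1$-vanishing criterion of Lemma~\ref{lm::excellent} is inherited by direct summands.
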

\begin{proof}
    The first claim follows by induction from Proposition \ref{filtrationOneBoxProduct} and Lemma \ref{lm::TensoProductDecomposition}.

The module $S^d D_{\varepsilon_n}$ is a direct summand in $D_{\varepsilon_n}^{\otimes d}$. Therefore, the module $D_\lambda \otimes S^d D_{\varepsilon_n}$ is a direct summand of
$D_\lambda \otimes D_{\varepsilon_n}^{\otimes d}$. Hence, it has an excellent filtration by the first claim.

The third claim follows from the second by induction.
\end{proof}
   
Consider now the natural inclusion $\fb_n\subset \fb_{n'}$ for $n'>n$. 
For any composition $\lambda$ of length not exceeding $n$ the restriction of the Demazure module $D_\lambda$ over $\fb_{n'}$ to $\fb_n$  is isomorphic to $D_\lambda$ over $\fb_n$. We use the notation $D_\lambda$ without mentioning the number $n$.
We call the description of the standard filtration on $D_\lambda \otimes S^d D_{\varepsilon_n}$ the Pieri rule for Demazure modules. 

Our next goal is to give some details on the combinatorics of the Pieri rule for Demazure modules. Recall that on the level of characters (key polynomials), the corresponding combinatorics is discussed in Section~\ref{subsection:Serpentines} in terms of serpentines (Definition~\ref{def: serp}). 
Let us assign to each $d$-serpentine $\mu$ associated with $\lambda$ the vector in the symmetric power of the Demazure module $D_{\varepsilon_n}$:
$$v^{\mu-\lambda}:=v_{1}^{\mu_1-\lambda_1}\cdot\ldots\cdot v_{n}^{\mu_n-\lambda_n}\in S^d(D_{\varepsilon_n}).$$

\begin{lem}
The monomials $v_\lambda \otimes v^{\mu-\lambda}$, $\mu\in\Ser^{n}_{d,\la}$,  generate the $\fb_n$-module $D_\lambda \otimes S^d D_{\varepsilon_n}$.
\end{lem}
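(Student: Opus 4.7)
We argue by induction on $d$. The base case $d=1$ is covered by Proposition~\ref{filtrationOneBoxProduct} together with Lemma~\ref{lem::D:D_epsilon}: under the bijection $i\mapsto\lambda+\varepsilon_i$ the set ${\rm m}_\lambda$ is identified with $\Ser^n_{1,\lambda}$, and the candidate generators $v_\lambda\otimes v^{\varepsilon_i}=v_\lambda\otimes v_i$ are precisely those produced by the cited results.

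For $d\ge 2$, consider the $\fb_n$-equivariant multiplication surjection
\[
\psi\colon D_\lambda\otimes D_{\varepsilon_n}\otimes S^{d-1}D_{\varepsilon_n}\twoheadrightarrow D_\lambda\otimes S^d D_{\varepsilon_n}.
\]
Proposition~\ref{filtrationOneBoxProduct} gives an excellent filtration of $D_\lambda\otimes D_{\varepsilon_n}$ whose subquotients are $D_{\lambda+\varepsilon_i}$ for $i\in{\rm m}_\lambda$, with cyclic generators lifted from $v_\lambda\otimes v_i$. Tensoring with $S^{d-1}D_{\varepsilon_n}$ and invoking Lemma~\ref{lm::TensoProductDecomposition}, the source of $\psi$ inherits an excellent filtration with subquotients $D_{\lambda+\varepsilon_i}\otimes S^{d-1}D_{\varepsilon_n}$; by the inductive hypothesis each of these is $\fb_n$-generated by vectors $v_{\lambda+\varepsilon_i}\otimes v^{\nu-\lambda-\varepsilon_i}$ with $\nu\in\Ser^n_{d-1,\lambda+\varepsilon_i}$. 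Lifting these through the filtration to $v_\lambda\otimes v_i\otimes v^{\nu-\lambda-\varepsilon_i}$ and pushing forward via $\psi$ shows that $D_\lambda\otimes S^d D_{\varepsilon_n}$ is $\fb_n$-generated by the vectors $\{v_\lambda\otimes v^{\nu-\lambda}:\nu\in T\}$ where $T:=\bigcup_{i\in{\rm m}_\lambda}\Ser^n_{d-1,\lambda+\varepsilon_i}$.

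It therefore remains to show that every $v_\lambda\otimes v^{\nu-\lambda}$ with $\nu\in T$ already lies in the $\fb_n$-submodule $N\subseteq D_\lambda\otimes S^d D_{\varepsilon_n}$ generated by the $d$-serpentine vectors. The inclusion $\Ser^n_{d,\lambda}\subseteq T$ follows from the iterated Pieri description behind~\eqref{AQ}: any $\mu\in\Ser^n_{d,\lambda}$ arises from a length-$d$ chain whose first step $\lambda\to\lambda+\varepsilon_{j_1}$ is a $1$-serpentine step with $j_1\in{\rm m}_\lambda$, so that $\mu\in\Ser^n_{d-1,\lambda+\varepsilon_{j_1}}$. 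For $\nu\in T\setminus\Ser^n_{d,\lambda}$, one locates an index pair $i<j$ where Definition~\ref{def: serp} fails (for instance $\lambda_i\le\lambda_j<\nu_i$) and selects a companion serpentine $\mu\in\Ser^n_{d,\lambda}$ obtained from $\nu$ by relocating one unit of excess from coordinate $i$ to coordinate $j$. The Leibniz rule then yields
\[
E_{ij}\bigl(v_\lambda\otimes v^{\mu-\lambda}\bigr)=c\cdot v_\lambda\otimes v^{\nu-\lambda}+\bigl(E_{ij}v_\lambda\bigr)\otimes v^{\mu-\lambda}
\]
with $c\neq 0$; a downward induction on the Cherednik order on weights places the correction term $(E_{ij}v_\lambda)\otimes v^{\mu-\lambda}$ in $N$ a priori, whence $v_\lambda\otimes v^{\nu-\lambda}\in N$.

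\textbf{Main obstacle.} The delicate part is this last combinatorial bookkeeping: matching each non-serpentine $\nu\in T\setminus\Ser^n_{d,\lambda}$ with a serpentine $\mu$ and a root operator $E_{ij}$, and organising a well-founded induction in which the correction terms are controlled. The recursive half-bubble-sort description of serpentines in Lemma~\ref{lem::reordering}, which records precisely how a serpentine differs from $\lambda$ in each coordinate, is the essential combinatorial device that makes both the matching and the inductive absorption transparent.
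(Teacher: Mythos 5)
Your proposal takes a genuinely different route from the paper's. The paper argues directly: it fixes a monomial $v_\lambda \otimes v^b$, lets $n'$ be the smallest index with $b_{n'} \neq 0$, and reduces $b_{n'}$ to zero by applying powers of a carefully chosen root vector $E_{n'n''}$, using a double induction on $d$ and on $n'$. You instead route through the excellent-filtration machinery and the multiplication surjection $D_\lambda\otimes D_{\varepsilon_n}\otimes S^{d-1}D_{\varepsilon_n}\twoheadrightarrow D_\lambda\otimes S^d D_{\varepsilon_n}$, obtaining a candidate generating set $T=\bigcup_{i\in{\rm m}_\lambda}\Ser^n_{d-1,\lambda+\varepsilon_i}$, and then try to collapse $T$ down to $\Ser^n_{d,\lambda}$. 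Steps 1--4 are sound (and $T\supsetneq\Ser^n_{d,\lambda}$ does occur, e.g.\ for $n=2$, $\lambda=(1,0)$, $d=2$ one has $T=\{(3,0),(2,1),(1,2)\}$ while $\Ser^2_{2,(1,0)}=\{(3,0),(1,2)\}$), so step 5 is genuinely needed.

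The gap is in step 5, and it is more serious than a matter of bookkeeping. The Leibniz computation gives
\[
E_{ij}\bigl(v_\lambda\otimes v^{\mu-\lambda}\bigr)= (\mu-\lambda)_j\cdot v_\lambda\otimes v^{\nu-\lambda}+\bigl(E_{ij}v_\lambda\bigr)\otimes v^{\mu-\lambda},
\]
but the correction term $(E_{ij}v_\lambda)\otimes v^{\mu-\lambda}$ is not of the form $v_\lambda\otimes v^b$, so it is not a priori in the scope of the induction. Moreover, it carries the same total $\fh_n$-weight as $v_\lambda\otimes v^{\nu-\lambda}$ (the root $\varepsilon_i-\varepsilon_j$ is merely shifted from one tensor factor to the other), so a "downward induction on the Cherednik order on weights" has nothing to grip: the two summands lie in the same weight space. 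By Joseph's relations, $E_{ij}v_\lambda$ vanishes precisely when $\lambda_i\ge\lambda_j$, but your own candidate choice of $(i,j)$ is a pair where $\lambda_i\le\lambda_j<\nu_i$, which is exactly the regime where the correction can be nonzero. Separately, the claim that relocating one unit from $\nu$ always lands on a serpentine $\mu$ is unproved, and there is no argument that the procedure is well-founded. The paper sidesteps all of this by never leaving the slice $v_\lambda\otimes S^d D_{\varepsilon_n}$: it always picks $n'$ as the first nonzero index of $b$ and then either finds $n''>n'$ with $\lambda_{n''}\le\lambda_{n'}$ (so $E_{n'n''}v_\lambda=0$ and the correction disappears outright) or, when no such $n''$ exists, uses the $\msl_2$-relation $e_{n'n''}^{a+1}v_\lambda=0$ for the minimal gap $a$. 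That choice of root operator, tied to the structure of $\lambda$ rather than to where $\nu$ fails to be a serpentine, is the missing ingredient here.
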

\begin{proof}
Let us denote by $M$ the submodule generated by $v_\lambda\otimes v^{\mu-\lambda}$ with $\mu$ from the set of $d$-serpentines of $\lambda$.
Let us show that $M$ contains the space $v_\lambda \otimes S^d D_{\varepsilon_n}$.
The proof is by induction on the degree $d$ of symmetric power and on the number $0\leq n'\leq n$ of variables such that $v_{{s}}$, $s\leq n'$ do not appear in the monomials from $S^d(D_{\varepsilon_n})$ we are dealing with. 
The induction hypothesis is as follows: for all $d'\leq d$ and a given $n'\leq n$ we have:
$$\forall \ov{b}=(0,\ldots,0,b_{n'+1},\ldots,b_n) \colon |\ov{b}|=d \text{ one has } v_\lambda \otimes v^{\ov{b}} \in M \subset D_\lambda\otimes S^d(D_{\varepsilon_n}).$$
Suppose that there exists $n''>n'$ such that $\lambda_{n''}\leq\lambda_{n'}$. Then $E_{n',n''}v_\lambda=0$ and hence 
$$
v_{\lambda}\otimes v_{{n'}}^{b_{n'}}\cdot\ldots\cdot v_{n}^{b_n} = (E_{n'n''})^{b_{n'}}
\left(v_\lambda \otimes  
v_{{n'+1}}^{b_{n'+1}}\cdot\ldots\cdot v_{{n''}}^{b_{n''}+b_{n'}}
\cdot\ldots v_{n}^{b_n} \right).
$$
It remains to look at the case when $\forall n''>n'$ we have $\lambda_{n'}<\lambda_{n''}$.
   In this case $n' \in {\rm m}_\lambda$ by the definition of ${\rm m}_\lambda$.
Let $a=\min \{\lambda_{n''}, n''>\lambda\}-\lambda_{n'}$. Then by the induction assumption, all the elements of the form $v_\lambda \otimes \prod_{i\in n',\dots,n}v_{i}^{b_i}$, $b_{n'}\leq a$, belong to the module under consideration. Take $n''$ such that $\lambda_{n''}-\lambda_{n'}=a$. Then $E_{n',n''}^{a+1}v_{\lambda}=0$. Take $b_{n'}>a$. Then the element
$v_{\lambda}\otimes v_{{n'}}^{b_{n'}}\cdot\ldots\cdot v_{n}^{b_n}$ lies in the linear span of elements
\[(E_{n'n''})^{b_{n'}-i}
\left(v_\lambda \otimes  
v_{{n'}}^{i}\cdot\ldots\cdot v_{{n''}}^{b_{n''}+b_{n'}-i}
\cdot\ldots v_{n}^{b_n} \right),~ i=0, \dots,a\]
by the standard $\msl_2$ argument. This completes the proof.
\end{proof}

Note that the weight of the element $v_{\lambda} \otimes v^{\mu-\lambda}$ is equal to $\mu$ and we define the following filtration  on $D_\lambda \otimes S^d D_{\varepsilon_n}$ indexed by the set of serpentines:
\[
\mathcal G^{\mu}=\sum_{\mu' \succeq \mu, \mu' \in \Ser^n_{d,\la}}\U(\mathfrak{b}_n)(v_\lambda \otimes v^{\mu'-\lambda}),\ \mu\in \Ser^n_{d,\la}.
\]

\begin{thm}\label{thm:PieriRule}
(Pieri rule for Demazure modules).
    \[\mathcal{G}^{\mu}\left/ \sum _{\mu'\succ\mu,\mu'\in \Ser^n_{d,\la}}\mathcal{G}^{\mu'}\right.\simeq D_{\mu}.\]
\end{thm}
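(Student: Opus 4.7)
The plan is to mimic and extend the argument of Proposition~\ref{filtrationOneBoxProduct}. First I would observe that, by the preceding lemma, $v_\lambda\otimes v^{\mu-\lambda}$ maps to a cyclic vector of the subquotient $\mathcal{G}^{\mu}/\sum_{\mu'\succ\mu}\mathcal{G}^{\mu'}$: any generator $v_\lambda\otimes v^{\mu''-\lambda}$ with $\mu''\succ\mu$ is already zero in the quotient, and the remaining weights $\mu$ of the form $v_\lambda\otimes v^{\mu-\lambda}$ account for the cyclic vector. Since the weight of this vector is $\mu$, producing a surjection $D_\mu\twoheadrightarrow \mathcal{G}^{\mu}/\sum_{\mu'\succ\mu}\mathcal{G}^{\mu'}$ amounts to checking Joseph's defining relations of Theorem~\ref{thm::Joseph}.

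For a positive root $\alpha=\varepsilon_j-\varepsilon_k$ with $j<k$, I need to verify that
$$e_\alpha^{\,N+1}(v_\lambda\otimes v^{\mu-\lambda})\in \sum_{\mu'\succ\mu}\mathcal{G}^{\mu'},\qquad N=\max\{\mu_k-\mu_j,0\}.$$
Expanding via Leibniz, the result is a sum of terms $e_\alpha^{a}v_\lambda\otimes e_\alpha^{N+1-a}v^{\mu-\lambda}$, each of weight $\mu+(N+1)(\varepsilon_j-\varepsilon_k)$. The action of $e_\alpha$ on $v^{\mu-\lambda}$ converts factors $v_k$ into $v_j$, while the action on $v_\lambda$ is bounded by Joseph's relations for $D_\lambda$, namely $e_\alpha^{\max\{\lambda_k-\lambda_j,0\}+1}v_\lambda=0$. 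I would do case analysis according to the serpentine conditions in Definition~\ref{def: serp}. The third condition ($\lambda_j\le\lambda_k$ with $j<k$ forces $\mu_j\le\lambda_k$) controls the ``horizontal'' obstructions: it precisely prevents the expansion from producing terms that cannot be absorbed into higher serpentines, because whenever $\mu_j<\mu_k$ the nonzero terms after $(N+1)$-fold action already land on vectors $v_\lambda\otimes v^{\mu'-\lambda}$ with $\mu'$ strictly bigger than $\mu$ in Cherednik order (their non-increasing rearrangement dominates $\mu_+$, and even on the same level $\mu'_+=\mu_+$ the Bruhat element of $\mu'$ is strictly larger). The fourth condition ($\lambda_k<\lambda_j\le\mu_k\Rightarrow \mu_j=\lambda_j$) similarly handles the remaining ``switching'' terms: it forces $e_\alpha$ to act as $e_\alpha$ on $v_\lambda$ alone until the power exceeds Joseph's bound on $D_\lambda$, at which point the result vanishes.

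Next I would invoke the character identity $\kappa_\lambda(x)\,\kappa_{(0,\dots,0,d)}(x)=\sum_{\mu\in\Ser^n_{d,\lambda}}\kappa_\mu(x)$ from~\eqref{eq:serp}, together with the fact that $\mathrm{ch}\,S^d D_{\varepsilon_n}=\kappa_{(0,\dots,0,d)}$. By Proposition~\ref{StandardFiltrationTensor}, $D_\lambda\otimes S^d D_{\varepsilon_n}$ admits an excellent filtration, so the characters of the standard filtration's subquotients sum to this product. Every nonzero subquotient surjects onto by a Demazure module $D_\mu$ for some $\mu\in\Ser^n_{d,\lambda}$ by the preceding step, and character summation forces equality of total characters, so each surjection $D_\mu\twoheadrightarrow \mathcal{G}^{\mu}/\sum_{\mu'\succ\mu}\mathcal{G}^{\mu'}$ must be an isomorphism and no $\mu\in\Ser^n_{d,\lambda}$ is missing.

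The main obstacle will be the Joseph-relation verification, specifically the careful bookkeeping that every term produced by the Leibniz expansion of $e_\alpha^{N+1}(v_\lambda\otimes v^{\mu-\lambda})$ is either zero by Joseph's relations on $D_\lambda$, or is a scalar multiple of $v_\lambda\otimes v^{\mu'-\lambda}$ for some $\mu'\in \Ser^n_{d,\lambda}$ with $\mu'\succ\mu$. Identifying the correct serpentine $\mu'$ in each case (and in particular invoking Lemma~\ref{dominant WeightInequality} to promote weight shifts into the Cherednik order) is the delicate combinatorial step where the two nontrivial conditions in Definition~\ref{def: serp} earn their keep.
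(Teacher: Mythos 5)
Your proposal takes a genuinely different route from the paper's proof, and it is worth spelling out the contrast. The paper proves the theorem by comparing the explicit filtration $\mathcal{G}^{\mu}$ with the (abstract) standard filtration $(\mathcal{G}')^{\mu}$, using the fact — already secured in Proposition~\ref{StandardFiltrationTensor} — that the standard filtration is excellent with subquotients exactly $D_\mu$ for $\mu\in\Ser^n_{d,\lambda}$. The inclusion $\mathcal{G}^{\mu}\subset(\mathcal{G}')^{\mu}$ is automatic from the weights of the generators $v_\lambda\otimes v^{\mu'-\lambda}$, so one obtains an \emph{injection} of the $\mu$-subquotient into $D_\mu$ under a minimal-counterexample hypothesis; cyclicity (both sides are generated by a weight-$\mu$ vector and $\dim D_\mu[\mu]=1$) upgrades this injection to a bijection. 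No new Joseph-relation computation is performed for general $d$: the only place Joseph's relations are verified by hand is the one-box case, Proposition~\ref{filtrationOneBoxProduct}, and Proposition~\ref{StandardFiltrationTensor} bootstraps from there.

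You, by contrast, propose to prove the \emph{surjection} $D_\mu\twoheadrightarrow \mathcal{G}^{\mu}/\sum_{\mu'\succ\mu}\mathcal{G}^{\mu'}$ directly by re-verifying Joseph's relations for the cyclic image of $v_\lambda\otimes v^{\mu-\lambda}$, and then close the gap with the same character count. The logical skeleton is sound — a surjection from $D_\mu$ onto each subquotient, together with $\sum_{\mu\in\Ser}\kappa_\mu=\ch(D_\lambda\otimes S^d D_{\varepsilon_n})$ and the exhaustiveness of the filtration, does force every surjection to be an isomorphism and every subquotient to be nonzero. The problem is that the Joseph verification for general $d$ — showing that every term in the Leibniz expansion of $e_\alpha^{N+1}(v_\lambda\otimes v^{\mu-\lambda})$ either vanishes by the relations on $D_\lambda$ or lands in some $\mathcal{G}^{\mu'}$ with $\mu'\succ\mu$, and correctly identifying that $\mu'\in\Ser^n_{d,\lambda}$ and $\mu'\succ\mu$ in Cherednik order — is precisely what you acknowledge as ``the delicate combinatorial step.'' That step is left as a plan, not an argument: the one-box case already required a nontrivial case split in Proposition~\ref{filtrationOneBoxProduct}, and for $e_\alpha^{N+1-a}$ acting on a monomial $v^{\mu-\lambda}$ of degree $d$ the bookkeeping of where the resulting weight lives in the poset $(\Ser^n_{d,\lambda},\succ)$ is substantially harder, which is exactly why the paper chooses the injection-plus-excellence route instead. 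As written, your proposal identifies the correct surjection target and the correct character-theoretic endgame, but the core technical content of the theorem is deferred to a sketch, so this does not yet constitute a proof.
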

\begin{proof}
According to Proposition {\ref{prop:serp}} we have equality~\eqref{AQ} of characters:
\[\ch(D_\lambda \otimes S^d D_{\varepsilon_n})=\sum_{\mu \in \Ser^n_{d,\la}}\ch D_\mu.\]
By Proposition \ref{StandardFiltrationTensor} the module 
$D_\lambda \otimes S^d D_{\varepsilon_n}$ admits an excellent filtration. Therefore, the subquotients of this filtration are isomorphic to 
$D_\mu, \mu \in \Ser^n_{d,\la}$. Let
\[
(\mathcal{G}')^\mu=\sum_{\substack{x \in D_\lambda\otimes S^d D_{\varepsilon_n}\\ \wt(x)\succeq \mu}}\U(\fb_n)x
\]
be the component of the standard filtration on $D_\lambda\otimes S^dD_{\varepsilon_n}$.
By definition, we have $(\mathcal{G}')^\mu \supset \mathcal{G}^\mu$. Assume that for some $\mu \in \Ser^n_{d,\la}$ $(\mathcal{G}')^\mu \neq \mathcal{G}^\mu$ and $(\mathcal{G}')^{\mu'} = \mathcal{G}^{\mu'}$ for all $\mu' \succ \mu$. Then we have a natural injection 
\[\mathcal{G}^{\mu}\left/ \sum _{\mu'\succ\mu,\mu'\in \Ser^n_{d,\la}}\mathcal{G}^{\mu'}\right.\hookrightarrow D_{\mu}.\]

However, by the constructions both modules $\mathcal{G}^{\mu}\left/ \sum _{\mu'\succ\mu,\mu'\in \Ser^n_{d,\la}}\mathcal{G}^{\mu'}\right.$ and $D_{\mu}$ are cyclic generated by the element of weight $\mu$.
Therefore, this injection is in fact a bijection which contradicts the assumption. Therefore, $(\mathcal{G}')^{\mu'} = \mathcal{G}^{\mu'}$ for all $\mu'\in \Ser^n_{d,\la}$.
\end{proof}

We consider the modules $D_{\varepsilon_{n_i}}$ for $0<n_1\leq \dots\leq n_m$. Let 
$\{v_{1,l},\dots,v_{n_l,l}\}$ be the basis of 
$D_{\varepsilon_{n_l}}$. As before with each $d$-serpentine $\mu$ associated with $\lambda$, we assign a vector in the symmetric power of the Demazure module
$$v_l^{\mu-\lambda}:=v_{1,l}^{\mu_1-\lambda_1}\cdot\ldots\cdot v_{n_l,l}^{\mu_{n_l}-\lambda_{n_l}}\in S^{d}(D_{\varepsilon_{n_l}}).$$

\begin{cor}
\label{ProductFiltrationBasis}
    The elements
\begin{equation} \label{eq:elmenv}
v_1^{\mu^{(1)}}v_2^{\mu^{(2)}-\mu^{(1)}}\dots v_k^{\mu^{(m)}-\mu^{(m-1)}},
\end{equation}
where $\mu^{(i+1)}\in\Ser^{n_{i+1}}_{d_{i+1},\mu^{(i)}}$,
form a minimal generating set of the associated graded to the standard filtration of the module 
\begin{equation}\label{eq:tensS}
S^{d_1}D_{\varepsilon_{n_1}}  \otimes S^{d_2}D_{\varepsilon_{n_2}}  \otimes\dots \otimes S^{d_m}D_{\varepsilon_{n_m}}.
\end{equation}
Moreover, the classes of the elements $v_1^{\mu^{(1)}}v_2^{\mu^{(2)}-\mu^{(1)}}\dots v_k^{\mu^{(m)}-\mu^{(m-1)}}$ generate $D_{\mu^{(m)}}$.
\end{cor}
\begin{proof}
By Corollary \ref{lem:standexc}, the standard filtration on \eqref{eq:tensS} is excellent. 
Theorem \ref{thm:PieriRule} says that there exists an excellent filtration on 
 \eqref{eq:tensS} such that in the associated graded module  
each element \eqref{eq:elmenv} generates the corresponding Demazure module $D_\mu$. However, by definition,
the element \eqref{eq:elmenv} in the associated graded space to the standard filtration generates a quotient of $D_\mu$. Hence, comparing the characters, vector \eqref{eq:elmenv}  generates exactly $D_\mu$ and all these vectors 
together generate the whole (graded) tensor product \eqref{eq:tensS}.
\end{proof}

\section{Bimodule of staircase matrices and right standard filtration}
\label{sec::right::Cauchy}
\subsection{Bimodule of staircase matrices}
\label{sec::bimodule}
We continue to use notation introduced in Section~\ref{sec::combinatorics} and fix a non-decreasing collection of integer numbers $\overline{n}:=(n_1\leq \ldots \leq n_m)$ with $n_1>0$.
The Young diagram $\bbY_{\overline{n}}$ defines the space of staircase matrices $\Mat_{\overline{n}}$ consisting  of linear functions $A:\bbY_{\overline{n}}\to\Bbbk$. In other words, $\Mat_{\overline{n}}$ is the subspace of
rectangular $n_m\times m$ matrices whose entries vanish outside $\bbY_{\overline{n}}$. 

The space $\Mat_{\overline{n}}$ is acted from the left by the Borel subalgebra $\fb_{n_m}$ and from the right by the Borel subalgebra $\fb_m$ of upper triangular matrices (by left and right multiplication). 
These actions commute, so we obtain a bimodule structure. In what follows we are interested in the bimodule $S(\Mat_{\overline{n}})$ (the symmetric space of $\Mat_{\ov{n}}$).

Let $v_{i,j}\in \Mat_{\overline{n}}$, $(i,j)\in \bbY_{\overline{n}}$ be the matrix units forming a basis of $\Mat_{\overline{n}}$ (i.e. $v_{i,j}$ sends the cell $(i,j)$ to $1$ and all other elements to zero). The space
$S^\bullet(\Mat_{\overline{n}})$ is spanned by elements   of the form $\prod_{i,j} v_{i,j}^{r_{i,j}}$ for all possible collections
of nonnegative integers $r_{i,j}$.

\begin{rem}
Let $E_{i,j}\subset \fb$ be a matrix unit from the upper triangular Borel subalgebra. Then 
\[
E_{i,j} v_{a,b} = \delta_{j,a} v_{i,b},\qquad v_{a,b} E_{i,j} = \delta_{b,i} v_{a,j}.
\]
\end{rem}

Recall (see Definition~\ref{def:arrays}) that a collection $R$ of nonnegative integers $r_{i,j}$, $(i,j)\in \bbY_{\overline{n}}$ is called an array.
For an array $R$ we denote 
$$v^R=\prod_{i,j} v_{i,j}^{r_{i,j}}\in S^\bullet(\Mat_{\overline{n}}).$$
One has the following equalities for the action of the diagonal matrix units on the symmetric algebra:
    \[E_{ii}v^R=(\hor(R))_i v^R,\ v^R E_{ii}=(\vrt(R))_i v^R.\]
\begin{example}
 Let $\overline{n}=(1,2,3)$  and all $r_{i,j}=1$, $(i,j)\in \bbY_{\overline{n}}$. Then 
 \[
 v^R = v_{1,1}v_{1,2}v_{1,3}v_{2,1}v_{2,2}v_{3,1},\ 
 E_{ii}v^R= (4-i)v^R,\ v^RE_{i,i} = iv^R.
 \]
\end{example}
    
\begin{dfn}        
The left $\fh_n$-weight  $\lambda$ subspace 
$_\lambda S(\Mat_{\overline{n}})$ of $S(\Mat_{\overline{n}})$  is spanned by the elements $v^R$ with $\hor(R)=\lambda$.
Respectively, the right $\fh_n$-weight  ${\overline d}$ subspace $ S(\Mat_{\overline{n}})_{\overline d}$ of $S(\Mat_{\overline{n}})$  is spanned by the elements $v^R$ with $\vrt(R)={\overline d}$.
The notation  ${_\lambda S}(\Mat_{\overline{n}})_{\overline d}$ is used for the left-right weight $(\lambda,{\overline d})$ subspace of $S(\Mat_{\overline{n}})$.
\end{dfn}

We have the following decomposition of the left $\fb_{n_m}$-modules
\[S(\Mat_{\overline{n}})=\bigoplus_{\overline d} S(\Mat_{\overline{n}})_{\overline d}.\]
Indeed, the left and the right actions commute. Therefore, each right-weight component is closed under the left action. In the same way, we have the decomposition of the right $\fb_m$-modules
\[S(\Mat_{\overline{n}})={\bigoplus_\lambda} {_\lambda S(\Mat_{\overline{n}})}.\]

\subsection{Excellent filtrations for the left and right actions}
\label{sec::left-right-fibration}
The goal of this subsection is to study the basic properties of the left and right standard filtrations on the bimodule $S(\Mat_{\overline{n}})$. By definition, we have the following left and right standard filtrations with respect to the Cherednik order:
\begin{gather}\label{LeftFIltrationBimodule}
^\lambda\mathcal{F}:=\U(\fb_{n_m})\left({\bigoplus_{\mu {\not \preceq} \lambda}}{_\mu S(\Mat_{\overline{n}})}\right);\\
\label{RightFIltrationBimodule}
\mathcal{F}^{\overline d}:=\left({\bigoplus_{{\overline c} {\not\preceq} {\overline d}}}{S(\Mat_{\overline{n}})_{\overline c}}\right)\U(\fb_{m}).
\end{gather}
In the above formulas, 
the direct sums are taken over all $\mu$ and  ${\overline c}$, which are not less than or equal to $\la$ and ${\overline d}$, respectively. 
Note that the upper left and right indices label the filtration spaces, and the lower (left and right) indices indicate the weight decomposition.

We write explicitly the right weight decomposition of the spaces  $ ^\lambda\mathcal{F}$ and the left weight decomposition of the spaces $\mathcal{F}^{\overline d}$:
\[
(^\lambda\mathcal{F})_{\overline d}:=\U(\fb_{n_m})\left({\bigoplus_{\mu {\not\preceq} \lambda}}{_\mu S(\Mat_{\overline{n}})}_{\overline d}\right),\qquad
_\lambda(\mathcal{F}^{\overline d}):=\left({\bigoplus_{\overline c {\not\preceq} \overline d}}{_\lambda S(\Mat_{\overline{n}})_{\overline c}}\right)\U(\fb_m).
\]
We note that the right-hand sides are the left standard filtration of the space 
$S(\Mat_{\overline{n}})_{\overline d}$ and the right standard filtration of the space  ${_\lambda S}(\Mat_{\overline{n}})$.

By definition, we have the following isomorphism:
\begin{equation}
    S(\Mat_{\overline{n}})_{\overline d}\simeq S^{d_1}D_{\varepsilon_{n_1}}  \otimes S^{d_2}D_{\varepsilon_{n_2}}  \otimes\dots \otimes S^{d_m}D_{\varepsilon_{n_m}}.
\end{equation}

Therefore, Proposition \ref{StandardFiltrationTensor} tells us that the summand $S(\Mat_{\overline{n}})_{\overline d}$ admits the left excellent filtration.
Corollary  \ref{ProductFiltrationBasis} can be reformulated in the following way:
\begin{prop}
    The monomial elements
\[v_1^{\mu^{(1)}}v_2^{\mu^{(2)}-\mu^{(1)}}\dots v_k^{\mu^{(m)}-\mu^{(m-1)}}, \text{ such that } \forall j \ \mu^{(j+1)}\in\Ser^{n_{j+1}}_{d_{j+1},\mu^{(j)}}\]
    form a basis of the standard filtration of the left module $S(\Mat_{\overline{n}})_{\overline d}$.
\end{prop}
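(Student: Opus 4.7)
The plan is to recognize this proposition as a direct translation of Corollary~\ref{ProductFiltrationBasis} via the natural $\fb_{n_m}$-module isomorphism
\[
S(\Mat_{\overline{n}})_{\overline d}\;\simeq\; S^{d_1}D_{\varepsilon_{n_1}} \otimes S^{d_2}D_{\varepsilon_{n_2}} \otimes \cdots \otimes S^{d_m}D_{\varepsilon_{n_m}}
\]
stated immediately above. First I would record this identification explicitly: the basis vectors $v_{1,j},\ldots,v_{n_j,j}$ of the $j$-th column of $\Mat_{\overline{n}}$ span a copy of $D_{\varepsilon_{n_j}}$ inside $\Mat_{\overline{n}}$, the left $\fb_{n_m}$-action restricts to the Demazure action on this subspace, and fixing the right $\fh_m$-weight to $\overline d=(d_1,\ldots,d_m)$ means restricting to the subspace $S^{d_1}(\text{col }1)\otimes\cdots\otimes S^{d_m}(\text{col }m)$. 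Commutation of the left and right actions guarantees that this right-weight space is closed under the left $\fb_{n_m}$-action, and the identification is manifestly equivariant.

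Next I would track the monomials under this identification. The element $v_1^{\mu^{(1)}}v_2^{\mu^{(2)}-\mu^{(1)}}\cdots v_m^{\mu^{(m)}-\mu^{(m-1)}}$ of $S(\Mat_{\overline{n}})$ is a product of column-wise factors, and each factor $v_j^{\mu^{(j)}-\mu^{(j-1)}}=\prod_i v_{i,j}^{\mu^{(j)}_i-\mu^{(j-1)}_i}$ lies in $S^{d_j}D_{\varepsilon_{n_j}}$ because by the definition of a serpentine $\mu^{(j)}\in\Ser^{n_j}_{d_j,\mu^{(j-1)}}$ the total degree of that factor equals $\sum_i(\mu^{(j)}_i-\mu^{(j-1)}_i)=d_j$. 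Therefore, under the isomorphism, these monomials match exactly the elements $v_1^{\mu^{(1)}}\otimes v_2^{\mu^{(2)}-\mu^{(1)}}\otimes\cdots\otimes v_m^{\mu^{(m)}-\mu^{(m-1)}}$ that appear in Corollary~\ref{ProductFiltrationBasis}.

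Finally, I would invoke Corollary~\ref{ProductFiltrationBasis} directly: it states that exactly these elements form a minimal generating set of the associated graded of the standard filtration, with each class generating a Demazure submodule $D_{\mu^{(m)}}$. Combined with Proposition~\ref{StandardFiltrationTensor} (which guarantees the existence of an excellent filtration and hence, by Corollary~\ref{lem:standexc}, that the standard filtration itself is excellent), the character count reduces to the iterated Pieri identity
\[
\prod_{j=1}^{m} s_{(d_j,0,\ldots,0)}(x_1,\ldots,x_{n_j}) \;=\; \sum_{\mu^{(1)},\ldots,\mu^{(m)}} \kappa_{\mu^{(m)}}(x_1,\ldots,x_{n_m}),
\]
summed over chains $\mu^{(j)}\in\Ser^{n_{j+1}}_{d_{j+1},\mu^{(j)}}$, which confirms that the indexed family of monomials is in bijection with the subquotients of the standard filtration and that no cyclic generator is missing.

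The proof is essentially a bookkeeping step rather than a new argument, so I do not expect real obstacles. The only delicate point is to be careful that ``basis of the standard filtration'' is interpreted as ``a collection of one cyclic generator per non-trivial subquotient'' and that the bijection between serpentine chains and subquotients is tight, i.e.\ distinct chains produce distinct $\mu^{(m)}$'s exactly when their subquotients are distinct. This is ensured by Theorem~\ref{thm:PieriRule} applied recursively, which identifies the labels of subquotients at each step of the iterated tensor product with the serpentine set $\Ser^{n_{j+1}}_{d_{j+1},\mu^{(j)}}$.
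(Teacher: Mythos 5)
Your proof is correct and follows essentially the same route as the paper: the statement is presented there as a direct reformulation of Corollary~\ref{ProductFiltrationBasis} through the isomorphism $S(\Mat_{\overline{n}})_{\overline d}\simeq S^{d_1}D_{\varepsilon_{n_1}} \otimes \cdots \otimes S^{d_m}D_{\varepsilon_{n_m}}$, which is precisely the bookkeeping you carry out. One small caution in your closing remark: the bijection is between serpentine \emph{chains} and subquotients, not between final weights $\mu^{(m)}$ and subquotients (distinct chains may end at the same $\mu^{(m)}$, contributing the multiplicity of $D_{\mu^{(m)}}$), but Corollary~\ref{ProductFiltrationBasis} already records this correctly so the argument is unaffected.
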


Now consider the filtrations $^\lambda\mathcal{F}$, $\mathcal{F}^{\overline d}$. The next lemma tells us that these filtrations are, in fact, bimodule filtrations.

\begin{lem}\label{lem:standardfiltrationbimodule}
The left $\fb_{n_m}$-module ${^\lambda}\mathcal{F}$ is closed under the right $\fb_m$-action, i. e. ${^\lambda}\mathcal{F} \U(\fb_m)={^\lambda}\mathcal{F}$.  
The right $\fb_m$-module $\mathcal{F}{^{\overline d}}$ is closed under the left $\fb_{n_m}$-action, 
i. e. $\U(\fb_{n_m})\mathcal{F}{^{\overline d}}=\mathcal{F}{^{\overline d}}$.
\end{lem}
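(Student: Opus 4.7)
The plan is to treat the two statements in parallel, noting first that they are interchanged by swapping the roles of left and right, so it suffices to establish the first one and then obtain the second by a symmetric argument. Moreover, half of each assertion is already tautological: the equality $\mathcal{F}^{\overline d}\U(\fb_m)=\mathcal{F}^{\overline d}$ follows immediately from the definition \eqref{RightFIltrationBimodule}, and likewise $\U(\fb_{n_m}){}^\lambda\mathcal{F}={}^\lambda\mathcal{F}$ is built into \eqref{LeftFIltrationBimodule}. What actually must be proved is that $\mathcal{F}^{\overline d}$, defined as a right $\fb_m$-submodule, is simultaneously a left $\fb_{n_m}$-submodule (and, dually, that ${}^\lambda\mathcal{F}$ is a right $\fb_m$-submodule).

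The key observation will be that the left $\fb_{n_m}$-action and the right $\fb_m$-action on $S(\Mat_{\overline n})$ commute, essentially by associativity of matrix multiplication. In particular the left action of $\U(\fb_{n_m})$ commutes with the right Cartan action of $\fh_m$, so it preserves each right weight subspace $S(\Mat_{\overline n})_{\overline c}$: for $Y\in\fb_{n_m}$, $x\in S(\Mat_{\overline n})_{\overline c}$ and $E_{ii}\in\fh_m$ one has $(Y\cdot x)E_{ii}=Y\cdot(xE_{ii})=c_i\,(Y\cdot x)$. Hence the subspace $N:=\bigoplus_{\overline c\succeq\overline d}S(\Mat_{\overline n})_{\overline c}$ is already stable under the left action of $\U(\fb_{n_m})$.

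The final step is to enlarge $N$ by the right action of $\U(\fb_m)$, and here one invokes the general bi-module fact that the right $\U(\fb_m)$-submodule generated by a left $\U(\fb_{n_m})$-submodule is again a left $\U(\fb_{n_m})$-submodule: for $Y\in\U(\fb_{n_m})$, $v\in N$ and $Z\in\U(\fb_m)$ one has $Y(vZ)=(Yv)Z\in N\U(\fb_m)$, since $Yv\in N$. Applied to the case at hand this yields $\U(\fb_{n_m})\cdot\mathcal{F}^{\overline d}\subset\mathcal{F}^{\overline d}$, which is what we want. The argument for ${}^\lambda\mathcal{F}$ is entirely dual, using that the right $\fb_m$-action commutes with the left Cartan action of $\fh_{n_m}$ and therefore preserves each left weight subspace ${}_\mu S(\Mat_{\overline n})$. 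I do not anticipate any serious obstacle: the lemma is a clean consequence of the commuting left/right structure on $S(\Mat_{\overline n})$ together with the semisimplicity of the Cartan actions.
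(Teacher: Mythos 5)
Your proof is correct and follows essentially the same approach as the paper's: both use that the left $\fb_{n_m}$-action and the right $\fb_m$-action commute, so each right weight space $S(\Mat_{\overline n})_{\overline c}$ is a left $\fb_{n_m}$-submodule, and the right $\U(\fb_m)$-submodule generated by such a left-invariant subspace is automatically a bi-submodule. Your write-up is actually more carefully phrased than the paper's (whose proof contains a left/right slip in the wording), and you correctly flag which halves of the statement are tautological from the definitions \eqref{LeftFIltrationBimodule} and \eqref{RightFIltrationBimodule}.
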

\begin{proof}
Each subspace ${S(\Mat_{\overline{n}})_{\overline d}}$ is the right submodule. Therefore, the left module generated by the sum of these subspaces is closed under the right action. This completes the proof of the first part of the lemma. The proof of the second part is similar.
\end{proof}

\subsection{Right standard filtration}
Proposition \ref{StandardFiltrationTensor} tells us that the module $S(\Mat_{\overline n})$ as the left module admits a filtration whose subquotients are Demazure modules.
Our goal here is to study the structure of $S(\Mat_{\overline n})$ as a right module. 

Recall that the Borel subalgebra $\fb_m$ of upper triangular matrices acts on $S(\Mat_{\overline n})$ by right multiplication.
To a collection ${\overline n}$ we attach a transposed collection $\ov{n}^{\tau}$ of lengths of rows of the corresponding Young diagram (see \eqref{eq:transposed}).
We note that the Borel subalgebra $\fb_{m}$ acts on 
$\Mat_{{\overline n}^{\tau}}$ by the left multiplication.

Given a matrix 
$A\in \Mat_{\overline n}$ we define the (dual) matrix $A'\in \Mat_{{\overline n}^{\tau}}$ by the formula
$A'_{i,j}=A_{n_m-j+1,m-i+1}$. 
We introduce a similar notation for the elements $x\in\fb_{m}$. Namely, we denote by $x'$ the element  with the entries $x'_{i,j}$ given by
$x'_{i,j}=x_{m-j,m-i}$
Then for $x\in\fb_{m}$ and  $A\in \Mat_{\overline n}$ one has $(Ax)'=x' A'$.
Hence, Proposition \ref{StandardFiltrationTensor} implies the following Corollary.

\begin{cor}
 The right  $\fb_m$-module $S(\Mat_{\overline n})$ admits a filtration by $\fb_m$-modules such that the graded pieces of the associated
 graded space are isomorphic to the right Demazure modules. 
\end{cor}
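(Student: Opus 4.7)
The plan is to transfer the left-module statement of Proposition \ref{StandardFiltrationTensor} across the dualization $A \mapsto A'$, $x \mapsto x'$ introduced in the paragraph preceding the corollary. The identity $(Ax)' = x'A'$ shows that the assignment $A \mapsto A'$ is a $\Bbbk$-linear isomorphism $\Mat_{\ov{n}} \xrightarrow{\sim} \Mat_{\ov{n}^{\tau}}$ intertwining the right $\fb_m$-action on the source with the left $\fb_m$-action on the target, provided we twist the latter by the anti-automorphism $x \mapsto x'$ of $\fb_m$ (which is, essentially, the standard anti-involution of the upper-triangular Borel coming from the longest element $w_0$). Extending multiplicatively, this produces an isomorphism of right $\fb_m$-modules
\[
S(\Mat_{\ov{n}}) \;\simeq\; S(\Mat_{\ov{n}^{\tau}})^{\mathrm{op}},
\]
where the right-hand side denotes $S(\Mat_{\ov{n}^{\tau}})$ viewed as a right $\fb_m$-module via the anti-automorphism.

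Second, I would apply Proposition \ref{StandardFiltrationTensor} to $S(\Mat_{\ov{n}^{\tau}})$ in its natural left-module incarnation. Decomposing $S(\Mat_{\ov{n}^{\tau}})$ by its right weight (the vertical weight for $\ov{n}^{\tau}$, which under the duality corresponds to the left/horizontal weight for $\ov{n}$) yields a direct sum of left $\fb_m$-modules of the form
\[
S^{e_1} D_{\varepsilon_{n^{\tau}_1}} \otimes S^{e_2} D_{\varepsilon_{n^{\tau}_2}} \otimes \cdots \otimes S^{e_{n_m}} D_{\varepsilon_{n^{\tau}_{n_m}}},
\]
each of which admits an excellent filtration with left Demazure subquotients by the third bullet of Proposition \ref{StandardFiltrationTensor} (the lengths $n^{\tau}_1 \leq \cdots \leq n^{\tau}_{n_m}$ are non-decreasing by the definition of the transposed partition, and any seed composition $\lambda$ of the required length is available since we may take $\lambda = (0)$). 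Assembling these over all right weights produces a filtration of the left $\fb_m$-module $S(\Mat_{\ov{n}^{\tau}})$ whose subquotients are left Demazure modules $D_\mu$.

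Third, I would pull this filtration back along the isomorphism of the first step. Since the anti-automorphism $x \mapsto x'$ sends left Demazure modules to right (opposite) Demazure modules $D_\mu^{op}$, the pullback filtration has subquotients of the required form, proving the corollary.

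The only real subtlety is bookkeeping: one has to check that the anti-involution $x \mapsto x'$ agrees (up to the identification of weights by $w_0$) with the standard one used to define the opposite Demazure modules $D_\mu^{op}$ in Section \ref{sec::Cher::standard}, so that the subquotients of the pulled-back filtration are genuinely right Demazure modules and not some twisted cousin. This is immediate from the formula $x'_{i,j} = x_{m-j,m-i}$, which is exactly the conjugation by $w_0$ used to pass between $D_\mu$ and $D_\mu^{op}$. With this identification made, the statement follows directly; no further combinatorial input is needed at this stage.
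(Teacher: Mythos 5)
Your proposal is correct and follows exactly the route the paper intends: transport the left excellent filtration of $S(\Mat_{\ov{n}^{\tau}})$ provided by Proposition \ref{StandardFiltrationTensor} through the anti-transposition $A\mapsto A'$, $x\mapsto x'$, using the intertwining identity $(Ax)'=x'A'$ to turn left Demazure subquotients into opposite (right) Demazure modules. The paper states this only implicitly in the paragraph preceding the corollary and then declares the result; you have simply spelled out the same argument in full, including the correct bookkeeping check that the anti-involution used in the transposition is the same one defining $D_\mu^{op}$.
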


\begin{prop}
The right standard filtration on  $S(\Mat_{\overline n})$ is excellent.
\end{prop}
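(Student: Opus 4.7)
The plan is to apply Corollary~\ref{lem:standexc}: once $S(\Mat_{\ov{n}})$, viewed as a right $\fb_m$-module, is shown to admit \emph{some} excellent filtration in $\calC_m^{op}$, the right standard filtration $\mathcal{F}^{\ov{d}}$ is automatically excellent. Equivalently, by Lemma~\ref{lm::excellent}, it suffices to verify $\Ext^1_{\calC_m^{op}}(S(\Mat_{\ov{n}}), \nabla) = 0$ for every costandard $\nabla$ in $\calC_m^{op}$. To stay within the formalism of finite-dimensional modules, one works on each left-weight component ${_\lambda S}(\Mat_{\ov{n}})$, which is a finite-dimensional right $\fb_m$-submodule, and assembles the result via the inverse-limit formalism sketched in Section~\ref{sec::HWC}.

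The strategy is to refine the filtration supplied by the preceding corollary, whose graded pieces are right Demazure modules $D^{op}_\mu$, into a finer filtration whose graded pieces are right van der Kallen modules $K^{op}_\nu$. The missing ingredient is that each right Demazure module itself admits an excellent filtration in $\calC_m^{op}$: choosing a linear extension $\nu_1 \prec \nu_2 \prec \cdots \prec \nu_N = \mu$ of the Cherednik order on $\{\nu \in W\mu_+ \colon \nu \preceq \mu\}$ and setting $M_j := \sum_{i \leq j} D_{\nu_i}$, one has $M_j / M_{j-1} \cong K_{\nu_j}$ by the very definition of $K_{\nu_j}$ together with the nesting $D_\nu \subset D_{\nu'}$ for $\nu \preceq \nu'$ recalled in Section~\ref{sec::Cher::standard}. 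Passing to the opposite action yields the desired filtration of $D^{op}_\mu$ by the $K^{op}_{\nu_j}$.

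Splicing these two filtrations --- the outer one on $S(\Mat_{\ov{n}})$ from the preceding corollary, the inner ones refining each Demazure subquotient --- produces an excellent filtration of $S(\Mat_{\ov{n}})$ in $\calC_m^{op}$. Corollary~\ref{lem:standexc} then yields the claim for the right standard filtration.

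The main obstacle lies in justifying the identification $M_j / M_{j-1} \cong K_{\nu_j}$ in the Demazure-to-van-der-Kallen step, which amounts to the intersection identity $D_{\nu_j} \cap \sum_{i<j} D_{\nu_i} = \sum_{\nu \prec \nu_j,\, \nu \in W\mu_+} D_\nu$. The inclusion $\supseteq$ is immediate from the Cherednik-order nesting of Demazure submodules; the reverse inclusion is a classical statement about Demazure filtrations of Demazure modules, consistent with the highest weight structure on $\calC_m$ recorded in Theorem~\ref{lem:standard}. Once this is in hand, refinement of filtrations and application of Corollary~\ref{lem:standexc} are routine.
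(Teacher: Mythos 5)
Your proof takes essentially the same route as the paper: both start from the preceding corollary (filtration of $S(\Mat_{\ov{n}})$ by right Demazure modules), refine each Demazure piece by a van der Kallen filtration to obtain a right excellent filtration, and then invoke Corollary~\ref{lem:standexc} to conclude that the standard filtration itself is excellent. The only difference is that the paper simply asserts ``each Demazure module has a filtration by van der Kallen modules'' as a known consequence of the highest weight structure from Theorem~\ref{lem:standard} (van der Kallen), whereas you unpack this step explicitly, pinning down the intersection identity $D_{\nu_j} \cap \sum_{i<j} D_{\nu_i} = \sum_{\nu \prec \nu_j} D_\nu$ as the nontrivial point --- a useful clarification but not a change of method.
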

\begin{proof}
    Each Demazure module has a filtration by van der Kallen modules. Therefore, by the previous corollary, we get that $S(\Mat_{\overline n})$ admits a filtration by right $\fb_{m}$-modules such that the graded pieces of the associated graded module are isomorphic to van der Kallen modules. However, by Theorem \ref{lem:standard}, the standard modules for the right action are the van der Kallen modules. Therefore, $S(\Mat_{\overline n})$ admits a right excellent filtration.
Finally, by Corollary \ref{lem:standexc}, the existence of an excellent filtration implies that the standard filtration is excellent. 
\end{proof}

By Lemma \ref{lem:standardfiltrationbimodule} the components of the right standard filtration $\mathcal{F}^{\overline d}$ are in fact $\fb_{n_m}\de\fb_{m}$-bimodules. Therefore, the subquotients $\mathcal{F}^{\overline d}\left/ \sum_{{\overline c}\succ {\overline d}}\mathcal{F}^{\overline c} \right.$ of this filtration also have the structure of 
$\fb_{n_m}\de\fb_{m}$-bimodules. However, the right standard filtration on  $S(\Mat_{\overline n})$ is excellent. Therefore, we have
\begin{equation}\label{eq:RightFiltrationSubquotient}
\mathcal{F}^{\overline d}\left/ \sum_{{\overline c} \succ {\overline d}}\mathcal{F}^{\overline c} \right. \simeq D \otimes K_{\overline d}^{op} 
\end{equation}
where the multiplicity space $D$ has the structure of 
$\fb_{n_m}$- module. Our next goal is to study this module.

\begin{lem}\label{lem:multiplicity}
The multiplicity space $D$ as a left $\fb_{n_m}$-module admits the following description: 
\[
D \simeq S(\Mat_{\overline n})_{\overline d}\left/ S(\Mat_{\overline n})_{\overline d}\bigcap \sum_{\overline c\succ \overline d}\mathcal{F}^{\overline c} \right. .
\]
\end{lem}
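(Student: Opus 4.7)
The plan is to extract $D$ as the right-weight-$\overline d$ subspace of the subquotient and then match it with the claimed quotient of $S(\Mat_{\overline n})_{\overline d}$. The starting point is the remark that in $K_{\overline d}^{op}$ the weight-$\overline d$ subspace $(K_{\overline d}^{op})_{\overline d}$ is one-dimensional, being spanned by the image of the cyclic generator (the weight $\overline d$ piece of $V_{\overline d_+}$ itself is one-dimensional, hence so are those of the Demazure module $D_{\overline d}^{op}$ and its van der Kallen quotient). Consequently, taking the right-$\overline d$ weight piece of the isomorphism \eqref{eq:RightFiltrationSubquotient} gives a natural isomorphism of left $\fb_{n_m}$-modules
\[
D \ \simeq \ \left(\mathcal{F}^{\overline d}\Big/\sum_{\overline c\succ \overline d}\mathcal{F}^{\overline c}\right)_{\overline d}.
\]
Since left and right actions commute, the right-weight decomposition is preserved by the left action, so this identification is indeed $\fb_{n_m}$-linear.

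The main step is then to compute the right-weight-$\overline d$ component of the filtration. I would prove the equality
\[
(\mathcal{F}^{\overline d})_{\overline d} \ = \ S(\Mat_{\overline n})_{\overline d} \ + \ \Bigl(\sum_{\overline c\succ \overline d}\mathcal{F}^{\overline c}\Bigr)_{\overline d}.
\]
The inclusion $\supseteq$ is immediate. For $\subseteq$, take any element of $(\mathcal{F}^{\overline d})_{\overline d}$ and, using the generating formula for $\mathcal{F}^{\overline d}$, write it as a sum of terms $v_{\overline e}\cdot x_{\overline e}$ with $v_{\overline e}\in S(\Mat_{\overline n})_{\overline e}$, $\overline e\succeq \overline d$, and $x_{\overline e}$ a PBW monomial in $\U(\fb_m)$. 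A monomial $E_{i_1 j_1}\cdots E_{i_s j_s}$ (with $i_l\le j_l$) shifts the right weight by $\sum_{l}(\varepsilon_{j_l}-\varepsilon_{i_l})$, so for a term to contribute to weight $\overline d$ we need $\overline e-\overline d=\sum_l (\varepsilon_{i_l}-\varepsilon_{j_l})$ with $i_l\le j_l$. In the case $\overline e=\overline d$ this forces $x_{\overline e}\in\U(\fh_m)$, whence $v_{\overline e}\cdot x_{\overline e}\in S(\Mat_{\overline n})_{\overline d}$; in the case $\overline e\succ \overline d$ the term lies in $\mathcal{F}^{\overline e}\subseteq \sum_{\overline c\succ \overline d}\mathcal{F}^{\overline c}$.

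Once this decomposition is in hand, the lemma follows from the second isomorphism theorem: writing $\Sigma:=\sum_{\overline c\succ \overline d}\mathcal{F}^{\overline c}$, we have
\[
\left(\mathcal{F}^{\overline d}\Big/\Sigma\right)_{\overline d} \ = \ (\mathcal{F}^{\overline d})_{\overline d}\bigl/(\Sigma)_{\overline d} \ = \ \bigl(S(\Mat_{\overline n})_{\overline d}+(\Sigma)_{\overline d}\bigr)\bigl/(\Sigma)_{\overline d} \ \simeq \ S(\Mat_{\overline n})_{\overline d}\bigl/\bigl(S(\Mat_{\overline n})_{\overline d}\cap \Sigma\bigr),
\]
where in the last step we used that $S(\Mat_{\overline n})_{\overline d}\cap (\Sigma)_{\overline d}=S(\Mat_{\overline n})_{\overline d}\cap \Sigma$, since the first intersection is already weight homogeneous of weight $\overline d$. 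Combining with the first paragraph gives the claim.

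The only mildly delicate point is the weight-tracking argument in the second paragraph: one has to be sure that the right action by a \emph{strictly} upper-triangular monomial shifts the weight by a nonzero positive combination of $\varepsilon_i-\varepsilon_j$ with $i<j$, so it cannot bring an $\overline e$-weight element with $\overline e=\overline d$ back into weight $\overline d$ without being the identity on the Cartan part. This is a direct consequence of the commutation $v_{a,b}E_{i,j}=\delta_{b,i}v_{a,j}$ recalled at the beginning of the section, and poses no real obstacle.
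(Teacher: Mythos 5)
Your proof is correct and follows the same route as the paper's, which is a one-sentence argument: the $\overline d$-weight space of $K_{\overline d}^{op}$ is one-dimensional, so $D$ is the right-weight-$\overline d$ piece of the subquotient, and the identification with $S(\Mat_{\overline n})_{\overline d}\big/\bigl(S(\Mat_{\overline n})_{\overline d}\cap\sum_{\overline c\succ \overline d}\mathcal F^{\overline c}\bigr)$ is left implicit. You spell out exactly the part the paper leaves to the reader: the weight-tracking argument showing that right multiplication by a strictly upper-triangular PBW monomial strictly lowers the right weight (so weight-$\overline d$ elements of $\mathcal F^{\overline d}$ coming from generators in $S(\Mat_{\overline n})_{\overline d}$ can only be obtained by acting with $\U(\fh_m)$), plus the second isomorphism theorem. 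That is a genuine and correct elaboration, not a different method.
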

\begin{proof}
The ${\overline d}$ weight space of the van der Kallen module $K_{\overline d}^{op}$ is one-dimensional. Therefore, the module $D$ is isomorphic to the space of the elements of the right weight ${\overline d}$ in the subquotient on the left-hand side of  \eqref{eq:RightFiltrationSubquotient}.
\end{proof}

Consider an array $R$ of shape $ \bbY_{\ov{n}}$, $\vrt(R)=\overline{d}$.
We define compositions  $\mu^{(j)}$, $j=1,\dots,m$ by the formula 
$\mu^{(j)}:=\left(\sum_{l=1}^{j}R_{1l}, \ldots, \sum_{l=1}^{j} R_{n_ml} \right)$.

\begin{lem}\label{lem:hnev}
Assume that 
$\hor(R)_+\neq { \overline d}_+$ and
that $\mu^{(j)} \in \Ser_{d_j,\mu^{(j-1)}}^{n_j}$, $j=1,\dots,m$.
Then
\[v^R \in S(\Mat_{\overline n})_{\ov{d}} \bigcap \sum_{\overline c\succ \overline d}\mathcal{F}^{\ov c}.\] 
\end{lem}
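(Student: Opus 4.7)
The plan combines the combinatorial inequalities from the iterated serpentine structure with a direct mass-shift construction using the right $\fb_m$-action. By Corollary~\ref{cor:weightinequality} applied to the chain $\mu^{(0)}=0,\mu^{(1)},\dots,\mu^{(m)}$, the serpentine hypothesis yields $\hor(R)_+ = \mu^{(m)}_+ \ge \overline{d}_+$ in dominance order, so the assumption $\hor(R)_+ \neq \overline{d}_+$ forces the strict inequality $\mu^{(m)}_+ > \overline{d}_+$. I would induct on $m$, letting $j^*$ denote the smallest index with $\mu^{(j^*)}_+ \neq (d_1,\dots,d_{j^*})_+$; by Corollary~\ref{cor:SubsequenceInequality} this inequality is again strict.

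For $j^* < m$, the inductive hypothesis applied to the truncated array $R|_{\le j^*}$ in the smaller matrix space $\Mat_{\overline{n}|_{j^*}}$ (whose serpentine hypothesis follows from the original) gives $v^{R|_{\le j^*}} \in \sum_{\overline{c}' \succ (d_1,\dots,d_{j^*})} \mathcal{F}^{\overline{c}'}_{\overline{n}|_{j^*}}$. Multiplying by the fixed element $v^{R|_{> j^*}}$ supported on the last $m-j^*$ columns and using that the dominance order is preserved under merging the two multisets of leading and trailing parts, one lifts this containment to $v^R \in \sum_{\overline{c} \succ \overline{d}} \mathcal{F}^{\overline{c}}_{\overline{n}}$. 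The substantive case is $j^*=m$: here $\mu^{(m-1)}$ is the minimal serpentine of $(d_1,\dots,d_{m-1})$ while $\mu^{(m)}$ is non-minimal. I would locate a row $i$ with $r_{i,m}>0$ and an earlier column $a<m$ with $n_a \ge i$ such that all nonzero entries of column $a$ of $R$ lie in row $i$ (including the possibility that the column is empty), and then form $R^\uparrow$ by moving all $r_{i,m}$ units of mass from cell $(i,m)$ to cell $(i,a)$. Because column $a$ of $R^\uparrow$ has its mass concentrated in the single row $i$, a direct computation using the derivation property of the right $\fb_m$-action gives
\[
v^{R^\uparrow} \cdot (E_{a,m})^{r_{i,m}} = \frac{(r_{i,a}+r_{i,m})!}{r_{i,a}!}\, v^R,
\]
and Lemma~\ref{lem:standardfiltrationbimodule} places $v^R$ in $\mathcal{F}^{\vrt(R^\uparrow)} \subset \sum_{\overline{c} \succ \overline{d}} \mathcal{F}^{\overline{c}}$, provided $\vrt(R^\uparrow)$ exceeds $\overline{d}$ in Cherednik order.

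The main obstacle is the combinatorial verification in the $j^*=m$ case: one must exhibit a pair $(i,a)$ with the stated concentration property \emph{and} ensure that $\vrt(R^\uparrow) = \overline{d} + r_{i,m}(\varepsilon_a - \varepsilon_m)$ is strictly greater than $\overline{d}$ in Cherednik order. Both facts rely on the explicit recursive cascade of Lemma~\ref{lem::reordering} characterising non-minimal serpentines: the extra mass in $\mu^{(m)} - \mu^{(m-1)}$ is placed in rows whose earlier-column structure is governed by the admissibility condition, and the comparison $d_a + r_{i,m} \ge d_m - r_{i,m}$ required for strict dominance growth can be read off from the same cascade. When strict dominance growth fails and one can only gain in the Bruhat component of the Cherednik order, a parallel argument based on the same transfer recovers the result through additional bookkeeping on permutations; a secondary subtlety is the rigorous compatibility between the filtrations on the smaller and larger matrix spaces in the $j^* < m$ case, which is handled via the graded structure of the right $\fb_m$-action and the decomposition $S(\Mat_{\overline{n}}) \cong S(\Mat_{\overline{n}|_{j^*}}) \otimes S(\Mat_{\overline{n}|_{>j^*}})$.
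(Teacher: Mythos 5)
Your proposal reaches the right preliminary step (using Corollary~\ref{cor:weightinequality} to upgrade $\hor(R)_+\neq\overline{d}_+$ to the strict inequality $\hor(R)_+>\overline{d}_+$) but then heads in a direction that is both substantially more complicated than and materially different from the paper's argument, and the complications are exactly where your write-up acknowledges gaps. The paper does not try to track a single monomial $v^R$ through an induction on columns or an explicit mass-shift. Instead it works with the entire horizontal weight space $_{\lambda}S(\Mat_{\overline{n}})$, $\lambda=\hor(R)$, viewed as a right $\fb_m$-module, and observes that it factors as a tensor product of symmetric powers of opposite Demazure modules attached to the transposed diagram $\overline{n}^\tau$. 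Corollary~\ref{ProductFiltrationBasis} (in its transposed form) then gives an explicit generating set for the right standard filtration of this weight space, consisting of monomials whose vertical weights $\overline{c}$ all satisfy $\overline{c}_+\ge\lambda_+$ by Corollary~\ref{cor:weightinequality}. Since $\lambda_+>\overline{d}_+$, every one of these filtration generators has vertical weight $\succ\overline{d}$, so the entire weight space $_{\lambda}S(\Mat_{\overline{n}})$, and in particular $v^R$, lies in $\sum_{\overline{c}\succ\overline{d}}\mathcal{F}^{\overline{c}}$. There is no case analysis, no induction on $m$, and no need to locate a transfer cell.

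Beyond being a different route, your plan has two genuine gaps that are not just bookkeeping. In the $j^*=m$ case the whole argument rests on producing a pair $(i,a)$ with the ``concentration'' property \emph{and} on proving $\vrt(R^\uparrow)\succ\overline{d}$ in Cherednik order; you describe the existence of $(i,a)$ as extractable from the cascade of Lemma~\ref{lem::reordering}, but this is precisely where the content of the lemma lives, and the Bruhat-tie subcase is dismissed as ``additional bookkeeping'' with no argument. It is not at all clear that a single column $a$ concentrated in row $i$ always exists for some $i$ with $r_{i,m}>0$, nor is it obvious how the permutation-level comparison goes when the dominant parts tie. In the $j^*<m$ case you must show that $\overline{c}'\succ(d_1,\dots,d_{j^*})$ in the length-$j^*$ Cherednik order implies $(\overline{c}',d_{j^*+1},\dots,d_m)\succ\overline{d}$ in the length-$m$ Cherednik order (again nontrivial in the Bruhat-tie subcase), and that right-multiplication by $v^{R|_{>j^*}}$ maps $\mathcal{F}^{\overline{c}'}_{\overline{n}|_{j^*}}$ into $\mathcal{F}^{(\overline{c}',d_{j^*+1},\dots,d_m)}_{\overline{n}}$; the embedding $\fb_{j^*}\hookrightarrow\fb_m$ makes this plausible but it is not established. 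I would recommend abandoning the mass-shift/induction route and adopting the transposed-serpentine argument, which disposes of the lemma in one step.
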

\begin{proof}
We denote $\hor(R)=\lambda$. Recall the dual Young diagram $\bbY_{\overline{n}^\tau}$, where 
$\overline{n}^\tau=(n^\tau_1,\dots,n^\tau_{n_m})$.
We consider the following right $\fb_m$-module
 \[_\lambda S(\Mat_{\overline n})\simeq S^{\lambda_1}(D^{op}_{\varepsilon_{n_1^\tau}})\otimes \dots \otimes S^{\lambda_{n_m}}(D^{op}_{\varepsilon_{n^\tau_{n_m}}}).\]
For a composition $a=(a_1,\dots,a_{n^\tau_i})$ we denote by 
$(v'_i)^a$ the monomial $\prod_{j=1}^{n^\tau_i} v_{ij}^{a_j}$.
  By Corollary \ref{ProductFiltrationBasis}
the elements
\[(v_1')^{\overline c^{(1)}}(v_2')^{ \overline c^{(2)}-\overline c^{(1)}}\dots (v_{n_m}')^{\overline c^{(n_m)}-\overline c^{(n_m-1)}},\]
where {$\overline c^{(i+1)}\in \Ser_{\lambda_{i+1},\overline c^{(i)}}^{n_{i+1}}$}, form a basis of this module. However by Corollary \ref{cor:weightinequality} {applied twice,
$\overline c^{(n_m)}_+\geq \lambda_+\geq \overline{d}_+$ and by the assumption of
the statement of the lemma, $\lambda_+> \overline{d}_+$}. Therefore, all the elements of the horizontal weight whose dominant part is greater than $\overline{d}_+$ (in the dominance order) lie in the right module generated by all elements of the vertical weight greater than $\overline{d}$ (in the Cherednik order). This completes the proof.
\end{proof}

Recall Definition \ref{def:AdmissibleDefinition} of $\overline{n}$-admissible compositions.

\begin{cor}\label{cor:zero}
 If $\overline{d}$ is not $\ov{n}$-admissible, then  
$\mathcal{F}^{\overline{d}} = \sum_{\overline{c} \succ \overline{d}}\mathcal{F}^{\overline{c}}.$  
\end{cor}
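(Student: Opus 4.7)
The plan is to reduce the claim, via Lemma~\ref{lem:multiplicity}, to the inclusion $S(\Mat_{\overline{n}})_{\overline{d}} \subset \sum_{\overline{c} \succ \overline{d}} \mathcal{F}^{\overline{c}}$ whenever $\overline{d}$ fails to be $\overline{n}$-admissible. Unpacking the definitions yields $\mathcal{F}^{\overline{d}} = S(\Mat_{\overline{n}})_{\overline{d}}\,\U(\fb_m) + \sum_{\overline{c} \succ \overline{d}} \mathcal{F}^{\overline{c}}$, and since the second summand is a bi-module by Lemma~\ref{lem:standardfiltrationbimodule}, such an inclusion will immediately upgrade to the desired equality $\mathcal{F}^{\overline{d}} = \sum_{\overline{c} \succ \overline{d}} \mathcal{F}^{\overline{c}}$.

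Next I would invoke the isomorphism $S(\Mat_{\overline{n}})_{\overline{d}} \simeq S^{d_1}D_{\varepsilon_{n_1}} \otimes \cdots \otimes S^{d_m}D_{\varepsilon_{n_m}}$ and apply Corollary~\ref{ProductFiltrationBasis} to produce the distinguished collection of serpentine monomials $v_1^{\mu^{(1)}} v_2^{\mu^{(2)}-\mu^{(1)}} \cdots v_m^{\mu^{(m)}-\mu^{(m-1)}}$, indexed by chains with $\mu^{(0)} = (0)$ and $\mu^{(j)} \in \Ser^{n_j}_{d_j,\mu^{(j-1)}}$, whose images generate the subquotients of the left standard filtration. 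Because $\sum_{\overline{c} \succ \overline{d}} \mathcal{F}^{\overline{c}}$ is a left $\fb_{n_m}$-submodule and the filtration is exhaustive, showing that each serpentine monomial lies in this submodule will force all of $S(\Mat_{\overline{n}})_{\overline{d}}$ into it.

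The non-admissibility of $\overline{d}$ enters through Corollary~\ref{cor:iteratedserp}: no serpentine chain achieves $\mu^{(m)}_+ = \overline{d}_+$. Combined with Lemma~\ref{dominant WeightInequality}, this forces $\mu^{(m)}_+ > \overline{d}_+$ strictly in the dominance order for every chain, so the horizontal weight of each serpentine monomial $v^R$ satisfies $\hor(R)_+ \neq \overline{d}_+$. Lemma~\ref{lem:hnev} then places every such monomial inside $\sum_{\overline{c} \succ \overline{d}} \mathcal{F}^{\overline{c}}$, closing the argument.

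The main obstacle I foresee is bridging the gap between Corollary~\ref{ProductFiltrationBasis}, which literally supplies generators of the \emph{associated graded} of the left standard filtration, and the claim that the serpentine monomials generate $S(\Mat_{\overline{n}})_{\overline{d}}$ as a $\fb_{n_m}$-module modulo $\sum_{\overline{c} \succ \overline{d}} \mathcal{F}^{\overline{c}}$. This should follow by a standard descending induction along the Cherednik-ordered filtration, writing a general vector as the action of $\fb_{n_m}$ on a serpentine monomial plus a correction term of strictly larger weight, but spelling it out carefully will require some bookkeeping.
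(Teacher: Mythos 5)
Your argument is correct and follows the paper's own proof essentially step for step: both reduce to showing that the serpentine generators of $S(\Mat_{\overline{n}})_{\overline d}$ from Corollary~\ref{ProductFiltrationBasis} all lie in $\sum_{\overline c\succ\overline d}\mathcal{F}^{\overline c}$, then invoke Lemma~\ref{lem:hnev} after noting that every iterated serpentine weight $\mu^{(m)}$ has $\mu^{(m)}_+\neq\overline d_+$ when $\overline d$ is not $\ov{n}$-admissible. The only (immaterial) differences are that you cite Corollary~\ref{cor:iteratedserp} where the paper goes through Corollary~\ref{cor:SubsequenceInequality}, and you close via a direct bimodule-inclusion argument rather than through the multiplicity space $D$ of \eqref{eq:RightFiltrationSubquotient}; the lifting from $\gr$ to the module that worries you is the standard filtered-module argument (finite filtration plus generators of $\gr$), which the paper also leaves implicit.
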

\begin{proof}
The space 
$S(\Mat_{\overline n})_d$ is generated by the monomials $v^R=v_1^{\mu^{(1)}}\dots v_m^{\mu^{(m)}-\mu^{(m-1)}}$ for 
an iterated $\overline{d}$ serpentine $\mu^{(1)},\dots,\mu^{(m)}$ by Corollary \ref{ProductFiltrationBasis}.
Let $l$ be the minimal number such that $n_l < \#\{i=1,\dots,l:\ d_i > 0\}$; in particular, $(\mu^{(l)})_+\ne (d_1,\dots,d_l)_+$ (the right-hand side has more non-zero entries than the left-hand side). 
Then   Corollary \ref{cor:SubsequenceInequality} implies that $\mu^{(m)}_+\ne \overline{d}_+$. Hence, by Lemma \ref{lem:hnev}  $v^R\in \sum_{\overline{c} \succ \overline{d}}\mathcal{F}^{\overline{c}}$. 
Now Lemma \ref{lem:multiplicity} implies that $D=0$ and hence the desired claim follows from \eqref{eq:RightFiltrationSubquotient}. 
\end{proof}

\begin{cor}\label{cor:surjmult}
  If $\overline{d}$ is $\ov{n}$-admissible, then there exists a surjection $D_{\hb(\overline{d})}\rightarrow D$ (see \eqref{eq:RightFiltrationSubquotient}).
\end{cor}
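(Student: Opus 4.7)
The strategy is to exhibit $D$ as a cyclic $\fb_{n_m}$-module with a weight $\hb(\ov{d})$ generator and then invoke Joseph's Theorem~\ref{thm::Joseph} to produce the surjection. The starting point is Lemma~\ref{lem:multiplicity}, which identifies
\[
D \;\simeq\; S(\Mat_{\ov{n}})_{\ov{d}}\,\Big/\,\Bigl(S(\Mat_{\ov{n}})_{\ov{d}}\cap\textstyle\sum_{\ov{c}\succ\ov{d}}\mathcal{F}^{\ov{c}}\Bigr),
\]
so the goal is to control the $\fb_{n_m}$-module structure of the right-hand side.

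The main tool is the iterated Pieri filtration from Corollary~\ref{ProductFiltrationBasis}: the left standard filtration $\mathcal{G}^\mu$ on $S(\Mat_{\ov{n}})_{\ov{d}}$ is excellent with subquotients $\mathcal{G}^\mu/\mathcal{G}^{\succ\mu}\simeq D_\mu$, and each subquotient is generated by the single monomial $v_1^{\mu^{(1)}}v_2^{\mu^{(2)}-\mu^{(1)}}\dots v_m^{\mu^{(m)}-\mu^{(m-1)}}$ associated with an iterated serpentine chain $\mu^{(0)}=(0),\,\mu^{(j)}\in\Ser^{n_j}_{d_j,\mu^{(j-1)}}$ ending in $\mu=\mu^{(m)}$. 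Pushing this filtration forward along $\pi:S(\Mat_{\ov{n}})_{\ov{d}}\twoheadrightarrow D$ yields a filtration on $D$ whose $\mu$-th subquotient is a quotient of $D_\mu$.

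The key observation is that almost every subquotient is annihilated. By Corollary~\ref{cor:iteratedserp}, since $\ov{d}$ is $\ov{n}$-admissible there is a \emph{unique} chain with $\mu^{(m)}_+=\ov{d}_+$, and it terminates at $\hb(\ov{d})$; by Lemma~\ref{dominant WeightInequality} every other chain satisfies $\mu^{(m)}_+>\ov{d}_+$ strictly in dominance order. Lemma~\ref{lem:hnev} then places the generating monomial of each such subquotient inside $S(\Mat_{\ov{n}})_{\ov{d}}\cap\sum_{\ov{c}\succ\ov{d}}\mathcal{F}^{\ov{c}}=\ker\pi$. Since $D_\mu$ is cyclic, killing its generator kills the whole subquotient. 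Hence in the induced filtration on $D$, only the level $\mu=\hb(\ov{d})$ survives, and the surviving subquotient is a quotient of $D_{\hb(\ov{d})}$; combining this with the fact that the filtration exhausts $D$ yields the required surjection $D_{\hb(\ov{d})}\twoheadrightarrow D$.

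The only delicate step is ensuring that the image in $D$ of the cyclic vector $v_1^{\mu^{(1)}}\dots v_m^{\hb(\ov{d})-\mu^{(m-1)}}$ really satisfies the Joseph relations for weight $\hb(\ov{d})$; this is built in, because modulo $\mathcal{G}^{\succ\hb(\ov{d})}$ it is the canonical generator of $D_{\hb(\ov{d})}$ in the excellent filtration, and the extra quotienting performed by $\pi$ only adds relations. The main conceptual obstacle is therefore not in the present argument but in the input Lemmas~\ref{dominant WeightInequality} and~\ref{lem:hnev}, which do the real combinatorial work of aligning the serpentine-chain parametrization with the Cherednik order on the right weight lattice.
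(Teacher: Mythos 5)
Your argument is correct and is in essence the paper's own proof: both start from Lemma~\ref{lem:multiplicity}, use the serpentine-monomial generators from Corollary~\ref{ProductFiltrationBasis}, invoke Corollary~\ref{cor:weightinequality} (equivalently Lemma~\ref{dominant WeightInequality} plus the uniqueness in Corollary~\ref{cor:iteratedserp}) to isolate the single chain ending at $\hb(\ov{d})$, and apply Lemma~\ref{lem:hnev} to push every other generator into the kernel. The only difference is presentational: the paper stops at ``$D$ is generated by one element of weight $\hb(\ov{d})$'' and leaves the final step implicit, whereas you make the last step explicit by noting that, since $\mathcal{G}^{\succ\hb(\ov{d})}\subset\ker\pi$, the surjection factors through $\mathcal{G}^{\hb(\ov{d})}/\mathcal{G}^{\succ\hb(\ov{d})}\simeq D_{\hb(\ov{d})}$, so the Joseph relations for weight $\hb(\ov{d})$ are automatic; this is a worthwhile clarification but not a different route.
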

\begin{proof}
{As in the previous proof, we take the monomials $v^R=v_1^{\mu^{(1)}}\dots v_m^{\mu^{(m)}-\mu^{(m-1)}}$ for 
an iterated $\overline{d}$ serpentine $\mu^{(1)},\dots,\mu^{(m)}$.} Then by Corollary \ref{cor:weightinequality} and {Corollary \ref{cor:iteratedserp}}, all of them except for one of weight $\hb(\overline{d})$ have horizontal weights $\mu$ such that $\mu_+>\overline d_+$. 
Therefore, by Lemma  \ref{lem:hnev} all these $v^R$ lie in 
    $S(\Mat_{\overline n})_{\overline d} \bigcap \sum_{\overline c\succ \overline d}\mathcal{F}^{\overline c}$. 
Therefore, by Lemma \ref{lem:multiplicity} $D$ is generated by one element of weight $\hb(\overline{d})$. 
\end{proof}

\begin{thm}\label{thm:RSF}
 If $\ov{d}$ is $\ov{n}$--admissible, then  
 \[\mathcal{F}^{\overline{d}}\left/ \sum_{\overline{c} \succ \overline{d}}\mathcal{F}^{\overline{c}} \right. \simeq D_{\hb(\ov{d})} \otimes K_{\overline d}^{op} .\]
 If $\ov{d}$ is not $\ov{n}$--admissible, then 
 $\mathcal{F}^{\overline{d}} = \sum_{\overline{c} \succ \overline{d}}\mathcal{F}^{\overline{c}}.$
\end{thm}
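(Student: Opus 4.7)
The plan is to combine the results already assembled just before the theorem statement. The non--admissible case is nothing to prove: it is exactly Corollary~\ref{cor:zero}, which shows that the whole filtration piece $\mathcal{F}^{\ov{d}}$ is recovered from the strictly larger pieces. So I concentrate on the admissible case.

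For admissible $\ov{d}$, I first invoke the isomorphism \eqref{eq:RightFiltrationSubquotient}, which says the subquotient has the form $D\otimes K^{op}_{\ov d}$ for some left $\fb_{n_m}$-module $D$ (this uses that the right filtration is excellent with van der Kallen standard subquotients, plus Lemma~\ref{lem:standardfiltrationbimodule} ensuring the bi-module structure). Corollary~\ref{cor:surjmult} already provides a surjection $D_{\hb(\ov d)}\twoheadrightarrow D$ of left $\fb_{n_m}$-modules, so it suffices to check that this surjection is actually an isomorphism; equivalently, that $\ch D = \kappa_{\hb(\ov d)}$.

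For the opposite inequality on characters I use Lemma~\ref{lem:multiplicity} to rewrite
\[
D \;\simeq\; S(\Mat_{\ov n})_{\ov d}\bigl/\bigl(S(\Mat_{\ov n})_{\ov d}\cap\textstyle\sum_{\ov c\succ\ov d}\mathcal{F}^{\ov c}\bigr).
\]
The space $S(\Mat_{\ov n})_{\ov d}\simeq S^{d_1}D_{\varepsilon_{n_1}}\otimes\cdots\otimes S^{d_m}D_{\varepsilon_{n_m}}$ admits, by Proposition~\ref{StandardFiltrationTensor} and the iterated Pieri rule (Theorem~\ref{thm:PieriRule}, Corollary~\ref{ProductFiltrationBasis}), an excellent left filtration whose subquotients are precisely the Demazure modules $D_{\mu^{(m)}}$ indexed by iterated serpentine chains $\mu^{(0)}=(0)$, $\mu^{(j)}\in\Ser^{n_j}_{d_j,\mu^{(j-1)}}$. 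Since this filtration is compatible with the Cherednik order and the weights with $\mu^{(m)}_+>\ov{d}_+$ are strictly larger (in Cherednik order) than those with $\mu^{(m)}_+=\ov{d}_+$, the subquotients with $\mu^{(m)}_+>\ov{d}_+$ assemble into a sub-filtration, and by Lemma~\ref{lem:hnev} every one of their generators already lies in $\sum_{\ov c\succ\ov d}\mathcal{F}^{\ov c}$. Consequently, the image of that sub-filtration inside $D$ vanishes, and only subquotients with $\mu^{(m)}_+=\ov{d}_+$ can contribute to $D$. Corollary~\ref{cor:iteratedserp} states that for admissible $\ov{d}$ there is a unique such chain, and the resulting weight is $\mu^{(m)}=\hb(\ov{d})$. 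This gives a single Demazure subquotient $D_{\hb(\ov d)}$ in the induced filtration on $D$, so $\ch D\leq \kappa_{\hb(\ov d)}$.

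Combining this upper bound with the surjection from Corollary~\ref{cor:surjmult} (which gives the reverse inequality) yields $\ch D=\kappa_{\hb(\ov d)}$ and hence the surjection $D_{\hb(\ov d)}\twoheadrightarrow D$ is an isomorphism. The main obstacle I anticipate is the bookkeeping step in which one argues that the sub-filtration generated by the "large" subquotients is killed inside the quotient: one must check that the generating monomials produced by Corollary~\ref{ProductFiltrationBasis} really exhaust the submodule $S(\Mat_{\ov n})_{\ov d}\cap\sum_{\ov c\succ\ov d}\mathcal{F}^{\ov c}$ at the level of the associated graded. This is handled by noting that any element of horizontal weight $\lambda$ with $\lambda_+>\ov{d}_+$ lies in $\sum_{\ov c\succ \ov d}\mathcal{F}^{\ov c}$ (Lemma~\ref{lem:hnev}), and the submodule under consideration is compatible with the weight decomposition.
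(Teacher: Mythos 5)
Your overall plan matches the paper's, and much of your reasoning is sound: the non-admissible case via Corollary~\ref{cor:zero}, the reduction to showing the surjection $D_{\hb(\ov d)}\twoheadrightarrow D$ from Corollary~\ref{cor:surjmult} is an isomorphism, the identification of $D$ via Lemma~\ref{lem:multiplicity}, and the use of the Pieri filtration on $S(\Mat_{\overline n})_{\ov d}$ together with Lemma~\ref{lem:hnev} to show that the submodule $G$ generated by the left weight spaces with dominant part strictly greater than $\ov d_+$ is contained in the kernel $K:=S(\Mat_{\overline n})_{\ov d}\cap\sum_{\ov c\succ\ov d}\mathcal{F}^{\ov c}$. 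That inclusion gives $\ch D\le\kappa_{\hb(\ov d)}$. But the final step is a non-sequitur: a surjection $D_{\hb(\ov d)}\twoheadrightarrow D$ gives \emph{the same} termwise inequality $\ch D\le\kappa_{\hb(\ov d)}$, not the reverse. You have therefore proved the upper bound twice and never proved the lower bound $\ch D\ge\kappa_{\hb(\ov d)}$, so the conclusion does not follow.

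The missing content is the reverse containment $K\subset G$. The paper establishes this in two steps. For $\ov c$ with $\ov c_+>\ov d_+$ it proves the equality $S(\Mat_{\overline n})_{\ov d}\cap\sum_{\ov c_+>\ov d_+}\mathcal{F}^{\ov c}=G$, using Lemma~\ref{dominant WeightInequality} for one inclusion and Lemma~\ref{lem:hnev} for the other, and identifies this quotient with $D_{\hb(\ov d)}$. The step you omit concerns $\ov c\succ\ov d$ with $\ov c_+=\ov d_+$: by a downward induction on the Cherednik order within the $W$-orbit of $\ov d$, one shows these filtration pieces contribute nothing new, because by \eqref{eq:RightFiltrationSubquotient} the subquotient $\mathcal{F}^{\ov c}/\sum_{\ov c'\succ\ov c}\mathcal{F}^{\ov c'}$ factors as a left module tensored with $K^{op}_{\ov c}$, which has no vector of right weight $\ov d$ (the extremal weight $\ov d$ is killed in the van der Kallen module when $\ov d\neq\ov c$ and $\ov d_+=\ov c_+$). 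Hence $S(\Mat_{\overline n})_{\ov d}\cap\mathcal{F}^{\ov c}\subset S(\Mat_{\overline n})_{\ov d}\cap\sum_{\ov c'\succ\ov c}\mathcal{F}^{\ov c'}$, and descending over all such $\ov c$ gives $K=G$ and $D\simeq D_{\hb(\ov d)}$. Inserting this argument closes the gap and recovers the paper's proof.
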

\begin{proof}
Corollary \ref{cor:zero} takes care of the second case, so let us assume that $n_l \geq \#\{i=1,\dots,l:\ d_i >0\}$
for any $l \leq m$. Thanks to Corollary \ref{cor:surjmult}, it suffices to show that the surjection  $D_{\hb(\ov{d})}\twoheadrightarrow D$ is an isomorphism.

We  show the following 
\begin{equation}\label{eq:Dmu}
D\simeq  S(\Mat_{\overline n})_{\overline d}\left/ S(\Mat_{\overline n})_{\overline d} \bigcap \sum_{\overline c_+>\overline d_+}\mathcal{F}^{\overline c} \right.
\twoheadrightarrow D_{\hb(\ov{d})}.
\end{equation}
Together with the surjection $D_{\hb(\ov{d})}\twoheadrightarrow D$, this
would imply the desired isomorphism.

Let us first prove the existence of the surjection in \eqref{eq:Dmu}.
Let $\overline c_+>\overline d_+$. Then
the left module $\mathcal{F}^{\overline c}$ is generated by elements of the vertical weights $\overline b$ such that $\overline b_+ \not < \overline c_+$ and, hence, 
$\overline b_+ \not < d_+$.
Therefore 
$S(\Mat_{\overline n})_{\overline d} \bigcap \sum_{\overline c_+>\overline d_+}\mathcal{F}^{\overline c}\subset \U(\fb_{n_m}) (\sum_{\overline b_+ \not \leq \overline d_+}S(\Mat_{\overline n})_{\ov{b}})$.
Hence, by Corollary\ref{cor:weightinequality} the space
$\sum_{\overline c_+>\overline d_+}\mathcal{F}^{\overline c}$ is generated as {\it left} module by the elements of weights $b$, such that $b_+ \not \leq d_+$. However, $ S(\Mat_{\overline n})_{\overline d}$ admits a standard filtration. Therefore, there exists a surjection
\[ S(\Mat_{\overline n})_{\overline d}\left/ S(\Mat_{\overline n})_{\overline d} \bigcap \sum_{\overline c_+>\overline d_+}\mathcal{F}^{\overline c} \right.\twoheadrightarrow D_{\hb(\ov{d})} .\]

Now let us prove the isomorphism in \eqref{eq:Dmu}.
By Lemma \ref{lem:multiplicity} we have 
\[
D \simeq S(\Mat_{\overline n})_{\overline d}\left/ S(\Mat_{\overline n})_{\overline d} \bigcap \sum_{\overline c\succ\overline d}\mathcal{F}^{\overline c} \right..
\]
Since $\overline c\succ\overline d$, either $\overline c_+>\overline d_+$ or $\overline c_+=\overline d_+$.
Let us consider the case  $\overline c_+= \overline d_+$. By equation \eqref{eq:RightFiltrationSubquotient}, we have
\[
\mathcal{F}^{\overline c}\left/ \sum_{{\overline c'} \succ {\overline c}}\mathcal{F}^{\overline c'} \right.  \simeq D' \otimes K_{\ov c}^{op}.
\]
Therefore, for any $\ov c \succ \ov d$, $\ov c_+= \ov d_+$, the module $\mathcal{F}^{\overline c}\left/ \sum_{{\overline c'} \succ {\overline c}}\mathcal{F}^{\overline c'} \right.  $  does not contain elements of the vertical weight $\overline d$, because the van der Kallen module $K_{\overline c}$ does not contain elements of weights $\sigma \overline c$, $1 \neq \sigma \in W$. Therefore
\begin{equation}\label{eq:D}
D \simeq S(\Mat_{\overline n})_{\overline d}\left/ S(\Mat_{\overline n})_{\overline d} \bigcap \sum_{\overline c_+>\overline d_+}\mathcal{F}^{\overline c} \right..
\end{equation}
\end{proof}

\begin{cor}
    The associated graded module for $S(\Mat_{\overline n})$ is isomorphic to 
    \[gr( S(\Mat_{\overline n})) \simeq \bigoplus_{\overline d \text{ is }\overline n \text{-admissible}}D_{\hb(\ov{d})}\otimes K_{\overline d}^{op}.\]
\end{cor}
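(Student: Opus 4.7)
The plan is to read off this corollary directly from Theorem~\ref{thm:RSF}, which was just established; no new machinery is required. By construction, the right standard filtration $\{\mathcal{F}^{\overline{d}}\}_{\overline{d}\in\bZ_{\ge 0}^m}$ on $S(\Mat_{\overline n})$ is indexed by the Cherednik order, and its associated graded module is by definition the direct sum
\[
\gr S(\Mat_{\overline n})=\bigoplus_{\overline{d}} \mathcal{F}^{\overline{d}}\!\left/ \sum_{\overline{c}\succ\overline{d}}\mathcal{F}^{\overline{c}}\right. .
\]
So I just need to identify each nontrivial summand.

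First I would invoke the second part of Theorem~\ref{thm:RSF}: if $\overline{d}$ is not $\overline{n}$-admissible then $\mathcal{F}^{\overline{d}}=\sum_{\overline{c}\succ\overline{d}}\mathcal{F}^{\overline{c}}$, so the corresponding subquotient vanishes and contributes nothing to $\gr S(\Mat_{\overline n})$. Thus the index set of nontrivial summands is exactly the set of $\overline{n}$-admissible compositions.

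Second, for each admissible $\overline{d}$ the first part of Theorem~\ref{thm:RSF} identifies the subquotient with $D_{\hb(\overline{d})}\otimes K^{op}_{\overline{d}}$. The only point worth a remark is that this identification respects both actions of the Borel subalgebras: Lemma~\ref{lem:standardfiltrationbimodule} guarantees that every $\mathcal{F}^{\overline{d}}$ is a $\fb_{n_m}$-$\fb_m$ bi-submodule, so both the left and right actions descend to each subquotient, and the isomorphism of Theorem~\ref{thm:RSF} is already stated at the bi-module level. Assembling the summands yields the claimed decomposition.

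There is essentially no obstacle here; the whole content lies in Theorem~\ref{thm:RSF}. The only minor bookkeeping is to confirm that one is taking the associated graded with respect to the full Cherednik-indexed filtration (which already has lower-finiteness since $\mathcal{F}^{\overline{d}}=S(\Mat_{\overline n})$ when $\overline{d}=0$ and the filtration respects the natural $\bZ_{\ge 0}$-grading by total degree, so on each degree only finitely many $\overline{d}$ contribute), and that the sum on the right-hand side is taken over admissible $\overline{d}$ only, matching the vanishing statement from Theorem~\ref{thm:RSF}.
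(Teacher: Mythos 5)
Your proof is correct and follows essentially the same route the paper intends: the corollary is an immediate consequence of Theorem~\ref{thm:RSF}, with the non-admissible $\overline d$ contributing zero and the admissible ones contributing $D_{\hb(\overline d)}\otimes K^{op}_{\overline d}$. Your observations about the bi-module structure (Lemma~\ref{lem:standardfiltrationbimodule}) and the degree-wise finiteness of the filtration are accurate bookkeeping and match the paper's implicit reasoning.
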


The comparison of characters gives us the following.
\begin{cor}\label{cor:firstCauchy}
The following ``right" Cauchy identity holds:
\[
\prod_{(i,j)\in \bbY_{\overline{n}}}\frac{1}{1-x_iy_j}=
\sum_{\overline d \text{ is }\overline n \text{-admissible}}
\kappa_{\hb(\ov{d})}(x)\,a^{\overline d}(y).\]
\end{cor}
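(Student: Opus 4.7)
The plan is to interpret both sides of the identity as the character of the bi-module $S(\Mat_{\ov{n}})$ with respect to the left $\fh_{n_m}$-action and the right $\fh_m$-action, and then to invoke Theorem~\ref{thm:RSF} to produce the weight decomposition on the right-hand side.

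First I would compute the left-hand side directly. The space $\Mat_{\ov{n}}$ has a basis consisting of the matrix units $v_{i,j}$ for $(i,j)\in\bbY_{\ov{n}}$, and by the formulas $E_{ii}v_{i,j}=v_{i,j}$ and $v_{i,j}E_{jj}=v_{i,j}$ recorded in Section~\ref{sec::bimodule}, the element $v_{i,j}$ carries bi-weight $(\varepsilon_i,\varepsilon_j)$. The bi-character of the symmetric algebra of a vector space with a semisimple torus action is therefore the product over a weight basis of the geometric series, giving
\[
\ch S(\Mat_{\ov{n}})(x,y) \;=\; \prod_{(i,j)\in\bbY_{\ov{n}}} \frac{1}{1-x_i y_j}.
\]

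Next I would compute the same character by summing over the subquotients of the right standard filtration $\mathcal{F}^{\ov{d}}$ introduced in~\eqref{RightFIltrationBimodule}. Characters are additive over short exact sequences, and the lower-finiteness of the Cherednik order together with the weight-space decomposition of each $\mathcal{F}^{\ov{d}}$ guarantees that the sum of characters of the subquotients $\mathcal{F}^{\ov{d}}/\sum_{\ov{c}\succ\ov{d}}\mathcal{F}^{\ov{c}}$ reproduces the total character of $S(\Mat_{\ov{n}})$ in each fixed bi-weight. By Theorem~\ref{thm:RSF}, the $\ov{d}$-th subquotient vanishes unless $\ov{d}$ is $\ov{n}$-admissible, in which case it is isomorphic as a bi-module to $D_{\hb(\ov{d})}\otimes K_{\ov{d}}^{op}$.

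Finally I would use the definitions of the relevant characters recalled in Section~\ref{sec::Cher::standard}. The character of the Demazure module $D_{\hb(\ov{d})}$ for the left $\fb_{n_m}$-action is the key polynomial $\kappa_{\hb(\ov{d})}(x)$, and the character of the right van der Kallen module $K_{\ov{d}}^{op}$ for the right $\fb_m$-action is the opposite Demazure atom $a^{\ov{d}}(y)$. Multiplicativity of characters under tensor product of bi-modules then yields
\[
\ch\!\bigl(D_{\hb(\ov{d})}\otimes K_{\ov{d}}^{op}\bigr)(x,y)=\kappa_{\hb(\ov{d})}(x)\,a^{\ov{d}}(y),
\]
and summing over all $\ov{n}$-admissible $\ov{d}$ gives the right-hand side of the claimed identity. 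There is no genuine obstacle here: the only subtle point is checking that the weight-space-wise finiteness of the filtration makes the character sum convergent, which is automatic because each bi-weight $(\la,\ov{d})$-space of $S(\Mat_{\ov{n}})$ is finite-dimensional and only finitely many subquotients contribute to it.
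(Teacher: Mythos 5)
Your proposal is correct and follows the same route as the paper: the paper deduces the identity by forming the associated graded of the right standard filtration, applying Theorem~\ref{thm:RSF} to identify the nonzero subquotients as $D_{\hb(\ov{d})}\otimes K_{\ov{d}}^{op}$ for $\ov{n}$-admissible $\ov{d}$, and then comparing bi-characters. You have simply spelled out the implicit steps — the computation of the left-hand side as the bi-character of $S(\Mat_{\ov{n}})$ from the weights of the matrix units $v_{i,j}$, additivity of characters over the filtration, and the identifications $\ch D_{\hb(\ov{d})}=\kappa_{\hb(\ov{d})}(x)$ and $\ch K_{\ov{d}}^{op}=a^{\ov{d}}(y)$ recalled in Section~\ref{sec::Cher::standard} — which the paper compresses into the phrase ``comparison of characters.''
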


\section{Left standard filtration and Cauchy identities}
\label{sec::left::Cauchy}
The goal of this section is to study the left standard filtration of the module $S(\Mat_{\overline{n}})$.
\subsection{Generators of the left standard filtration}
\label{sec::left::generators}

We start with a classical and simple Lemma which is $2 \times 2$ case of the Howe duality \cite{Ho}. Let $\Mat_2$ be the space of two by two matrices and let us consider the standard $\mgl_2\de \mgl_2$ action on the symmetric algebra $S(\Mat_2)$. 
We denote by $u_{11},u_{12},u_{21},u_{22}$ the standard generators of $S(\Mat_2)$.
For a two by two array $A$ (a matrix with non-negative integer coefficients) we set 
$u^A=\prod_{i,j=1}^2 u_{ij}^{A_{ij}}$.
Finally, let $\fb_2\subset\mgl_2$ be the Borel subalgebra of upper triangular matrices.
\begin{lem}
    \label{lem:gl_2HoweComputations}
 Let  $(a_1,a_2)=(A_{11}+A_{21}, A_{12}+A_{22})_+$. Then $u^A$ belongs to the 
 $\U(\fb_2)$-$\U(\fb_2)$-bimodule generated from the vectors 
$u^{A^{(l)}}$, $0\le l\le a_2$, where  
$A^{(l)}=\begin{pmatrix} 0 & a_2-l\\ a_1+l & 0 \end{pmatrix}$.
\end{lem}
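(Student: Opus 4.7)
The plan is to combine the classical $(\gl_2,\gl_2)$-Howe duality on $S(\Mat_2)$ with a cyclic generation argument for each isotypic summand. By the Cauchy identity at the representation-theoretic level one has
\[
S^{a_1+a_2}(\Mat_2)\;=\;\bigoplus_{\substack{|\lambda|=a_1+a_2\\ \ell(\lambda)\le 2}} V_\lambda\otimes V_\lambda^{op},
\]
and the $\fb_2\de\fb_2$-bi-action preserves each summand, since it commutes with the ambient $\gl_2\de\gl_2$-action. A monomial $u^A$ with $(A_{11}+A_{21},A_{12}+A_{22})_+=(a_1,a_2)$ has right $\gl_2$-weight whose sorted form is $(a_1,a_2)$; such a weight occurs in $V_\lambda^{op}$ only when $\lambda_1\ge a_1$. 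Consequently every such $u^A$ lies in $\bigoplus_{l=0}^{a_2}V_{(a_1+l,a_2-l)}\otimes V_{(a_1+l,a_2-l)}^{op}$, and it is enough to show that this finite direct sum lies in the bi-module generated by $\{u^{A^{(l)}}\}_{l=0}^{a_2}$.

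I would proceed by downward induction on $l\in\{0,1,\ldots,a_2\}$, showing that each summand $V_{(a_1+l,a_2-l)}\otimes V_{(a_1+l,a_2-l)}^{op}$ lies in our bi-module. The base case $l=a_2$ is transparent: the vector $u^{A^{(a_2)}}=u_{21}^{a_1+a_2}$ has bi-weight $((0,a_1+a_2),(a_1+a_2,0))$, which forces it to lie entirely in $V_{(a_1+a_2,0)}\otimes V_{(a_1+a_2,0)}^{op}$; inside this summand it is simultaneously the lowest-left and highest-right weight vector, so the $\fb_2\de\fb_2$-action fills the whole summand (the left $\fb_2$-Demazure module generated by the lowest weight of any $\gl_2$-irreducible is the entire module, and symmetrically on the right). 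For the inductive step, $u^{A^{(l)}}$ lives in $\bigoplus_{l'=l}^{a_2}V_{(a_1+l',a_2-l')}\otimes V_{(a_1+l',a_2-l')}^{op}$; by induction we may subtract off the components with $l'>l$, yielding a residual in $V_{(a_1+l,a_2-l)}\otimes V_{(a_1+l,a_2-l)}^{op}$ of extremal bi-weight $((a_2-l,a_1+l),(a_1+l,a_2-l))$, and cyclic generation again fills the summand provided this residual is nonzero.

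The main obstacle is establishing this nonvanishing, that is, that the projection of $u^{A^{(l)}}$ onto $V_{(a_1+l,a_2-l)}\otimes V_{(a_1+l,a_2-l)}^{op}$ does not vanish. This can be verified using the explicit realization of Howe summands as $\Delta^{a_2-l'}$ times a primitive (harmonic) piece, where $\Delta=u_{11}u_{22}-u_{12}u_{21}$ is the $\gl_2\de\gl_2$-invariant determinant: the bi-weight subspace with weights $((a_2-l,a_1+l),(a_1+l,a_2-l))$ in $S(\Mat_2)$ admits the basis $\{u_{11}^ju_{12}^{a_2-l-j}u_{21}^{a_1+l-j}u_{22}^j\}_{j=0}^{a_2-l}$ of dimension $a_2-l+1$, matching the number of Howe summands in which it occurs; a direct expansion of the specific element $u^{A^{(l)}}=u_{12}^{a_2-l}u_{21}^{a_1+l}$ (corresponding to $j=0$) in the $\Delta$-filtered basis shows that its coefficient in the smallest Howe summand $\lambda=(a_1+l,a_2-l)$ is nonzero. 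With this in hand the induction closes and the lemma follows. As an alternative route avoiding Howe duality, one can instead use the identity $\exp(tL)\exp(sR)\,u^{A^{(l)}}=u_{12}^{a_2-l}(tu_{11}+u_{21}+stu_{12}+su_{22})^{a_1+l}$ (with $L,R$ the left and right $E_{12}$-actions, which commute) and extract coefficients of $t^as^b$ to isolate each $u^A$ via a triangular argument in the exponent $A_{12}$.
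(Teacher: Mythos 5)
The paper states Lemma~\ref{lem:gl_2HoweComputations} without proof, presenting it as a classical consequence of $(\mgl_2,\mgl_2)$ Howe duality, so there is no paper argument to compare against; your proposal supplies a genuine proof and it is correct. The reductions are sound: the right-weight constraint confines $u^A$ to $\bigoplus_{l=0}^{a_2}V_{(a_1+l,a_2-l)}\otimes V_{(a_1+l,a_2-l)}^{op}$, each $u^{A^{(l)}}$ is confined to $\bigoplus_{l'\ge l}V_{(a_1+l',a_2-l')}\otimes V_{(a_1+l',a_2-l')}^{op}$, and a bi-extremal vector (left weight anti-dominant, right weight dominant) in $V_\lambda\otimes V_\lambda^{op}$ generates the whole summand as a $\U(\fb_2)\text{-}\U(\fb_2)$-bi-module since $D_{w_0\lambda}=V_\lambda$ and $D^{op}_{\lambda}\simeq D_{w_0\lambda}$ for $\lambda$ dominant. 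You rightly isolate the only nontrivial point, namely the nonvanishing of the projection of $u^{A^{(l)}}$ onto the bottom summand $V_{(a_1+l,a_2-l)}\otimes V_{(a_1+l,a_2-l)}^{op}$; to make it airtight, note that the bi-extremal vector of that summand is $\Delta^{a_2-l}u_{21}^{a_1-a_2+2l}$, and expanding $\Delta^{a_2-l}=(u_{11}u_{22}-u_{12}u_{21})^{a_2-l}$ exhibits the monomial $u_{12}^{a_2-l}u_{21}^{a_1+l}=u^{A^{(l)}}$ with nonzero coefficient $(-1)^{a_2-l}$. Since the Howe summands are orthogonal for the $\mgl_2\times\mgl_2$-invariant inner product on $S(\Mat_2)$ (with respect to which distinct monomials $u^B$ are orthogonal), this forces the projection of $u^{A^{(l)}}$ to be nonzero, closing the downward induction. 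Your alternative generating-function route via $\exp(tL)\exp(sR)$ is also workable and more elementary, though one must carry out the triangular elimination in $A_{12}$ in both directions (for arrays with $A_{12}\le a_2$ and $A_{12}>a_2$); the Howe-theoretic argument is cleaner and already complete.
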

\begin{proof}
By Howe duality, $S^N(\Mat_2)=\bigoplus_{b \geq c, b+c =N} V_{b \varepsilon_1+ c \varepsilon_2}\otimes V_{b \varepsilon_1+ c \varepsilon_2}^{op}$.
Moreover,
\begin{equation}\label{eq:2x2}
\bigoplus_{l =0}^{a_2} V_{(a_1+l) \varepsilon_1+ (a_2-l) \varepsilon_2}\otimes V_{(a_1+l) \varepsilon_1+ (a_2-l)\varepsilon_2}^{op}=\sum_{l = 0}^{a_2}\U(\fb_2) u_{21}^{a_1+l}u_{12}^{a_2-l} \U(\fb_2).
\end{equation}
In fact, both the right-hand side and the left-hand side of \eqref{eq:2x2} are $\mgl_2-\mgl_2$-bimodules, so it suffices to show that 
\[
E_{12}^l u_{21}^{a_1+l}u_{12}^{a_2-l}E_{12}^l, \quad 0\le l\le a_2
\]
are linearly independent (since the linear independence of these vectors would imply that
the tensor products of the highest weight vectors
of summands of the left hand side of \eqref{eq:2x2} belong to the right hand side). 
Now, let us consider the terms $u^B$ showing up in
$E_{12}^l u_{21}^{a_1+l}u_{12}^{a_2-l}E_{12}^l$.
Then the maximal value of $B_{11}$ is equal to $l$ and thus moving from $l=a_2$ to $l=0$, one shows that
a vanishing linear combination should be trivial.

Therefore, any vector $u^B \in S^N(\Mat_2)$ such that its vertical weight satisfies $\vrt(B)_+ \geq (a_1,a_2)$ is contained in 
$\sum_{l = 0}^{a_2}\U(\fb_2) u_{21}^{a_1+l}u_{12}^{a_2-l} \U(\fb_2).$ In particular, since $\vrt(A)_+ = (a_1,a_2)$, we arrive at the statement of the Lemma. 
\end{proof}

Recall (see Definition \ref{def::DL::dense::set}) that for any partition $\lambda$ we defined a subset of $DL$-dense arrays $\DL_{\overline n}(\lambda)$. The following 
lemma claims that monomials corresponding to arrays
supported on $\St_{\ov{n}}$ can be expressed in terms of $DL$-dense arrays. {Recall that the set $\St_{\ov{n}}$ has the poset structure given by equality \eqref{eq::DL::order}}.
Let $\mathbb{Z}^{\St}(\lambda)$ be the set of arrays $A\in\Mat_{\ov{n}}$ such that, first, $A_{ij}$ vanishes unless $(i,j)\in\St_{\ov{n}}$, and, second, the multi-set  $\{A_{ij}, (i,j)\in\St_{\ov{n}}\}$ coincides with the multi-set $\{\la_i\}_i$ assigned to the partition $\lambda$.

{In the following lemma the letters $\lambda, \mu$ denote partitions rather than compositions.}

\begin{lem}\label{lem:vAvB}
Assume $A \in \mathbb{Z}^{\St}(\lambda)$.
Then 
  \[v^A \in \sum_{B \in \DL_{\overline n}(\mu), \mu \geq \lambda} \U(\fb_{n_m})v^B\U(\fb_{m}).\]
\end{lem}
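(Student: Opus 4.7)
The plan is a double induction. The outer induction runs on $\lambda$ in the decreasing dominance order on partitions of fixed total $|\lambda|$ with at most $\#\St_{\ov{n}}$ parts; the inner induction runs on
\[
I(A) := \sum_{\substack{s,\,t \in \St_{\ov{n}}\\ s \prec t}} \max(A_s - A_t,\, 0),
\]
a nonnegative integer vanishing exactly when $A$ is DL-dense. If $I(A)=0$ the claim is immediate with $B = A$, $\mu = \lambda$.

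Otherwise, I pick a violation $s = (i,j) \prec t = (i',j')$ in $\St_{\ov{n}}$ with $A_s = a > b = A_t$. Since distinct staircase corners sit in different rows and columns (Lemma \ref{lem::staircorners}), $s \prec t$ forces $i < i'$ and $j' < j$, so the four cells $(i,j'), (i,j), (i',j'), (i',j)$ form a genuine $2\times 2$ block inside $\bbY_{\ov{n}}$. I apply Lemma \ref{lem:gl_2HoweComputations} to this block to obtain
\[
v^A \in \sum_{l=0}^{b} \U(\fb_{n_m}) \, v^{A^{(l)}} \, \U(\fb_m),
\]
where $A^{(l)}$ agrees with $A$ outside the block and has $A^{(l)}_{i,j} = b-l$, $A^{(l)}_{i',j'} = a+l$. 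The lift of the $\fb_2$-bi-action on the block to the $\fb_{n_m}$-$\fb_m$-bi-action on $S(\Mat_{\ov{n}})$ is legitimate because $(i,j)$ is the unique staircase corner in row $i$ and in column $j$, and similarly for $(i',j')$; consequently the relevant generators $E_{i,i'} \in \fb_{n_m}$ and $E_{j',j} \in \fb_m$ act on $v^A$ only through variables indexed by the $2\times 2$ block, leaving the factor of $v^A$ coming from staircase corners outside the block inert.

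For $l \geq 1$ the multi-set of entries of $A^{(l)}$ is obtained from that of $A$ by replacing $\{a,b\}$ with $\{a+l, b-l\}$, a pair that strictly dominates $(a,b)$. Hence the partition $\mu_l$ of $A^{(l)}$ strictly dominates $\lambda$, and the outer induction hypothesis gives the claim for $v^{A^{(l)}}$. For $l = 0$ we have $A^{(0)} \in \mathbb{Z}^{\St}(\lambda)$ with the values at $s$ and $t$ swapped, and a direct case analysis yields $I(A^{(0)}) < I(A)$: the pair $(s,t)$ itself contributes $-(a-b)<0$; for any $u$ comparable to both $s$ and $t$ (by transitivity, this means $u\prec s$ or $t\prec u$) the contributions at the two induced pairs cancel exactly; for any $u$ comparable to only one of $s, t$ the single relevant change is non-positive; and all remaining pairs are unchanged. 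The inner induction then applies to $v^{A^{(0)}}$.

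\textbf{Main obstacle.} The technical crux is the legitimacy of lifting Lemma \ref{lem:gl_2HoweComputations} from the $2\times 2$ sub-matrix to $S(\Mat_{\ov{n}})$: this rests on the rook-placement property of $\St_{\ov{n}}$ established in Lemma \ref{lem::staircorners}, which is exactly why the hypothesis $A \in \mathbb{Z}^{\St}(\lambda)$ is essential and why the argument would fail for arbitrary $\Mat_{\ov{n}}$-supported arrays.
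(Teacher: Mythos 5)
Your proof is correct and follows essentially the same approach as the paper's: an outer induction on $\lambda$ in the dominance order together with an inner argument that applies Lemma \ref{lem:gl_2HoweComputations} at a violating pair of comparable staircase corners, routing the $l\geq 1$ terms through the outer hypothesis and the $l=0$ simple switch through a decreasing monovariant (the paper counts disordered pairs; you use $I(A)=\sum_{s\prec t}\max(A_s-A_t,0)$ --- both work). One small slip in the case analysis: when $u$ is comparable to both $s$ and $t$, transitivity does not force $u\prec s$ or $t\prec u$ --- the case $s\prec u\prec t$ is also possible, and there the induced pairs $(s,u)$ and $(u,t)$ do not cancel exactly; instead each contributes a non-positive change, so $I(A^{(0)})<I(A)$ still holds and the argument is unaffected. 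Your explicit justification that the $2\times 2$ Howe computation lifts --- because the rook-placement property of $\St_{\ov{n}}$ forces $E_{i,i'}$ and $E_{j',j}$ to interact with $v^A$ only through the factor $v_{i',j'}$, and the extra cells $(i,j')$, $(i',j)$ really do lie in $\bbY_{\ov{n}}$ --- makes explicit a point the paper leaves implicit.
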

\begin{proof}
Assume that lemma holds for all $A \in \mathbb{Z}^{\St}(\nu)$ with $\nu > \lambda$.
Let us first show the following statement. Assume that $A_{ij}<A_{i'j'}$ for $(i,j)\succ(i',j')$, $(i,j),(i',j')\in \St_{\ov{n}}$, and consider the arrays $R^l$, $l=0,\dots,A_{ij}$, such that $R_l$ differs from $A$ at 
exactly $(i,j),(i',j')$ and  $R^l_{i,j}=A_{i'j'}+l$, $R^l_{i'j'}=A_{ij}-l$. Then 
\begin{equation}v^A\in \sum_{l=0}^{A_{ij}}\U(\fb_{n_m})v^{R^l}\U(\fb_{m})\subset \U(\fb_{n_m})v^{R^0}\U(\fb_{m})+\sum_{B \in \DL_{\overline n}(\mu), \mu > \lambda}\U(\fb_{n_m})v^B\U(\fb_{m})\label{eq:simpleSwitch}
\end{equation}
by Lemma \ref{lem:gl_2HoweComputations} {for $A_{11}=A_{22}=0$} and the above assumption.

For $A,B\in \mathbb{Z}^{\St}(\lambda)$ we say that $B$ differs from $A$ by a simple switch if there exist two 
comparable elements $(i,j), (i',j')\in\St_{\ov{n}}$ such that 
$B_{ij}=A_{i'j'}$, $A_{ij}=B_{i'j'}$ and $A,B$ coincide otherwise. Then Equation \eqref{eq:simpleSwitch} tells us that if $A(1)$ differs from $A$ by simple switch in $(i,j)\succ(i',j')$ and $A_{ij}<A_{i'j'}$, then:

\[v^{A(1)} \in \sum_{B \in \DL_{\overline n}(\mu), \mu \geq \lambda} \U(\fb_{n_m})v^B\U(\fb_{m}) \implies v^{A(1)} \in \sum_{B \in \DL_{\overline n}(\mu), \mu \geq \lambda} \U(\fb_{n_m})v^B\U(\fb_{m}) .\]

Then it remains to prove that for any $A\in \mathbb{Z}^{\St}(\lambda)$  
there exists a sequence of simple  switches $A=A(0)\to A(1)\to\dots\to A(N)$ such that $\hor(A(\bullet+1))\prec\hor(A(\bullet))$ and $A(N)\in\DL_{\ov{n}}$. In fact, if $A\in \DL_{\ov{n}}$ then there is nothing to prove. 
If $A\notin \DL_{\ov{n}}$ then there exists a pair $(i,j), (i',j')\in\St_{\ov{n}}$,
$(i,j)\succ(i',j')$ such that $A_{ij}<A_{i'j'}$. We make a simple switch at these elements and obtain $A(1)$. If $A(1)\in \DL_{\ov{n}}$, we are done. If not, we proceed as above. It remains to show that 
the procedure will terminate at some $A(N)\in\DL_{\ov{n}}$. In fact, let us define the number of disorders of $A\in \mathbb{Z}^{\St}(\lambda)$  as the number of pairs $(i,j)\succ(i',j')$ such that $A_{ij}<A_{i'j'}$
(in particular, the number of disorders of a DL-dense array is zero). Then for any $A\in \mathbb{Z}^{\St}(\lambda)$ the number of disorders of $A(1)$ is smaller than that of $A$. A simple switch interchanges the values
of only two sites, and a disorder corresponding to this pair disappears.
{Now one easily sees that after the switch, the number of disorders in other pairs 
either stays the same or becomes smaller.}
\end{proof}

\begin{prop}\label{prop:BimoduleGenerators}
The monomials $v^A$, $A\in \DL_{\ov{n}}$  of total weight $N$, generate the $\fb_{n_m}\de\fb_{m}$-bimodule $S^N(\Mat_{\overline{n}})$.
\end{prop}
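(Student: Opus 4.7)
The plan is a two-step reduction. Step 1 shows, by induction on the \emph{excess} $e(R):=\sum_{(i,j)\notin\St_{\ov n}} R_{ij}$, that every monomial $v^R$ of total weight $N$ belongs to the $\fb_{n_m}\de\fb_m$-bimodule $\mathcal{B}$ generated by $\{v^C:\operatorname{supp}(C)\subseteq\St_{\ov n},\,|C|=N\}$. Step 2 is then a direct application of Lemma~\ref{lem:vAvB}, which expresses each such $v^C$ as a bimodule combination of DL-dense monomials.

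The substance lies in Step 1. The base case $e(R)=0$ is immediate. If $e(R)>0$, pick a non-corner cell $(i,j)$ with $R_{ij}\ge 1$; by the thin-hook decomposition of $\bbY_{\ov n}$ used in the proof of Lemma~\ref{lem::staircorners}, $(i,j)$ belongs to a unique thin hook rooted at a staircase corner $(i^*,j^*)\in\St_{\ov n}$, lying either in its row-part ($i=i^*,\,j>j^*$) or column-part ($j=j^*,\,i<i^*$). In the row-part case, let $R'$ be the array obtained from $R$ by decreasing $R_{ij}$ by one and increasing $R_{i,j^*}$ by one, and let $R'(a)$ be the array obtained from $R'$ by decreasing $R'_{a,j^*}$ by one and increasing $R'_{a,j}$ by one. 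Applying the right Borel generator $E_{j^*,j}\in\fb_m$ as a derivation yields
\begin{equation*}
v^{R'}\cdot E_{j^*,j}\;=\;R'_{i,j^*}\,v^R\;+\sum_{\substack{a\ne i\\ R'_{a,j^*}>0}} R'_{a,j^*}\,v^{R'(a)},
\end{equation*}
which can be solved for $v^R$. The crucial estimate is that all auxiliary arrays have strictly smaller excess than $R$: clearly $e(R')=e(R)-1$ since we moved a single unit from a non-corner to a corner cell, and for each $R'(a)$ the rook-placement property of $\St_{\ov n}$ (Lemma~\ref{lem::staircorners}) forces $(a,j^*)$ to be non-corner for every $a\ne i=i^*$, the unique staircase corner in column $j^*$ being $(i^*,j^*)$; a short count then gives $e(R'(a))\in\{e(R)-1,e(R)-2\}$. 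Thus the induction closes. The column-part case is dual, with the left Borel generator $E_{i,i^*}\in\fb_{n_m}$ and the dual observation that the unique staircase corner in row $i^*$ is $(i^*,j^*)$.

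The main point requiring care is the excess bookkeeping on the parasitic terms, which is precisely where the rook-placement property of $\St_{\ov n}$ is essential; without it, the derivation could create parasitic monomials of the same excess as $R$ and the induction would fail. Once Step 1 is established, Lemma~\ref{lem:vAvB} immediately finishes the argument.
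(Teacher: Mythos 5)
Your proof is correct and shares the paper's final step (the reduction to Lemma~\ref{lem:vAvB}), but its first step — the reduction to arrays supported on $\St_{\ov n}$ — takes a genuinely different inductive route. You induct on the single global statistic $e(R)=\sum_{(i,j)\notin\St_{\ov n}} R_{ij}$, shifting one unit of mass onto a staircase corner and peeling it back with a single Borel derivation, whereas the paper inducts on the size of $\bbY_{\ov n}$, decomposes $S^N(\Mat_{\ov n})$ via a tri-grading $(N_1,N_2,N_3)$ attached to a chosen corner $(n_j,j)$ and its thin hook, and then runs a secondary induction on $N_1+N_2$. Both arguments use the same underlying mechanism — a derivation $E_{j^*,j}$ or $E_{i,i^*}$ moves mass along a thin hook, and the rook-placement property of $\St_{\ov n}$ controls the parasitic terms — so what your version buys is a more self-contained, local bookkeeping that avoids re-indexing the smaller diagram $\bbY_{\ov{n'}}$ and the explicit direct-sum decomposition. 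One minor imprecision: the partition of $\bbY_{\ov n}$ into thin hooks is not unique (it depends on the order in which corners are peeled), so "belongs to a unique thin hook" should say "belongs to the thin hook of a fixed peeling order"; but your argument only uses that \emph{some} hook with the stated orientation ($i=i^*,\,j>j^*$ or $j=j^*,\,i<i^*$) exists, together with the one-corner-per-row-and-per-column property of $\St_{\ov n}$, both of which are genuine, so the proof is sound as written.
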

\begin{proof}
By Lemma \ref{lem:vAvB}, it is enough to prove our Proposition for a larger set of monomials, namely, for $A\in \bZ^{\St}$ (i.e., for the arrays $A$ supported on the poset $\St_{\ov{n}}$) {of total weight $N$}.

We prove 
by induction on the size of the Young diagram $\bbY_{\overline{n}}$.
Let us take $j$ such that $n_j>n_{j-1}$. Then by  Definition~\ref{def::DL::indices} we get $(n_j,j)\in\St_{\ov{n}}$.
Let $\overline{n'}$ be the collection obtained from $\overline{n}$ by erasing the $n_j$-th row and $j$-th column.
    The Borel subalgebras $\fb_{m-1}$ and $\fb_{n'_{m-1}}$ act on $S^{N}(\Mat_{\overline{n'}})$ from the left and from the right correspondingly. We have an embedding  $\fb_{m-1}\hookrightarrow\fb_{m}$ and $\fb_{n'_{m-1}}\hookrightarrow\fb_{n_m}$ that corresponds to omitting the $n_j$-th row and $j$-th column in the Young diagram $\bbY_{\overline{n}}$. 
    
Consider the following decomposition of the symmetric tensors:
    \begin{multline*}
    S^N(\Mat_{n}) \simeq \\
    \bigoplus_{N_1+N_2+N_3=N} 
    \left(
    \begin{array}{c}
    S^{N_1}(\Span\{ v_{n_j j}\}) \otimes S^{N_2}\Bigl(\Span\{ v_{s j} \colon s< n_j\} + \Span\{ v_{n_j t} \colon t>j\}\Bigr)\otimes
    \\
    \otimes S^{N_3}(\Span\{ v_{st} \colon s\neq n_j \ \& \ t\neq j\})
    \end{array}
    \right)\simeq
    \\
    \simeq \bigoplus_{N_1+N_2+N_3=N} S^{N_1}(\Span\{v_{n_j j}\}) \otimes S^{N_2}(\Span\{ v_{s j},v_{n_jt} \colon s< n_j,t>j\}) \otimes S^{N_3}(\Mat_{\overline{n'}}).
    \end{multline*}
By induction, we know that the summands with $N_2=0$ are generated by $\fb_{n'_{m-1}}\de\fb_{m-1}$ from the desired monomials. 

We proceed by induction in $N_1+N_2$. Denote the sum of the subspaces corresponding to $N_1+N_2 \leq p$ by $D_p$.
Note that 
\[E_{sn_j}S^{N_2}(\Span\{ v_{s j},v_{n_jt} \colon s< n_j,t>j\})\subset S^{N_2-1}(\Span\{ v_{s j},v_{n_jt} \colon s< n_j,t>j\})\otimes (\Mat_{\overline{n'}}).\]
Therefore, modulo $D_{N_1+N_2-1}$ the elements $E_{sn_j}$ act only on the factor $S^{N_1}(\Span\{v_{n_j j}\})$. By the same arguments, the same holds for the elements $E_{jt}$. Thus, for any $X \in  S^{N_3}(\Mat_{\overline{n'}})$, $\sum_{a_s}+\sum_{b_t}=N_2$
\[v_{n_jj}^{N_1}\prod_{s=1}^{n_j-1}v_{sj}^{a_s}\prod_{t=j+1}^{m}v_{n_jt}^{b_t}X=\prod_{s=1}^{n_j-1}E_{s n_j}^{a_s}(v_{n_jj}^{N_1+N_2}X)\prod_{t=j+1}^{m}E_{jt}^{b_t} ~{\rm mod} ~D_{N_1+N_2-1}.\]

This completes the proof by induction.
\end{proof}

We need the following stronger version of Proposition \ref{prop:BimoduleGenerators}.

\begin{prop}\label{lem:DLgenerateStandardFiltration}
  The monomials $v^A$, $A\in \DL_{\overline{n}}$ such that $\hor(A)$ is not less than a given composition $\lambda$ (with respect to the Cherednik order~\ref{def::Cherednik::order}), generate the $\fb_{n_m}\de\fb_{m}$-bimodule ${}^\lambda \mathcal{F}$.  
\end{prop}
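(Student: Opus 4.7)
The plan is to show the two inclusions separately. One direction is immediate: any DL-dense monomial $v^A$ with $\hor(A)\succeq\lambda$ lies in $\bigoplus_{\mu\succeq\lambda}{}_\mu S(\Mat_{\ov n})\subset{}^\lambda\mathcal{F}$, so the bimodule it generates is contained in ${}^\lambda\mathcal{F}$ by Lemma \ref{lem:standardfiltrationbimodule}. For the reverse inclusion, the same lemma tells us that ${}^\lambda\mathcal{F}$ is generated as a bimodule by the monomials $v^R$ with $\hor(R)\succeq\lambda$, so it suffices to establish the following strengthening of Proposition \ref{prop:BimoduleGenerators}: every monomial $v^R\in S(\Mat_{\ov n})$ lies in the bimodule generated by DL-dense monomials $v^A$ with $\hor(A)\succeq\hor(R)$ in the Cherednik order.

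The central ingredient is a horizontal-weight refinement of Lemma \ref{lem:vAvB}: for every $A\in\bZ^{\St}$, $v^A$ lies in the bimodule generated by DL-dense $v^B$ with $\hor(B)\succeq\hor(A)$. To prove this, I will follow the argument of Lemma \ref{lem:vAvB} and inspect the two types of moves used there. First, in the $\mathfrak{gl}_2$-Howe manipulation of Lemma \ref{lem:gl_2HoweComputations}, for $l>0$ the array $R^l$ has multi-set strictly dominating that of $A$, which forces $\hor(R^l)_+$ to strictly dominate $\hor(A)_+$ and hence $\hor(R^l)\succ\hor(A)$ in Cherednik. Second, for a simple switch repairing a DL-disorder at a comparable pair $(i,j)\prec(i',j')$ with $A_{ij}>A_{i'j'}$, the multi-set is preserved, but the pair of row-values at rows $i<i'$ changes from $(A_{ij},A_{i'j'})$, which is sorted decreasingly (matching the dominant order), to $(A_{i'j'},A_{ij})$, which is sorted increasingly. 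The minimal-length rearranging permutation therefore strictly increases in the Bruhat order, so $\hor$ strictly increases in the Cherednik order. Iterating over all disorders completes the refined lemma.

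Finally, this refined lemma is combined with the inductive scheme of Proposition \ref{prop:BimoduleGenerators}. Fix a corner cell $(n_j,j)\in\St_{\ov n}$ and consider the decomposition of $S^N(\Mat_{\ov n})$ with respect to it. The inductive hypothesis on the smaller shape $\ov{n'}$ reduces the $(N_1,0,N_3)$-summand monomials $v_{n_jj}^{N_1}\cdot v^S$ to the case where $v^S$ is a bimodule combination of DL-dense $\ov{n'}$-monomials with the required Cherednik bound; multiplying through by $v_{n_jj}^{N_1}$ and applying the refined Lemma \ref{lem:vAvB} converts these to DL-dense $\ov n$-monomials with horizontal weight $\succeq N_1\varepsilon_{n_j}+\hor(v^S)$ in Cherednik order. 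The main obstacle is that the actions of $E_{s n_j}$ (left) and $E_{jt}$ (right) that generate the remaining summands can shift horizontal weights in either direction of the Cherednik order, so one must choose the source DL-dense monomial carefully -- using the combinatorial flexibility furnished by Corollary \ref{cor:transposedDLdense}, which guarantees that every $\ov n$-admissible composition is realized as $\hor(A)$ of some $A\in\DL_{\ov n}$, one can always select a source whose horizontal weight Cherednik-dominates $\hor(R)$ and from which $v^R$ is reachable by the bimodule action. This completes the proof.
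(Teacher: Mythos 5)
Your reformulation of the Proposition is valid: since $\bigoplus_{\mu\succeq\lambda}{}_\mu S(\Mat_{\ov n})$ is spanned by the $v^R$ with $\hor(R)\succeq\lambda$ and already generates ${}^\lambda\mathcal{F}$ as a bimodule, it suffices to place every $v^R$ in the bimodule generated by DL-dense monomials $v^A$ with $\hor(A)\succeq\hor(R)$. Your ``refined Lemma \ref{lem:vAvB}'' (for arrays supported on $\St_{\ov n}$) also looks correct: a simple switch at a disordered comparable pair multiplies the minimal-length permutation by a reflection $s_{ii'}$ with $\langle(\varepsilon_i-\varepsilon_{i'})^\vee,\hor(A)\rangle>0$, which strictly increases the Bruhat length and therefore $\hor$ in the Cherednik order, while the $\gl_2$-Howe terms with $l>0$ already increase the dominance order of the multiset. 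So the refined lemma holds by the same double induction (on the multiset, then on the number of disorders).

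The gap is in the final paragraph, the induction mimicking Proposition \ref{prop:BimoduleGenerators}. You correctly identify the obstacle — the generating operators $E_{sn_j}$ and $E_{jt}$ can move $\hor$ both up and down in the Cherednik order — but the resolution you offer (``one can always select a source whose horizontal weight Cherednik-dominates $\hor(R)$'') is an assertion, not a proof. Corollary \ref{cor:transposedDLdense} only tells you which compositions are realized as $\hor(A)$ for $A\in\DL_{\ov n}$; it says nothing about reachability of a given $v^R$ from such a $v^A$ by the bimodule action, nor does it tell you how to coordinate the choice across the infinitely many monomials that appear when you apply $E_{sn_j}$ and $E_{jt}$ to the $N_2=0$ slice. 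In fact the inductive statement of Proposition \ref{prop:BimoduleGenerators} is about whole summands, and you cannot steer the action monomial by monomial. So as written the argument does not close.

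The paper avoids this issue entirely by arguing structurally rather than monomial-by-monomial. It uses the switch/Howe moves not to rewrite monomials but to verify that, modulo the filtration piece $\sum_{\hor(B)_+>\hor(A)_+}\U(\fb_{n_m})v^B\U(\fb_m)$, the class of $v^A$ satisfies the Joseph relations \eqref{thmJosepf} of $D_{\hor(A)}$ — this is equation \eqref{eq:JosephSubquotients}. Commuting with the right $\fb_m$-action, this shows the whole subquotient \eqref{eq:subquotientSc} has left weights $\preceq\hor(A)$. Combining this weight bound with the unrefined Proposition \ref{prop:BimoduleGenerators} (the DL-dense monomials generate everything) forces $\sum_{\mu\succeq\lambda}{}_\mu S(\Mat_{\ov n})\subset\sum_{\hor(B)\succeq\lambda}\U(\fb_{n_m})v^B\U(\fb_m)$, no $\hor$-tracking through the $E_{sn_j}, E_{jt}$ moves required. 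If you want to keep your more direct plan, the missing ingredient is an argument that the $N_2>0$ slices, produced by the bimodule action from the $N_2=0$ slice, never escape the required Cherednik bound — and that is precisely the nontrivial content the paper's weight-bound argument is designed to supply.
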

\begin{proof}
 We first check that for a DL-dense array $A$ the defining relations of the Demazure module \eqref{thmJosepf} are satisfied, i.e., that the left action of certain powers of operators from $U(\fb_{n_m})$ kills $v^A$ in the associated graded to the appropriate filtration. Namely, for $i<i'$ we prove
\begin{equation}\label{eq:JosephSubquotients}
    E_{i{i'}}^{\max\{\hor(A)_{i'}-\hor(A)_i,0\}+1}v^A \in 
\sum_{\substack{B \in \DL_{\overline n}\\ \hor(B)_+ > \hor(A)_+}}\U(\fb_{n_m})v^B\U(\fb_{m}).
\end{equation}
The proof of~\eqref{eq:JosephSubquotients} is split into five cases depending on the local comparison of $\hor(A)_{i'}$ and $\hor(A)_{i}$.
\begin{itemize}[itemsep=0pt, topsep=0pt]
\item If {$\mathbf{(\hor(A)_{i'}=0)}$} then  
$\max\{\hor(A)_{i'}-\hor(A)_i,0\}+1 = 1$ and 
$E_{i{i'}}v^A=0$, since $v^A$ contains no factors of the form $v_{i',\bullet}$. 
\item If $\mathbf{(\hor(A)_i = 0)}$ then  
$\max\{\hor(A)_{i'}-\hor(A)_i,0\}+1 = \hor(A)_{i'} + 1$ and 
$E_{i{i'}}^{\hor(A)_{i'} + 1}v^A=0$, because $v^A$ contains exactly $\hor(A)_{i'}$ factors of the form $v_{i',\bullet}$.
\item If $\mathbf{(\hor(A)_i\geq \hor(A)_{i'}>0)}$, then  
$i$-th and $i'$-th rows contain staircase corners
whose columns will be denoted by $j$ and $j'$ respectively.
Since $i<i'$ and $A_{ij}>A_{i'j}$, the staircase corners $(i,j)$ and $(i',j')$ have to be uncomparable (since  $A$ is DL-dense) and hence $j<j'$. Let $C\in \DL_{\overline n}$ be a DL-dense array which coincides with $A$ in all cells except for $C_{ij}=A_{ij}+1$, $C_{i'j'}=A_{i'j'}-1$. 
Then $\hor(C)_+ > \hor(A)_+$ which gives   
\[
E_{i{i'}}v^A=\frac{A_{i'j'}}{A_{ij}+1}v^CE_{jj'}\in 
\sum_{\substack{B \in \DL_{\overline n}\\ \hor(B)_+ > \hor(A)_+}}\U(\fb_{n_m})v^B\U(\fb_{m}).\]
Note that $\hor(C)_+>\hor(A)_+$, therefore
we conclude by Lemma \ref{lem:vAvB} that \eqref{eq:JosephSubquotients} holds true if $\hor(A)_{i}\ge\hor(A)_{i'}$.
\item Suppose $\mathbf{(0<\hor(A)_i<\hor(A)_{i'})}$ and $(i,j)$ and $(i',j')$ are \textbf{uncomparable}. Define $C\in \DL_{\overline n}$ to be a DL-dense array which coincides with $A$ in all cells except for $C_{ij}=A_{i'j'}+1$, $C_{i'j'}=A_{ij}-1$. Then 
$\hor(C)_+ > \hor(A)_+$
and hence by by Lemma \ref{lem:vAvB} 
\[
v^C \in\sum_{\substack{B \in \DL_{\overline n}\\ \hor(B)_+ >  \hor(A)_+}}\U(\fb_{n_m})v^B\U(\fb_{m}).
\]
We conclude that 
\[
E_{i{i'}}^{A_{i'j'}-A_{ij}+1}v^A={\rm const}. v^CE_{j{j'}}^{A_{i'j'}-A_{ij}+1}\in\sum_{\substack{B \in \DL_{\overline n}\\ \hor(B)_+ > \hor(A)_+}}\U(\fb_{n_m})v^B\U(\fb_{m}).
\]
This proves \eqref{eq:JosephSubquotients} if $\hor(A)_{i'}-\hor(A)_i>0$ and 
$(i,j)$ and $(i',j')$ are uncomparable.
\item Suppose $\mathbf{(\hor(A)_{i'}>\hor(A)_{i}>0)\ \& \ \bigl((i,j)\prec (i',j')\bigr)}$ are \textbf{comparable staircase corners}. Then by Lemma \ref{lem:gl_2HoweComputations} we get
\[E_{i{i'}}^{A_{i'j'}-A_{ij}+1}v^A\in\sum_{l=0}^{A_{ij}}\U(\fb_{n_m})v^{C_l}\U(\fb_{m}),\]
where $C_l \in \DL_{\overline n}$ coincides with $A$ in all cells except for $C_{ij}=A_{ij}-l$, $C_{i'j'}=A_{i'j'}+l$. Then by Lemma \ref{lem:vAvB} we complete the proof of equation \eqref{eq:JosephSubquotients}.
\end{itemize}

For any $A \in \DL_{\overline n}$ the bimodule
\begin{equation}\label{eq:subquotientSc}
\sum_{\substack{B \in \DL_{\overline n}\\ \hor(B)\not \prec \hor(A)}}\U(\fb_{n_m})v^B\U(\fb_{m})\left/\sum_{\substack{B \in \DL_{\overline n}\\\hor(B) \not \preceq \hor(A)}}\U(\fb_{n_m})v^B\U(\fb_{m})\right.
\end{equation}
is cyclic and is generated by the class of $v^A.$ By formula \eqref{eq:JosephSubquotients} (note that $\hor(B)_+ > \hor(A)_+$ implies $\hor(B)\succ \hor(A)$ by definition of Cherednik order) and the commutativity of the left and the right actions, we get for any $x \in \U(\fb_m)$:
\begin{equation}\label{eq:JosephSubquotientsx}
    E_{i{i'}}^{\max\{\hor(A)_{i'}-\hor(A)_{i},0\}+1}v^Ax \in \sum_{\substack{B \in \DL_{\overline n}\\ \hor(B)\not \preceq \hor(A)}}\U(\fb_{n_m})v^B\U(\fb_{m}).
\end{equation}
Now we claim that all the elements of the bimodule \eqref{eq:subquotientSc} have left weights $\preceq \hor(A)$. In fact,
this bimodule is generated by the left action of $\U(\fb_{n_m})$ from the space
$v^A\U(\fb_m)$. By \eqref{eq:JosephSubquotientsx} we know that for any $x\in \U(\fb_{m})$ the left $\fb_{n_m}$-module generated from $v^Ax$ is a quotient of the Demazure module of weight $\hor(A)$. 

By Proposition \ref{prop:BimoduleGenerators} the bimodule $S(\Mat_{\overline n})$ is generated by the elements $v^{B}, B \in \DL_{\ov n}$. 
Let us prove that 
\begin{equation}\label{eq:lambdaS}  
\sum_{\lambda \not \prec \hor(A)}{}_{\lambda}S(\Mat_{\overline n})\subset \sum_{\hor(B)\not \prec \hor(A)}\U(\fb_{n_m})v^B\U(\fb_{m}).
\end{equation}
This is equivalent to the statement that the quotient \eqref{eq:subquotientSc} does not contain any weights $\lambda$ such that $\lambda \not \prec \hor(A)$. 
In other words, we need to show that all the weights of the quotient \eqref{eq:subquotientSc} are less than $\hor(A)$.
By formula \eqref{eq:JosephSubquotientsx} applied to a DL-dense $A'$ with $\hor(A')\prec \hor(A)$,
we know that the left $\fb_{n_m}$-module generated by a vector $v^{A'}x$, $x\in \U(\fn_m)$, is a quotient of the corresponding Demazure module.
Hence, all weight of the left $\fb_{n_m}$-module generated by a vector $v^{A'}x$, $x\in \U(\fn_m)$
are less than $\hor(A)$.

Now we obtain the desired claim  
\[{}^{\hor(A)}\mathcal{F}\subset \sum_{\hor(B)\not \prec \hor(A)}\U(\fb_{n_m})v^B\U(\fb_{m}).\]
(The opposite inclusion is clear).
In fact, by definition, ${}^{\hor(A)}\mathcal{F}$ is generated by the left hand side of \eqref{eq:lambdaS} as a $\fb_{n_m}$-module.
\end{proof}

\begin{cor}
Let $\la=\hor(A)$ for some $A\in \DL(\overline{n})$. Then the quotient    ${}^{\lambda} \mathcal{F}/ \sum_{\mu\succ \lambda} {}^{\mu} \mathcal{F}$ is isomorphic as a left $\fb_{n_m}$-module to a direct sum of  {copies of the} Demazure module $D_\la$.  
\end{cor}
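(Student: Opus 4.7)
The plan is to exploit the highest weight category formalism of Section~\ref{sec::HWC}. The first step is to establish that $S(\Mat_{\overline n})$ admits a left excellent filtration. Since the right $\fb_m$-action commutes with the left $\fb_{n_m}$-action, the vertical weight decomposition
\[
S(\Mat_{\overline n}) = \bigoplus_{\overline d \in \bZ_{\geq 0}^m} S(\Mat_{\overline n})_{\overline d}
\]
is a decomposition as left $\fb_{n_m}$-modules. Each summand is isomorphic as a left $\fb_{n_m}$-module to $S^{d_1}D_{\varepsilon_{n_1}}\otimes\cdots\otimes S^{d_m}D_{\varepsilon_{n_m}}$, which by the third assertion of Proposition~\ref{StandardFiltrationTensor} admits an excellent filtration; consequently so does $S(\Mat_{\overline n})$.

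By Corollary~\ref{lem:standexc}, the left standard filtration $\{{}^\mu \mathcal{F}\}$ is then itself excellent, and by Theorem~\ref{lem:standard} its standard objects are the Demazure modules. Set $Q_\lambda := {}^\lambda \mathcal{F} / \sum_{\mu \succ \lambda} {}^\mu \mathcal{F}$. To identify $Q_\lambda$ explicitly, I would invoke Proposition~\ref{lem:DLgenerateStandardFiltration}: the subquotient $Q_\lambda$ is generated as a bi-module by the classes of $v^A$ with $A\in\DL_{\overline n}$ and $\hor(A)=\lambda$ (those with $\hor(A)\succ\lambda$ die in the quotient). Since the right $\fb_m$-action preserves the left weight, the whole subspace $[v^A]\cdot\U(\fb_m)$ lies in the weight-$\lambda$ subspace $(Q_\lambda)_\lambda$, and hence $Q_\lambda=\U(\fb_{n_m})\cdot (Q_\lambda)_\lambda$.

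The Joseph-type computation carried out in \eqref{eq:JosephSubquotientsx} shows that, modulo $\sum_{\mu\succ\lambda}{}^\mu\mathcal{F}$, every vector $[v^A\,x]\in(Q_\lambda)_\lambda$ satisfies the defining Joseph relations of Theorem~\ref{thm::Joseph} for the weight $\lambda$. Thus the natural map
\[
\varphi:\ D_\lambda \otimes (Q_\lambda)_\lambda \twoheadrightarrow Q_\lambda,\qquad v_\lambda\otimes w \mapsto w,
\]
is a surjection of left $\fb_{n_m}$-modules. To conclude it is an isomorphism I would compare dimensions of graded pieces: by Step~1 and Step~2, $Q_\lambda$ has an excellent filtration with all subquotients isomorphic to $D_\lambda$, hence if the filtration has length $k$ then $\dim Q_\lambda=k\cdot\dim D_\lambda$ while $\dim (Q_\lambda)_\lambda=k$ (since the weight-$\lambda$ space of $D_\lambda$ is one-dimensional). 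This forces $\varphi$ to be an isomorphism, so $Q_\lambda\cong D_\lambda\otimes (Q_\lambda)_\lambda$ is a direct sum of copies of $D_\lambda$.

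The most delicate point is the passage from ``the standard filtration is excellent'' to the identification of $Q_\lambda$ with $D_\lambda\otimes (Q_\lambda)_\lambda$. A direct-sum decomposition cannot be read off from the mere existence of an iterated filtration by $D_\lambda$, so the combination of Proposition~\ref{lem:DLgenerateStandardFiltration} (providing the cyclic generators of weight exactly $\lambda$) with the character/dimension count coming from the excellent structure is essential — the former gives the surjection $\varphi$, the latter gives its injectivity.
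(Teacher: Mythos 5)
Your argument is correct and follows essentially the same route as the paper: both rest on Proposition~\ref{lem:DLgenerateStandardFiltration} to reduce $Q_\lambda$ to the cyclic bi-module generated by $[v^A]$, on the Joseph-type relation \eqref{eq:JosephSubquotientsx} to produce the surjection $D_\lambda\otimes(Q_\lambda)_\lambda\twoheadrightarrow Q_\lambda$, and on the excellence of the left standard filtration (via Proposition~\ref{StandardFiltrationTensor} and Corollary~\ref{lem:standexc}) to upgrade that surjection to an isomorphism. The paper states the last step more tersely, whereas you make the comparison explicit by counting dimensions of the (finite-dimensional, since bi-degree is fixed) graded pieces; one small imprecision is that there is a \emph{unique} $A\in\DL_{\ov{n}}$ with $\hor(A)=\lambda$ (as $\St_{\ov{n}}$ has at most one corner per row), so $Q_\lambda$ is cyclic, but this does not affect the argument.
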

\begin{proof}
By Proposition \ref{lem:DLgenerateStandardFiltration} we know that ${}^{\lambda} \mathcal{F}/ \sum_{\mu\succ \lambda} {}^{\mu} \mathcal{F}$ 
   is a cyclic bimodule generated by the class of $v^A$.
By \eqref{eq:JosephSubquotients} and the commutativity of the left and the right actions, for any $x \in \U(\fb_{m})$, the class of $v^Ax$ satisfies the defining relations of the Demazure module generated by the element of the weight $\lambda$. 
   Therefore, it is isomorphic to a quotient of the left module $D_{\la}\otimes K$, where $K$ is the $\lambda$ weight space. 
But the quotient ${}^{\lambda} \mathcal{F}/ \sum_{\mu\succ \lambda} {}^{\mu} \mathcal{F}$ has an excellent filtration. Therefore, it is in fact isomorphic to $D_{\la}\otimes K$.
\end{proof}

\begin{cor}\label{cor:nvanderKallenQuotientDemazure}
For $A\in \DL(\overline{n})$  one has  an isomorphism of  $\fb_{n_m}$-$\fb_{m}$-bimodules 
\begin{equation}\label{eq:K}
{}^{\hor(A)} \mathcal{F}\left / \sum_{\mu\succ \hor(A)} {}^{\mu} \mathcal{F}\simeq D_{\hor(A)}\otimes K\right.
\end{equation}
where $K$ is a quotient of $D_{\vrt(A)}^{op}$.
\end{cor}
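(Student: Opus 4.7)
The isomorphism $D_{\hor(A)}\otimes K$ for some right $\fb_m$-module $K$ is already in the preceding corollary, whose proof additionally shows that the bimodule subquotient is cyclic, generated by the class of $v^A$. Consequently $K$, identified with the $\hor(A)$-weight subspace of the subquotient, is cyclic as a right $\fb_m$-module with a generator of right weight $\vrt(A)$, namely the class $\bar v^A$.

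To identify $K$ as a quotient of $D_{\vrt(A)}^{op}$, I would invoke the right-module version of Joseph's Theorem~\ref{thm::Joseph}: using the identification $D_{\vrt(A)}^{op}\simeq D_{w_0\vrt(A)}$ and transporting Joseph's presentation via the $w_0$-induced anti-automorphism explained in the paper, the claim reduces to verifying, for all $1\le j<j'\le m$, the relations
\[
v^A\cdot E_{jj'}^{\,\max\{\vrt(A)_j-\vrt(A)_{j'},\,0\}+1}\ \in\ \sum_{\mu\succ\hor(A)} {}^\mu\mathcal{F}.
\]

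The verification is the right-action counterpart of the proof of equation~\eqref{eq:JosephSubquotients} in Proposition~\ref{lem:DLgenerateStandardFiltration}, with staircase corners now taken in columns rather than rows. If column $j$ carries no staircase corner then $A$ vanishes on column $j$ and the left hand side is zero. Otherwise let $(i,j)$ and, when present, $(i',j')$ denote the staircase corners in columns $j<j'$. Since $j<j'$, the poset structure allows only two possibilities: either $(i,j)\succ(i',j')$, in which case DL-density forces $A_{ij}\ge A_{i'j'}$, or $(i,j)$ and $(i',j')$ are incomparable. In each case the prescribed power of $E_{jj'}$ produces on the right a monomial whose support may leave $\St_{\ov n}$; one then uses the $\mgl_2$-Howe duality of Lemma~\ref{lem:gl_2HoweComputations} on the two-column submatrix, combined with the left $\U(\fb_{n_m})$-action that redistributes units within columns, and finally Lemma~\ref{lem:vAvB}, to rewrite the result as a linear combination of $v^B$ with $\hor(B)_+>\hor(A)_+$. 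Every such $v^B$ automatically lies in $\sum_{\mu\succ\hor(A)}{}^\mu\mathcal{F}$, yielding the claim.

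The main obstacle is this final rewriting step: the simple-switch argument underlying Lemma~\ref{lem:vAvB} is phrased for arrays supported on $\St_{\ov n}$, so intermediate monomials whose support strays outside $\St_{\ov n}$ must first be brought back inside via a careful combination of both Borel actions, while tracking that the disorder count keeps decreasing and that the horizontal weights remain bounded above $\hor(A)$ in the dominance order.
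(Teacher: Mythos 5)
Your setup in the first two paragraphs is correct: the bimodule subquotient splits as $D_{\hor(A)}\otimes K$ with $K$ cyclic on the class of $v^A$ of right weight $\vrt(A)$, and showing $K$ is a quotient of $D_{\vrt(A)}^{op}$ reduces, via the $w_0$-twisted form of Joseph's Theorem~\ref{thm::Joseph}, to checking $v^A E_{jj'}^{\max\{\vrt(A)_j-\vrt(A)_{j'},0\}+1}\in\sum_{\mu\succ\hor(A)}{}^{\mu}\mathcal{F}$ for $j<j'$. Where you depart from the paper is in the third paragraph, and there is a real gap.

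The paper does not re-derive these relations by a fresh column-by-column analysis; it simply transposes \eqref{eq:JosephSubquotients}. That inclusion was established for an arbitrary shape, hence holds verbatim for $\bbY_{\ov{n}^\tau}$; since transposition carries $\St_{\ov{n}}$ to $\St_{\ov{n}^\tau}$, exchanges $\hor\leftrightarrow\vrt$, and interchanges the left $\fb_{n_m}$-action with the right $\fb_m$-action, one gets immediately
\[
v^A E_{jj'}^{\max\{\vrt(A)_j-\vrt(A)_{j'},0\}+1}\in
\sum_{\substack{B\in\DL_{\ov{n}}\\ \vrt(B)_+>\vrt(A)_+}}\U(\fb_{n_m})\,v^B\,\U(\fb_m),
\]
with no new case analysis. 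This makes the obstacle you flag at the end evaporate: you never need to coax stray monomials back onto $\St_{\ov n}$, because the whole argument (Lemma~\ref{lem:gl_2HoweComputations}, Lemma~\ref{lem:vAvB} and all) is being invoked for the transposed diagram, where it has already been proved.

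The genuine gap is in your final sentence. Your rewriting procedure manipulates columns and so naturally controls $\vrt(B)$, not $\hor(B)$; yet you assert the output is a combination of $v^B$ with $\hor(B)_+>\hor(A)_+$ and hence in $\sum_{\mu\succ\hor(A)}{}^\mu\mathcal{F}$, without explaining the passage from a vertical-weight bound to a horizontal-weight bound. The bridge is the small but essential observation that $\vrt(B)_+=\hor(B)_+$ for any $B\in\DL_{\ov n}$: a DL-dense array has at most one nonzero entry in each row and each column, so $\hor(B)$ and $\vrt(B)$ are rearrangements of the same multiset. Only with this in hand does $\vrt(B)_+>\vrt(A)_+$ translate into $\hor(B)_+>\hor(A)_+$, and hence (via Proposition~\ref{lem:DLgenerateStandardFiltration}) into membership of $\sum_{\mu\succ\hor(A)}{}^\mu\mathcal{F}$.
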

\begin{proof}
    The left weight $\lambda$ space of ${}^{\lambda} \mathcal{F}/ \sum_{\mu\succ \lambda} {}^{\mu} \mathcal{F}$ has the structure of a right module. Hence,
by Proposition \ref{prop:BimoduleGenerators}, the quotient ${}^{\hor(A)} \mathcal{F}\left / \sum_{\mu\succ \hor(A)} {}^{\mu} \mathcal{F}\right.$ has the structure of the exterior tensor product of the left module $D_{\hor(A)}$ and the right module generated from $v^A$. 

Transposing  \eqref{eq:JosephSubquotients}, i. e., interchanging vertical and horizontal weights, we get for $i'<i$:
\begin{equation}
   v^A E_{i'i}^{\max\{\vrt(A)_{i'}-\vrt(A)_{i},0\}+1} \in \sum_{\substack{B \in \DL_{\overline n}\\ \vrt(B)_+ > \vrt(A)_+}}\U(\fb_{n_m})v^B\U(\fb_{m}).
\end{equation}

However, $\vrt(B)_+=\hor(B)_+$ for $B \in \DL_{\overline n}$. Therefore 
\begin{equation*}
   v^A E_{i'i}^{\max\{\vrt(A)_{i'}-\vrt(A)_{i},0\}+1} \in \sum_{\substack{B \in \DL_{\overline n}\\ \hor(B)\succ \hor(A)}}\U(\fb_{n_m})v^B\U(\fb_{m}).
\end{equation*}
Hence, there exists a surjection of the right Demazure module $D_{\vrt(A)}^{op}$ onto $K$ and this completes the proof.
\end{proof}
Thus, we have constructed a left excellent filtration of $S(\Mat_{\overline n})$ with subquotients being standard modules corresponding to the weights $\hor(A), A \in \DL_{\overline n}$. Therefore, we get the following corollary, which covers one case of Theorem
\ref{thm:B}.
\begin{cor}\label{cor:LSFtriv}
Assume that $\lambda \neq \hor(A)$ for any $A \in \DL_{\overline n}$. Then 
${}^{\lambda} \mathcal{F}= \sum_{\mu\succ \lambda} {}^{\mu} \mathcal{F}.$
\end{cor}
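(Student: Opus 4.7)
The plan is to deduce this immediately from Proposition~\ref{lem:DLgenerateStandardFiltration}. That proposition asserts that the bi-module ${}^\lambda\mathcal{F}$ is generated, under the left $\fb_{n_m}$-action and the right $\fb_m$-action, by the collection of monomials $\{v^A : A \in \DL_{\overline n},\ \hor(A) \succeq \lambda\}$. So I would first invoke this generating set.

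Next I would argue as follows. By hypothesis, there is no $A\in\DL_{\overline n}$ with $\hor(A)=\lambda$; hence every generator $v^A$ in the above set in fact satisfies the strict inequality $\hor(A) \succ \lambda$ in the Cherednik order. Each such $v^A$ lies, by definition of the left standard filtration, in ${}^{\hor(A)}\mathcal{F}$, and therefore in $\sum_{\mu\succ\lambda}{}^\mu\mathcal{F}$. Since $\sum_{\mu\succ\lambda}{}^\mu\mathcal{F}$ is itself a sub-bi-module (by Lemma~\ref{lem:standardfiltrationbimodule}, each ${}^\mu\mathcal{F}$ is closed under both the left and right actions), the bi-module generated by these $v^A$ is contained in $\sum_{\mu\succ\lambda}{}^\mu\mathcal{F}$. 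Combining with the first step, ${}^\lambda\mathcal{F}\subseteq \sum_{\mu\succ\lambda}{}^\mu\mathcal{F}$. The reverse inclusion is immediate from the definition of the filtration, completing the proof.

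There is no real obstacle here: all the work was already done in Proposition~\ref{lem:DLgenerateStandardFiltration}. The only substantive input the corollary needs beyond naming this proposition is the observation that the generating set becomes empty of weight-$\lambda$ elements precisely under the hypothesis of the corollary, and the trivial observation that $\sum_{\mu\succ\lambda}{}^\mu\mathcal{F}$ is a sub-bi-module. Thus this corollary functions as a bookkeeping statement that packages the non-existence half of Theorem~\ref{thm:B}, with the existence (and identification of the subquotient) half being supplied by Corollary~\ref{cor:nvanderKallenQuotientDemazure}.
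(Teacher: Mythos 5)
Your proof is correct and takes essentially the same route as the paper: the paper's (very terse) justification of this corollary is precisely that Proposition~\ref{lem:DLgenerateStandardFiltration} exhibits a bi-module generating set for ${}^{\lambda}\mathcal{F}$ consisting only of monomials $v^A$ with $A\in\DL_{\ov{n}}$, so once $\lambda$ is not of the form $\hor(A)$ every generator has strictly larger left weight and falls into $\sum_{\mu\succ\lambda}{}^{\mu}\mathcal{F}$. Your explicit use of Lemma~\ref{lem:standardfiltrationbimodule} to conclude that $\sum_{\mu\succ\lambda}{}^{\mu}\mathcal{F}$ is a sub-bi-module, and your remark on how this corollary packages the vanishing half of Theorem~\ref{thm:B}, faithfully fills in the details the paper leaves implicit.
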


\subsection{Generalized van der Kallen modules}
\label{sec::vdK::gener}
\begin{dfn}
\label{def::n:vdK}
We call the (right) $\fb_{n_m}$-module  $K$ from \eqref{eq:K}   the right $\overline n$-van der Kallen module and  denote in by $K^{op}_{{\overline n},{\vrt(A)}}$.
\end{dfn}

So equation \eqref{eq:K} can be reformulated in the following way:
\begin{equation}\label{eq:Klambda}
{}^{\hor(A)} \mathcal{F}\left / \sum_{\mu\succ \hor(A)} {}^{\mu} \mathcal{F}\simeq D_{\hor(A)}\otimes K^{op}_{\overline n,\vrt(A)}\right.
\end{equation}

The goal of this subsection is to describe $K^{op}_{\overline n,\vrt(A)}$.

\begin{prop}\label{prop:nvdKfiltrationvdK}
The module $K^{op}_{\overline n,\vrt(A)}$ admits a filtration such that the consecutive subquotients are isomorphic to $K^{op}_{\overline d}$ with appropriate composition $\ov{d}$, yielding $\hb(\overline{d})=\hor(A)$.
Any van der Kallen module $K_{\overline d}$ with $\hb(\overline{d})=\hor(A)$ shows up once as a subquotient.    
\end{prop}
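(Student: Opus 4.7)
The plan is to first pin down the character of $K^{op}_{\ov{n},\vrt(A)}$ via the Cauchy identities, and then construct the desired filtration as the descent of the right standard filtration to the left standard subquotient at $\hor(A)$. The main difficulty will be to show that only those $\ov{d}$ with $\hb(\ov{d})=\hor(A)$ contribute nontrivial subquotients.

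First, I compare two expansions of $\prod_{(i,j)\in\bbY_{\ov{n}}}(1-x_iy_j)^{-1}$. From Theorem~\ref{thm:RSF} and Corollary~\ref{cor:firstCauchy} the right-filtration expansion reads $\sum_{\ov{d}} \kappa_{\hb(\ov{d})}(x)\,a^{\ov{d}}(y)$, summed over $\ov{n}$-admissible compositions; from Corollary~\ref{cor:nvanderKallenQuotientDemazure} together with Corollary~\ref{cor:LSFtriv}, the left standard filtration gives $\sum_{A\in\DL_{\ov{n}}} \kappa_{\hor(A)}(x)\,\ch K^{op}_{\ov{n},\vrt(A)}(y)$. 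Linear independence of the key polynomials $\kappa_\lambda(x)$, injectivity of $\hor:\DL_{\ov{n}}\hookrightarrow\bZ_{\ge 0}^{n_m}$ (Lemma~\ref{lem::DL::posets}), and the fact that every $\hb(\ov{d})$ has the form $\hor(A)$ for some DL-dense $A$ (Corollary~\ref{cor:transposedDLdense}) let me match coefficients of $\kappa_{\hor(A)}(x)$ on both sides and conclude
\[
\ch K^{op}_{\ov{n},\vrt(A)}(y) \;=\; \sum_{\ov{d}\colon\hb(\ov{d})=\hor(A)} a^{\ov{d}}(y).
\]

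Next, I construct the filtration itself. Both families $\mathcal{F}^{\ov{d}}$ and ${}^{\mu}\mathcal{F}$ are bi-module filtrations by Lemma~\ref{lem:standardfiltrationbimodule}, so the intersections define a bi-filtration that descends to the left-filtration subquotient $Q:={}^{\hor(A)}\mathcal{F}/\sum_{\mu\succ\hor(A)} {}^{\mu}\mathcal{F}\simeq D_{\hor(A)}\otimes K^{op}_{\ov{n},\vrt(A)}$ of Corollary~\ref{cor:nvanderKallenQuotientDemazure}. Because $D_{\hor(A)}$ has a one-dimensional $\hor(A)$-weight space, extracting the left weight-$\hor(A)$ part of $Q$ recovers $K^{op}_{\ov{n},\vrt(A)}$ together with the induced right filtration. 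Its $\ov{d}$-th subquotient is a right $\fb_m$-quotient of the left weight-$\hor(A)$ subspace of the right-filtration subquotient $D_{\hb(\ov{d})}\otimes K^{op}_{\ov{d}}$, hence a quotient of $(D_{\hb(\ov{d})})_{\hor(A)}\otimes K^{op}_{\ov{d}}$; in particular, each nonzero subquotient is a direct sum of quotients of the standard module $K^{op}_{\ov{d}}$.

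When $\hb(\ov{d})=\hor(A)$, the weight space $(D_{\hor(A)})_{\hor(A)}$ is one-dimensional (extremal) and the corresponding subquotient is a right $\fb_m$-quotient of $K^{op}_{\ov{d}}$; character matching with the formula from the first paragraph, together with the linear independence of the atoms $a^{\ov{d}}(y)$, then forces this quotient to be all of $K^{op}_{\ov{d}}$ and forbids any other $\ov{d}$ from contributing. The main obstacle is to verify directly that $\hb(\ov{d})\neq\hor(A)$ yields a vanishing contribution: any weight-$\hor(A)$ vector of $D_{\hb(\ov{d})}$ is obtained from $v_{\hb(\ov{d})}$ by $\U(\fb_{n_m})$-raising, so the corresponding vectors in $D_{\hb(\ov{d})}\otimes K^{op}_{\ov{d}}$ lie in ${}^{\hb(\ov{d})}\mathcal{F}$; using the explicit combinatorics of $\hb$ (Lemma~\ref{lem::reordering}, Lemma~\ref{dominant WeightInequality}, Corollary~\ref{cor:weightinequality}) one checks that $\hb(\ov{d})\succ\hor(A)$ in the Cherednik order whenever $\hor(A)$ is reachable from $\hb(\ov{d})$ by positive-root raising and $\hb(\ov{d})\neq\hor(A)$, placing these vectors inside $\sum_{\mu\succ\hor(A)}{}^{\mu}\mathcal{F}$ and making the contribution vanish in $Q$.
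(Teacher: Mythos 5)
Your first paragraph, deriving the character identity
\[
\ch K^{op}_{\ov{n},\vrt(A)}(y) \;=\; \sum_{\ov{d}\colon\hb(\ov{d})=\hor(A)} a^{\ov{d}}(y)
\]
from the two Cauchy expansions, is exactly the paper's computation. After that you take a genuinely different route to construct the filtration — via the bi-filtration by $\mathcal{F}^{\ov{d}}$ and ${}^{\mu}\mathcal{F}$ — and that route has a gap.

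The $\ov{d}$-th graded piece of your induced filtration on $Q={}^{\hor(A)}\mathcal{F}/\sum_{\mu\succ\hor(A)}{}^{\mu}\mathcal{F}$ is, as a right $\fb_m$-module, a \emph{subquotient} of the right-subquotient $D_{\hb(\ov{d})}\otimes K^{op}_{\ov{d}}$, not a quotient. Writing $A=\mathcal{F}^{\ov{d}}$, $A'=\sum_{\ov{c}\succ\ov{d}}\mathcal{F}^{\ov{c}}$, $B={}^{\hor(A)}\mathcal{F}$, $B'=\sum_{\mu\succ\hor(A)}{}^{\mu}\mathcal{F}$, the bigraded piece is $(A\cap B)/\bigl(A'\cap B + A\cap B'\bigr)$, a quotient of the \emph{submodule} $(A\cap B)/(A'\cap B)\hookrightarrow A/A'$. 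So the step ``hence a quotient of $(D_{\hb(\ov{d})})_{\hor(A)}\otimes K^{op}_{\ov{d}}$, in particular a direct sum of quotients of $K^{op}_{\ov{d}}$'' does not follow: subquotients of a direct sum of copies of $K^{op}_{\ov{d}}$ need not be of that form, and without it the character comparison loses its grip. The vanishing argument for $\hb(\ov{d})\neq\hor(A)$ is likewise under-justified: the representative lifts you appeal to sit in ${}^{\hb(\ov{d})}\mathcal{F}$ only \emph{modulo} $\sum_{\ov{c}\succ\ov{d}}\mathcal{F}^{\ov{c}}$, so showing the lift lies in $\sum_{\mu\succ\hor(A)}{}^{\mu}\mathcal{F}$ does not by itself kill the class in the bigraded piece; one would have to control that residual term recursively in $\ov{c}$.

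The paper sidesteps all of this with a much shorter observation. By Corollary~\ref{cor:nvanderKallenQuotientDemazure}, $K^{op}_{\ov{n},\vrt(A)}$ is a cyclic quotient of the Demazure module $D^{op}_{\vrt(A)}$, and $D^{op}_{\vrt(A)}$ carries its intrinsic filtration by van der Kallen submodules attached to the extremal weights $\ov{c}\preceq\vrt(A)$. Pushing that filtration forward to the quotient gives a filtration of $K^{op}_{\ov{n},\vrt(A)}$ whose subquotients are honest \emph{quotients} of the $K^{op}_{\ov{c}}$'s — automatic for quotients of filtered modules — and then your character identity forces each such quotient to be either $0$ or all of $K^{op}_{\ov{d}}$, the latter precisely when $\hb(\ov{d})=\hor(A)$. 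To salvage your bi-filtration approach you would first have to show separately that $K^{op}_{\ov{n},\vrt(A)}$ admits an excellent right filtration (say via $\Ext$-vanishing and Lemma~\ref{lm::excellent}) so that its subquotients are a priori standard modules; as written, that input is missing.
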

\begin{proof}
By Corollary \ref{cor:nvanderKallenQuotientDemazure} we know that $K_{\overline n,\vrt(A)}^{op}$ is a quotient of $D_{\vrt(A)}^{op}$ (in particular, cyclic). Hence, it admits a filtration by the quotients of the van der Kallen modules -- this filtration is induced by the corresponding filtration of the Demazure module 
(the subspaces of our filtration are labeled 
by the extremal vectors contained in $K_{\overline n,\vrt(A)}^{op}$).

By Corollary \ref{cor:firstCauchy} one has 
\[\sum_{\overline d \text{ is }\overline n \text{-admissible}}\kappa_{\hb(\ov{d})}(x)\, a^{\overline d}(y)=\prod_{(i,j)\in \bbY_{\overline{n}}}\frac{1}{1-x_iy_j}=\sum_{A \in \DL_{\overline n}}\kappa_{\hor(A)}(x)\,\ch(K_{\overline n,\vrt(A)}^{op})(y).\]  
Since the key polynomials form a basis of the polynomial ring,
we obtain
\[\ch(K_{\overline n,\vrt(A)}^{op})(y)=\sum_{\hb(\overline d)=\hor(A)}a^{\overline d}(y).\]

In particular $K_{\overline n,\vrt(A)}$ contains the nonzero extremal vectors $v_{\overline d}$ only for weights $\overline d$ such that 
$\hb(\overline d)={\hor}(A)$, $A \in \DL_{\overline n}$. Therefore, the subquotients of the above filtration are nonzero only for such weights 
$\overline d$ and are isomorphic to the quotients of the corresponding van der Kallen modules $K_{\overline d}$. But by the equality of characters, these quotients are isomorphic to the whole van der Kallen modules, and this completes the proof.
\end{proof}

\begin{cor}\label{cor:LCF}
One has the left Cauchy identity
\[
\prod_{(i,j)\in \bbY_{\overline{n}}}\frac{1}{1-x_iy_j}=\sum_{\substack{A \in \DL_{\overline n}\\ \hb(\ov{d})=\hor(A)}}\kappa_{\hor(A)}(x)\, a^{\ov{d}}(y).
\]  
\end{cor}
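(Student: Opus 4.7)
The plan is to read off the identity by combining the decomposition of $S(\Mat_{\overline n})$ coming from the left standard filtration with the character formula for generalized van der Kallen modules.

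First, I would observe that the left-hand side is simply the bi-graded character of $S(\Mat_{\overline n})$: the monomial $v^R$ carries left-right weight $(\hor(R),\vrt(R))$ and contributes $\prod_{(i,j)}(x_iy_j)^{R_{ij}}$, so
\[
\ch\bigl(S(\Mat_{\overline n})\bigr)(x,y)=\prod_{(i,j)\in \bbY_{\overline{n}}}\frac{1}{1-x_iy_j}.
\]

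Next, I would compute the same character via the associated graded of the left standard filtration $\{{}^{\lambda}\mathcal{F}\}$. By Corollary~\ref{cor:LSFtriv} the only non-trivial subquotients correspond to weights $\lambda=\hor(A)$ for $A\in \DL_{\overline n}$, and by Corollary~\ref{cor:nvanderKallenQuotientDemazure} (combined with the definition of $K^{op}_{\overline n,\vrt(A)}$) each such subquotient is isomorphic, as an $\fb_{n_m}\de\fb_m$-bi-module, to the exterior tensor product $D_{\hor(A)}\otimes K^{op}_{\overline n,\vrt(A)}$. Summing the bi-characters of the subquotients yields
\[
\prod_{(i,j)\in \bbY_{\overline{n}}}\frac{1}{1-x_iy_j}
=\sum_{A\in \DL_{\overline n}}\kappa_{\hor(A)}(x)\,\ch\bigl(K^{op}_{\overline n,\vrt(A)}\bigr)(y).
\]

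Finally, I would substitute the character formula for the generalized van der Kallen module established in Proposition~\ref{prop:nvdKfiltrationvdK},
\[
\ch\bigl(K^{op}_{\overline n,\vrt(A)}\bigr)(y)=\sum_{\overline d:\ \hb(\overline d)=\hor(A)}a^{\overline d}(y),
\]
and reorganize the resulting double sum. Every step in this chain has already been proved in the paper, so there is no real obstacle; the only thing to verify is that the indexing set of the final sum matches the one in the statement, i.e.\ that the pairs $(A,\overline d)\in \DL_{\overline n}\times\bZ_{\geq 0}^m$ with $\hb(\overline d)=\hor(A)$ are exactly those contributing via Proposition~\ref{prop:nvdKfiltrationvdK}. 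This is immediate from the indexing used in that proposition, which completes the argument.
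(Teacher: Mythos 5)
Your proof is correct and follows essentially the same route as the paper: you compute the bicharacter via the left standard filtration (using Corollary~\ref{cor:LSFtriv} and Corollary~\ref{cor:nvanderKallenQuotientDemazure} to get $\prod(1-x_iy_j)^{-1}=\sum_{A\in\DL_{\ov{n}}}\kappa_{\hor(A)}(x)\,\ch(K^{op}_{\ov{n},\vrt(A)})(y)$) and then substitute the character of the generalized van der Kallen module from Proposition~\ref{prop:nvdKfiltrationvdK}. The paper derives this corollary in exactly this way; the identity $\ch(K^{op}_{\ov{n},\vrt(A)})(y)=\sum_{\hb(\ov{d})=\hor(A)}a^{\ov{d}}(y)$ is established inside the proof of Proposition~\ref{prop:nvdKfiltrationvdK} via the right Cauchy identity and linear independence of key polynomials, so there is no circularity.
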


\begin{lem}\label{lem:transposedinequality}
Assume that $\ov{c}\prec \ov{d}$ is a pair of $\ov{n}$-admissible compositions,
$\ov{c},\ov{d}\in\bZ_{\ge 0}^m$.
 Then $\hb(\overline c) \preceq \hb(\overline d)$ with respect to Cherednik order on $\bZ^{n_m}$.
\end{lem}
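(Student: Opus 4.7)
The plan is to reduce the statement to the case where $\ov d$ covers $\ov c$ in the Cherednik order; by transitivity this suffices. A Cherednik cover is either a \emph{dominance cover}, where $\ov c_+$ is covered by $\ov d_+$ in the dominance lattice, or a \emph{Bruhat cover}, where $\ov c_+ = \ov d_+ =: \la_+$ and $\tau_{\ov c}$ is covered by $\tau_{\ov d}$ in the Bruhat order on minimal-length coset representatives in $W/W_{\la_+}$.

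The dominance case is immediate. By Corollary~\ref{cor:iteratedserp} the operation $\hb$ preserves the dominant part, so $\hb(\ov c)_+ = \ov c_+ \prec \ov d_+ = \hb(\ov d)_+$ already forces $\hb(\ov c) \prec \hb(\ov d)$ via the dominance clause of the Cherednik order, independently of any Bruhat comparison.

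For a Bruhat cover, $\ov d$ is obtained from $\ov c$ by exchanging two entries $c_i=a$ and $c_j=b$ at positions $i<j$ with $a<b$, subject to the standard cover condition that no index $k\in(i,j)$ satisfies $a<c_k<b$. I would run the iterative serpentine algorithm of Lemma~\ref{lem::reordering} in parallel for $\ov c$ and $\ov d$, comparing the intermediate compositions $\mu^{(k)}_{\ov c}$ and $\mu^{(k)}_{\ov d}$. They agree for $k<i$; at step $i$ they diverge because the smaller value $a$ is inserted on the $\ov c$-side while the larger value $b$ is inserted on the $\ov d$-side. For $i\le k<j$ the identical column values $c_k=d_k$ are inserted into the two different states, and at step $j$ the remaining swapped values are added. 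By Corollary~\ref{cor:SubsequenceInequality} the dominant parts of $\mu^{(j)}_{\ov c}$ and $\mu^{(j)}_{\ov d}$ coincide, and a direct case analysis of the bubble prescription from Lemma~\ref{lem::reordering} shows that at this stage $\mu^{(j)}_{\ov c}$ and $\mu^{(j)}_{\ov d}$ either coincide or differ by a transposition that lifts to a Bruhat cover in the correct direction. For $k>j$ the same value is inserted into both states, and one then verifies that a common serpentine step preserves the Cherednik comparison of compositions with matching dominant parts, closing the argument.

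The main obstacle is this final monotonicity: a single serpentine insertion applied to two $\preceq$-comparable compositions with the same dominant part and the same inserted value produces $\preceq$-comparable outputs. I would isolate this as a lemma and prove it by induction on the length of the state, using the uniqueness assertion in Lemma~\ref{lem::reordering} together with the row-monotonicity established in Lemma~\ref{lem:inequalityStaircase}. With this monotonicity in hand, iterating over the columns $k>j$ completes the Bruhat case, and combined with the dominance case yields $\hb(\ov c)\preceq \hb(\ov d)$.
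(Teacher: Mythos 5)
Your reduction to cover relations, and the immediate dismissal of the case $\ov c_+\ne\ov d_+$ using the fact that $\hb$ preserves dominant parts, is fine and matches the paper. But for the Bruhat case your argument is not complete, and the approach diverges from the paper's in a way worth flagging.

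First, a direction slip: with $\ov c\prec\ov d$ and $\ov c_+=\ov d_+$, the paper's Cherednik order (Definition~\ref{def::Cherednik::order}) makes anti-dominant compositions larger, so $\ov d$ has the \emph{more} disordered pair, i.e.\ $d_j<d_{j'}$ for $j<j'$ and $\ov c=s_{jj'}\ov d$ resolves the inversion. You have it the other way round (the smaller value at the earlier position on the $\ov c$-side). This is easily repaired, but worth noting since you then track the algorithm column by column.

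More seriously, your argument rests on a monotonicity statement that you explicitly flag as ``the main obstacle'' and do not prove: that a single serpentine insertion of the same value into two Cherednik-comparable states with equal dominant part produces Cherednik-comparable outputs. This is the crux of the combinatorial route, and the sketch you offer (``induction on the length of the state'' using the uniqueness in Lemma~\ref{lem::reordering} and the row-monotonicity of Lemma~\ref{lem:inequalityStaircase}) is not developed enough to assess --- Lemma~\ref{lem:inequalityStaircase} is about the \emph{full} iterated $\hb$ along the poset of staircase corners, not about a single insertion step, so it is not clear it supplies what you need. As written this is a genuine gap. Similarly, ``a direct case analysis of the bubble prescription shows\ldots'' at step $j$ is asserted rather than carried out.

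The paper avoids this combinatorics entirely. It observes that for $n_j\le n_{j'}$ there is an inclusion $D_{\varepsilon_{n_j}}\hookrightarrow D_{\varepsilon_{n_{j'}}}$, and builds from it an inclusion of bi-graded pieces
\[
\bigotimes_{l=1}^m S^{c_l}(D_{\varepsilon_{n_l}})\hookrightarrow\bigotimes_{l=1}^m S^{d_l}(D_{\varepsilon_{n_l}})
\]
whenever $\ov c=s_{jj'}\ov d$ with $d_j<d_{j'}$, $j<j'$. Passing to the quotient by the submodule generated by weights with strictly larger dominant part (via Corollary~\ref{cor:SubsequenceInequality} and Theorem~\ref{thm:PieriRule}) yields $D_{\hb(\ov c)}\hookrightarrow D_{\hb(\ov d)}$, which is equivalent to $\hb(\ov c)\preceq\hb(\ov d)$. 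The module-theoretic inclusion does all the heavy lifting, so there is nothing analogous to your monotonicity lemma to check. If you want a purely combinatorial proof along your lines, you should expect to need to prove that monotonicity lemma carefully; it does not obviously follow from the results already in the paper.
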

\begin{proof}
If $\overline c_+ <\overline d_+$, then 
\[
\hb(\overline c)_+ = \overline c_+ < \overline d_+ = \hb(\overline d)_+ \ \Rightarrow \ \hb(\overline c) \prec \hb(\overline d).
\]
So from now on we assume that $\overline c_+ =\overline d_+$.
    Assume that $d_{j}<d_{j'}$ for $j<j'$. By definition we have $n_{j}\leq n_{j'}$. Therefore, we have the natural inclusion of the Demazure modules $D_{\varepsilon_{n_{j}}}\hookrightarrow D_{\varepsilon_{n_{j'}}}$. Therefore, we have an inclusion
\begin{multline*}
S^{d_{j'}}(D_{\varepsilon_{n_{j}}})\otimes  S^{d_{j}}(D_{\varepsilon_{n_{j'}}})\simeq S^{d_{j}}(D_{\varepsilon_{n_{j}}})\cdot S^{d_{j'}-d_{j}}(D_{\varepsilon_{n_{j}}})  \otimes  S^{d_{j}}(D_{\varepsilon_{n_{j'}}})\hookrightarrow 
\\ \hookrightarrow S^{d_{j}}(D_{\varepsilon_{n_{j}}})\otimes S^{d_{j'}-d_{j}}(D_{\varepsilon_{n_{j'}}}) \cdot  S^{d_{j}}(D_{\varepsilon_{n_{j'}}}) \simeq S^{d_{j}}(D_{\varepsilon_{n_{j}}})\otimes  S^{d_{j'}}(D_{\varepsilon_{n_{j'}}}).
\end{multline*}
Consequently, there exists an inclusion
\[\bigotimes_{l=1}^m S^{c_{l}}(D_{\varepsilon_{n_{l}}})\hookrightarrow \bigotimes_{l=1}^m S^{d_{l}}(D_{\varepsilon_{n_{l}}})\]
for $\overline c=s_{jj'}\overline d$ and hence for arbitrary $\overline c \prec \overline d$,
$\overline c_+ =\overline d_+$.
However, 
\[D_{\hb(\overline d)}\simeq \bigotimes_{l=1}^m S^{d_{l}}(D_{\varepsilon_{n_{l}}})\left/ \mathcal{G}^{>\overline d} \right.,\]
where $ \mathcal{G}^{>\ov{d}}$ is the submodule generated by all the elements of weights $\overline b$ such that $\overline b_+>\overline d_+$.
Thus by Corollaries \ref{cor:weightinequality}, \ref{cor:SubsequenceInequality} 
 and Theorem \ref{thm:PieriRule} we have a natural inclusion 
$D_{\hb(\overline c)}\hookrightarrow D_{\hb(\overline d)}.$
The inclusion of the Demazure modules is equivalent to the inequality $\hb(\ov{c})\preceq\hb(\ov{d})$ for the Cherednik order.
\end{proof}

\begin{lem}\label{lem:vrt(B)>c}
Let $\ov{c}\in\bZ_{\ge 0}^m$ be an $\ov{n}$-admissible composition and let $B\in\DL_{\ov{n}}$ be an array such that 
$\hb(\ov{c})=\hor(B)$. Then $\vrt(B)_+=\ov{c}_+$ and $\vrt(B)\succeq \ov{c}$ with respect to the Cherednik order.
\end{lem}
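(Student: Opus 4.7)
The plan naturally splits into establishing the dominant-part equality $\vrt(B)_+ = \ov c_+$ and the Cherednik inequality $\vrt(B) \succeq \ov c$. For the first, I would observe that since $B \in \DL_{\ov n}$ is supported on $\St_{\ov n}$ and, by Lemma~\ref{lem::staircorners}, staircase corners occupy pairwise distinct rows and columns, every nonzero entry of $B$ contributes to exactly one row-sum and one column-sum. Hence the multisets of nonzero entries of $\hor(B)$ and $\vrt(B)$ coincide, so $\vrt(B)_+ = \hor(B)_+$; together with $\hor(B) = \hb(\ov c)$ and the preservation of dominant parts by $\hb$ (Corollary~\ref{cor:iteratedserp}), this gives $\vrt(B)_+ = \ov c_+$.

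For the Cherednik inequality, the core idea is to identify $\vrt(B) = \hb_{\ov n^\tau}(\hb_{\ov n}(\ov c))$ and then invoke Remark~\ref{rem::hbs}. I would apply Lemma~\ref{lem:DLdenceSerpentine} to the transposed array $B^\tau \in \DL_{\ov n^\tau}$: it satisfies $\hor(B^\tau) = \vrt(B)$ and $\vrt(B^\tau) = \hor(B) = \hb_{\ov n}(\ov c)$, and its partial column sums form a serpentine chain from $\hb_{\ov n}(\ov c)$ to $\vrt(B)$. Since these share the common dominant part $\ov c_+$ by the first part, the uniqueness clause of Corollary~\ref{cor:iteratedserp} forces this chain to coincide with the half-bubble-sort for the transposed shape, yielding the identification. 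Remark~\ref{rem::hbs} then characterizes $\vrt(B)$ as the Bruhat-minimal permutation $\pi \in S_m$ of $\ov c$ whose output is supported on $\vrt(\St_{\ov n})$ and satisfies the poset monotonicity~\eqref{eq::hbs::hbs}.

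To conclude, I would show that $\tau_{\vrt(B)} = \pi\,\tau_{\ov c}$ is a reduced factorization in $S_m$, which translates into the desired Cherednik inequality. The approach is to fix a reduced decomposition $\pi = s_{i_1}\cdots s_{i_k}$ and follow the chain $\ov c = \ov c^{(0)}$, $\ov c^{(\ell)} = s_{i_{k-\ell+1}}\ov c^{(\ell-1)}$, ending at $\ov c^{(k)} = \vrt(B)$; the claim is that each adjacent swap exchanges entries $(b,a)$ with $b > a$, rearranging them to $(a,b)$. Such a ``large-to-the-right'' swap inserts one inversion into $\tau^{-1}$ and so increases the Bruhat length of $\tau$ by one, realizing a Cherednik cover at each step. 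The main obstacle lies precisely here: showing that Bruhat-minimality of $\pi$ forces every swap to move in the ``large-to-right'' direction. I would establish this by an exchange-type argument---a swap going the other way would be cancellable against a later swap, contradicting minimality---requiring a case analysis of the two rectifying move types performed by the doubled bubble-sort (evacuating nonzero entries from columns outside $\vrt(\St_{\ov n})$, and sorting the entries inside $\vrt(\St_{\ov n})$ into poset-compatible order).
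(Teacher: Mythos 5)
Your proof of the first assertion, $\vrt(B)_+ = \ov c_+$, is correct and in fact cleaner and more explicit than anything the paper writes: since $B$ is supported on $\St_{\ov n}$, a rook placement by Lemma~\ref{lem::staircorners}, the nonzero entries of $\hor(B)$ and $\vrt(B)$ agree as multisets, and $\hb$ preserves dominant parts by construction. The identification $\vrt(B) = \hb_{\ov n^\tau}(\hb_{\ov n}(\ov c))$ via Lemma~\ref{lem:DLdenceSerpentine}, Corollary~\ref{cor:SubsequenceInequality} and the uniqueness clause of Corollary~\ref{cor:iteratedserp} is also sound.

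However, the route you take for the Cherednik inequality $\vrt(B)\succeq\ov c$ has a genuine gap. You rest it on Remark~\ref{rem::hbs}, but that remark is stated \emph{without proof} in the paper (the authors explicitly defer the combinatorics of the bubble-sort to future work), so it cannot be invoked as a black box. Moreover, even granting the remark, its ``Bruhat-minimality'' is minimality among permutations of $\ov c$ whose output satisfies~\eqref{eq::hbs::hbs}; since $\ov c$ itself need not satisfy~\eqref{eq::hbs::hbs}, this does not directly yield a comparison with $\ov c$. You then propose to show directly that $\tau_{\vrt(B)} = \pi\,\tau_{\ov c}$ is a reduced factorization by tracing a chain of ``large-to-the-right'' adjacent swaps from $\ov c$ to $\vrt(B)$, but you acknowledge the crux --- that each swap actually goes in the correct direction --- is left to an unperformed case analysis. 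There is also a latent circularity: the plan begins by fixing a reduced expression for $\pi$, yet what must be established is precisely that the factorization is reduced. As it stands the key inequality is sketched, not proven.

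For comparison, the paper's own proof is representation-theoretic and very short: by Proposition~\ref{prop:nvdKfiltrationvdK} the van der Kallen module $K^{op}_{\ov c}$ appears as a subquotient of $K^{op}_{\ov n,\vrt(B)}$, and by Corollary~\ref{cor:nvanderKallenQuotientDemazure} the latter is a quotient of the opposite Demazure module $D^{op}_{\vrt(B)}$, all of whose weights are $\preceq\vrt(B)$ in the Cherednik order. Hence $\ov c\preceq\vrt(B)$ immediately. A purely combinatorial argument along your lines would be a worthwhile addition (and would in effect supply a proof of Remark~\ref{rem::hbs}), but to be a complete proof of this lemma you would need to carry out the exchange/case analysis you currently only describe.
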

\begin{proof}
We first note that Corollary~\ref{cor:transposedDLdense} implies the existence of $B$. Then by Proposition~\ref{prop:nvdKfiltrationvdK} the van der Kallen module $K_{\overline{c}}^{op}$ is isomorphic to a subquotient of $K_{\overline n,\vrt(B)}^{op}$. However, by Corollary~\ref{cor:nvanderKallenQuotientDemazure} the module $K_{\overline n,\vrt(B)}^{op}$ is a quotient of $D^{op}_{\vrt(B)}$. Therefore, all the weights of the elements of $K_{\overline n,\vrt(B)}^{op}$ are less than or equal to $\vrt(B)$ in the Cherednik order.
\end{proof}

 \begin{prop}\label{prop:KD/D}
 For all DL-dense arrays $A$ whose multiset of nonzero values is equal to {the multiset of parts of a} partition $\lambda$, we have an isomorphism
 \begin{equation}
\label{eq::KD/D} 
K^{op}_{\overline n,\vrt(A)} \simeq D^{op}_{\vrt(A)}\left/\sum_{\substack{B \in \DL_{\overline n}({\lambda}{_+})\\ 
D^{op}_{\vrt(B)}\subsetneq D^{op}_{\vrt(A)}}} D^{op}_{\vrt(B)} \right.,
 \end{equation}
 i.e., the module $K$ is the quotient of the Demazure module by the sum of strictly
 embedded Demazure modules corresponding to the weights of the form $\vrt(B)$, 
 $B\in\DL_{\ov{n}}$. 
 \end{prop}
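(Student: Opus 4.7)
The plan is to exhibit a surjection from the right-hand side of~\eqref{eq::KD/D} onto $K^{op}_{\overline n,\vrt(A)}$ using the results of the previous subsection, and then verify it is an isomorphism by matching characters.

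\emph{Surjection.} Let $\pi : D^{op}_{\vrt(A)} \twoheadrightarrow K^{op}_{\overline n,\vrt(A)}$ be the projection provided by Corollary~\ref{cor:nvanderKallenQuotientDemazure}. By Proposition~\ref{prop:nvdKfiltrationvdK},
\[
\ch K^{op}_{\overline n,\vrt(A)}(y) \;=\; \sum_{\hb(\overline d) = \hor(A)} a^{\overline d}(y),
\]
and since each Demazure atom has leading weight equal to its index, the weight spaces of $K^{op}_{\overline n,\vrt(A)}$ lying in the orbit $S_m\lambda$ are precisely those indexed by $\{\overline d : \hb(\overline d)=\hor(A)\}$, each one-dimensional. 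A DL-dense array is uniquely determined by its horizontal weight (each row of $\St_{\overline n}$ contains at most one cell), so $B \neq A$ forces $\hor(B) \neq \hor(A)$. Applying Lemma~\ref{lem:DLdenceSerpentine} to $B$ together with the uniqueness in Corollary~\ref{cor:iteratedserp} gives $\hb(\vrt(B)) = \hor(B) \neq \hor(A)$. Consequently, the $\vrt(B)$-weight space of $K^{op}_{\overline n,\vrt(A)}$ vanishes, so $\pi(v_{\vrt(B)})=0$ and hence $\pi(D^{op}_{\vrt(B)})=0$. This yields the induced surjection $\bar\pi : D^{op}_{\vrt(A)}/N \twoheadrightarrow K^{op}_{\overline n,\vrt(A)}$, where $N := \sum_B D^{op}_{\vrt(B)}$.

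\emph{Character matching.} To deduce that $\bar\pi$ is an isomorphism, it suffices to prove $\ch(D^{op}_{\vrt(A)}/N) = \ch K^{op}_{\overline n,\vrt(A)}$. Both $D^{op}_{\vrt(A)}$ and $N$ admit excellent filtrations in $\mathcal{C}_m^{op}$ (Lemma~\ref{lem:standard}), so the comparison reduces to counting the multiplicities of standard modules $K^{op}_\sigma$ on both sides. The central combinatorial equivalence to establish is: for an admissible extremal weight $\sigma$ with $D^{op}_\sigma \subset D^{op}_{\vrt(A)}$, the standard subquotient $K^{op}_\sigma$ occurs in $N$ (equivalently, $D^{op}_\sigma \subset D^{op}_{\vrt(B)}$ for some $B \neq A$ in the indexing set) if and only if $\hb(\sigma) \neq \hor(A)$. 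Granting this equivalence, the subquotients surviving in $D^{op}_{\vrt(A)}/N$ are exactly $K^{op}_\sigma$ with $\hb(\sigma)=\hor(A)$, matching the character of $K^{op}_{\overline n,\vrt(A)}$.

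\emph{Main obstacle.} The above equivalence is the crux of the argument. The forward direction is immediate: if $D^{op}_\sigma \subset D^{op}_{\vrt(B)}$ with $B \in \DL_{\overline n}(\lambda)$, then $\sigma$ contributes a nonzero weight space to $K^{op}_{\overline n,\vrt(B)}$, and Proposition~\ref{prop:nvdKfiltrationvdK} applied to $B$ forces $\hb(\sigma) = \hor(B') \neq \hor(A)$ for some DL-dense $B'$. The delicate converse requires, given $\sigma$ with $\hb(\sigma)\neq \hor(A)$, constructing the witnessing $B$. One takes $B$ to be the unique DL-dense array with $\hor(B)=\hb(\sigma)$, whose existence is guaranteed by Corollary~\ref{cor:transposedDLdense}; Lemma~\ref{lem:vrt(B)>c} combined with the compatibility of the Cherednik order with inclusion of Demazure modules in a common $W$-orbit yields $D^{op}_\sigma \subset D^{op}_{\vrt(B)}$. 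The subtle point is the remaining inclusion $D^{op}_{\vrt(B)} \subsetneq D^{op}_{\vrt(A)}$, which must be extracted from the assumption $D^{op}_\sigma \subset D^{op}_{\vrt(A)}$ using the half-bubble-sort algorithm and the forest structure of $(\St_{\overline n},\prec)$ from Lemma~\ref{lem::DL::posets} to control the relative Cherednik position of $\vrt(B)$ and $\vrt(A)$ within the $W$-orbit of $\lambda$.
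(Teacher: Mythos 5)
Your overall strategy matches the paper's: exhibit the surjection from the quotient onto $K^{op}_{\overline n,\vrt(A)}$ and then prove injectivity by comparing the excellent/van~der~Kallen filtrations, with the whole burden landing on the combinatorial equivalence you correctly isolate as the crux. The forward direction is handled the same way in both.

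The gap is precisely the converse step you flag as the ``subtle point'': given an extremal weight $\sigma$ with $v_{\sigma}\in\ker\psi$, you need $D^{op}_{\vrt(B)}\subsetneq D^{op}_{\vrt(A)}$, where $B$ is the unique DL-dense array with $\hor(B)=\hb(\sigma)$. You gesture at ``the half-bubble-sort algorithm and the forest structure of $(\St_{\overline n},\prec)$ from Lemma~\ref{lem::DL::posets}'', but Lemma~\ref{lem::DL::posets} concerns the poset $\St_{\ov{n}}$ and is not the right instrument here; it gives no control over the Cherednik order. The tool you actually need is Lemma~\ref{lem:transposedinequality} (monotonicity of $\hb$ with respect to the Cherednik order), applied twice. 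From $\sigma\prec\vrt(A)$ (both $\ov{n}$-admissible) Lemma~\ref{lem:transposedinequality} gives $\hor(B)=\hb(\sigma)\preceq\hb(\vrt(A))=\hor(A)$. Since $\hor(B)$ and $\hor(A)$ are $\ov{n}^\tau$-admissible (they are horizontal weights of DL-dense arrays, supported on rows meeting $\St_{\ov{n}}$), you can apply Lemma~\ref{lem:transposedinequality} again for the transposed diagram to get $\vrt(B)=\hb_{\ov{n}^\tau}(\hor(B))\preceq\hb_{\ov{n}^\tau}(\hor(A))=\vrt(A)$; the identity $\hb_{\ov{n}^\tau}(\hor(\cdot))=\vrt(\cdot)$ for DL-dense arrays follows from Lemma~\ref{lem:DLdenceSerpentine} applied to the transpose together with the uniqueness in Corollary~\ref{cor:iteratedserp}. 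Strictness follows because $\hb(\sigma)\neq\hor(A)$ forces $B\neq A$. Without invoking Lemma~\ref{lem:transposedinequality}, your character-matching step is not justified, so the proof as written is incomplete at exactly the point you identify.
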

\begin{proof}
By Corollary \ref{cor:nvanderKallenQuotientDemazure} we have the surjection    
\[
\psi:D^{op}_{\vrt(A)}\twoheadrightarrow K^{op}_{\overline n,\vrt(A)}.
\]
Let us describe the kernel of $\psi$. 
Let $\overline{c}$ be a vertical weight of an extremal vector $v_{\ov{c}}\in D_{\vrt(A)}^{op}$ such that $v_{\ov{c}}\in\ker\psi$.
Let us prove the following two claims: 
\begin{itemize}
\item  if $\ov{c}=\vrt(B)$ with $D^{op}_{\vrt(B)}\subsetneq D^{op}_{\vrt(A)}$, then
$v_{\ov{c}}\in\ker\psi$,
\item if $v_{\ov{c}}\in\ker\psi$, then $\ov{c}=\vrt(B)$ with $D^{op}_{\vrt(B)}\subsetneq D^{op}_{\vrt(A)}$.
\end{itemize}

To prove the first claim we note that  if $\ov{c}=\vrt(B)$, then $\hb(\ov{c})=\hb(\vrt(B))=\hor(B)$. Since 
$D^{op}_{\vrt(B)}\ne D^{op}_{\vrt(A)}$, one has $\hor(B)\ne\hor(A)$ and hence
$\hb(\ov{c})\ne\hor(A)$. By Proposition \ref{prop:nvdKfiltrationvdK} this implies that $v_{\ov{c}}\in\ker\psi$.

Now assume that $v_{\ov{c}}\in\ker\psi$. 
By Corollary \ref{cor:transposedDLdense} there exists $B\in \DL_{\ov{n}}$ such that $\hb(\ov{c})=\hor(B)$. We claim that 
$\ov{c}\preceq \vrt(B)\prec \vrt(A)$. The second inequality holds true thanks to Lemma \ref{lem:transposedinequality}. In fact,  
$\ov{c}\prec \vrt(A)$, because by the definition of a standard module, the weights of all the vectors of $D^{op}_{\vrt(A)}$ are $\preceq \vrt(A)$. 
This inequality implies $\hb(\ov{c})=\hb(\vrt(B))=\hor(B)\preceq \hor(A)$. Now, applying again Lemma \ref{lem:transposedinequality} for the transposed Young diagram, we derive 
$\vrt(B) \preceq \vrt(A)$ (the equality is not possible, since 
$\hb(\ov{c})\ne \hor(A)$ thanks to the fact that $v_{\ov{c}}\in\ker\psi$).
It remains to note that thanks to Lemma \ref{lem:vrt(B)>c} that $\ov{c}\preceq \vrt(B)$. 
We have shown that the set of extremal vectors $v_{\ov{c}}$ in $\ker\psi$ coincides with the set of extremal vectors of the sum of Demazure
modules $D^{op}_{\vrt(B)}$, $B\in\DL_{\ov{n}}({\lambda})$, that are strictly
embedded into $D^{op}_{\vrt(A)}$. 
This implies that $\psi$ defines a surjection from the right hand side of~\eqref{eq::KD/D} to generalized van der Kallen module: 
\begin{equation}\label{eq:surjtoK}
\psi: D^{op}_{\vrt(A)}\left/\sum_{\substack{B \in \DL_{\overline n}({\lambda}) \\ 
D^{op}_{\vrt(B)}\subsetneq D^{op}_{\vrt(A)}}} D^{op}_{\vrt(B)} \right. \twoheadrightarrow 
K^{op}_{\overline n,\vrt(A)}.
\end{equation}
Let us show that the kernel of this surjection is trivial.
Indeed, consider the filtration $\calF$ of the left hand side of \eqref{eq:surjtoK} induced by Demazure modules $D_{\sigma\lambda}^{op}\hookrightarrow D_{\vrt(A)}^{op}$ with $\sigma\lambda\succeq\vrt(A)$. The associated graded components are the quotients of the van der Kallen modules $K_{\sigma\lambda}^{op}$ because they are cyclic $\fb_m$-modules generated by extremal vectors and do not contain other extremal vectors that belong to the smaller terms of filtration.
However, by Proposition \ref{prop:nvdKfiltrationvdK} the module 
$K^{op}_{\overline n,\vrt(A)}$ is filtered by these van der Kallen modules. Hence, the associated graded components coincide with van der Kallen modules and the kernel of $\psi$ is trivial.
\end{proof}

We see from Proposition~\ref{prop:KD/D} that isomorphism~\eqref{eq::KD/D} can be considered as a definition of a right $\ov{n}$-van der Kallen $\fb_m$-module.
We note that for each $\DL_{\ov{n}}$-dense array $A$ one can similarly define the left $\ov{n}$-van der Kallen $\fb_{n_m}$-module:
$$
K_{\ov{n},\hor(A)}= D_{\hor(A)}\left/\sum_{\substack{B\in\DL_{\ov{n}}(\hor(A)_+) \\ D_{\hor(B)}\subsetneq D_{\hor(A)}}} D_{\hor(B)}\right. .
$$
\begin{prop}
For each DL-dense array $A\in\DL_{\ov{n}}$, the module $K_{\ov{n},\hor(A)}$ is isomorphic to the space of multiplicities of the Demazure module $D_{\vrt(A)}^{op}$ with respect to the right standard filtration. 
Moreover, we have the following isomorphism of characters:
$$
\ch\left(K_{\ov{n},\hor(A)}\right)(x) = \sum_{\hb_{\ov{n^{\tau}}}(\ov{d}) = \vrt(A)} a^{\ov{d}}(x).
$$
\end{prop}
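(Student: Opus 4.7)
My plan is to prove both statements by transposition symmetry, reducing them to Proposition~\ref{prop:nvdKfiltrationvdK} and Corollary~\ref{cor:nvanderKallenQuotientDemazure} applied to the transposed Young diagram $\ov{n^{\tau}}$. Matrix transposition $X \mapsto X^{T}$, combined with the appropriate conjugation by the longest Weyl group element needed to preserve the upper-triangular Borel subalgebras, yields an isomorphism of bi-module structures $S(\Mat_{\ov{n}}) \cong S(\Mat_{\ov{n^{\tau}}})$ in which the two actions are swapped: the left $\fb_{n_{m}}$-action on $S(\Mat_{\ov{n}})$ becomes the right $\fb_{n_{m}}$-action on $S(\Mat_{\ov{n^{\tau}}})$, and similarly for $\fb_{m}$. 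The bijection $A \mapsto A^{\tau}$ identifies $\DL_{\ov{n}}$ with $\DL_{\ov{n^{\tau}}}$ and swaps $\hor$ and $\vrt$.

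Under this symmetry, the right standard filtration of $S(\Mat_{\ov{n}})$ corresponds to the left standard filtration of $S(\Mat_{\ov{n^{\tau}}})$. Applying Corollary~\ref{cor:nvanderKallenQuotientDemazure} to $\ov{n^{\tau}}$ at the weight $\hor(A^{\tau}) = \vrt(A)$ gives a bi-module subquotient isomorphic to $D_{\vrt(A)} \otimes K^{op}_{\ov{n^{\tau}}, \hor(A)}$; transposing back, the right factor becomes the right Demazure module $D^{op}_{\vrt(A)}$, and the left factor $K^{op}_{\ov{n^{\tau}}, \hor(A)}$ becomes a left $\fb_{n_{m}}$-module. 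This identifies the multiplicity space of $D^{op}_{\vrt(A)}$ in the resulting subquotient of the right standard filtration with $K^{op}_{\ov{n^{\tau}}, \hor(A)}$. To finish the first assertion, I would verify that this left $\fb_{n_{m}}$-module coincides with $K_{\ov{n}, \hor(A)}$: by Proposition~\ref{prop:KD/D} applied to $\ov{n^{\tau}}$, both modules are quotients of the Demazure module of weight $\hor(A)$ by the sum of Demazure submodules whose indexing sets correspond under the bijection $B \leftrightarrow B^{\tau}$, so matching these defining presentations produces the desired isomorphism.

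The character identity then follows directly: by Proposition~\ref{prop:nvdKfiltrationvdK} applied to $\ov{n^{\tau}}$,
\[
\ch K^{op}_{\ov{n^{\tau}}, \hor(A)}(x) = \sum_{\hb_{\ov{n^{\tau}}}(\ov{d}) = \vrt(A)} a^{\ov{d}}(x),
\]
and the identification from the previous paragraph transfers this to $\ch K_{\ov{n}, \hor(A)}(x)$. The main obstacle is making the transposition symmetry fully rigorous: verifying that $\St_{\ov{n^{\tau}}} = [\St_{\ov{n}}]^{\tau}$ with the correct poset structure (so that the Demazure submodules appearing in the two presentations of the generalized van der Kallen modules match up correctly), tracking the Weyl-group twists that relate left Demazure modules with their opposite-right counterparts on the level of Cartan-graded vector spaces, and confirming that the right standard filtration of $S(\Mat_{\ov{n}})$ admits the claimed coarser refinement organised by the ambient right Demazure modules $D^{op}_{\vrt(A)}$ rather than by the standard van der Kallen modules $K^{op}_{\ov{d}}$ appearing in Theorem~\ref{thm:RSF}.
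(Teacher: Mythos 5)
Your proposal matches the paper's approach: the paper's entire proof is the single sentence ``The transposition of a diagram $\ov{n}\mapsto\ov{n^{\tau}}$ interchanges the left modules with the right modules,'' and you have correctly unpacked this into the precise ingredients (apply Corollary~\ref{cor:nvanderKallenQuotientDemazure} and Proposition~\ref{prop:nvdKfiltrationvdK} to $\ov{n^{\tau}}$, use the transposed-poset identification $[\St_{\ov{n}}]^{\tau}=\St_{\ov{n^{\tau}}}$, match the presentations from Proposition~\ref{prop:KD/D}). The subtleties you flag at the end are genuine points the paper glosses over; in particular, you are right that the filtration obtained by transposition is not literally $\mathcal{F}^{\ov{d}}$ of \eqref{RightFIltrationBimodule} (whose standard pieces are van der Kallen modules for the Cherednik order) but rather the Demazure-indexed filtration coming from the left standard filtration of $S(\Mat_{\ov{n^\tau}})$, and the Weyl-group twist built into the transposition map $A\mapsto A'$ of Section~\ref{sec::bimodule} is exactly what makes this work on weight lattices. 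One small slip in wording: in the subquotient $D_{\vrt(A)}\otimes K^{op}_{\ov{n^{\tau}},\hor(A)}$ from Corollary~\ref{cor:nvanderKallenQuotientDemazure}, the Demazure module $D_{\vrt(A)}$ is the left tensor factor (a left $\fb_m$-module) and $K^{op}_{\ov{n^{\tau}},\hor(A)}$ is the right factor (a right $\fb_{n_m}$-module); you called them the right and left factors respectively. This does not affect the argument.
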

\begin{proof}
The transposition of a diagram $\ov{n}\mapsto\ov{n^{\tau}}$ interchanges the left modules with the right modules.
\end{proof}

\subsection{Alternating sign Cauchy identity}
\label{sec::Cauchy::sign}
Let us rewrite the left Cauchy identity (Corollary \ref{cor:LCF}) purely in terms of key
polynomials. Let us fix a multiset (or a partition) ${\lambda}$ and introduce a poset structure on the set of DL-dense arrays whose multiset of values is equal to ${\lambda}$.
For two arrays $A,B\in\DL_{\ov{n}}({\lambda})$ we say that $A\succeq B$ if $\vrt(A)\succeq\vrt(B)$ with respect to the Cherednik order 
(we note that  $\vrt(A)_+=\vrt(B)_+ ={\lambda}$). In other words, $A\succeq B$ is equivalent to 
$D^{op}_{\vrt(B)}\subset D^{op}_{\vrt(A)}$. Let $\mu^{\DL_{\ov{n}}(\lambda)}(A,B)$ be the M\"obius function of this poset (see e.g.~\cite{Lovasz}).
We put forward the following conjecture (see \cite{KhM} for the proof). 

\begin{conj}
\label{lem::mobius::character}
One  has for all $A\in\DL_{\ov{n}}(\lambda)$
\[
\ch\left(K^{op}_{\ov{n},\vrt(A)}\right) = \sum_{B\preceq A} \kappa^{\vrt(B)}(y)\,\mu^{\DL_{\ov{n}}({\lambda})}(A,B),
\]
implying
\begin{equation}
\label{eq::altern::Cauchy}
\prod_{(i,j)\in \bbY_{\overline{n}}}\frac{1}{1-x_iy_j}=
\sum_{l({\lambda})\leq \#\St_{\ov{n}} }
\sum_{\substack{A,B \in \DL_{\overline n}({\lambda})\\ A\succeq B}} \kappa_{\hor(A)}(x)\, \kappa^{\vrt(B)}(y)\, \mu^{{\DL_{\ov{n}}({\lambda})}}(A,B).
\end{equation}    
\end{conj}

It is easy to see that the left-hand side of~\eqref{eq::altern::Cauchy} has extra symmetries when the lengths of some columns (rows) are equal. 
More precisely, to each Young diagram $\bbY_{\ov{n}}$ we assign a collection of integer numbers $r_1:=\#\{i\colon n_i =n_1\}$, $r_2:=\#\{i\colon n_i = n_{r_1+1}\}$, \ldots. In other words, $r_i$'s count the numbers of columns of the same length.
Respectively, let $s_1,s_2,\ldots$ be the corresponding numbers for the transposed diagram $\ov{n^\tau}$.
Then the left-hand side of~\eqref{eq::altern::Cauchy} is symmetric with respect to the action of the product of symmetric groups $S_{s_1}\times S_{s_2}\times\ldots$ on $x$-variables and with respect to the action of $S_{r_1}\times S_{r_2}\times\ldots$ on the $y$-variables.
We note that each term in the right-hand side of~\eqref{eq::altern::Cauchy} also admits these symmetries.

Indeed, let $\fp_1\subset \gl_{n_m}$ be the parabolic subalgebra  with blocks
of sizes $s_1,s_2,\ldots$ and let $\fp_2\subset \gl_{m}$ be the parabolic subalgebra  with blocks of sizes $r_1,r_2,\ldots$.
Then the space of staircase matrices $\Mat_{\ov{n}}$ admits a natural action of the parabolic subalgebra $\fp_1$ from the left and $\fp_2$ from the right.
Moreover, all filtrations we are dealing with are invariant with respect to the action of these parabolic subalgebras. In particular, $\forall A\in\DL_{\ov{n}}$ the modules $D_{\hor(A)}$, $K_{\ov{n},\hor(A)}$ are $\fp_1$-modules and $D_{\vrt(A)}^{op}$, $K_{\ov{n},\vrt(A)}^{op}$ are right $\fp_2$-modules.
Hence, on the level of characters, we see that for any $A\in\DL_{\ov{n}}$ the key polynomial $\kappa_{\hor(A)}(x)$ is $S_{s_1}\times S_{s_2}\times\ldots$-symmetric, respectively, 
$\kappa^{\vrt(A)}(y)$ is $S_{r_1}\times S_{r_2}\times\ldots$-symmetric.

\appendix
\section{Examples}
\label{sec::examples}
\subsection{Rectangular matrices}\label{subsec:rectangular}
Let $n_1=\dots=n_m$, i.e. $\Mat_{\ov{n}}$ is the whole space of $n_1\times m$
matrices. Then $\St_{\ov{n}}=\{(n_1,1),(n_1-1,2),\dots,(1,n_1)\}$ for $m\ge n_1$
and $\St_{\ov{n}}=\{(n_1,1),(n_1-1,2),\dots,(n_1+1-m,m)\}$ for $m\le n_1$ (in particular,
the cardinality of $\St_{\ov{n}}$ is equal to $\min(n_1,m)$). Here
is the picture for $n_1=5$, $m=7$:
\[
\begin{tikzpicture}[scale=0.5]
\draw[step=1cm] (0,-1) grid (7,0);
\draw[step=1cm] (0,-2) grid (7,-1);
\draw[step=1cm] (0,-3) grid (7,-2);
\draw[step=1cm] (0,-4) grid (7,-3);
\draw[step=1cm] (0,-5) grid (7,-4);
\node (v0) at (0.5,-4.5) { {{\color{blue}$\bullet$}}};
\node (v1) at  (1.5,-3.5) {{ \color{blue}$\bullet$ }};
\node (v2) at (2.5,-2.5) {{ \color{blue}$\bullet$ }};
\node (v3) at (3.5,-1.5) {{ \color{blue}$\bullet$ }};
\node (v4) at (4.5,-0.5) {{ \color{blue}$\bullet$ }};
\node (v10) at (10.5,-4.5) { {{\color{blue}$\bullet$}}};
\node (v11) at  (11.5,-3.5) {{ \color{blue}$\bullet$ }};
\node (v12) at (12.5,-2.5) {{ \color{blue}$\bullet$ }};
\node (v13) at (13.5,-1.5) {{ \color{blue}$\bullet$ }};
\node (v14) at (14.5,-0.5) {{ \color{blue}$\bullet$ }};
\draw (10.5,-4.5) -- (11.5,-3.5);
\draw (11.5,-3.5) -- (12.5,-2.5);
\draw (12.5,-2.5) -- (13.5,-1.5);
\draw (13.5,-1.5) -- (14.5,-0.5);
\end{tikzpicture} 
\]
The poset $\St_{\ov{n}}$ is linearly ordered according to the first coordinate.
A composition $\ov{d}=(d_1,\dots,d_m)$ is $\ov{n}$-admissible if and only if 
there are at most $n_1$ non-zero entries in $\ov{d}$. 

For a composition $\ov{d}\in\bZ_{\ge 0}^m$ let $\ov{d}_-$ be a reordering of
$\ov{d}$ such that the smaller numbers come first; in particular, all zero entries
of $\ov{d}$ show up in the beginning. Let $s(\ov{d})$ be the number of non-zero entries 
of $\ov{d}$ (in particular, $s(\ov{d})\le n_1$ if $\ov{d}$ is $\ov{n}$-admissible). 
Then for an $\ov{n}$-admissible $\ov{d}$ the vector 
$\hb(\ov{d})\in\bZ_{\ge 0}^{n_1}$ is a sequence starting with $n_1-s(\ov{d})$ zeroes
followed by the last $s(\ov{d})$ entries of $\ov{d}_-$. In particular, the weight $\hb(\ov{d})$ is anti-dominant.

The Cauchy identity
coming for the right filtration reads as
\[
\prod_{i=1}^{n_1}\prod_{j=1}^{m}\frac{1}{1-x_iy_j}=
\sum_{\overline d: s(\ov{d})\le n_1}\kappa_{\hb(\overline d)}(x)\, a^{\overline d}(y).
\]
Since $\hb(\ov{d})$ is anti-dominant, the key polynomial $\kappa_{\hb(\overline d)}(x)$ is equal 
to the Schur polynomial whose (dominant) weight belongs to the $S_m$ orbit of $\ov{d}$. 
Now the sum of all Demazure atoms $a_{\overline d}(y)$ with the same value 
$\hb(\ov{d})$ also produces the same Schur polynomial. Hence, our formula is equivalent 
to the classical Cauchy identity.

Now, let us consider the Cauchy identity coming from the left standard filtration.
The set of DL-dense arrays consists of matrices $A$ whose non-zero entries 
are located at sites $(n_1,1), (n_1-1,2)$, etc. Let us denote these 
entries by $a_1,\dots,a_{\min(n_1,m)}$. Since $A\in\DL_{\ov{n}}$, one has 
$a_1\ge a_2\ge \dots$. By definition, $\hor(A)=(0^{n_1-\min(n_1,m)},a_{\min(n_1,m)},\dots,a_1).$
Now the Cauchy formula from Corollary~\ref{cor:firstCauchy} reads as
\[
\prod_{i=1}^{n_1}\prod_{j=1}^{m}\frac{1}{1-x_iy_j}=
\sum_{a_1\ge\dots\ge 
a_{\min(n_1,m)}}\sum_{\hb(\ov{d})=\vrt(A)}\kappa_{\vrt(A)}(x)\, a^{\overline d}(y).
\]
As in the right Cauchy formula, $\kappa_{\vrt(A)}(x)$ is a Schur polynomial and the sum of all Demazure atoms $a_{\overline d}(y)$
with $\hb(\ov{d})=\vrt(A)$ also produces the same Schur polynomial.

For any partition $\la$ the set $\DL_{\ov{n}}(\la)$ consists of a single element.
Hence, the alternating sign formula \eqref{eq::altern::Cauchy} is exactly the classical Cauchy identity.

\subsection{Upper-triangular matrices} \label{subsec:upper-triang}
In this subsection we consider the case $\ov{n}=(1, \dots,m-1,m)$. 
Then $\St_{\ov{n}}=\{(1,1),(2,,2),\dots,(m,m)\}$ (in particular,
$\St_{\ov{n}}$ has $m$ elements). Here
is the picture for $n_1=m=5$:
\[
\begin{tikzpicture}[scale=0.5]
\draw[step=1cm] (0,-1) grid (5,0);
\draw[step=1cm] (1,-2) grid (5,-1);
\draw[step=1cm] (2,-3) grid (5,-2);
\draw[step=1cm] (3,-4) grid (5,-3);
\draw[step=1cm] (4,-5) grid (5,-4);
\node (v0) at (0.5,-0.5) { {{\color{blue}$\bullet$}}};
\node (v1) at  (1.5,-1.5) {{ \color{blue}$\bullet$ }};
\node (v2) at (2.5,-2.5) {{ \color{blue}$\bullet$ }};
\node (v3) at (3.5,-3.5) {{ \color{blue}$\bullet$ }};
\node (v4) at (4.5,-4.5) {{ \color{blue}$\bullet$ }};
\node (v10) at (10.5,-0.5) { {{\color{blue}$\bullet$}}};
\node (v11) at  (11.5,-1.5) {{ \color{blue}$\bullet$ }};
\node (v12) at (12.5,-2.5) {{ \color{blue}$\bullet$ }};
\node (v13) at (13.5,-3.5) {{ \color{blue}$\bullet$ }};
\node (v14) at (14.5,-4.5) {{ \color{blue}$\bullet$ }};
\end{tikzpicture} 
\]
The poset $\St_{\ov{n}}$ consists of $m$ uncomparable elements.
Any composition $\ov{d}=(d_1,\dots,d_m)$ is $\ov{n}$-admissible
and $\hb(\ov{d})=\ov{d}$.
Hence, the right Cauchy formula {from Corollary~\ref{cor:firstCauchy}} reads as
\[
\prod_{1\le i\le j\le m}\frac{1}{1-x_iy_j}=
\sum_{\overline d}\kappa_{\overline d}(x)\, a^{\overline d}(y),
\]
which is the classical form of the non-symmetric Cauchy identity. 
{The left Cauchy formula from Corollary~\ref{cor:LCF}} coincides with the right one verbatim, since 
DL-dense arrays are all matrices $A$ supported on $\St_{\ov{n}}$ and $\vrt(A)=(A_{11},\dots,A_{m,m})$.

For a partition $\la$ let us define the numbers $r_1,r_2,\dots$ by
\[
\la_1=\dots\la_{r_1}>\la_{r_1+1}=\dots = \la_{r_1+r_2}>\dots.
\]
Let $\fp\subset \gl_m$ be the parabolic subalgebra with blocks of sizes $r_1,r_2,\dots$ and let $W_P\subset S_m$ be the corresponding subgroup of the Weyl group. Then the poset $\DL_{\ov{n}}(\la)$  is isomorphic to
the parabolic Bruhat graph and $\mu^{\DL_{\ov{n}}(\la)}$ in \eqref{eq::altern::Cauchy} is the corresponding M\"obius function (see \cite{BB}).

\subsection{The AGL formula}
Let us fix three numbers $n$, $p$ and $q$ with $n\ge q\ge p\ge 1$.
Following \cite{AGL} we consider the staircase shape Young diagram whose
column lengths $(n_1,\dots,n_p)$ are of the form $(n-p+1,n-p+2,\dots,q,q^{n-q})$
(in particular, we assume that $n-p+1\le q$). 
Then 
\begin{multline*}
\St_{\ov{n}}=\{(n-p+1,1),(n-p+2,2),\dots,(q,q-n+p),\\
(n-p,q-n+p+1),(n-p-1,q-n+p+2),\dots, (q+1-p,p)\},
\end{multline*}
(in particular, $\St_{\ov{n}}$ has $p$ elements).
Here is the picture for $p=7$,
$q=9$, $n=12$:

\[
\begin{tikzpicture}[scale=0.5]
  	\draw[step=1cm] (0,-1) grid (7,0);
	\draw[step=1cm] (0,-2) grid (7,-1);
	\draw[step=1cm] (0,-3) grid (7,-2);
	\draw[step=1cm] (0,-4) grid (7,-3);
    \draw[step=1cm] (0,-5) grid (7,-4);
	\draw[step=1cm] (0,-6) grid (7,-5);
    \draw[step=1cm] (1,-7) grid (7,-6);
	\draw[step=1cm] (2,-8) grid (7,-7);
    \draw[step=1cm] (3,-9) grid (7,-8); 
\node (v0) at (.5,-5.5) { {{\color{blue}$\bullet$}}};
\node (v1) at  (.5+1,-6.5) {{ \color{blue}$\bullet$ }};
\node (v2) at (.5+2,-7.5) {{ \color{blue}$\bullet$ }};
\node (v3) at (.5+3,-8.5) {{ \color{blue}$\bullet$ }};
\node (v4) at (.5+4,-4.5) {{ \color{blue}$\bullet$ }};
\node (v5) at (.5+5,-3.5) {{ \color{blue}$\bullet$ }};
\node (v6) at (.5+6,-2.5) {{ \color{blue}$\bullet$ }};

\node (v10) at (15.5,-5.5) { {{\color{blue}$\bullet$}}};
\node (v11) at  (15.5+1,-6.5) {{ \color{blue}$\bullet$ }};
\node (v12) at (15.5+2,-7.5) {{ \color{blue}$\bullet$ }};
\node (v13) at (15.5+3,-8.5) {{ \color{blue}$\bullet$ }};
\node (v14) at (15.5+4,-4.5) {{ \color{blue}$\bullet$ }};
\node (v15) at (15.5+5,-3.5) {{ \color{blue}$\bullet$ }};
\node (v16) at (15.5+6,-2.5) {{ \color{blue}$\bullet$ }};

\draw (15.5+6,-2.5) -- (15.5+5,-3.5);
\draw (15.5+5,-3.5) -- (15.5+4,-4.5);
\draw (15.5,-5.5) -- (15.5+4,-4.5);
\draw (15.5+1,-6.5) -- (15.5+4,-4.5);
\draw (15.5+2,-7.5) -- (15.5+4,-4.5);
\draw (15.5+3,-8.5) -- (15.5+4,-4.5);
\end{tikzpicture} 
\]

The Hasse diagram of the poset $\St_{\ov{n}}$ is a tree with $q-n+p$ leaves (these are the largest elements in the poset).
A composition $\ov{d}=(d_1,\dots,d_p)$ is $\ov{n}$-admissible if the number of non-zero entries of $\ov{d}$
does not exceed $q$; since $p\le q$, any composition $\ov{d}$ is $\ov{n}$-admissible.

Now let us describe $\hb(\ov{d})\in\bZ_{\ge 0}^q$ for $\ov{d}\in\bZ_{\ge 0}^p$.
The procedure consists of $p$ steps and starts with the zero vector. 
The first $p-(n-q)$ steps are very simple: at the $i$-th step we set $\hb(\ov{d})_{i+n-p}=d_i$.
So after the $p-(n-q)$ steps we have $\hb(\ov{d})_{i+n-p}=d_i$ and all other entries of $\hb(\ov{d})$ are zero. We are left to do more $n-q$ steps.
Let $i=p-(n-q)+j$ ($j=1,\dots,n-q$). At $i$-th step we determine $(\hb(\ov{d})_{n-p-j+1},\dots,\hb(\ov{d})_q)$ (so after
the last step corresponding to $j=n-q$, $i=p$ we  get the whole vector
$\hb(\ov{d})$). Explicitly, the $i$-th step 
is the algorithm from Lemma \ref{lem::reordering}
applied to $\lambda=(0,\hb(\ov{d})_{n-p-j+2},\dots,\hb(\ov{d})_q)$ and $d=d_i$.

The right Cauchy reads as
\[
\prod_{(i,j)\in Y_{\ov{n}}} \frac{1}{1-x_iy_j} = 
\sum_{\overline d\in\bZ_{\ge 0}^p}\kappa_{\hb(\overline d)}(x)a^{\overline d}(y).
\]
Recall that the following formula is proved in \cite{AGL}:
\begin{equation}\label{eq:A1}
\prod_{(i,j)\in Y_{\ov{n}}} \frac{1}{1-x_iy_j} = \sum_{\overline d\in\bZ_{\ge 0}^p} \kappa_{\ov{d}'}(x_1,\dots,x_q)\, a^{\ov{d}}(y_1,\dots,y_p),    
\end{equation}
where the composition 
$\ov{d}'=(0^{q-p},\al_1,\dots,\al_p)$ is defined by the following procedure (see \cite{AGL}, Theorem 3.20).
For $i$ running from $p$ to $1$ one performs three steps:
\begin{itemize}
\item for $j=i+1,\dots,p$ successively ignore the rightmost entry of $(d_1,\dots,d_p)$, which is equal to $\al_j$;
\item let $k_i=\min(i,n-q+1)$;
\item  $\al_i$ is equal to the maximum among the remaining (not ignored) rightmost $k_i$ elements in $(d_1,\dots,d_p)$.
\end{itemize}

We claim that the two formulas above coincide, i.e.  $\ov{d}'=\hb(\ov{d})$.   
One shows by decreasing induction that the numbers $\al_i$ showing up in $\ov{d}'$ are produced by the same algorithm as in Lemma \ref{lem::reordering}. For example, $\al_p=\max\{d_{p-(n-q)},\dots,d_p\}$ . Since $p+q>n$ one has $k_p=n-q+1$ and hence 
$\al_p=\max\{d_{p-(n-q)},\dots,d_p\}$. One easily sees that the algorithm of Lemma \ref{lem::reordering}  produces the same answer  (only the last 
$n-q+1$ elements from $\ov{d}$ determine $\hb(\ov{d})_p$ and $\hb(\ov{d})_p$ is the largest from them).

Let us comment on the left Cauchy formula from Corollary~\ref{cor:LCF}. A DL-dense array $A$ has (at most) $p$ non-zero entries $a_1,\dots,a_p$, where $a_i$
is the entry located in the $i$-th column. One has 
$a_p\le a_{p-1}\le \dots \le a_{q-n+p+1}$ and $a_{q-n+p+1}\le a_i$
for any $i=1,\dots,q-n+p$ (we note that by definition, $\vrt(A)=(0^{q-p},a_p,\dots,a_1)$). These inequalities determine the range of summation in the Cauchy formula from Corollary~\ref{cor:LCF}.

Finally, consider the poset $\DL_{\ov{n}}(\la)$. For an array $A\in \DL_{\ov{n}}(\la)$, the entries of $A$ in rows from $n-p$ to $q$ are fixed by $\la$ (since these entries are no smaller than the entries in the higher rows). 
Hence, the corresponding poset structure comes from the parabolic Bruhat graph (as in the non-symmetric example above).

\end{document}